\newcommand{\BC}{\mathbb{C}}
\newcommand{\BZ}{\mathbb{Z}}
\newcommand{\BH}{\mathbb{H}}
\newcommand{\HFL}{\mathrm{HFL}^-}
\newcommand{\HF}{\mathrm{HF}^-}
\newcommand{\F}{\mathbb{F}}
\newcommand{\J}{\mathcal{J}}
\newcommand{\sss}{\ifmmode{{\mathfrak s}}\else{${\mathfrak s}$\ }\fi}
\newcommand{\sst}{\ifmmode{{\mathfrak t}}\else{${\mathfrak t}$\ }\fi}
\def\linkminus{\!\smallsetminus\!}
\def\Z{\mathbb Z}
\def\R{\mathbb R}
\def\Q{\mathbb Q}
\def\C{\mathbb C}
\def\F{\mathbb F}
\def\H{\mathbb H}
\def\qq{\mathbf{q}}
\def\rr{\mathbf{r}}
\def\JJ{\mathbf{J}}
\def\JFF{$J$-}
\def\HFF{$H$-}
\def\ss{\mathbf{s}}
\def\kk{\mathbf{k}}
\def\DD{\mathbf{D}}
\def\HH{\mathbf{H}}
\def\ggg{\mathbf{g}}
\def\vv{\mathbf{v}}
\def\uu{\mathbf{u}}
\def\LL{\mathcal{L}} 
\def\ww{\mathbf{w}}
\def\0{\mathbf{0}}
\def\mm{\mathbf{m}}
\def\aaa{\boldsymbol{a}}
\def\ee{\mathbf{e}}
\def\newm{\eta}
\def\lkv{\boldsymbol{\ell}}
\def\ddl{\boldsymbol{\theta}}
\def\wt#1{\widetilde{#1}}
\def\hf{\mathit{HF}}
\def\spinc{Spin$^c$}
\DeclareMathOperator\lk{lk}
\DeclareMathOperator\codim{codim}
\DeclareMathOperator{\Ord}{Ord}
\definecolor{c805080}{RGB}{128,80,128}
\definecolor{c508050}{RGB}{80,128,80}
\definecolor{c0000ed}{RGB}{0,0,237}
\definecolor{cffffd7}{RGB}{255,255,215}
\definecolor{c0000ff}{RGB}{0,0,255}
\definecolor{cffffff}{RGB}{255,255,255}
\newcommand{\bp}{\begin{pmatrix}}
\newcommand{\ep}{\end{pmatrix}}
\numberwithin{equation}{section}
\theoremstyle{plain}
\newtheorem{theorem}[equation]{Theorem}
\newtheorem*{theorem*}{Theorem}
\newtheorem{lemma}[equation]{Lemma}
\newtheorem{proposition}[equation]{Proposition}
\newtheorem{corollary}[equation]{Corollary}
\theoremstyle{definition}
\newtheorem{example}[equation]{Example}
\newtheorem{definition}[equation]{Definition}
\theoremstyle{remark}
\newtheorem{remark}[equation]{Remark}
\numberwithin{equation}{section}
\newtheorem*{ack}{Acknowledgements}
\title[Immersed concordances of links]{Immersed concordances of links and Heegaard Floer homology}
\author{Maciej Borodzik}
\address{Institute of Mathematics, Polish Academy of Science, ul. \'Sniadeckich 8, Warsaw, Poland}
\address{Institute of Mathematics, University of Warsaw, ul. Banacha 2,
02-097 Warsaw, Poland}
\email{mcboro@mimuw.edu.pl}
\author{Eugene Gorsky}
\address{Department of Mathematics, UC Davis, One Shields Avenue Davis CA 95616 USA}
\address{ National Research University Higher School of Economics, Vavilova 7, Moscow, Russia}
\email{egorskiy@math.ucdavis.edu}
\thanks{The second author was partially supported by the grants DMS-1559338 and  RFBR-13-01-00755}
\subjclass[2010]{primary: 57M25, secondary: 14B07, 14H20} 
\keywords{L-space link, splitting number, immersed concordance, $d$--invariant, surgery, semigroup of a singular point, Hilbert function}
\begin{document}
\begin{abstract}
An immersed concordance between two links is a concordance with possible self-intersections. Given an immersed concordance we construct
a smooth four-dimensional cobordism between surgeries on links. 
By applying $d$-invariant inequalities for this cobordism we obtain inequalities between the \HFF functions of links, which 
can be extracted from the link Floer homology package. As an application we show a Heegaard Floer theoretical criterion for bounding the splitting number of links. The criterion is especially effective for L-space links, and we present an infinite family of L-space links with vanishing linking numbers and arbitrary large splitting numbers. We also show a semicontinuity of the \HFF function under $\delta$-constant deformations of singularities with many branches.
\end{abstract}
\maketitle

\section{Introduction}
\subsection{Overview}
An immersed cobordism between two links $\LL_1$ and $\LL_2$ in $S^3$ is a smoothly immersed surface in $S^3\times[1,2]$, whose boundary is
$\LL_2\subset S^3\times\{2\}$ and $\LL_1\subset S^3\times\{1\}$. An immersed concordance is an immersed cobordism, whose all the components have genus 0.
The notion of an immersed cobordism gives a unified approach for studying
smooth four genus, clasp number, splitting number and unlinking number of links. Recently many papers using this technique appeared
\cite{BFP, BL2, Kaw, NO, OS}. Generalizing the construction of \cite{BL2} we can use an
immersed concordance as a starting point in constructing 
a four-dimensional cobordism between large surgeries on $\LL_1$ and $\LL_2$ with precisely described surgery coefficients. Under some extra assumptions
we can guarantee that the four-dimensional cobordism is negative definite. We apply the the $d$-invariant
inequality of Ozsv\'ath and Szab\'o, see \eqref{eq:dinvariantinequality}, to relate the $d$-invariants of the corresponding surgeries on $\LL_1$ and $\LL_2$. These inequalities are best expressed in terms of the \HFF functions.

The \HFF function is a function that is used to calculate the $d$-invariant of large surgeries on links (see Theorem~\ref{thm:surgeryonlinks},
which can be thought of as an informal definition of $H$).
For knots it was first defined by Rasmussen in his thesis \cite{Ras} (as an analogue of the Fr\o yshov invariant in Seiberg-Witten theory), who used it
to obtain nontrivial bounds for the slice genus of knots. For L-space knots, the \HFF function can be easily reconstructed from the
Alexander polynomial. For L-space links with several components (see Section \ref{sec:L space}), the \HFF function was introduced by the second 
author and N\'emethi \cite{GN} (denoted by small $h$ there), who showed that for algebraic links it coincides with the Hilbert function
defined by the valuations on the local ring of the corresponding singularity. 

Unfortunately, apart from different notations of $H$ in the literature 
there are at least three different ``natural'' conventions on the definition of $H$, all differing by some shift of
the argument. This can be seen in \cite{BL},
where three different functions $I$, $J$ and $R$ denote very similar objects. In the link case the situation will be similar. The function called $H$
will take as an argument the levels of the Alexander filtration in the chain complex $CFL^-$, that is, its arguments will be from some lattice. Shifting 
the argument of $H$ by half the 
linking numbers will yield a function $J$ from $\Z^n$ to $\Z$. The normalization of the \JFF function makes it suit very well for
studying link concordances.  Finally, we will have
a function $R$, defined for algebraic singularities, which resembles the most the semigroup counting function from \cite{BL} and agrees with the Hilbert function from \cite{GN}.

We define the \HFF function for general links and
find inequalities between the \HFF functions of two links related by an immersed concordance (under some assumptions on the concordance).
The following theorem is one of the main results of the paper. The statement is easier in terms of the \JFF function than in terms of the \HFF function.
\begin{theorem*}[Theorem~\ref{thm:single}]
Let $\LL_1$ and $\LL_2$ be two $n$-component links differing by a single positive
crossing change, that is, $\LL_2$ arises by changing a negative crossing of $\LL_1$
into a positive one. Let $J_1$ and $J_2$ be the corresponding \JFF functions and let $\mm\in\Z^n$, $\mm=(m_1,\ldots,m_n)$.
\begin{itemize}
\item[(a)] If the crossing change is between two strands of the same component $L_{1i}$, then
\[J_2(m_1,m_2,\ldots,m_i+1,\ldots,m_n)\le J_1(m_1,\ldots,m_n)\le J_2(m_1,\ldots,m_i,\ldots,m_n).\]
\item[(b)] If the crossing change is between the $i$-th and $j$-th strand of $\LL_1$, then
\[J_2(m_1,m_2,\ldots,m_n)\le J_1(m_1,\ldots,m_n)\le J_2(m_1,\ldots,m_i-1,\ldots,m_n)\]
and 
\[J_2(m_1,m_2,\ldots,m_n)\le J_1(m_1,\ldots,m_n)\le J_2(m_1,\ldots,m_j-1,\ldots,m_n)\]
\end{itemize}
\end{theorem*}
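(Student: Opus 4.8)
The plan is to realize the crossing change by an explicit immersed concordance carrying a single double point, to feed this concordance into the large-surgery cobordism construction sketched in the introduction, and to turn the resulting $d$-invariant inequalities \eqref{eq:dinvariantinequality} into bounds on the \HFF functions, which one then rewrites in terms of $J$. The difference between the two cases is already visible at the level of linking numbers, so I would set that up first. Since $\LL_2$ is obtained from $\LL_1$ by switching one negative crossing into a positive one, there is an immersed concordance $C\subset S^3\times[1,2]$ from $\LL_1$ to $\LL_2$ whose components are $n$ parametrized annuli: a product annulus over every component not meeting the crossing, together with one or two annuli carrying, between them, a single transverse double point whose sign is fixed by the direction of the crossing change. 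In case (a) the double point is a self-intersection of the annulus over $L_{1i}$ and $\lk(\LL_2)=\lk(\LL_1)$; in case (b) it is an intersection of the annuli over $L_{1i}$ and $L_{1j}$, so that $\lk(L_{2i},L_{2j})=\lk(L_{1i},L_{1j})+1$ while all other linking numbers are unchanged. This is ultimately the source of the two slightly different statements in part (b).

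Next I would apply the cobordism construction to $C$. Choosing framings $\Lambda_1$ on $\LL_1$ and $\Lambda_2$ on $\LL_2$ that are compatible with $C$ and large enough for Theorem~\ref{thm:surgeryonlinks} to apply --- they differ only in the entries recording the components that meet the crossing, and in case (b) also in the off-diagonal $(i,j)$-entry forced by the linking-number change --- one obtains a smooth cobordism $W$ with $b_1(W)=0$ and $\partial W=-S^3_{\Lambda_1}(\LL_1)\sqcup S^3_{\Lambda_2}(\LL_2)$ which is negative definite, the double point contributing exactly one extra class of square $-1$ in $H_2(W)$ (so $b_2^-(W)=1$, $b_2^+(W)=0$). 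Since $C$ is a \emph{concordance}, i.e.\ each component has genus $0$, one should moreover obtain a bound in the opposite direction, so that \eqref{eq:dinvariantinequality} is effectively available both ways between the two large surgeries. The delicate point here is the dictionary between \spinc{} structures: by Theorem~\ref{thm:surgeryonlinks} the \spinc{} structures on each $S^3_{\Lambda_\ell}(\LL_\ell)$ are indexed by points of an Alexander lattice, and one must pin down the fact that a \spinc{} structure on $W$ restricting on the $\LL_2$-side to the lattice point $\mm$ restricts on the $\LL_1$-side to $\mm$, to $\mm-\ee_i$, or to $\mm-\ee_j$, the shift by a single unit vector coming precisely from the square $-1$ class.

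With this in place, \eqref{eq:dinvariantinequality} applied in both directions, combined with Theorem~\ref{thm:surgeryonlinks}, gives the theorem. For the extremal \spinc{} structures --- those for which $c_1(\sss)^2+b_2^-(W)=0$ --- the correction term drops out and one is left with a two-sided bound on $H_1$ in terms of $H_2$, evaluated at points of the two Alexander lattices at two relative positions in case (a) and at three in case (b), the positions being governed by the choices of extremal \spinc{} structure. Finally I would substitute $J_\ell(\mm)=H_\ell\bigl(\mm-\tfrac12\lk(\LL_\ell)\bigr)$. In case (a) the two Alexander lattices coincide, the $H\to J$ shift is identical on both sides, and the bound transfers verbatim into $J_2(\mm+\ee_i)\le J_1(\mm)\le J_2(\mm)$. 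In case (b) the two lattices are interleaved by $\tfrac12\ee_i+\tfrac12\ee_j$ --- this is exactly the linking-number change reappearing both in the off-diagonal entries of $\Lambda_1,\Lambda_2$ and in the $H\to J$ normalization --- and combining this half-integer shift with the $\pm\ee_i$ or $\pm\ee_j$ coming from the extremal \spinc{} structures produces the two displayed inequalities of part (b), one recording the $i$-extremal and one the $j$-extremal structure, with the direction of the $J_1$-versus-$J_2$ comparison reversed relative to case (a).

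I expect the construction of $C$ and the invocation of \eqref{eq:dinvariantinequality} to be essentially formal. The main obstacle is the middle step: building $W$ with precisely the right framings, upgrading the $d$-invariant inequality to a genuinely two-sided bound in the single-double-point case (this is where genus $0$ is used), and computing the \spinc{} correspondence accurately enough that the resulting shifts are $\pm1$ in the $i$-th or $j$-th coordinate and nothing else --- together with the careful matching, in case (b), of the linking-number jump against the off-diagonal entries of $\Lambda_1,\Lambda_2$ and against the $H\to J$ normalization. An off-by-one anywhere in this chain would corrupt the final shifts, so essentially all the work is bookkeeping of Alexander gradings and \spinc{} structures.
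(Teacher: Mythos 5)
Your overall route is the paper's: realize the crossing change by an immersed annular concordance with a single positive double point, blow it up, build the cobordism between large surgeries, apply \eqref{eq:dinvariantinequality}, track \spinc{} structures through Theorem~\ref{thm:surgeryonlinks}, and convert $H$ to $J$. (The paper packages all of this as the general Theorem~\ref{thm:mainestimate} about APSICs and then observes that the single crossing change gives $\rr=\ee_i$, $\kk=\0$ in case (a) and $\rr=\0$, $\kk=\ee_i$ or $\ee_j$ in case (b).) However, there is a genuine gap at the step you yourself flag as the main obstacle: you claim that from a single negative definite cobordism $W$, using that the concordance has genus $0$, the inequality \eqref{eq:dinvariantinequality} is ``effectively available both ways.'' It is not. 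A negative definite \spinc{} cobordism from $Y_1$ to $Y_2$ only yields $d(Y_2,\sss_2)\ge \deg F+d(Y_1,\sss_1)$, which translates into one-sided bounds of the form $J_2(\cdot)\le J_1(\cdot)$; its orientation reversal is positive definite, so \eqref{eq:dinvariantinequality} says nothing in the other direction. The genus-$0$ hypothesis is used for something else entirely, namely to ensure that the inner boundary $Y_1$ is itself a large surgery on $\LL_1$ so that its $d$-invariants are computed by the \HFF function; it does not reverse the inequality. Playing with the two ``extremal'' \spinc{} structures $c_1=\pm PD[E]$ on one fixed negative definite $W$ changes the lattice point at which the inequality is evaluated, but never its direction, so you obtain only half of each displayed estimate.

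The missing idea is that one must build \emph{two} cobordisms, distinguished by the sign of the blow-up at the double point. Blowing up negatively produces (for an annular concordance) a negative definite $W_{12}$ from $Y_1$ to $Y_2$, with \spinc{} restriction shifted by $\rr+\frac12\aaa$; this gives $J_2(\mm)\le J_1(\mm-\rr)$. Blowing up positively produces a positive definite $W_{12}$, so that $-W_{12}$ is a negative definite cobordism from $Y_2$ to $Y_1$, with a different \spinc{} shift $\frac12\ddl$ depending on a sign choice at each multicolored double point; this gives $J_1(\mm+\kk)\le J_2(\mm)$, and in case (b) the two admissible sign choices are exactly what produce the two variants $\kk=\ee_i$ and $\kk=\ee_j$. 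The framing relations also differ between the two constructions ($q_{1i}=q_{2i}-4\newm_{ii}-a_i$ versus $q_{1i}=q_{2i}+a_i$), so the two inequalities genuinely live on different $4$-manifolds and cannot be extracted from one. With this repair --- and with the degree bookkeeping done separately in the two cases, since in the negative blow-up case $W_{02}$ is no longer $U_{\qq_2}(\LL_2)$ and an extra $-p$ appears in the $d$-invariant formula --- your argument becomes the paper's proof of Theorem~\ref{thm:mainestimate}, of which Theorem~\ref{thm:single} is the immediate specialization.
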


As an application we provide new criteria for splitting numbers of links. 

\begin{theorem*}[Theorem \ref{th: vanishing}]
\
\begin{itemize}
\item[(a)] Suppose that a two component link  $\LL=L_1\cup L_2$ can be unlinked using $a$ positive and $b$ negative crossing changes.
Let $g_i$ denote the slice genus of $L_i$. Define vectors
$$
\ggg:=(g_1,g_2),\ \wt{\ggg}:=\left(g_1+\frac 12\lk(L_1,L_2),g_2+\frac 12\lk(L_1,L_2)\right).
$$
Define the region $R(a)$  by inequalities:
$$
R(a):=\{(m_1,m_2):m_1+m_2\ge a, m_1\ge 0, m_2\ge 0\}.
$$
Then $J(\mm)=\wt{J}(\mm)=0$ for $\mm\in R(a)+\ggg$.
\item[(b)] If, in addition, $\LL$ is an L-space link, then 
$$\HFL(\LL,\vv)=0\ \text{for}\ \vv \in R(a)+\wt{\ggg}+(1,1).$$
In particular, all coefficients of the Alexander polynomial vanish in $R(a)+\wt{\ggg}+(\frac 12,\frac 12)$.
\end{itemize}
\end{theorem*}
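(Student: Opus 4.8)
To prove Theorem~\ref{th: vanishing}(a) I would combine two inputs: Theorem~\ref{thm:single}, which controls how $J$ changes under a single crossing change, and a direct computation of $J$ for a split link. Fix a sequence of crossing changes $\LL=\LL^{(0)}\to\LL^{(1)}\to\dots\to\LL^{(N)}$, $N=a+b$, each between the two components $L_1,L_2$, with $a$ of them positive and $b$ negative, and with $\LL^{(N)}=L_1\sqcup L_2$ split. For the split link, a large surgery $S^3_{\pp}(\LL^{(N)})$ is the connected sum $S^3_{p_1}(L_1)\#S^3_{p_2}(L_2)$, $d$-invariants add, and so by Theorem~\ref{thm:surgeryonlinks} the $H$-function — hence $J$ — decomposes: $J_{\LL^{(N)}}(m_1,m_2)=J_{L_1}(m_1)+J_{L_2}(m_2)$; since $\lk(L_1,L_2)=0$ the functions $J$ and $\wt J$ agree here and $\ggg=\wt\ggg$. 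As $J$ is non-negative and, for a knot, $J_{L_i}(m)=0$ once $m\ge g_i$ (the one-component case of the $d$-invariant inequality, i.e.\ the vanishing of the local $h$-invariants past the slice genus), we obtain $J_{\LL^{(N)}}=\wt J_{\LL^{(N)}}=0$ on $\ggg+\Z_{\ge 0}^2=\{m_1\ge g_1,\,m_2\ge g_2\}$.

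Next I would run the sequence backwards, tracking the zero set. By Theorem~\ref{thm:single}(b) together with $J\ge 0$: if $\LL^{(k)}\to\LL^{(k+1)}$ is a positive crossing change then $\{J_{\LL^{(k)}}=0\}\supseteq(\{J_{\LL^{(k+1)}}=0\}+\ee_1)\cup(\{J_{\LL^{(k+1)}}=0\}+\ee_2)$, whereas if it is negative then $\{J_{\LL^{(k)}}=0\}\supseteq\{J_{\LL^{(k+1)}}=0\}$ with no shift. Thus the $b$ negative changes cost nothing, and each of the $a$ positive changes costs a single shift by $\ee_1$ or $\ee_2$, at our choice. Starting from $\ggg+\Z_{\ge 0}^2$ and applying the operator $Q\mapsto(Q+\ee_1)\cup(Q+\ee_2)$ exactly $a$ times, an easy induction identifies the output with $\bigcup_{a_1+a_2=a}(\ggg+(a_1,a_2)+\Z_{\ge 0}^2)$, which equals $R(a)+\ggg$: a point with $m_i\ge g_i$ lies in this union iff some $a_1\in\{0,\dots,a\}$ satisfies $a_1\le m_1-g_1$ and $a-a_1\le m_2-g_2$, i.e.\ iff $m_1+m_2\ge a+g_1+g_2$. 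This gives $J_\LL=0$ on $R(a)+\ggg$. The statement for $\wt J$ is obtained by rerunning the same argument with the crossing-change bounds for $\wt J$: an inter-component crossing change alters $\lk(L_1,L_2)$ by exactly one, which is precisely the shift folded into the normalization of $\wt J$, so these bounds follow from Theorem~\ref{thm:single} and the base case is unchanged, giving $\wt J_\LL=0$ on $R(a)+\ggg$ as well.

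For part (b) I would invoke the L-space dictionary. For an L-space link the link Floer package is reconstructed from the $H$-function: a finite-difference (inclusion--exclusion) formula expresses $\HFL(\LL,\vv)$, and likewise its multigraded Euler characteristic and hence the symmetrized Alexander polynomial $\Delta_\LL$, through the four values $H(\vv),H(\vv-\ee_1),H(\vv-\ee_2),H(\vv-\ee_1-\ee_2)$; in particular $\HFL(\LL,\vv)=0$ whenever $H$ vanishes at all four of these lattice points. Translating part (a) into $H$ via the half-linking-number shift in the definition of $\wt J$, the function $H_\LL$ vanishes on $R(a)+\wt\ggg$, and $R(a)+\wt\ggg$ is upward closed (adding $\ee_1$ or $\ee_2$ preserves it); hence for $\vv\in R(a)+\wt\ggg+(1,1)$ all four relevant points lie in $\{H_\LL=0\}$, so $\HFL(\LL,\vv)=0$. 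Passing to Euler characteristics and applying the standard $(\tfrac12,\tfrac12)$-shift between the $\HFL$-grading and the symmetric Alexander grading, all coefficients of $\Delta_\LL$ in $R(a)+\wt\ggg+(\tfrac12,\tfrac12)$ vanish.

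The conceptual weight sits in Theorem~\ref{thm:single} and the surgery formula; what remains is bookkeeping, and the one delicate point is keeping the three normalizations $H$, $J$, $\wt J$ and their arguments straight across crossing changes — in particular verifying that the linking-number jump produced by an inter-component crossing change is exactly compensated by the $\wt J$-normalization, so that part (a) for $J$, part (a) for $\wt J$, and part (b) all land on the advertised shifts of $R(a)$; nailing the constants $(1,1)$ and $(\tfrac12,\tfrac12)$ in (b) belongs here too. An alternative that avoids iterating Theorem~\ref{thm:single} is to splice the unlinking crossing-change annuli together with genus-$g_i$ slice surfaces of the $L_i$ into a single immersed concordance from $\LL$ to the two-component unlink, count its double points by sign, and feed the result into the paper's general $d$-invariant inequality for immersed concordances; this is morally the same proof, with the bookkeeping moved into the double-point count.
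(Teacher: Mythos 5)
Your proposal is essentially the paper's proof: the paper packages your step-by-step tracking of zero sets into Theorem~\ref{th:2 comp} (itself obtained by iterating Theorem~\ref{thm:single}) and then makes the single choice $a_1=\min(m_1-g_1,a)$, $a_2=a-a_1$, which is exactly your union $\bigcup_{a_1+a_2=a}\bigl(\ggg+(a_1,a_2)+\Z_{\ge0}^2\bigr)=R(a)+\ggg$; part (b) is likewise the same argument (inclusions $A^-(\vv-\ee_i)\hookrightarrow A^-(\vv)$ are quasi-isomorphisms when the $H$-values agree, plus the $(\tfrac12,\tfrac12)$-shift between $\HFL$ and $\Delta$, which the paper routes through Corollary~\ref{cor:two}).

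The one place where your reasoning as written does not hold up is the $\wt J$ claim in part (a): ``rerunning the same argument'' fails because the zero-set propagation relies on $J\ge0$, and $\wt J$ is not non-negative in general (for algebraic links it is $\le 0$); moreover the linking-number jump is absorbed by the shift between $H$ and $J$, not by the normalization $\wt J=J-\sum_i J_{L_i}$, so no compensation of the kind you describe occurs. The fix is immediate from facts you already have, and is what the paper does: on $R(a)+\ggg$ one has $m_i\ge g_i$, hence $J_{L_1}(m_1)=J_{L_2}(m_2)=0$ (the components are unchanged by multicolored crossing changes), so $\wt J(\mm)=J(\mm)=0$ there.
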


In the examples we focus on a family of two-bridge
links which were shown in \cite{Liu} to be L-space links.

\begin{theorem*}[Theorem \ref{th:two bridge}]
The splitting number of the two-component two-bridge link $$\LL_n=b(4n^2+4n,-2n-1)$$ equals $2n$, although the linking number between the components of $\LL_n$ vanishes.
\end{theorem*}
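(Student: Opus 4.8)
The strategy is to establish the two inequalities $\mathrm{sp}(\LL_n)\le 2n$ and $\mathrm{sp}(\LL_n)\ge 2n$ separately; the second is the substantial one. Throughout we use that every two-bridge link has unknotted components, so the slice genera vanish, $g_1=g_2=0$, and that $\lk(L_1,L_2)=0$; hence in the notation of Theorem~\ref{th: vanishing} we have $\ggg=\wt{\ggg}=(0,0)$. We also record the continued fraction expansion $\tfrac{4n^2+4n}{2n+1}=[2n,1,2n]$, so that $\LL_n$ is, up to mirroring (which affects neither the splitting number nor the Newton polygon of the Alexander polynomial), the rational link $C(2n,1,2n)$, with $\LL_1$ the Whitehead link. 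Finally, $\LL_n$ is an L-space link by \cite{Liu}.

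For the upper bound I would induct on $n$, starting from the two-component unlink $\LL_0$. One checks from the continued fraction that the inter-component crossings of $\LL_n$ are exactly those of the two length-$2n$ twist regions of $C(2n,1,2n)$ (the middle crossing is a self-crossing; the two twist regions contribute $+n$ and $-n$ to the linking number). Changing one crossing of the first twist region produces an adjacent oppositely-signed crossing, which cancels by a Reidemeister~II move, so the diagram becomes $C(2n-2,1,2n)$; one further crossing change in the last twist region turns this into $C(2n-2,1,2n-2)$, which is $\LL_{n-1}$ up to mirroring. Hence $\mathrm{sp}(\LL_n)\le\mathrm{sp}(\LL_{n-1})+2$ and so $\mathrm{sp}(\LL_n)\le 2n$.

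For the lower bound, suppose $\LL_n$ is carried to the unlink by $a$ positive and $b$ negative crossing changes between the two components (no self-crossing changes are needed, the components being unknotted). Each such change alters $\lk(L_1,L_2)$ by $\pm1$, and this linking number is $0$ before and after, so $a=b$ and $\mathrm{sp}(\LL_n)=2a$. Theorem~\ref{th: vanishing}(b) with $\wt{\ggg}=(0,0)$ then says that every coefficient of the symmetrized Alexander polynomial $\Delta_{\LL_n}(t_1,t_2)$ vanishes on $R(a)+(\tfrac12,\tfrac12)$. Consequently it is enough to exhibit a monomial $t_1^{d_1}t_2^{d_2}$ of $\Delta_{\LL_n}$ with $d_1,d_2\ge\tfrac12$ and $d_1+d_2=n$: writing $d_i=m_i+\tfrac12$ with $m_i\ge 0$, such a point fails to lie in $R(a)+(\tfrac12,\tfrac12)$, forcing $m_1+m_2=n-1<a$, hence $a\ge n$ and $\mathrm{sp}(\LL_n)=2a\ge 2n$. (Equivalently one may invoke Theorem~\ref{th: vanishing}(a): it suffices to find $\mm$ with $m_1,m_2\ge 0$, $m_1+m_2=n-1$ and $J_{\LL_n}(\mm)=H_{\LL_n}(\mm)\neq 0$.)

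The main obstacle is thus the Heegaard Floer input: locating the required nonzero coefficient of $\Delta_{\LL_n}$, equivalently (since $\LL_n$ is an L-space link) the required nonzero value of the $H$-function. I would obtain this either by running the standard recursion for the Alexander polynomial of the rational link $C(2n,1,2n)$, or directly from the computation of $H$ for these two-bridge links in \cite{Liu}. For $n=1$ this is the Whitehead link, with $\Delta\doteq(t_1^{1/2}-t_1^{-1/2})(t_2^{1/2}-t_2^{-1/2})$, whose corner monomial $t_1^{1/2}t_2^{1/2}$ has exponent sum $1=n$; for general $n$ one must check that the extremal monomial of $\Delta_{\LL_n}$ with both exponents positive has exponent sum exactly $n$ — the bound $\le n$ is automatic from the upper bound already proved, so only $\ge n$ needs verification. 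A routine but necessary bookkeeping point is to keep the half-integer normalizations of $\Delta$, $J$ and $H$ consistent with those used in Theorem~\ref{th: vanishing}. Putting the two inequalities together yields $\mathrm{sp}(\LL_n)=2n$, which grows without bound even though $\lk(L_1,L_2)=0$.
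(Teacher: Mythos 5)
Your argument is correct and follows essentially the same route as the paper: the lower bound comes from Theorem~\ref{th: vanishing} applied with $\ggg=\wt{\ggg}=(0,0)$ together with the linking-number parity argument, and the upper bound from explicit crossing changes in the twist regions of the standard two-bridge diagram (the paper's Figure~\ref{fig:bnk} with $k=n$). The one step you defer -- locating a nonzero coefficient of $\Delta_{\LL_n}$ at a bidegree $(d_1,d_2)$ with $d_1,d_2\ge\tfrac12$ and $d_1+d_2=n$, equivalently a point $\mm\succeq\0$ with $m_1+m_2=n-1$ and $J(\mm)\neq 0$ -- is exactly what the paper supplies by quoting Liu's closed formula
\[
\Delta_{\LL_n}(t_1,t_2)=(-1)^{n}\sum_{|i+1/2|+|j+1/2|\le n}(-1)^{i+j}t_1^{i+1/2}t_2^{j+1/2},
\]
whose term $-t_1^{n-1/2}t_2^{1/2}$ (take $i=n-1$, $j=0$) gives, via Corollary~\ref{cor:two}, $\wt{\JJ}_{\LL_n}=t_1^{n-1}+(\text{lower order in }t_1)$ and hence $J(n-1,0)=\wt{J}(n-1,0)=1$. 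So your verification does go through, but as written it is a genuine outstanding obligation rather than a routine check: without this input the Alexander polynomial could a priori be supported only near the axes and the argument would give nothing. Your observation that the upper bound already caps the exponent sum at $n$ is a nice economy the paper does not use, and your inductive splitting sequence is more explicit than the paper's appeal to the figure.
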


We compare this theorem with the work of Batson and Seed \cite{BS} which provides a different bound for the splitting number in terms of Khovanov homology. It turns out their lower bound is quite weak in this case and is at most three for all $\LL_n$.

Another application is a topological proof of semicontinuity of the Hilbert function of singularities under $\delta$-constant deformations.
The result was proved in \cite{BL2} for unibranched singular points (there is also an algebraic proof of a more general version in \cite{GN} for one
component links).  
Our result is for multibranched singularities under the assumption that the
number of branches does not change.

\subsection{Structure of the paper}

The paper uses a lot of background facts about Heegaard Floer homology and L-space links, most of them were discussed in \cite{GN,Liu}
using slightly different set of notations. For the reader's convenience, we repeat these facts and introduce the functions $H$ and $J$ in full generality in Sections 2 and 3. In Section 4, we relate the Ozsv\'ath-Szab\'o $d$-invariants of large surgeries on  a link to the \HFF function. Section 5 is the technical core of the paper: for an immersed cobordism between two links $\LL_1, \LL_2$ we construct a cobordism between the 
surgeries $S^3_{\qq_1}(\LL_1),S^3_{\qq_2}(\LL_2)$ of the 3-sphere on these links, and prove that it is negative definite under certain assumptions. In the negative definite case, we apply the classical inequality for $d$-invariants of $S^3_{\qq_1}(\LL_1),S^3_{\qq_2}(\LL_2)$, and obtain in Section 6 an inequality for $H$ and \JFF functions for the links $\LL_1,\LL_2$ stated in Theorem~\ref{thm:mainestimate}. We use this result 
to prove Theorem \ref{thm:single}.

In Section 7 we apply these results to obtain more concrete inequalities for two--component links, and prove Theorems \ref{th: vanishing} and \ref{th:two bridge}.
Finally, in the last section we apply the inequalities  to algebraic links and compare them with the algebro-geometric results on deformations of singularities.

\subsection{Notations and conventions}
All links are assumed to be oriented. 
For a link $\LL$, we denote by $L_1,\ldots,$ its components. This allows us to make a distinction between $\LL_1,\LL_2$ and $L_1,L_2$. The former
denotes two distinct links, the latter stays for two components of the same link $\LL$. 

We will mark vectors in  $n$--dimensional lattices in bold, in particular, we will write $\0=(0,\ldots,0)$. 
Given $\uu,\vv\in \BZ^n$, we write $\uu\preceq \vv$ if $u_i\leq v_i$ for all $i$,
and $\uu\prec \vv$ if $\uu\preceq \vv$ and $\uu\neq \vv$.
We will write $\ww=\max(\uu,\vv)$ (resp. $\ww=\min(\uu,\vv)$) if $w_i=\max(u_i,v_i)$ (resp. $w_i=\min(u_i,v_i)$) for all $i$.
We denote the $i$-th coordinate vector by $\ee_i$.

For a subset $I=\{i_1,\ldots,i_r\}\subset \{1,\ldots,n\}$ and $\uu\in \BZ^n$, we denote by $\uu_I$ the vector $(u_{i_1},\ldots,u_{i_r})$.
For a link $\LL=\bigcup_{i=1}^{n}L_i$ we denote by
$\LL_I=L_{i_1}\cup\ldots \cup L_{i_r}$ the corresponding sublink.

We will always work with $\F=\BZ/2\BZ$ coefficients.

\begin{ack}
The authors would like to thank to David Cimasoni, Anthony Conway,
Stefan Friedl, Jennifer Hom, Yajing Liu, Charles Livingston, Wojciech Politarczyk and Mark Powell for fruitful discussions.
The project was started during a singularity theory conference in Edinburgh in July 2015. The authors would like to thank the ICMS for
hospitality.
\end{ack}

\section{Links and L-spaces}

\subsection{Links and their Alexander polynomials}
Let $\LL\subset S^3$ be a link. Denote by $L_1,\ldots,L_n$ its components.
Throughout the paper, the multivariable Alexander polynomial (see \cite{Hi12} for definition) will be symmetric:
$$\Delta(t_1^{-1},\ldots,t_n^{-1})=\Delta(t_1,\ldots,t_n).$$
The sign of a multivariable Alexander polynomial can be fixed using the interpretation of the Alexander polynomial
via the sign refined Reidemeister torsion; see \cite[Section 4.9]{Hi12} for discussion and \cite{Tu01} for an introduction to Reidemeister torsion.

\begin{example}\label{ex:whitalex}
The Alexander polynomial for the Whitehead link equals 
$$\Delta_{Wh}(t_1,t_2)=-(t_1^{1/2}-t_1^{-1/2})(t_2^{1/2}-t_2^{-1/2}).$$ 
For
the Borromean link the Alexander polynomial equals 
$$\Delta_{Bor}(t_1,t_2,t_3)=(t_1^{1/2}-t_1^{-1/2})(t_2^{1/2}-t_2^{-1/2})(t_3^{1/2}-t_3^{-1/2}).$$
\end{example}

In some examples we will consider {\em algebraic links}, defined as intersections of complex plane curve singularities
with a small 3-sphere. The Alexander polynomials of algebraic links were computed by  Eisenbud and Neumann \cite{EN}.
In Section~\ref{sec:algebraiclinks} 
below we also discuss more recent results of Campillo, Delgado and Gusein-Zade \cite{CDG}, relating the Alexander polynomial to the 
algebraic invariants of a singularity, such as the multi-dimensional semigroup.

\begin{example}
The link of the singularity $x^2=y^{2n}$ consists of 2 unknots with linking number $n$. The corresponding Alexander polynomial equals
$$\Delta_{2,2n}(t_1,t_2)=\frac{t_1^{n/2}t_2^{n/2}-t_1^{-n/2}t_2^{-n/2}}{t_1^{1/2}t_2^{1/2}-t_1^{-1/2}t_2^{-1/2}}.$$
\end{example}

For future reference we recall the Torres formula, proved first in \cite{torres}. It relates the Alexander polynomial of a link $\LL$
with the Alexander polynomial of its sublink.
\begin{theorem}[Torres Formula]\label{thm:torres}
Let $\LL=L_1\cup\ldots\cup L_n$ be an $n$ component link and let $\LL'=L_1\cup\ldots\cup L_{n-1}$. The Alexander polynomials of $\LL'$ and of $\LL$
are related by the following formula.
\[
\Delta_{\LL}(t_1,\ldots,t_{n-1},1)=\begin{cases}
\left(\prod_{i=1}^{n-1}t_i^{\frac{1}{2}\lk(L_i,L_n)}-\prod_{i=1}^{n-1}t_i^{-\frac{1}{2}\lk(L_i,L_n)}\right)\Delta_{\LL'}(t_1,\ldots,t_{n-1})&\textrm{if $n>2$,}\\
\frac{\left(t_1^{\frac{1}{2}\lk(L_1,L_2)}-t_1^{-\frac{1}{2}\lk(L_1,L_2)}\right)}{\left(t_1^{\frac{1}{2}}-t_1^{-\frac{1}{2}}\right)}\Delta_{\LL'}(t_1)&\textrm{if $n=2$,}
\end{cases}
\]
where $\lk(L_i,L_n)$ is the linking number between $L_i$ and $L_n$.
\end{theorem}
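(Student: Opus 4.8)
The plan is to read the formula off the interpretation of the symmetric, sign-refined multivariable Alexander polynomial as the Reidemeister torsion of the link exterior (in the normalization of \cite[Section 4.9]{Hi12} and \cite{Tu01}), together with the multiplicativity of torsion under gluing. Write $X_\LL=S^3\sm\Int N(\LL)$ and $X_{\LL'}=S^3\sm\Int N(\LL')$ for the two exteriors. The key geometric observation is that $X_{\LL'}$ is obtained from $X_\LL$ by Dehn filling the boundary torus $T_n=\partial N(L_n)$ along the meridian $\mu_n$ of $L_n$, that is, $X_{\LL'}=X_\LL\cup_{T_n}V$ with $V\cong S^1\times D^2$ the filling solid torus. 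First I would pin down the coefficient systems: the inclusion $X_\LL\hookrightarrow X_{\LL'}$ induces on first homology the map $\Z^n\to\Z^{n-1}$ sending $\mu_i\mapsto\mu_i$ for $i<n$ and $\mu_n\mapsto 0$ --- equivalently, on group rings it is exactly the specialization $t_n\mapsto 1$ appearing in the statement --- while the core of $V$ is isotopic to a longitude $\lambda_n$ of $L_n$, whose class in $H_1(X_{\LL'})$ is $\sum_{i<n}\lk(L_i,L_n)\mu_i$, i.e.\ the monomial $\ell:=\prod_{i=1}^{n-1}t_i^{\lk(L_i,L_n)}$.

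Assuming for the moment that $\ell\ne 1$ and that all complexes in sight are acyclic over $\Q(t_1,\dots,t_{n-1})$, the computation is short. The torus $T_n$ then has trivial torsion $\tau(T_n)\doteq 1$, the solid torus $V$ has $\tau(V)\doteq(\ell-1)^{-1}$, and $\tau(X_\LL)$ with the specialized coefficient system equals $\Delta_\LL(t_1,\dots,t_{n-1},1)$ up to a unit; multiplicativity of torsion for $X_{\LL'}=X_\LL\cup_{T_n}V$ gives
\[
\Delta_\LL(t_1,\dots,t_{n-1},1)\ \doteq\ (\ell-1)\,\tau(X_{\LL'}).
\]
If $n>2$ then $\LL'$ still has at least two components, so $\tau(X_{\LL'})\doteq\Delta_{\LL'}$; writing $\ell-1=\ell^{1/2}(\ell^{1/2}-\ell^{-1/2})$ and absorbing the monomial $\ell^{1/2}$ into the unit ambiguity yields the first case. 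If $n=2$ then $X_{\LL'}$ is a knot exterior, for which $\tau(X_{\LL'})\doteq\Delta_{\LL'}(t_1)/(t_1-1)$; substituting and symmetrizing $\tfrac{t_1^{\lk(L_1,L_2)}-1}{t_1-1}$ gives the second case.

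Two points need genuine care. The first is the normalization: every identity above holds only up to multiplication by $\pm t_1^{a_1}\cdots t_{n-1}^{a_{n-1}}$, so to obtain the equality \emph{on the nose} with the indicated sign one must work with the Euler-structure/sign-refined torsion of \cite{Tu01} and use the symmetry $\Delta(t_1^{-1},\dots,t_n^{-1})=\Delta(t_1,\dots,t_n)$ fixed in the paper; this is exactly the step that replaces $\ell-1$ and $\tfrac{t_1^{\lk}-1}{t_1-1}$ by the symmetric expressions in the statement. The second, and the main obstacle, is the degenerate situation where one of the complexes fails to be $\Q(t_1,\dots,t_{n-1})$-acyclic --- notably $\ell=1$, i.e.\ $\lk(L_i,L_n)=0$ for all $i<n$, and also the case $\Delta_{\LL'}=0$. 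Then $\lambda_n$ is null-homologous in $X_{\LL'}$, torsion in the naive sense is undefined, and one must argue directly --- for instance, from the Mayer--Vietoris sequence of the decomposition, the class of $\lambda_n$ forces a nonzero element in $H_1$ of the relevant cover --- that $\Delta_\LL(t_1,\dots,t_{n-1},1)=0$, which is precisely what the formula predicts since then $\ell-1=0$. An entirely parallel and historically earlier argument runs through Fox calculus on a Wirtinger presentation of $\LL$, deleting the generator and relations associated with $L_n$; but the torsion route above is cleaner and matches the conventions used elsewhere in this paper.
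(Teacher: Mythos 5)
The paper does not prove this statement at all: it is quoted as a classical result with a bare citation to \cite{torres}, whose original argument runs through Fox calculus on a Wirtinger presentation. Your torsion proof is therefore not competing with anything in the text, and it is the standard modern route (essentially Turaev's); it also has the advantage of matching the conventions the paper actually uses, since the sign of $\Delta$ is fixed here via sign-refined Reidemeister torsion, so the ``on the nose'' normalization you defer to \cite{Tu01} is exactly the right place to settle it. The generic-case computation is correct: filling $T_n$ along $\mu_n$ recovers $X_{\LL'}$, the core of the filling torus represents $\ell=\prod_{i<n}t_i^{\lk(L_i,L_n)}$, and multiplicativity gives $\Delta_{\LL}(t_1,\dots,t_{n-1},1)\doteq(\ell-1)\,\tau(X_{\LL'})$, with the dichotomy $\tau(X_{\LL'})\doteq\Delta_{\LL'}$ for $n>2$ versus $\tau\doteq\Delta_{\LL'}/(t_1-1)$ for $n=2$ accounting exactly for the two cases of the formula.

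Two places in your sketch would need to be expanded into actual arguments before this is a complete proof. First, you use silently that specializing $t_n\mapsto 1$ commutes with the identification $\tau(X_{\LL})\doteq\Delta_{\LL}$; this is the change-of-rings property of torsion, valid only when the specialized complex is still acyclic, and it should be stated since it is where the left-hand side of the formula enters. Second, and more seriously, the degenerate cases ($\ell=1$, or $\Delta_{\LL'}=0$) are where you are thinnest. When $\ell\neq 1$ but $\Delta_{\LL'}=0$, Mayer--Vietoris with acyclic $T_n$ and $V$ does give $H_*(X_{\LL};\text{spec.})\cong H_*(X_{\LL'})$, so both sides vanish; that part is fine. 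But when $\ell=1$ (all $\lk(L_i,L_n)=0$), the coefficient system on $T_n$ and $V$ becomes trivial after specialization, these pieces are far from acyclic, and the one-line remark that ``the class of $\lambda_n$ forces a nonzero element in $H_1$'' does not yet show that $H_1(X_{\LL};\text{spec.})$ is nonzero over the field $\Q(t_1,\dots,t_{n-1})$ --- which is what vanishing of $\Delta_{\LL}(t_1,\dots,t_{n-1},1)$ requires. This case needs its own argument (e.g.\ exhibiting a non-torsion class in the relevant cover coming from the lifted torus $T_n$, or falling back on the Fox-calculus computation, where deleting the row and column of $L_n$ makes the vanishing transparent). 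As a proof sketch your proposal is sound; as a proof it is complete only after these two points, especially the second, are filled in.
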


\subsection{L-spaces and L-space links}
\label{sec:L space}
We will use the minus version of the Heegaard Floer link homology, defined in \cite{OSlinks}. To fix the conventions, we assume that 
$\HF(S^3)=\F[U]$ is supported in degrees $0, -2, -4,\ldots$.
To every 3-manifold $M$ this theory associates a chain complex $CF^{-}(M)$ which naturally splits as
a direct sum over \spinc{} structures on $M$: $CF^{-}(M)=\bigoplus_{\ss} CF^{-}(M,\ss)$.
The homology $\HF(M)=\bigoplus_{\ss} \HF(M,\ss)$, as a graded $\F[U]$-module, is a topological invariant of $M$.

\begin{definition}\
\begin{itemize}
\item[(a)] A 3-manifold $M$ is called an L-space if $b_1(M)=0$ and its Heegaard Floer homology has minimal possible rank: 
$\HF(M,\ss)\simeq \F[U]$ for all $\ss$.
\item[(b)] A link $\LL$ is called an L-space link if $S^3_{\qq}(\LL)$, the integral surgery of $S^3$ on the components of $\LL$
with coefficients $\qq=(q_1,\ldots,q_n)$, is an L-space for $\qq\ggcurly \0$.
\end{itemize}
\end{definition}

For a link $\LL=L_1\cup\ldots\cup L_n$ and a vector $\mm\in \BZ^n$ we define the framing matrix $\Lambda(\mm)=(\Lambda_{ij}(\mm))$:
\begin{equation}\label{eq:framingmatrix}
\Lambda_{ij}(\mm)=\begin{cases}
\lk(L_i,L_j) & \text{if}\ i\neq j,\\
m_i& \text{if}\ i=j.\\
\end{cases}
\end{equation}
It is well known that if $\det \Lambda\neq 0$ then $|H_1(S^3_{\mm}(\LL))|=|\det \Lambda(\mm)|$.
We recall the following result of Liu.
\begin{theorem}[see \expandafter{\cite[Lemma 2.5]{Liu}}]\label{thm:liusurg}
Suppose $\LL=L_1\cup\ldots\cup L_n$ is a link. Let $\mm=(m_1,\ldots,m_n)$ be a framing such that 
\begin{itemize}
\item[(a)] The framing matrix $\Lambda(\mm)$ is positive definite.
\item[(b)] For every $I\subset \{1,\ldots,n\}$ the $\mm_I$ surgery on $\LL_I$ is an L-space.
\end{itemize}
Then for any integer vector $\mm'\succeq \mm$ the $\mm'$ surgery on $\LL$
is an L-space. In particular, $\LL$ is an L-space link.
\end{theorem}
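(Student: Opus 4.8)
The plan is to reduce the general statement to the two hypotheses via a single, well-chosen surgery cobordism argument, following the philosophy already visible in \cite{Liu}. First I would recall that it suffices to treat the case $\mm'=\mm+\ee_k$ for a single coordinate $k$, since the general $\mm'\succeq\mm$ is obtained by iterating this step finitely many times, and at each stage hypothesis (b) is preserved: indeed, for any $I$ the sublink framing $(\mm+\ee_k)_I$ either equals $\mm_I$ (if $k\notin I$) or differs from $\mm_I$ by increasing the $k$-th diagonal entry, so I also need the one-step increment statement to hold for sublinks — meaning the induction must be run simultaneously on the number of components and on $\sum(m_i'-m_i)$. So I would set up a double induction: the base case $n=1$ is the classical fact that increasing the surgery coefficient on an L-space knot past the relevant threshold keeps producing L-spaces, which follows from the surgery exact triangle and the rank count.

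Next, for the inductive step, I would use the integer surgery exact triangle relating $S^3_{\mm}(\LL)$, $S^3_{\mm+\ee_k}(\LL)$, and $S^3_{\mm_{\widehat k}}(\LL_{\widehat k})$ (surgery on the link with the $k$-th component deleted, or more precisely the relevant link obtained by the triangle) — this is the standard mapping cone / skein triangle for a single integer-framed component. The point is that two of the three terms in the triangle are L-spaces: $S^3_{\mm}(\LL)$ by the outer hypothesis together with positive-definiteness of $\Lambda(\mm)$ (which is what makes $S^3_\mm(\LL)$ itself an L-space — this is where (a) is used, presumably via the fact that a positive-definite plumbing-type surgery on a link all of whose sublink surgeries are L-spaces is an L-space, or directly from Liu's lemma hypotheses being exactly the large-surgery regime), and the third term $S^3_{\mm_{\widehat k}}(\LL_{\widehat k})$ is an L-space by the inductive hypothesis on the number of components (its framing matrix is the corresponding principal minor of $\Lambda(\mm)$, still positive definite, and all \emph{its} sublink surgeries are among the $\mm_I$-surgeries assumed to be L-spaces). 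Then rank additivity in the exact triangle — more carefully, the fact that if two of the three groups in an exact triangle of $\widehat{HF}$ have rank equal to the order of the corresponding $H_1$ and the $H_1$ orders satisfy the additivity $|H_1(S^3_{\mm+\ee_k})| = |H_1(S^3_\mm)| + |H_1(S^3_{\mm_{\widehat k}}(\LL_{\widehat k}))|$ coming from $\det$ being linear in the $k$-th diagonal entry — forces the third to have minimal rank as well, i.e. $S^3_{\mm+\ee_k}(\LL)$ is an L-space.

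I would then need to check the determinant bookkeeping carefully: expanding $\det\Lambda(\mm+\ee_k)$ along the $k$-th row gives $\det\Lambda(\mm) + \det\Lambda'$ where $\Lambda'$ is the $(k,k)$-minor, which is exactly the framing matrix of $\LL_{\widehat k}$; positive-definiteness of $\Lambda(\mm)$ guarantees all these minors are positive, so the orders add with the correct signs and there is no cancellation. Combined with the well-known inequality $\rk\widehat{HF}(Y)\ge |H_1(Y)|$ for a rational homology sphere $Y$, the exactness of the triangle pins down the rank of the unknown term from above, giving equality. The main obstacle I expect is not any single step but the organization: making sure the exact triangle is set up with the correct framings so that the "extra" term really is a surgery on $\LL_{\widehat k}$ with framing a principal minor of $\Lambda(\mm)$ (one may need to first absorb the linking contributions by a change of basis, or argue that after increasing $m_k$ enough the $k$-th component can be treated as "large" relative to the rest), and keeping the inductive hypothesis strong enough — it must assert the conclusion for \emph{all} sublinks and \emph{all} admissible framings, not just the ambient link — so that it can feed back into itself. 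Once that scaffolding is in place, the homological algebra of the exact triangle plus the determinant identity closes the argument.
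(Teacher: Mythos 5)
The paper does not prove this statement at all --- it is quoted verbatim from Liu's Lemma~2.5 and used as a black box --- so there is no internal proof to compare against. That said, your outline is essentially Liu's own argument and it is correct: the double induction on the number of components and on $\sum_i(m_i'-m_i)$, the surgery exact triangle whose third term is $S^3_{\mm'_{\widehat k}}(\LL_{\widehat k})$ with framing the $(k,k)$-principal minor, the multilinearity identity $\det\Lambda(\mm'+\ee_k)=\det\Lambda(\mm')+\det\Lambda(\mm')_{\widehat k}$, and the squeeze $\rk\HFhat(Y)\ge|H_1(Y)|$ together close the argument exactly as you describe. Two small clarifications. First, you hedge about ``where (a) is used'' to know $S^3_{\mm}(\LL)$ is an L-space; nothing needs to be derived there --- that is literally hypothesis (b) with $I=\{1,\ldots,n\}$. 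The only roles of positive definiteness are (i) to guarantee that all minors and all matrices $\Lambda(\mm')=\Lambda(\mm)+\mathrm{diag}(\mm'-\mm)$ remain positive definite (hence nonzero determinant, hence rational homology spheres, and no sign cancellation in the determinant sum), and (ii) to let the inductive hypothesis apply to $\LL_{\widehat k}$, whose framing matrix is a positive definite principal minor and whose sublink surgeries are among those covered by (b). Second, no change of basis or ``absorbing linking contributions'' is needed: viewing $L_k$ as a framed knot in $Y=S^3_{\mm'_{\widehat k}}(\LL_{\widehat k})$, the triangle relating $Y$, $Y_{m'_k}(L_k)=S^3_{\mm'}(\LL)$ and $Y_{m'_k+1}(L_k)=S^3_{\mm'+\ee_k}(\LL)$ is already the correct one.
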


We will generalize this result for rational surgeries.
\begin{proposition}\label{prop:rationallarge}
Suppose $\LL$ and $\mm$ are as in the statement of Theorem~\ref{thm:liusurg}. 
Then for any rational framing vector $\qq\succeq \mm$, the $\qq$ surgery on $\LL$ is an L-space.
\end{proposition}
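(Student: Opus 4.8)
The plan is to reduce the statement about a rational surgery vector $\qq\succeq\mm$ to the integral case handled by Theorem~\ref{thm:liusurg}, by realizing $S^3_\qq(\LL)$ as an integral surgery on a larger link obtained from $\LL$ by a standard continued-fraction (slam-dunk / Rolfsen twist) expansion. Concretely, write each rational coefficient $q_i=p_i/r_i$ as a negative-continued-fraction $[a_1^{(i)},a_2^{(i)},\ldots]$ with all $a_k^{(i)}\ge 2$ — this is possible precisely because $q_i\ge m_i\ge 2$, since conditions (a) and (b) of Theorem~\ref{thm:liusurg} already force the diagonal entries $m_i$ to be at least $2$ (a component with framing $1$ or $0$ would make $\Lambda(\mm)$ fail to be positive definite, or would not give an L-space on the corresponding sublink). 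Replacing the $i$-th component of $\LL$ by a chain of $L_i$ together with a string of unknotted meridians with framings $-a_2^{(i)},-a_3^{(i)},\ldots$ produces a link $\LL^+$ in $S^3$ with an \emph{integral} framing vector $\mm^+$ such that $S^3_{\mm^+}(\LL^+)\cong S^3_\qq(\LL)$.

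The key steps, in order, are: (1) verify the arithmetic of the continued-fraction expansion and the resulting surgery diagram equivalence $S^3_{\mm^+}(\LL^+)\cong S^3_\qq(\LL)$; (2) check that the framing matrix $\Lambda(\mm^+)$ of the enlarged link is positive definite — it is obtained from $\Lambda(\mm)$ by adjoining, for each component, a block which is (minus) the tridiagonal continued-fraction matrix with entries $a_k^{(i)}\ge 2$ on the diagonal, and these blocks are positive definite while the off-diagonal coupling to the original link only involves the single meridian of $L_i$, so positive-definiteness is preserved by the same linear-algebra estimate used in the integral case; (3) check hypothesis (b) for $\LL^+$, namely that every sublink of $\LL^+$, surgered with the restricted framing, is an L-space. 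For this one observes that a sublink of $\LL^+$ is either a sublink of a chain of unknots (whose surgery is a connected sum of lens spaces, hence an L-space), or contains $\LL_I$ for some $I$ together with partial meridian chains; in the latter case, doing the slam-dunks on the meridian chains first reduces it to a \emph{rational} surgery $\qq'_I\succeq\mm_I$ on $\LL_I$, which is an L-space by induction on $n$ (the base case $n=1$ being the fact that large positive surgery on an L-space knot is a lens-space-like L-space, combined with the slam-dunk for smaller rational coefficients — or, more cleanly, by setting up the whole proposition as a simultaneous induction on $n$). Once (1)–(3) are in place, Theorem~\ref{thm:liusurg} applied to $\LL^+$ and $\mm^+$ gives that $S^3_{\mm^+}(\LL^+)=S^3_\qq(\LL)$ is an L-space.

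The main obstacle I expect is step (3): making the sublink bookkeeping for $\LL^+$ rigorous and circular-reference-free. The cleanest way around it is to prove Proposition~\ref{prop:rationallarge} by induction on $n$ so that when a sublink of $\LL^+$ collapses (after slam-dunks) to a rational surgery on a \emph{proper} sublink $\LL_I$ of $\LL$, the inductive hypothesis of the proposition itself applies; sublinks that are genuinely smaller only in the meridian directions reduce to rational surgeries on $\LL_I$ with $I=\{1,\dots,n\}$, which is exactly the statement being proved, but now with strictly fewer meridians, so a secondary induction on the total number of meridians in $\LL^+$ closes the argument. A minor technical point is the orientation/linking-number convention for the newly added meridians, which must be chosen so that $\Lambda_{ij}(\mm^+)=\lk$ continues to hold with the meridian linking $L_i$ once and all other original components zero times; this is routine. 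Everything else is the standard Kirby-calculus and positive-definiteness linear algebra already implicit in the proof of Theorem~\ref{thm:liusurg}.
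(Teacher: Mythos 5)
Your route --- expanding each non-integral $q_i$ as a negative continued fraction, replacing $L_i$ by a chain of meridians, and feeding the resulting integrally framed link $\LL^+$ into Theorem~\ref{thm:liusurg} --- is genuinely different from the paper's proof, which inducts on the number of non-integral entries of $\qq$, views $L_j$ as a knot in the L-space obtained by surgering the remaining components, and invokes the Rasmussen--Rasmussen classification \cite{RR} of the set of L-space slopes to pass from $\{m_j,m_j+1,\ldots,\infty\}$ to all rationals above $m_j$. Much of your plan is sound: completing the square along each chain gives a congruence from $\Lambda(\mm^+)$ to $\Lambda(\qq)$ plus a positive diagonal part, so positive definiteness does hold; and the partial continued fractions $[a^{(i)}_1,\ldots,a^{(i)}_j]^-$ are all $\ge q_i\ge m_i$, so slam-dunked proper sublinks do fall under your inductive hypotheses.

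The gap is the final step. Condition (b) of Theorem~\ref{thm:liusurg} quantifies over \emph{all} sublinks, including $I=\{1,\ldots,n\}$ itself --- it must, since for $n=1$ the condition over proper sublinks is vacuous and the conclusion would then assert that every knot admits L-space large surgeries. Hence applying Theorem~\ref{thm:liusurg} to $(\LL^+,\mm^+)$ presupposes that the $\mm^+$-surgery on the \emph{full} link $\LL^+$ is an L-space; but $S^3_{\mm^+}(\LL^+)\cong S^3_{\qq}(\LL)$ is exactly the conclusion you are after. Your double induction (on $n$, then on the total number of meridians) disposes of every proper sublink of $\LL^+$, but leaves the top case untouched, so the argument is circular as written. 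Closing it requires an independent input for the full link: for instance, the surgery exact triangle applied to the last meridian of a chain together with the additivity of $|\det\Lambda|$ over the triple of slopes $(a_k,\,a_k-1,\,\infty)$ --- the classical continued-fraction propagation of the L-space condition --- or the interval result of \cite{RR}, which is the ingredient the paper actually uses. A minor further slip: positive definiteness and condition (b) do \emph{not} force $m_i\ge 2$ (the paper's own use of $\mm=(1,1)$ for the Whitehead link satisfies both); what saves your expansion is only that a non-integral $q_i\ge m_i\ge 1$ is automatically $>1$, which is all one needs for a negative continued fraction with every entry $\ge 2$.
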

\begin{proof}
For a surgery vector $\qq$ denote by $t(\qq)$ the number of non-integer entries in the vector $\qq$.

Let us make the following statement.
\begin{equation}\tag{$I_{k,l}$}
\textit{For any $I\subset\{1,\ldots,n\}$ with $|I|\le l$, if $t(\qq_I)\le k$, then $S^3_{\qq_I}(\LL_I)$ is an L-space.}
\end{equation}
The statement $(I_{0,l})$ is covered for all $l\ge 1$ by Theorem~\ref{thm:liusurg}. 
Moreover, the statement $(I_{1,1})$ is standard.
Our aim is to show that $(I_{k,l})$ implies $(I_{k+1,l})$.

Choose $I\subset\{1,\ldots,n\}$ with $|I|=l$. 
Take $\qq\succeq\mm$ with $t(\qq)=k+1$. Suppose $j\in I$ is such that $q_j\notin\Z$ and let $I'=I\setminus\{j\}$.
Let $Y=S^3_{\qq_{I'}}(\LL_{I'})$.
As $t(\qq_{I'})=k$, the assumption $(I_{k,l-1})$ (which is contained in $(I_{k,l})$) implies that $Y$ is an L-space.
The component $L_j$ can be regarded as a knot in $Y$. Let $\mathcal{A}\subset\Q\cup\{\infty\}$ be the set of surgery coefficients
such that $a\in\mathcal{A}$ if and only if $Y_a(L_j)$ is an L-space. By the inductive assumption all integers $l\ge m_j$ belong to
$\mathcal{A}$, indeed $Y_l(L_1)$ is the surgery on $\LL$ with coefficient $\qq'_{I}$, where $\qq'_{I}$ is the
vector $\qq_I$ with $l$ at the $j$-th position.
Furthermore $\infty\in\mathcal{A}$
as well, because $Y$ itself is an L-space.

In \cite{RR} possible shapes of $\mathcal{A}$ were classified. The result allows us to conclude that if $m_1,m_1+1,\infty$ belong to $\mathcal{A}$,
then all rational numbers greater than $m_1$ are in $\mathcal{A}$. This shows $(I_{k+1,l})$.
\end{proof}

As an application of Proposition~\ref{prop:rationallarge} we will prove the following result, which
generalizes
\cite[Theorem 1.10]{hedcab}. 

\begin{proposition}\label{prop:cablesonlinks}
Suppose $\LL=L_1\cup\ldots\cup L_n$ is an L-space link. Let $p,q$ be coprime positive integers and let $\LL_{p,q}$ be the link 
$L_{cab}\cup L_2\cup\ldots\cup L_n$, where $L_{cab}$ is the $(p,q)$ cable on $L_1$. If $q/p$ is sufficiently large, than $\LL_{p,q}$
is also an L-space link. More precisely, if $\mm$ is an integer vector satisfying the conditions of Theorem \ref{thm:liusurg} then $\LL_{p,q}$ is an L-space link if $q/p>m_1$.
\end{proposition}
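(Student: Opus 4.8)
The plan is to reduce the statement to Proposition~\ref{prop:rationallarge} by exhibiting an appropriate surgery description of $\LL_{p,q}$. The key observation is the classical fact that surgery on a cable $L_{cab}=(L_1)_{p,q}$ can be traded for surgery on $L_1$ together with surgery on a Seifert-fibered piece. Concretely, for suitable framings, $S^3_{r}(L_{p,q})$ is obtained from $S^3_{q/p}(L_1)$ by a further surgery; in particular, if $q/p$ is large, a large framing on the cable corresponds to a framing vector on $\LL$ whose first coordinate is $q/p>m_1$, hence $\succeq \mm$. I would first write down this surgery-correspondence carefully: given an integer framing $N$ on $L_{cab}$ (with the blackboard/cabling convention), the manifold $S^3_N(L_{cab}\cup L_2\cup\cdots\cup L_n)$ can be realized as a surgery on $L_1\cup\cdots\cup L_n$ with first coefficient $q/p$ plus some local modification near $L_1$ that is independent of the rest of the link.

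Second, I would invoke Proposition~\ref{prop:rationallarge}: since $\mm$ satisfies the hypotheses of Theorem~\ref{thm:liusurg} and $q/p>m_1$, the vector $\qq=(q/p,\mathfrak{m}_2,\ldots,\mathfrak{m}_n)$ (or any vector $\succeq\mm$ with first entry $q/p$) gives an L-space surgery $S^3_{\qq}(\LL)$. Then I would argue that the extra local modification turning $S^3_{\qq}(\LL)$ into a large surgery on $\LL_{p,q}$ preserves the L-space property. This is the analogue of the argument in \cite{hedcab}: the Seifert-fibered gluing piece (a cable space) has the property that a sufficiently positive Dehn filling on the cabling circle is an L-space gluing, and one can appeal to the surgery exact triangle or to the fact that gluing an L-space solid-torus piece along an L-space boundary with the appropriate slope inequality yields an L-space. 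I would state it as: for $q/p$ large, all sufficiently large framings $\qq'$ on $\LL_{p,q}$ with $\qq'\succeq(\text{something large})$ yield L-spaces, because each such surgery is obtained from an L-space surgery on $\LL$ by a positive Dehn filling on the cable space, and positivity is guaranteed by $q/p>m_1$.

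Finally, I would need to produce an explicit integer framing vector $\mm'$ for $\LL_{p,q}$ that itself satisfies the hypotheses of Theorem~\ref{thm:liusurg}—i.e.\ positive definiteness of the framing matrix and the L-space condition for all sublinks—so as to conclude via that theorem that $\LL_{p,q}$ is an L-space link, not merely that large surgeries are L-spaces. For the sublink condition one uses that every sublink of $\LL_{p,q}$ not containing $L_{cab}$ is a sublink of $\LL$ (handled by hypothesis), and every sublink containing $L_{cab}$ is a cable on the corresponding sublink of $\LL$, handled by the same cabling argument applied inductively. Positive definiteness of the new framing matrix follows by choosing the framing on $L_{cab}$ large enough and noting that $\lk(L_{cab},L_i)=pq$ times the corresponding entry, so the matrix is a positive-definite rescaling/modification of $\Lambda(\mm)$ plus a large diagonal term.

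**Main obstacle.** The hard part will be the middle step: making the ``positive Dehn filling on a cable space preserves L-spaces'' argument precise, keeping track of the correct slope conventions so that the inequality $q/p>m_1$ is exactly what is needed, and ensuring the gluing/filling argument is uniform over the remaining components $L_2,\ldots,L_n$ (i.e.\ that the cable-space modification is genuinely local and commutes with the other surgeries). Verifying that $m_1$—rather than, say, $m_1$ adjusted by the framing correction $pq$—is the sharp threshold requires care with the cabling framing convention.
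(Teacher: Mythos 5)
Your overall strategy is the same as the paper's: generalize Hedden's cabling argument, feed the rational framing $(q/p,m_2,\ldots,m_n)$ into Proposition~\ref{prop:rationallarge}, handle sublinks by induction/assumption, and verify the hypotheses of Theorem~\ref{thm:liusurg} for an explicit framing on $\LL_{p,q}$. The difference is in how the ``middle step'' is discharged, and that is exactly the step you leave open.

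The obstacle you flag does not require any L-space gluing theorem or Dehn-filling-on-cable-spaces argument. For the specific integer framing $pq$ on $L_{cab}$ (and $m_2,\ldots,m_n$ on the other components) one has the identification, already recorded in \cite[Section 2.4]{hedcab},
\[
S^{3}_{(pq,m_2,\ldots,m_n)}(\LL_{p,q})\;\simeq\;\mathrm{Lens}(p,q)\,\#\,S^3_{(q/p,m_2,\ldots,m_n)}(\LL),
\]
so the ``local modification'' is simply a connected summand which is a lens space; since a connected sum of L-spaces is an L-space, the only input needed is that $S^3_{(q/p,m_2,\ldots,m_n)}(\LL)$ is an L-space, which is Proposition~\ref{prop:rationallarge} because $(q/p,m_2,\ldots,m_n)\succeq\mm$ when $q/p>m_1$. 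This single citation replaces the entire slope-convention analysis you were worried about, and it also makes transparent why $m_1$ (not $m_1$ shifted by $pq$) is the threshold: the cable framing $pq$ corresponds precisely to the rational coefficient $q/p$ on $L_1$. You should use this identity rather than attempting a gluing argument; with your route you would need to re-derive L-space interval results for cable spaces, which is substantially more work and not needed here.

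One concrete error: you assert $\lk(L_{cab},L_i)=pq\cdot\lk(L_1,L_i)$. The correct relation is $\lk(L_{cab},L_j)=p\,\lk(L_1,L_j)$, since the $(p,q)$ cable is homologous to $p$ times the core in the solid torus around $L_1$. With the correct factor, positive definiteness of the new framing matrix $\Lambda_{pq}$ at the framing $(pq,m_2,\ldots,m_n)$ follows by observing that $\Lambda_{pq}$ is obtained from $\Lambda(\mm)$ by multiplying the first row and column by $p$ (a congruence, preserving definiteness) and then adding $qp-p^2m_1>0$ to the top-left entry (a positive semi-definite perturbation, using $q/p>m_1$). Your version with the factor $pq$ would not match this computation.
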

\begin{proof}
The proof is a direct generalization of \cite[Proof of Theorem 1.10]{hedcab}.
Choose $p$ and $q$ coprime and suppose that $\mm$ satisfies the conditions of Theorem \ref{thm:liusurg} and $q/p>m_1$.
First we will show that the $\mm'$ surgery on $\LL_{p,q}$
is an L-space, 
where $\mm'=(pq,m_2,\ldots,m_n)$. 
By \cite[Section 2.4]{hedcab} we know that $S^{3}_{\mm'}(\LL_{p,q})\simeq \textrm{Lens}(p,q)\# S^3_{\mm''}(\LL)$, 
where we set $\mm''=(q/p,m_2,\ldots,m_n)$ and
$\textrm{Lens}(p,q)$ is the lens space. As $\textrm{Lens}(p,q)$ is an L-space
and since a connected sum of L-spaces is an L-space, it is enough to show that $S^3_{\mm''}(\LL)$ is an L-space. 
But $\mm''\succeq \mm$, so by Proposition~\ref{prop:rationallarge} we conclude that $S^3_{\mm''}(\LL)$ is an L-space.
Hence the $\mm'$ surgery on $\LL_{p,q}$ is an L-space. The same proof applies to any sublink of $\LL_{p,q}$ which contains $L_{cab}$,
and for a sublink $\LL_I$ not containing $L_{cab}$ the $\mm'_I$--surgery is an L-space by assumption.

Let $\Lambda_{pq}$ be the framing
matrix for $\LL_{p,q}$ with framing $\mm'$, let $\Lambda$ be the framing matrix for $\LL$ with framing $\mm$. By assumption, $\Lambda$
is positive definite. The matrix $\Lambda_{pq}$ differs from $\Lambda$ only at the first column and at the first row. As
$\lk(L_{cab},L_j)=p\lk(L_1,L_j)$ for $j=2,\ldots,n$, we conclude that $\Lambda_{pq}$ can be obtained from $\Lambda$ by multiplying the first row and the first column by $p$ (the element in the top-left corner is multiplied by $p^2$) and then adding $qp-p^2m_1$ to the element in the top-left corner. The first operation is a matrix congruence so it preserves positive definiteness of the matrix. Adding an element can
be regarded as taking a sum with a matrix with all entries zero but $qp-p^2m_1$ in the top-left corner. This matrix is positive semi-definite,
because we assumed that $q/p>m_1$.
Now a sum of a positive definite matrix and a positive semi-definite one is a positive definite matrix. Therefore $\Lambda_{pq}$
is positive definite.

By Theorem~\ref{thm:liusurg} applied to $\LL_{p,q}$ with framing $\mm'$ we conclude that $\LL_{p,q}$ is an L-space link.
\end{proof}

To make Proposition \ref{prop:cablesonlinks} more concrete, we have to present an explicit vector $\mm$ satisfying the conditions of Theorem \ref{thm:liusurg}. This is done in the following theorem.

\begin{theorem}\label{thm:onDi}
Let $D_i$ denote the maximal degree of $t_i$ in the multivariable Alexander polynomial of an L-space link $\LL$,
$m_i\ge 2D_i+2$. Assume that $n>1$ and $\lk(L_i,L_j)\neq 0$ for all $i\neq j$. Then $\mm$ satisfies the conditions of Theorem \ref{thm:liusurg}.
\end{theorem}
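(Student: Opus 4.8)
The plan is to show that the framing $\mm=(2D_1+2,\dots,2D_n+2)$ satisfies both conditions (a) and (b) of Theorem~\ref{thm:liusurg}. The common ingredient is a lower bound on each $D_i$ coming from the Torres formula: under the hypothesis $\lk(L_i,L_j)\ne 0$ I claim that
\[
D_i\ \ge\ \deg\Delta_{L_i}+\tfrac12\sum_{j\ne i}|\lk(L_i,L_j)|-\tfrac12\qquad\text{for every }i .
\]
To prove this, reorder the components so that $i=1$, set $t_2=\dots=t_n=1$, and iterate the Torres formula (Theorem~\ref{thm:torres}), using its $n=2$ version at the last step; this gives the exact identity
\[
\Delta_{\LL}(t_1,1,\dots,1)=\frac{\prod_{k=2}^{n}\bigl(t_1^{\lk(L_1,L_k)/2}-t_1^{-\lk(L_1,L_k)/2}\bigr)}{t_1^{1/2}-t_1^{-1/2}}\cdot\Delta_{L_1}(t_1).
\]
Since each $\lk(L_1,L_k)\ne 0$, every factor in the numerator is a nonzero binomial with leading coefficient $\pm1$ and top $t_1$-degree $|\lk(L_1,L_k)|/2$, so no cancellation occurs and the right-hand side is a nonzero Laurent polynomial in $t_1$ of degree exactly $\deg\Delta_{L_1}+\tfrac12\sum_{k\ge2}|\lk(L_1,L_k)|-\tfrac12$. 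Specialising $t_2,\dots,t_n$ to $1$ can only lower the $t_1$-degree, so this number is at most $D_1$, which is the claim (in particular $\Delta_\LL\ne0$ and each $D_i$ is finite). Condition (a) is now immediate: using $\deg\Delta_{L_i}\ge0$ we get $m_i=2D_i+2\ge\sum_{j\ne i}|\lk(L_i,L_j)|+1>\sum_{j\ne i}|\Lambda_{ij}(\mm)|$ for all $i$, so $\Lambda(\mm)$ is a strictly diagonally dominant symmetric matrix with positive diagonal, hence positive definite.

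For condition (b) I would prove that $S^3_{\mm_I}(\LL_I)$ is an L-space for every $I\subseteq\{1,\dots,n\}$, by induction on $n$. The case $I=\emptyset$ is trivial. If $|I|=1$ then $L_i$ is an L-space knot (a component of an L-space link); since $n>1$ and all linking numbers are nonzero, $\sum_{j\ne i}|\lk(L_i,L_j)|\ge 1$, so the bound above gives $D_i\ge\deg\Delta_{L_i}=g(L_i)$, whence $m_i=2D_i+2>2g(L_i)-1$ and $S^3_{m_i}(L_i)$ is an L-space. If $2\le|I|<n$, then $\LL_I$ is again an L-space link with all pairwise linking numbers nonzero (a sublink of an L-space link is an L-space link, see \cite{Liu,GN}); applying the Torres formula to pass from $\LL$ to $\LL_I$ — now with no denominator, since we stop at $|I|\ge2$ components — and using once more that specialisation never raises a degree, we obtain $D_i^{(I)}:=\deg_{t_i}\Delta_{\LL_I}\le D_i$. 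By the inductive hypothesis (Theorem~\ref{thm:onDi} applied to the smaller link $\LL_I$), the framing $(2D_i^{(I)}+2)_{i\in I}$ meets the hypotheses of Theorem~\ref{thm:liusurg} for $\LL_I$; since $\mm_I\succeq(2D_i^{(I)}+2)_{i\in I}$, Theorem~\ref{thm:liusurg} shows $S^3_{\mm_I}(\LL_I)$ is an L-space. Finally, for $I=\{1,\dots,n\}$ the inequality $m_i\ge 2D_i+2$ places $S^3_\mm(\LL)$ in the large-surgery range (Theorem~\ref{thm:surgeryonlinks}), and a large surgery on an L-space link is an L-space; alternatively, if one reads condition (b) of Theorem~\ref{thm:liusurg} as ranging only over proper subsets, this case is part of its conclusion and need not be verified.

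The main obstacle is the degree estimate of the first paragraph; everything afterwards is essentially bookkeeping. The delicate point is that iterating the Torres formula must not introduce cancellation, and this is precisely what the hypothesis $\lk(L_i,L_j)\ne0$ guarantees — if some linking number vanished, the left-hand side of the displayed identity would be zero and the argument would collapse, so the hypothesis is genuinely used. The secondary facts that keep the induction for (b) consistent are the monotonicity $D_i^{(I)}\le D_i$ of Alexander degrees under passing to sublinks (again read off from the Torres formula) and the identity $\deg\Delta_K=g(K)$ for an L-space knot $K$, which is what makes the base case $|I|=1$ match the surgery bound $2g(K)-1$.
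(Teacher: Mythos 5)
Your degree estimate via the iterated Torres formula, the diagonal-dominance argument for positive definiteness of $\Lambda(\mm)$, and the induction over sublinks (using $D_i^{(I)}\le D_i$ and the fact that components of L-space links are L-space knots with $\deg\Delta=g$) are all correct and essentially the route the paper takes. The paper checks positive definiteness by writing $\Lambda(\mm)$ as a sum of positive semidefinite $2\times 2$ blocks plus a diagonal matrix rather than by Gershgorin, and it compresses your sublink induction into a single sentence, but these are cosmetic differences; if anything your systematic use of $|\lk(L_i,L_j)|$ is the more careful version.

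The genuine gap is the case $I=\{1,\ldots,n\}$. You assert that $m_i\ge 2D_i+2$ ``places $S^3_{\mm}(\LL)$ in the large-surgery range'' and conclude at once that the surgery is an L-space. But Theorem~\ref{large surgery} is stated only for $\qq\ggcurly\0$, i.e.\ for an unspecified sufficiently large framing; it provides no explicit threshold, so it cannot be invoked at the specific framing $\mm=2\DD+2$ without further argument. The quantitative statement you need --- that $2\DD+2$ is already large enough --- is exactly the content of the paper's Lemma~\ref{lem:D L space}, and its proof is not a formality: it runs through the Manolescu--Ozsv\'ath truncation of the surgery complex and uses the stabilization $H(\vv)=H(\min(\vv,\DD))$ of Lemma~\ref{lem:H stabilize}, which in turn rests on the same Torres-formula degree bounds for sublinks and on the hypothesis $\lk(L_i,L_j)\neq 0$. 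Your fallback (reading condition (b) of Theorem~\ref{thm:liusurg} as ranging only over proper subsets) does not rescue this: the conclusion of that theorem applied with $\mm'=\mm$ is precisely that $S^3_{\mm}(\LL)$ is an L-space, so in Liu's lemma the full index set is part of the hypothesis and must be verified. Everything else in your proposal stands.
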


\begin{proof}
Since the degrees of the multivariable Alexander polynomials of the sublinks of $\LL$ are less than $D_i$, it is sufficient to prove that $S^3_{\mm}(\LL)$ is an L-space and the framing matrix $\Lambda(\mm)$ is positive definite. The former is proved below as Lemma \ref{lem:D L space}.
To prove the latter, remark that by Theorem~\ref{thm:torres} one has:
\begin{equation}
\label{eq:projection}
\frac{\Delta(L_1)}{t^{1/2}-t^{-1/2}}=\frac{\Delta(t,1,\ldots,1)}{\prod_{j\neq 1}\left(t^{\frac{1}{2}\lk(L_1,L_j)}-t^{-\frac{1}{2}\lk(L_1,L_j)}\right)},
\end{equation}
so
$$
2D_i-\sum_{j\neq i}\lk(L_i,L_j)+1\ge 2\deg \Delta(L_i)\ge 0.
$$
Now $\Lambda(\mm)$ is a sum of $\binom{n}{2}$ positive definite matrices 
$$
\left(
\begin{matrix}
\lk(L_i,L_j) & \lk(L_i,L_j)\\
\lk(L_i,L_j) & \lk(L_i,L_j)\\
\end{matrix}
\right)
$$
with the only nonzero block at $i$-th and $j$-th rows and columns,
and a diagonal nonnegative definite matrix with entries 
$$m_i-\sum_{j\neq i}\lk(L_i,L_j)\ge 2D_i+2-\sum_{j\neq i}\lk(L_i,L_j)>0,$$
so it is positive definite.
\end{proof}

\begin{remark}
This bound is far from being optimal for links with many components. For example, it is proved in \cite{GHom} that
the point $(pq+1,\ldots,pq+1)$ satisfies the conditions of Theorem \ref{thm:liusurg} for the $(pn,qn)$ torus link, while in the above bound
one has $D_i=(npq-p-q)/2$ for $n>2$. On the other hand, for the $(2,2q)$ torus link we get $D_1=D_2=(q-1)/2$, so Theorem~\ref{thm:onDi} 
gives $m_i\ge q+1$, and the two bounds agree.  
\end{remark}


\section{Heegaard Floer link homology and the \HFF function for links}

In this section define the \HFF function for links and collect some useful facts about it.

\subsection{Alexander filtration}
A knot $K$ in a 3-manifold $M$ induces a filtration on the Heegaard Floer  complex $CF^{-}(M)$. 
Similarly, a link $\LL=L_1\cup\ldots \cup L_n$ 
with $n$ components in $M$ induces $n$ different filtrations on $CF^{-}(M)$, which can be interpreted as 
a filtration indexed by an $n$-dimensional lattice.
For a link in $S^3$, it is natural to make this lattice different from $\BZ^n$.

\begin{definition}\label{def:lattice}
Given an $n$-component oriented link $\LL\subset S^3$, we define an affine lattice over $\BZ^n$:
\[
\BH(\LL) =\bigoplus_{i=1}^{n}\BH_i(\LL),\qquad  \BH_i(\LL)=\BZ+\frac{1}{2}\lk(L_i,\LL\linkminus L_i).
\]
We also define the \emph{linking vector}:
\[\lkv=\lkv(\LL)=\frac12(\lk(L_1,\LL\linkminus L_1),\lk(L_2,\LL\linkminus L_2),\ldots,\lk(L_n,\LL\linkminus L_n))\]
We have $\BH(\LL)=\BZ^n+\lkv$.
\end{definition}

For $\vv\in \BH(\LL)$ define a subcomplex $A^{-}(\LL,\vv)\subset CF^-(S^3)$ corresponding to the filtration level $\vv$.
The filtration is ascending, so $A^{-}(\LL,\uu)\subset A^{-}(\LL,\vv)$ for $\uu\preceq \vv$.
The Heegaard Floer link homology $\HFL(\LL,\vv)$ can be defined as the homology of the associated graded complex:
\begin{equation}
\label{def of HFL}
\HFL(\LL,\vv)=H_{*}\left(A^{-}(\LL;\vv)/\sum_{\uu\prec \vv}A^{-}(\LL;\uu)\right).
\end{equation}
The Euler characteristic of this homology was computed in  \cite{OSlinks}:
\begin{multline}
\label{eq: delta tilde}
\wt{\Delta}(t_1,\ldots,t_n):=\sum_{\vv\in \BH(\LL)}\chi(\HFL(\LL,\vv))t_1^{v_1}\cdots t_n^{v_n}=\\
=\begin{cases}
(t_1\cdots t_n)^{1/2}\Delta(t_1,\ldots,t_n) & \text{if}\ n>1,\\
\Delta(t)/(1-t^{-1}) & \text{if}\ n=1,
\end{cases}
\end{multline}
where, as above, $\Delta(t_1,\ldots,t_n)$ denotes the  symmetrized Alexander polynomial of $\LL$.

One can forget a component $L_n$ in $\LL$ and consider the $(n-1)$-component link $\LL\linkminus L_n$. 
There is a natural forgetful map $\pi_n:\BH(\LL)\to \BH(\LL\linkminus L_n)$ defined by the equation:
\begin{equation}
\label{eq:forget}
\pi_n(v_1,\ldots,v_n)=\left(v_1-\frac{1}{2}\lk(L_1,L_n),\ldots,v_{n-1}-\frac{1}{2}\lk(L_{n-1},L_n)\right).
\end{equation}
In general, one defines a map $\pi_{\LL'}:\BH(\LL)\to \BH(\LL')$ for every sublink $\LL'\subset \LL$:
\begin{equation}
\label{eq:forget2}
[\pi_{\LL'}(v)]_{j}=\left(v_j-\lkv(\LL)_j+\lkv(\LL')_j\right)\quad \text{for}\ L_j\subset \LL'.
\end{equation}
Furthermore, for $v_n\gg 0$ the subcomplexes $A^{-}(\LL;\vv)$ stabilize, and by \cite[Proposition 7.1]{OSlinks} one has
a natural homotopy equivalence $A^{-}(\LL;\vv)\sim  A^{-}(\LL\linkminus L_n;\pi_n(\vv))$. 
More generally, for a sublink $\LL'=L_{i_1}\cup\ldots \cup L_{i_{n'}}$ one gets:
\begin{equation}
\label{projection for a-}
A^{-}(\LL';\pi_{\LL'}(\vv))\sim  A^{-}(\LL;\vv),\ \text{if}\ v_i\gg 0\ \text{for all}\ i\notin\{i_1\ldots,i_{r'}\}.
\end{equation}

There is an action of commuting operators $U_1,\ldots,U_n$ on the complex $A^{-}(\LL)$.
The action of $U_i$ drops the homological grading by $2$ and drops the $i$-th filtration level by $1$. In particular, 
$U_iA^{-}(\LL,\vv)\subset A^{-}(\LL,\vv-\ee_i)$.
This action makes the complexes $A^{-}(\LL,\vv)$ modules over the polynomial ring $\F[U_1,\ldots,U_n]$. It is known \cite{OSlinks} that $A^{-}(\LL,\vv)$ is a finitely generated module over $\F[U_1,\ldots,U_n]$, and all the $U_i$ are homotopic to each other on $A^{-}(\LL,\vv)$.
In particular, all the $U_i$ act in the same way in the homology $H_{*}(A^{-}(\LL,\vv))$, which can therefore be naturally considered as $\F[U]$--module,
where a single variable $U$ acts as $U_1$.

\subsection{The \HFF function}
It is known (see \cite{MO}, this is also a consequence of the Large Surgery Theorem~\ref{large surgery}  below) that 
the homology of $A^{-}(\LL,\vv)$ is isomorphic as an $\F[U]$-module to the Heegaard Floer homology of a large surgery on $\LL$ equipped with a certain \spinc{} structure.  Therefore it always splits as a direct sum of a single copy of $\F[U]$ and some $U$-torsion.
We begin with the following fact.

\begin{lemma}\label{lemnaturalinclusion}
For $\uu\preceq \vv$ the natural inclusion 
$$
\iota_{\uu,\vv}:A^{-}(\LL,\uu)\hookrightarrow A^{-}(\LL,\vv)
$$
is injective on the free parts of the homology, hence it is a multiplication by a nonnegative power of $U$. 
\end{lemma}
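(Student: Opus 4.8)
The plan is to exploit the fact, recalled just above the lemma, that for every $\vv$ the $\F[U]$-module $H_*(A^-(\LL,\vv))$ splits as $\F[U]\oplus(\text{$U$-torsion})$, so that the "free part" is a well-defined rank-one summand. First I would reduce to the case of a single step in the lattice: since $\iota_{\uu,\vv}$ factors as a composition of inclusions $\iota_{\ww,\ww+\ee_i}$ as $\ww$ runs along any lattice path from $\uu$ to $\vv$, and since a composition of maps that are each multiplication by a power of $U$ on the free part is again such a multiplication (and injectivity is preserved under composition), it suffices to treat $\vv=\uu+\ee_i$ for a single coordinate $i$.

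Next I would identify the relevant algebraic input. The homology $H_*(A^-(\LL,\vv))$ is a finitely generated $\F[U]$-module (the complex $A^-(\LL,\vv)$ is finitely generated over $\F[U_1,\dots,U_n]$, and all $U_i$ act as $U$), hence isomorphic to $\F[U]\oplus\bigoplus_j \F[U]/U^{n_j}$. A map of such modules sends torsion to torsion, so on the quotient by torsion it induces an $\F[U]$-linear endomorphism of $\F[U]$, i.e.\ multiplication by some element of $\F[U]$; since the target free part is $\F[U]$ itself, this element is a monomial $U^k$ (up to the unit $1$) or $0$. The content to be verified is therefore precisely that this $U^k$ is nonzero, i.e.\ that $\iota_{\uu,\vv}$ does not kill the free part. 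For this I would invoke the stabilization/large surgery picture: for $\vv$ with all coordinates large, $A^-(\LL,\vv)\simeq CF^-(S^3)=\F[U]$ via the projection to the empty sublink, and more generally the inclusions $\iota$ are compatible with the stabilization equivalences \eqref{projection for a-} and with the $U$-action, so composing $\iota_{\uu,\vv}$ with a suitable further chain of inclusions up to a large $\vv'$ lands us in $\F[U]$; since that long composite is, on the free part, multiplication by some power of $U$ coming from a geometric map that is nonzero on homology, the original $\iota_{\uu,\vv}$ cannot have been zero on the free part. Equivalently, one argues that the image of the free generator of $H_*(A^-(\LL,\uu))$ under the whole tower of inclusions generates the free part of $CF^-(S^3)$ after multiplying by a power of $U$, which forces injectivity on free parts at every stage.

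I expect the main obstacle to be making the "nonzero on the free part" step rigorous without circularity: one must be careful that the isomorphism $H_*(A^-(\LL,\vv))\cong \F[U]\oplus(\text{torsion})$ and the compatibility of the inclusion maps with the $U$-action and with the sublink-projection equivalences are all genuinely available from the cited results (the Large Surgery Theorem and \cite{OSlinks}) rather than being consequences of the lemma itself. Once that compatibility is in hand, the argument is short: multiplication-by-$U^k$ on $\F[U]$ is injective for every finite $k$, and the only alternative, $k=\infty$ (the zero map on free parts), is excluded because the composite all the way up to a large filtration level is, up to a finite power of $U$, an isomorphism onto $\F[U]$. I would write the proof in that order: (1) reduce to one lattice step; (2) observe the map is multiplication by $U^k$ or $0$ on free parts by the module structure; (3) rule out $0$ via the stabilization chain; (4) conclude nonnegativity of the exponent is automatic and injectivity follows.
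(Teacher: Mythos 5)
Your outline (reduce to a single lattice step, note that on free parts the induced map is multiplication by a monomial in $U$ or zero, then rule out zero) matches the shape of the paper's proof, but the decisive step --- ruling out the zero map --- is not actually carried out. You correctly identify it as ``the main obstacle'' and then resolve it only by appeal to a stabilization chain up to $CF^-(S^3)$ together with the assertion that the long composite is ``multiplication by some power of $U$ coming from a geometric map that is nonzero on homology.'' No such geometric map is available at this point, and nothing you cite explains why the inclusion of the subcomplex $A^-(\LL,\uu)$ into $A^-(\LL,\vv')$ for $\vv'$ large (or into $CF^-(S^3)$) could not kill the free generator: a priori the generator of the free part of $H_*(A^-(\LL,\uu))$ could map to a torsion element or to zero. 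So the argument as written is circular at exactly the point you flagged.

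The paper closes this gap with a purely algebraic sandwich that your proposal is missing: since $U_i$ drops the $i$-th filtration level by one, multiplication by $U_i$ maps $A^-(\LL,\vv)$ into $A^-(\LL,\vv-\ee_i)$, giving inclusions
$$
U_iA^{-}(\LL,\vv)\subset A^{-}(\LL,\vv-\ee_i)\subset A^{-}(\LL,\vv)
$$
whose composite induces multiplication by $U$ on $H_*(A^-(\LL,\vv))\simeq\F[U]\oplus T(\vv)$. Multiplication by $U$ is injective on the free summand, so $\iota^*_{\vv-\ee_i,\vv}$ must already be injective there, and one gets $U\F[U]\subset\iota^*_{\vv-\ee_i,\vv}\F[U]\subset\F[U]$ (which is also what later yields that $H$ changes by $0$ or $1$ in each step). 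If you want to keep your global stabilization picture, the same trick works in one shot: for $N$ large enough $U^N\,CF^-(S^3)\subset A^-(\LL,\uu)$, so the inclusion into $CF^-(S^3)$ cannot vanish on free parts; but some version of this factorization must appear explicitly, and in your write-up it does not.
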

\begin{proof}
It is sufficient to prove that 
$$
\iota^{*}_{\vv-\ee_i,\vv}: H_{*}(A^{-}(\LL,\vv-\ee_i))\hookrightarrow H_{*}(A^{-}(\LL,\vv))
$$
is injective on the free parts. 
The latter holds
because $A^{-}(\LL,\vv-\ee_i)$ contains the image of $U_i\sim U$
acting on $A^{-}(\LL,\vv)$. Indeed, if $H_{*}(A^{-}(\LL,\vv))\simeq \F[U]+T(\vv)$, where $T(\vv)$ is $U$-torsion,
then $U\F[U]\subset H_{*}(U_iA^{-}(\LL,\vv))$. Consider the inclusions $U_iA^{-}(\LL,\vv)\subset A^{-}(\LL,\vv-\ee_i)\subset A^{-}(\LL,\vv)$. Since the composite inclusion of $U_iA^{-}(\LL,\vv)$ into $A^{-}(\LL,\vv)$ is injective on free parts,  we conclude that $\iota^{*}_{\vv-\ee_i,\vv}$ is injective and
\begin{equation}
\label{incl}
U\F[U]\subset \iota^{*}_{\vv-\ee_i,\vv}\F[U]\subset \F[U]
\end{equation}
\end{proof}
 
\begin{definition}
We define a function $H(\vv)=H_{\LL}(\vv)$ by saying that $-2H(\vv)$ is the maximal homological degree of the free part of $H_*(A^{-}(\LL,\vv))$.
\end{definition}

%
%
%
%

We will gather now some important properties of the \HFF function.
\begin{proposition}\label{prop:changeby1}
The function $H(\vv)$ has nonnegative integer values. Furthermore, for all $\vv\in \BH(\LL)$ one has $H(\vv-\ee_i)=H(\vv)$ or $H(\vv-\ee_i)=H(\vv)+1$. 
\end{proposition}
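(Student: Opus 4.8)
The plan is to deduce everything from Lemma~\ref{lemnaturalinclusion} together with the fact that $A^-(\LL,\vv)$ stabilizes for large $\vv$. Recall that $H_*(A^-(\LL,\vv))\cong\F[U]\oplus T(\vv)$ with $T(\vv)$ being $U$-torsion, and that by definition the free summand is generated in Maslov degree $-2H(\vv)$. The first observation will be that for $\vv\in\BH(\LL)$ with all coordinates sufficiently large the (ascending, bounded) Alexander filtration is exhausted, so that $A^-(\LL,\vv)=CF^-(S^3)$ and hence $H_*(A^-(\LL,\vv))=\HF(S^3)=\F[U]$ is supported in non-positive even degrees; in that range $H(\vv)=0$.

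Next I would record monotonicity together with integrality and nonnegativity in one stroke. Fix $\uu\preceq\vv$ in $\BH(\LL)$. By Lemma~\ref{lemnaturalinclusion} the induced map $\iota^*_{\uu,\vv}$ is injective on free parts and acts there as multiplication by $U^{k}$ for some integer $k=k(\uu,\vv)\ge 0$; since $\iota^*_{\uu,\vv}$ preserves the Maslov grading and sends the generator of the free part of $H_*(A^-(\LL,\uu))$ (in degree $-2H(\uu)$) to $U^{k}$ times the generator of the free part of $H_*(A^-(\LL,\vv))$ (in degree $-2H(\vv)-2k$), comparing degrees gives $H(\uu)=H(\vv)+k(\uu,\vv)$. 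Taking $\vv$ large so that $H(\vv)=0$ then shows that $H(\uu)=k(\uu,\vv)$ is a nonnegative integer for every $\uu$, and in general $H(\uu)\ge H(\vv)$ whenever $\uu\preceq\vv$.

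It then remains to bound the jump $H(\vv-\ee_i)-H(\vv)$ from above by $1$. Applying the identity above with $\uu=\vv-\ee_i$ gives $H(\vv-\ee_i)=H(\vv)+k$ with $k=k(\vv-\ee_i,\vv)\ge 0$, so I only need $k\le 1$. This is already contained in the proof of Lemma~\ref{lemnaturalinclusion}: the chain of inclusions \eqref{incl} reads $U\F[U]\subset\iota^*_{\vv-\ee_i,\vv}\F[U]\subset\F[U]$ inside the free part $\F[U]$ of $H_*(A^-(\LL,\vv))$, and since $\iota^*_{\vv-\ee_i,\vv}$ acts as multiplication by $U^{k}$ on free parts we have $\iota^*_{\vv-\ee_i,\vv}\F[U]=U^{k}\F[U]$; thus $U\F[U]\subseteq U^{k}\F[U]$, which forces $k\le 1$. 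Hence $H(\vv-\ee_i)\in\{H(\vv),H(\vv)+1\}$. The only genuinely delicate point is the grading bookkeeping in the middle paragraph — one must make sure the exponent $k$ produced by Lemma~\ref{lemnaturalinclusion} is matched consistently with the degree shift of $U$ — while everything else is a direct reuse of that lemma and of the stabilization $A^-(\LL,\vv)=CF^-(S^3)$ for $\vv\gg\0$.
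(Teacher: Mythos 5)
Your argument is correct and follows essentially the same route as the paper: the paper also identifies $H(\vv)$ with the exponent $k$ of the $U$-power by which the (grading-preserving) inclusion into $CF^-(S^3)$ acts on free parts, giving integrality and nonnegativity, and then reads off the bound on the jump from the chain of inclusions \eqref{incl}. Your explicit use of the stabilization $A^-(\LL,\vv)=CF^-(S^3)$ for $\vv\gg\0$ just makes precise a step the paper leaves implicit.
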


\begin{proof}
By Lemma~\ref{lemnaturalinclusion} the inclusion of $A^{-}(\LL,\vv)$ in $CF^{-}(S^3)$ induces an injective map on the free parts of 
the homology, so it sends a generator of the free part to $U^{k}$ times a generator of the free part for some $k\ge 0$.
 Since the inclusion preserves the homological grading (and the generator of $HF^-(S^3)$ has grading 0), 
the generator of the free part of $H_{*}(A^{-}(\LL,\vv))$ has grading $-2k$, and $k=H(\vv)$.
The last statement immediately follows from \eqref{incl}.
\end{proof}
\begin{proposition}\label{prop:splitlink}
If $\LL$ is a split link then $H(\vv)=\sum_{i=1}^{n} H_i(v_i)$, where $H_i$ is the \HFF function for the $i$-th component of the link.
\end{proposition}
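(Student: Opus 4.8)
The plan is to reduce the statement to the analogous fact about the complexes $A^{-}$ and then to the known K\"unneth-type behavior of the Heegaard Floer chain complex under disjoint union. First I would recall that if $\LL=\LL'\sqcup\LL''$ is split, then the link Floer complex splits as a tensor product: $CFL^{-}(\LL)\simeq CFL^{-}(\LL')\otimes_{\F}CFL^{-}(\LL'')$, compatibly with all $n$ Alexander filtrations, where the first $|\LL'|$ filtrations come from $\LL'$ and the rest from $\LL''$. Consequently, for $\vv=(\vv',\vv'')\in\BH(\LL')\oplus\BH(\LL'')=\BH(\LL)$ (note that for a split link $\lkv=\0$, so $\BH(\LL)=\BH(\LL')\oplus\BH(\LL'')$ and the two natural lattices agree), one gets a filtered homotopy equivalence
\[
A^{-}(\LL,\vv)\;\simeq\;A^{-}(\LL',\vv')\otimes_{\F[U]} A^{-}(\LL'',\vv''),
\]
where the tensor product is taken identifying the single $U$-action on each factor (using that all the $U_i$ act the same way up to homotopy, as recalled after \eqref{projection for a-}). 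By induction on the number of components it suffices to treat the case of two split pieces, and in fact it suffices to treat the case where $\LL''$ is a single component, iterating the argument.

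Next I would pass to homology. Since we work over the field $\F=\BZ/2\BZ$, the K\"unneth theorem gives $H_{*}(A^{-}(\LL,\vv))\cong H_{*}(A^{-}(\LL',\vv'))\otimes_{\F[U]}H_{*}(A^{-}(\LL'',\vv''))$ as graded $\F[U]$-modules, again with the understanding that $U$ acts diagonally. Each factor, by the discussion preceding Lemma~\ref{lemnaturalinclusion}, splits as $\F[U]\oplus(\text{$U$-torsion})$. The tensor product of $\F[U]\oplus T'$ with $\F[U]\oplus T''$ contains exactly one free summand $\F[U]\otimes_{\F[U]}\F[U]=\F[U]$, and all the remaining summands ($\F[U]\otimes T''$, $T'\otimes\F[U]$, $T'\otimes T''$) are $U$-torsion. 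Thus the free part of $H_{*}(A^{-}(\LL,\vv))$ is generated by the tensor product of the generators of the free parts of the two factors, and its maximal homological degree is the sum of the two maximal degrees, because the grading on a tensor product is additive. Translating through the definition $-2H(\vv)=$ (max degree of free part), this gives $H_{\LL}(\vv)=H_{\LL'}(\vv')+H_{\LL''}(\vv'')$, and the full statement follows by induction, writing a split $n$-component link as the disjoint union of its components.

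The main point requiring care — and the step I would expect to be the only real obstacle — is justifying the filtered tensor-product decomposition $A^{-}(\LL,\vv)\simeq A^{-}(\LL',\vv')\otimes A^{-}(\LL'',\vv'')$ with the correct handling of the several $U$-variables. One has to be slightly careful that $A^{-}(\LL',\vv')$ is naturally a module over $\F[U_1,\ldots,U_{n'}]$ and $A^{-}(\LL'',\vv'')$ over $\F[U_{n'+1},\ldots,U_n]$, so that the honest statement is an isomorphism of $\F[U_1,\ldots,U_n]$-modules; then one invokes that on homology all $U_i$ act identically to view everything as $\F[U]$-modules and apply the K\"unneth formula over $\F[U]$ (equivalently, one can simply apply the ordinary K\"unneth formula over the field $\F$ and then track the single $U$-action). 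This decomposition is essentially contained in the construction of $CFL^{-}$ for split links in \cite{OSlinks}; I would cite it there and spend at most a sentence or two on the bookkeeping. Everything after that — the identification of free and torsion parts and the additivity of gradings — is routine homological algebra over $\F[U]$.
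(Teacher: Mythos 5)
Your proposal is correct and follows essentially the same route as the paper, which simply cites the tensor-product decomposition $A^{-}(\LL,\vv)\simeq A^{-}(L_1,v_1)\otimes_{\F[U]}\cdots\otimes_{\F[U]}A^{-}(L_n,v_n)$ from \cite[Section 11]{OSlinks} (grading-preserving, with $\BH(\LL)=\BZ^n$ since all linking numbers vanish) and reads off the additivity of $H$. Your extra care about the K\"unneth step is fine — note only that over the PID $\F[U]$ there are Tor terms, but these are $U$-torsion and so do not affect the free part; your parenthetical fallback to the K\"unneth formula over the field $\F$ handles this cleanly.
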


\begin{proof}
For a split link by \cite[Section 11]{OSlinks} one has 
$$
A^{-}(\LL,\vv)\simeq A^{-}(L_1,v_1)\otimes_{\F[U]}\cdots \otimes_{\F[U]}A^{-}(L_n,v_n),
$$
 and the isomorphism preserves the homological gradings. 
Note that all the linking numbers of a split link vanish, so $\BH(\LL)=\BZ^n$, 
and the projections to sublattices do not require any shifts as in \eqref{projection for a-}.
\end{proof}

\begin{proposition}
For a sublink $\LL'=L_{i_1}\cup\ldots \cup L_{i_{r'}}$, one has
\begin{equation}
\label{projection for h}
H_{\LL}(\vv)=H_{\LL'}(\pi_{\LL'}(\vv))\ \text{if}\ v_i\gg 0\ \text{for}\ i\notin\{i_1\ldots,i_{r}\}.
\end{equation}
\end{proposition}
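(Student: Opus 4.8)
The plan is to deduce this stabilization statement for $H$ directly from the corresponding stabilization statement for the subcomplexes $A^-$, namely the homotopy equivalence \eqref{projection for a-}, together with the defining property of $H$ in terms of the maximal homological degree of the free part of $H_*(A^-(\LL,\vv))$. The key observation is that $H$ is defined purely through the $\F[U]$-module structure of $H_*(A^-(\LL,\vv))$: since this homology splits as $\F[U]\oplus T(\vv)$ with $T(\vv)$ a $U$-torsion module, the quantity $-2H(\vv)$ is the grading of the generator of the free summand, and this is an invariant of the graded $\F[U]$-module. A graded $\F[U]$-module homotopy equivalence between two complexes induces a graded $\F[U]$-module isomorphism on homology, and hence preserves $H$.

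First I would fix the sublink $\LL'=L_{i_1}\cup\ldots\cup L_{i_{r'}}$ and a vector $\vv\in\BH(\LL)$ with $v_i\gg 0$ for all $i\notin\{i_1,\ldots,i_{r'}\}$, meaning $v_i$ large enough that \eqref{projection for a-} applies. Second, I would invoke \eqref{projection for a-}, which gives a homotopy equivalence of chain complexes $A^-(\LL';\pi_{\LL'}(\vv))\sim A^-(\LL;\vv)$; I would note (citing \cite[Proposition 7.1]{OSlinks}, which is how \eqref{projection for a-} is established) that this equivalence is $U$-equivariant and grading-preserving, since the stabilization map forgetting a component respects both the homological grading and the $U$-action. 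Third, passing to homology yields a grading-preserving isomorphism of $\F[U]$-modules $H_*(A^-(\LL';\pi_{\LL'}(\vv)))\cong H_*(A^-(\LL;\vv))$. Fourth, I would observe that such an isomorphism carries the free summand $\F[U]$ of one side onto the free summand of the other, and in particular sends the generator of the free part (which lives in the maximal homological degree among free elements, by Lemma~\ref{lemnaturalinclusion} and Proposition~\ref{prop:changeby1}) to a generator of the free part in the same grading. Hence the two maximal degrees coincide, i.e. $-2H_\LL(\vv)=-2H_{\LL'}(\pi_{\LL'}(\vv))$, which is the claim.

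The only point requiring care — and the step I expect to be the main (though minor) obstacle — is verifying that the homotopy equivalence in \eqref{projection for a-} is genuinely $U$-equivariant and homological-grading-preserving, rather than merely a chain homotopy equivalence of ungraded complexes. This is implicit in the way \eqref{projection for a-} is stated and in \cite{OSlinks}, since the forgetful/stabilization maps on the link Floer complex are constructed to intertwine the $U$-actions and respect the Maslov grading; I would simply remark this explicitly. Once that is granted, the argument is formal: the pair (rank of the free part, top grading of the free part) is an isomorphism invariant of a finitely generated graded $\F[U]$-module of the form $\F[U]\oplus(\text{torsion})$, and $H$ reads off exactly the second coordinate. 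One could alternatively phrase the whole proof as: $H(\vv)$ equals the $d$-invariant-type quantity $-\tfrac12\max\{\deg x : x\in H_*(A^-(\LL,\vv))\text{ free}\}$, and this is manifestly preserved under graded $\F[U]$-isomorphism, so \eqref{projection for a-} gives \eqref{projection for h} immediately.
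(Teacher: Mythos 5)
Your proposal is correct and follows exactly the paper's route: the paper's entire proof is the one-line remark that the statement ``follows from \eqref{projection for a-}'', and your argument is simply a careful unpacking of that implication, correctly isolating the only point needing attention (that the homotopy equivalence is $U$-equivariant and grading-preserving, so that $H$, being an invariant of the graded $\F[U]$-module structure of the homology, is preserved).
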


\begin{proof}
Follows from \eqref{projection for a-}.
\end{proof}

\subsection{The \HFF function for L-space links}\label{sec:hlspace}

By Theorem \ref{large surgery} (see also \cite{Liu}), a link is an L-space link if 
and only if $H_{*}(A^{-}(\vv))\simeq \F[U]$ for all $\vv\in\BH(\LL)$.
It turns out that for L-space links
the \HFF function is determined by the Alexander polynomial. 

Throughout Section~\ref{sec:hlspace} we will assume that $\LL$ is an L-space link.
Since $H_*(A^{-}(\vv))\simeq \F[U]$ for all $\vv\in\BH(\LL)$, by \eqref{def of HFL} and by the inclusion-exclusion formula one can write:
\begin{equation}
\label{chi from h}
\chi(\HFL(\LL,\vv))=\sum_{B\subset \{1,\ldots,n\}}(-1)^{|B|-1}H_\LL(\vv-\ee_B),
\end{equation}
where $\ee_B$ denotes the characteristic vector of the subset $B\subset \{1,\ldots,n\}$; see \cite[formula (3.3)]{GHom}. 
For $n=1$ equation \eqref{chi from h} has the form $\chi_{\LL,\vv}=H(\vv-1)-H(\vv)$, so $H(\vv)$ 
can be easily reconstructed from the Alexander polynomial: $H_\LL(\vv)=\sum_{\uu\ge \vv+1}\chi_{\LL,\uu}$. 
For $n>1$, one can also show that equation \eqref{chi from h} 
together with the boundary conditions \eqref{projection for h} has a unique solution, which is given by the following theorem: 

\begin{theorem}[\cite{GN}]
\label{th:invertion}
The \HFF function of an L-space link is determined by the Alexander polynomials of its sublinks as following:
\begin{equation}
\label{invertion}
H_{\LL}(v_1,\ldots,v_n)=\sum_{\LL'\subseteq \LL}(-1)^{\# \LL'-1}\sum_{\substack{\uu'\in\BH(\LL')\\ \uu'\succeq \pi_{\LL'}(\vv+\mathbf{1})}}\chi(\HFL(\LL',\uu')),
\end{equation}
where $\mathbf{1}=(1,\ldots,1)$.
\end{theorem}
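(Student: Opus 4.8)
The plan is to prove Theorem~\ref{th:invertion} by induction on the number of components $n$, using the inclusion-exclusion formula \eqref{chi from h} together with the stabilization property \eqref{projection for h} as boundary conditions. First I would treat the base case $n=1$: here \eqref{chi from h} reads $\chi(\HFL(\LL,\vv))=H(\vv-1)-H(\vv)$, and since $H(\vv)\to 0$ as $\vv\to +\infty$ (the complex $A^-(\LL,\vv)$ stabilizes to $CF^-(S^3)$, whose free part sits in degree $0$), summing telescopically gives $H_\LL(\vv)=\sum_{\uu\ge\vv+1}\chi(\HFL(\LL,\uu))$, which is exactly \eqref{invertion} since the only sublink is $\LL$ itself.

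For the inductive step, I would argue that equation \eqref{chi from h} determines $H_\LL$ uniquely given its values for $v_i\gg 0$. Fix all coordinates except $v_n$; then \eqref{chi from h} expresses a difference $H_\LL(\vv)-H_\LL(\vv-\ee_n)$ (after collecting the $B$ containing $n$ and those not containing $n$, one gets $\sum_{B'\subseteq\{1,\ldots,n-1\}}(-1)^{|B'|}\bigl(H_\LL(\vv-\ee_{B'})-H_\LL(\vv-\ee_{B'}-\ee_n)\bigr)$) in terms of $\chi(\HFL)$ and lower-order data. By the stabilization \eqref{projection for h}, for $v_n\gg 0$ we have $H_\LL(\vv)=H_{\LL\smallsetminus L_n}(\pi_n(\vv))$, which by the inductive hypothesis is given by the claimed formula involving sublinks of $\LL\smallsetminus L_n$. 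Summing the recursion in the $v_n$ direction down from $+\infty$ then yields $H_\LL(\vv)$ as that boundary value minus $\sum_{w_n>v_n}$ of the inclusion-exclusion difference. I would then verify that substituting the inductive formula for the boundary term and simplifying the resulting double sum reproduces \eqref{invertion}: the sublinks of $\LL$ split into those containing $L_n$ and those not, the latter coming from the boundary value and the former from the summed $\chi(\HFL)$ terms, with the shift by $\mathbf{1}$ and the projections $\pi_{\LL'}$ bookkept via \eqref{eq:forget2}.

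The main obstacle I expect is the bookkeeping of the affine shifts: the lattices $\BH(\LL')$ for the various sublinks differ by the half-linking-number corrections encoded in $\pi_{\LL'}$, and one must check that the region $\{\uu'\succeq\pi_{\LL'}(\vv+\mathbf 1)\}$ in \eqref{invertion} is precisely the one produced by iterating the telescoping sum and applying \eqref{projection for h} at each stage. In particular one has to confirm that the $\mathbf 1$-shift is consistent between the $n=1$ normalization $\sum_{\uu\ge\vv+1}$ and the general inclusion-exclusion, and that no sublink is double-counted or missed when reorganizing $\sum_{B\subseteq\{1,\ldots,n\}}$ into $\sum_{\LL'\subseteq\LL}$. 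A clean way to handle this is to phrase everything in terms of the ``cumulative'' function $\widehat H(\vv)=\sum_{\uu\succeq\vv}(\ldots)$ and observe that \eqref{chi from h} is a finite-difference identity whose unique bounded-below solution with the prescribed asymptotics is \eqref{invertion}; then the verification reduces to a Möbius-inversion computation on the Boolean lattice of components, which is routine once the asymptotics are pinned down. Since the paper attributes this to \cite{GN}, I would also remark that an alternative is simply to cite that reference and only sketch the uniqueness argument showing \eqref{chi from h} plus \eqref{projection for h} has at most one solution.
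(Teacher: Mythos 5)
Your plan is essentially the argument the paper itself indicates: the paper does not prove Theorem~\ref{th:invertion} but cites \cite{GN}, remarking only that \eqref{chi from h} together with the boundary conditions \eqref{projection for h} has a unique solution, given by \eqref{invertion} — which is exactly the uniqueness-plus-verification scheme you describe (with the base case $n=1$ handled by the same telescoping sum). The remaining work in your sketch is only the sign and affine-shift bookkeeping you already flag, so the proposal is correct and follows the same route.
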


There is a formula for the \HFF function in terms of the multivariable Alexander polynomial.
Consider the generating function:
\begin{equation}\label{eq:hhh}
\HH(t_1,\ldots,t_n):=\sum_{v_1,\ldots,v_n\in\Z^n} t_1^{v_1}\ldots t_n^{v_n}H(v_1,\ldots,v_n).
\end{equation}
Note that $\HH$ is a Laurent series in $t_i^{-1/2}$: by \eqref{projection for h} $H(\vv)$ vanishes if $v_i\gg 0$ for some $i$, but
it does not vanish for $\vv\llcurly \0$. Then \cite[Theorem 3.4.3]{GN} implies
\begin{equation}\label{eq:hhgen}
\HH(t_1,\ldots,t_n)=\prod_{i=1}^n\frac{1}{1-t_i^{-1}}\sum_{\LL'\subset \LL}(-1)^{\#\LL'-1}\wt{\Delta}_{\LL'}(t_{j_1},\ldots,t_{j_{\# \LL'}})
\prod_{j\colon L_j\subset \LL'} t_j^{\lkv(\LL)_j-\lkv(\LL')_j-1},
\end{equation}
where the sublink $\LL'$ has $r$ components and $\wt{\Delta}$ is defined by \eqref{eq: delta tilde}.
This immediately follows from \eqref{invertion} and the identity
$$
\sum_{\vv}t_1^{v_1}\ldots t_n^{v_n}\chi(\HFL(\LL',\pi_{\LL'}(\vv+\mathbf{1})))=\wt{\Delta}_{\LL'}(t_{j_1},\ldots,t_{j_{\#\LL'}})
\prod_{j\colon L_j\subset \LL'} t_j^{\lkv(\LL)_j-\lkv(\LL')_j-1},
$$
which itself follows from \eqref{eq: delta tilde} and \eqref{eq:forget2}.

As above, let $D_i$ denote the maximal $t_i$-degree of the Alexander polynomial of $\LL$, $\DD=(D_1,\ldots,D_n)$.
\begin{lemma}
\label{lem:H stabilize}
Assume that $\lk(L_i,L_j)\neq 0$, then $H(\vv)=H(\min(\vv,\DD)).$
\end{lemma}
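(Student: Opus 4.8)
The statement $H(\vv) = H(\min(\vv,\DD))$ says that $H$ is already at its minimal value once each coordinate $v_i$ reaches $D_i$, i.e.\ increasing any $v_i$ beyond $D_i$ does not decrease $H$. Since Proposition~\ref{prop:changeby1} tells us $H(\vv-\ee_i) \in \{H(\vv), H(\vv)+1\}$, it suffices to prove the \emph{one-variable} statement: if $v_i > D_i$ then $H(\vv) = H(\vv - \ee_i)$; iterating over all coordinates that exceed their bound then gives the full claim. So fix $i$ and assume $v_i > D_i$; the plan is to show $H_\LL$ is locally constant in the $i$-th direction in this range.

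The cleanest route uses the generating function. By \eqref{eq:hhgen}, $\HH(t_1,\ldots,t_n)$ is the product of $\prod_j (1-t_j^{-1})^{-1}$ with a Laurent polynomial $P(t_1,\ldots,t_n)$ built from $\wt\Delta_{\LL'}$ over sublinks $\LL'$. The key observation is a degree bound: each $\wt\Delta_{\LL'}$ contributes $t_i$-degree at most that of $\wt\Delta_\LL$ (when $L_i \subset \LL'$; if $L_i \not\subset \LL'$ the corresponding term is independent of $t_i$), and after accounting for the shift factors $t_j^{\lkv(\LL)_j - \lkv(\LL')_j - 1}$ and the $(t_i\cdots t_n)^{1/2}$ twist in \eqref{eq: delta tilde}, the top $t_i$-degree of $P$ is exactly $D_i$ — here is where the hypothesis $\lk(L_i,L_j)\neq 0$ enters, as it is needed (via the Torres formula, cf.\ the computation in the proof of Theorem~\ref{thm:onDi}) to control how the degree behaves under forgetting components, ensuring no sublink term overshoots. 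Granting this, write $\HH = \frac{P}{1 - t_i^{-1}} \cdot \prod_{j\neq i}(1-t_j^{-1})^{-1}$. Since $\frac{1}{1-t_i^{-1}} = \sum_{k\geq 0} t_i^{-k}$ expands with only nonpositive powers of $t_i$, the coefficient of $t_i^{v_i}$ in $\HH$ for $v_i > D_i$ agrees with the coefficient of $t_i^{v_i - 1}$: indeed, multiplying $P$ (with top $t_i$-degree $D_i$) by $\sum_{k\ge 0} t_i^{-k}$ makes all coefficients of $t_i^{m}$ with $m \geq D_i$ equal (each is the sum of all $t_i$-coefficients of $P$), hence stable under $m \mapsto m-1$ once $m > D_i$. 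This equality of coefficients, read off with the remaining variables fixed, is exactly $H(\vv) = H(\vv - \ee_i)$.

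An alternative, more homological argument avoids the degree bookkeeping: one shows directly that for $v_i > D_i$ the inclusion $\iota_{\vv - \ee_i, \vv}$ of Lemma~\ref{lemnaturalinclusion} is an isomorphism on free parts, equivalently multiplication by $U^0$. This should follow by combining \eqref{chi from h} — which expresses the alternating sum of $H$-values over a unit cube in terms of $\chi(\HFL(\LL,\vv))$, a coefficient of the Alexander polynomial that vanishes when $v_i > D_i$ — with the inductive stabilization \eqref{projection for h} for the sublinks appearing as lower corners of the cube, using that those sublinks have strictly smaller $t_i$-degree. Either way, the main obstacle is the degree estimate: pinning down precisely that the forgetful/Torres shifts do not inflate the $t_i$-degree of any sublink contribution beyond $D_i$, which is exactly the place the nonvanishing-linking-number hypothesis is used, paralleling the inequality $2D_i - \sum_{j\neq i}\lk(L_i,L_j) + 1 \geq 2\deg\Delta(L_i) \geq 0$ already exploited in Theorem~\ref{thm:onDi}.
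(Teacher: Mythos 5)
Your overall strategy -- reduce to the one--step statement $H(\vv)=H(\vv-\ee_i)$ for $v_i>D_i$ and control the $t_i$--degrees of the sublink contributions via the Torres formula -- is the same as the paper's, which works directly with the inversion formula \eqref{invertion} rather than with the generating function \eqref{eq:hhgen}. However, the execution of your main (generating--function) argument contains genuine errors. First, the degree claim is off: for the argument to work you need the $t_i$--support of $P=(1-t_i^{-1})\prod_{j\ne i}(1-t_j^{-1})\HH$ to lie \emph{strictly} below $D_i$ (the terms coming from sublinks containing $L_i$ have $t_i$--degree at most $D_i-\tfrac12$ after the shifts); if the top $t_i$--degree of $P$ were ``exactly $D_i$'' as you assert, the coefficient of $t_i^{D_i}$ in $P$ would be nonzero and $H$ would jump between $v_i=D_i$ and $v_i=D_i+1$, contradicting the lemma. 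Second, the stabilization mechanism is stated backwards: the coefficient of $t_i^m$ in $P\cdot\sum_{k\ge0}t_i^{-k}$ is $\sum_{m'\ge m}[t_i^{m'}]P$, which equals the sum of \emph{all} $t_i$--coefficients of $P$ only for $m$ at or below the \emph{bottom} of the support, and vanishes for $m$ above the top; what you actually need is that consecutive coefficients agree for $m\ge D_i$, i.e.\ that $[t_i^{m}]P=0$ there.

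A more structural problem is that $P$ is not a Laurent polynomial: the single--component sublink terms involve $\wt{\Delta}_{L_j}=\Delta_{L_j}(t_j)/(1-t_j^{-1})$, an infinite series, and $\HH$ itself does not have bounded--above $t_i$--support for fixed values of the other variables (for instance, for the $(2,4)$--torus link $H(v_1,-N)=N+1$ for all $v_1\ge 1$), so naive coefficient bookkeeping in a single variable gives wrong answers. The paper avoids all of this by arguing pointwise with \eqref{invertion}: when $v_i>D_i$, the Torres bound $\deg_{t_i}\Delta(\LL')\le\pi_{\LL'}(\DD)_i$ kills the inner sum for every sublink $\LL'$ containing $L_i$, the sublinks not containing $L_i$ contribute terms independent of $v_i$, and the surviving summands for $\vv$ and for $\min(\vv,\DD)$ coincide. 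Your alternative ``homological'' sketch is essentially this argument, but as written it is only a sketch and defers exactly the degree estimate that needs to be carried out; to repair the proof you should develop that route (or, equivalently, apply \eqref{invertion} directly) rather than the generating--function manipulation.
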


\begin{proof} 
By Theorem~\ref{thm:torres} the degree of the Alexander polynomial of a sublink $\LL'=\LL_{I}$
in variable $t_i$ is less than or equal to $D_i-\frac{1}{2}\sum_{j\notin I}\lk(L_i,L_j)=\pi_{\LL'}(\DD)_i$.
Therefore if $v_i>D_i$ then 
$$\pi_{\LL'}(\vv)_i>\pi_{\LL'}(\DD)_i\ge \deg_{t_i}\Delta(\LL'),$$ 
and if $\uu\succeq \pi_{\LL'}(\vv)+\mathbf{1}$ then
$\chi(\HFL(\LL',\uu'))=0$. Therefore the summands contributing to \eqref{invertion} nontrivially 
correspond to subsets $I$ such that $v_i\le D_i$ for $i\in I$. Applying  \eqref{invertion}  to $\min(\vv,\DD)$,
one gets exactly the same summands.
\end{proof}

\begin{corollary}
For $\vv\succeq \DD$ one has $H(\vv)=0$.
\end{corollary}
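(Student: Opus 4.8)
The plan is to deduce the corollary from Lemma~\ref{lem:H stabilize} together with the $n=1$ description of $H$. First I would note that for $\vv\succeq\DD$ one has $\min(\vv,\DD)=\DD$, so Lemma~\ref{lem:H stabilize} gives $H(\vv)=H(\DD)$; hence it suffices to establish the single equality $H(\DD)=0$.

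To compute $H(\DD)$ I would push all but the first coordinate of the argument to $+\infty$ and apply the projection formula \eqref{projection for h}. Iterating \eqref{projection for h} to forget the components $L_2,\dots,L_n$ identifies $H_\LL(D_1,v_2,\dots,v_n)$, for $v_2,\dots,v_n\gg0$, with the value of $H_{L_1}$ at the first coordinate of $\pi_{L_1}(\DD)$, which by \eqref{eq:forget2} equals $D_1-\tfrac{1}{2}\lk(L_1,\LL\linkminus L_1)$. Since $\LL$ is an L-space link, $H_*(A^-(\LL,\vv))\cong\F[U]$ for every $\vv$, and because $A^-(L_1,\cdot)$ is a stabilization of $A^-(\LL,\cdot)$ the same is true of $A^-(L_1,w)$ for every $w$; thus the $n=1$ formula $H_{L_1}(w)=\sum_{u\ge w+1}\chi_{L_1,u}$ is available, and its right-hand side vanishes once $w\ge\deg_t\Delta_{L_1}$. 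Finally, the Torres-formula estimate used in the proof of Lemma~\ref{lem:H stabilize} (applied to the sublink with $I=\{1\}$) gives $\deg_t\Delta_{L_1}\le D_1-\tfrac{1}{2}\lk(L_1,\LL\linkminus L_1)$, which is exactly the threshold just obtained, so $H_\LL(D_1,v_2,\dots,v_n)=0$. A second application of Lemma~\ref{lem:H stabilize} then gives $H(\DD)=H_\LL(D_1,v_2,\dots,v_n)=0$, as needed.

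I do not anticipate a real obstacle; the only point that needs care is the bookkeeping of the half-integer linking shifts $\lkv$ while iterating \eqref{projection for h}, so that the argument of $H_{L_1}$ that emerges coincides with the Torres bound on the omitted cross-terms. As a shortcut one could instead quote directly the remark following \eqref{eq:hhh}: under the hypothesis $\lk(L_i,L_j)\ne0$ the generating function $\HH$ is a Laurent series in the $t_i^{-1/2}$, so $H(\vv)=0$ whenever one coordinate of $\vv$ is sufficiently large, and combining this with the stabilization of Lemma~\ref{lem:H stabilize} again forces $H(\DD)=0$.
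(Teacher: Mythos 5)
Your proof is correct and is essentially the argument the paper leaves implicit: Lemma~\ref{lem:H stabilize} reduces everything to $H(\DD)$, which you show vanishes by projecting (for $v_2,\ldots,v_n\gg0$) onto $H_{L_1}$ evaluated at $D_1-\frac12\lk(L_1,\LL\linkminus L_1)\ge \deg\Delta_{L_1}$; the Torres detour could even be skipped by sending \emph{all} coordinates to $+\infty$, where $A^-(\LL,\vv)=CF^-(S^3)$ gives $H=0$ directly. One caution about your proposed shortcut: the remark after \eqref{eq:hhh} that $H(\vv)$ vanishes when a \emph{single} coordinate is large is not literally true (for the Hopf link $H(N,-N)\neq 0$), though the correct statement --- vanishing when all coordinates are large --- is all you actually need there.
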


\begin{lemma}
\label{lem:D L space}
Let $\DD$ be as above, then for $\mm\succeq 2\DD+2$ the surgery $S^3_{\mm}(\LL)$ yields an L-space.
\end{lemma}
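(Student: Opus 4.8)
The plan is to deduce Lemma~\ref{lem:D L space} directly from Theorem~\ref{thm:liusurg}, by verifying that the framing $\mm = 2\DD+2$ (and hence any larger $\mm$, by the monotonicity built into that theorem) satisfies both hypotheses (a) and (b). The only genuine content is condition (a), positive definiteness of the framing matrix $\Lambda(\mm)$, because condition (b) will follow by induction on the number of components once we know the statement for all proper sublinks — the degrees $D_i'$ of a sublink $\LL_I$ are bounded above by $D_i$ (by the Torres formula, as already used in the proof of Lemma~\ref{lem:H stabilize}), so the same bound $m_i \ge 2D_i+2 \ge 2D_i'+2$ applies to the sublink, and the base case of a single component (or a split piece) is classical.

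First I would reduce to checking positive definiteness of $\Lambda(\mm)$ when $\mm = 2\DD+2$. The argument here is exactly the one already carried out in the proof of Theorem~\ref{thm:onDi}: write $\Lambda(\mm)$ as a sum of the $\binom{n}{2}$ positive semidefinite rank-one blocks
\[
\begin{pmatrix} \lk(L_i,L_j) & \lk(L_i,L_j)\\ \lk(L_i,L_j) & \lk(L_i,L_j)\end{pmatrix}
\]
supported on the $i$-th and $j$-th rows and columns, plus the diagonal matrix with entries $m_i - \sum_{j\neq i}\lk(L_i,L_j)$. Using the consequence of the Torres formula recorded in \eqref{eq:projection}, namely $2D_i - \sum_{j\neq i}\lk(L_i,L_j) + 1 \ge 2\deg\Delta(L_i) \ge 0$, we get $m_i - \sum_{j\neq i}\lk(L_i,L_j) \ge 2D_i + 2 - \sum_{j\neq i}\lk(L_i,L_j) > 0$, so the diagonal part is positive definite; adding the positive semidefinite blocks keeps it positive definite. (Strictly, one should note that if $n = 1$ there are no off-diagonal terms and $m_1 \ge 2D_1 + 2 > 0$ directly, while the hypothesis $\lk(L_i,L_j)\neq 0$ is used to guarantee the diagonal bound is strict.)

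Next I would organize the sublink induction cleanly. For $|I| = 1$, $S^3_{m_i}(L_i)$ with $m_i \ge 2D_i + 2 > 0$ is a large positive surgery on a knot, and this is an L-space precisely when — well, it need not be in general, so here I should be careful: Theorem~\ref{thm:liusurg}(b) is a hypothesis about sublinks being L-spaces, and the claim of Lemma~\ref{lem:D L space} is only that $S^3_\mm(\LL)$ is an L-space, which a priori requires $\LL$ itself to be an L-space link. Re-reading the statement, $\LL$ is indeed assumed to be an L-space link throughout Section~\ref{sec:hlspace}, so every sublink $\LL_I$ is also an L-space link, and by Liu's results (or Theorem~\ref{large surgery}) large enough surgeries on each $\LL_I$ are L-spaces. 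Thus I would invoke: for an L-space link, surgery with framing $\succeq \DD'$-type bound is an L-space — applying the lemma inductively to $\LL_I$ — to get condition (b) for $\LL$, then apply Theorem~\ref{thm:liusurg} to conclude $S^3_{2\DD+2}(\LL)$ and all larger surgeries are L-spaces.

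The main obstacle is the circularity risk in the induction: Theorem~\ref{thm:liusurg}(b) demands that \emph{every} sublink surgery $S^3_{\mm_I}(\LL_I)$ be an L-space, and I am trying to prove exactly such statements. The resolution is that the induction is on $|I|$ and is genuinely well-founded: the base case $|I| = 1$ is the known fact that a sufficiently positive surgery on a component of an L-space link is an L-space, and at each stage the bound $m_i \ge 2D_i + 2$ dominates the corresponding sublink bound $2D_i^{(I)} + 2$ because $D_i^{(I)} \le D_i$ by Torres. I would make sure to state this bound comparison explicitly and to check that the hypothesis $\lk(L_i,L_j)\neq 0$ (needed for the strict diagonal inequality) is inherited appropriately or else handle the case where some sublink has vanishing linking numbers by passing to split pieces and using Proposition~\ref{prop:splitlink}-style additivity. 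Everything else is the routine linear algebra already displayed in the proof of Theorem~\ref{thm:onDi}.
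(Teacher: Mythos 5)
There is a genuine circularity in your plan. Theorem~\ref{thm:liusurg} (Liu's lemma) requires in condition (b) that $S^3_{\mm_I}(\LL_I)$ be an L-space for \emph{every} $I\subset\{1,\ldots,n\}$, including $I=\{1,\ldots,n\}$ itself: the theorem only propagates L-space-ness upward from a framing already known to yield an L-space, it does not create the basepoint. This is visible in the paper's own proof of Theorem~\ref{thm:onDi}, which says explicitly that to apply Theorem~\ref{thm:liusurg} one must separately prove that $S^3_{\mm}(\LL)$ is an L-space, and that \emph{this} is the content of Lemma~\ref{lem:D L space}. So your induction on $|I|$ never closes at the top level: condition (b) for the full link is exactly the statement you are trying to prove. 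Your fallback --- ``$\LL$ is an L-space link, so large enough surgeries are L-spaces'' --- does not repair this, because the definition of an L-space link gives only an unquantified threshold $\qq\ggcurly\0$, whereas the whole point of the lemma is the explicit bound $2\DD+2$; and there is no downward propagation available from an unknown large threshold to $2\DD+2$. The positive-definiteness computation and the Torres-formula comparison of sublink degrees are fine but are not where the difficulty lies.

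The paper's proof is entirely different and supplies precisely the missing ingredient. It works directly with the Manolescu--Ozsv\'ath surgery complex: a \spinc{} structure on $S^3_{\mm}(\LL)$ corresponds to a coset of $\BZ^n$ modulo the lattice spanned by the columns of $\Lambda(\mm)$, and for $\mm\succeq 2\DD$ each such coset meets the parallelepiped $P$ with opposite corners $\pm\DD$ in at most one point. By Lemma~\ref{lem:H stabilize} the complexes $A^-(\LL,\vv)$ stabilize outside $P$, so the truncation procedure of \cite[Section 8.3]{MO} (with the truncation constant $b=\DD$) contracts the whole surgery complex in each \spinc{} structure to a single copy of $\F[U]$, which is the definition of an L-space. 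If you want to keep your argument, you need to replace the appeal to Theorem~\ref{thm:liusurg} for the full link by some such direct computation of $\HF(S^3_{\mm}(\LL),\ss)$.
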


\begin{proof} 
Consider the parallelepiped $P$ in $\BZ^n$ with opposite corners at $\DD$ and $-\DD$. To compute the
Heegaard Floer homology of $S^3_{\mm}(\LL)$, we use the surgery complex of Manolescu-Ozsv\'ath \cite{MO}.
Every \spinc{} structure on $S^3_{\mm}(\LL)$ corresponds to an equivalence class of $\BZ^n$ modulo the lattice generated by the columns of
$\Lambda(\mm)$. For $\mm\succeq 2\DD$ this equivalence class has at most one point in $P$, and the whole surgery complex can be contracted to a single copy of $\F[U]$ supported at that point. For the precise description of the ``truncation" procedure, we refer to \cite[Section 8.3, Case I]{MO}, where the constant $b$ in \cite[Lemma 8.8]{MO} can be chosen equal to $\DD$ by Lemma \ref{lem:H stabilize}.
\end{proof}

The following symmetry property of $H$, which generalizes the symmetry in the case of knots \cite{NiWu,HLZ},
is proved in \cite[Lemma 5.5]{Liu}.
\begin{proposition} 
For an L-space link one has 
\begin{equation}
\label{duality}
H(-\vv)=H(\vv)+|\vv|
\end{equation}
\end{proposition}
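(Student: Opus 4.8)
The plan is to deduce this from the conjugation symmetry of Heegaard Floer homology together with the identification of $H_*(A^-(\LL,\vv))$ with the Floer homology of a large surgery on $\LL$. Recall that for an L-space link, by Theorem~\ref{large surgery} we have $H_*(A^-(\LL,\vv)) \simeq \F[U]$, supported in maximal homological degree $-2H(\vv)$. The large surgery theorem identifies $H_*(A^-(\LL,\vv))$, up to an overall grading shift depending only on $\vv$ and the framing, with $\HF(S^3_\qq(\LL),\sss_\vv)$ for $\qq \gg \0$, where $\sss_\vv$ is the \spinc{} structure determined by $\vv$. First I would make this grading shift explicit: writing $d(S^3_\qq(\LL),\sss_\vv)$ for the $d$-invariant, one has $d(S^3_\qq(\LL),\sss_\vv) = -2H(\vv) + c(\qq,\vv)$ for an explicit correction term $c(\qq,\vv)$ coming from the surgery formula, quadratic in $\vv$; this is exactly the content of Theorem~\ref{thm:surgeryonlinks}.

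The key input is then the conjugation symmetry: the \spinc{} conjugation $\sss \mapsto \bar\sss$ on $S^3_\qq(\LL)$ preserves $d$-invariants, $d(Y,\sss) = d(Y,\bar\sss)$, and under the parametrization of \spinc{} structures by $\vv$ it sends $\sss_\vv$ to $\sss_{-\vv}$ (the orientation-reversal on the link, or equivalently conjugation of the Alexander gradings, negates $\vv$; this is compatible with $\BH(\LL)$ being symmetric under $\vv \mapsto -\vv$). Combining $d(S^3_\qq(\LL),\sss_\vv) = d(S^3_\qq(\LL),\sss_{-\vv})$ with the explicit formula above gives $-2H(\vv) + c(\qq,\vv) = -2H(-\vv) + c(\qq,-\vv)$, hence $2(H(-\vv) - H(\vv)) = c(\qq,\vv) - c(\qq,-\vv)$. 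I would then compute the right-hand side: because the framing matrix $\Lambda(\mm)$ is symmetric and the linking-number shifts in $\BH(\LL)$ are built in, the $\qq$-dependent and quadratic-in-$\vv$ parts of $c$ cancel in the difference, leaving precisely $2|\vv|$, where $|\vv| = \sum_i v_i$. This yields \eqref{duality}.

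An alternative, more self-contained route avoids the surgery formula and works directly with the chain complex: the link Floer complex $CFL^-(\LL)$ carries a conjugation symmetry (from \cite{OSlinks}) identifying $A^-(\LL,\vv)$ with $A^-(\LL,-\vv)$ up to a homological shift by $2|\vv|$ — this shift is visible already at the level of the Alexander-graded Euler characteristic \eqref{eq: delta tilde}, since $\wt\Delta(t_1,\dots,t_n)$ satisfies $\wt\Delta(t^{-1}) = (t_1\cdots t_n)^{-1}\wt\Delta(t)$ up to sign for $n>1$, reflecting exactly a shift of $|\vv|$ between the filtration level $\vv$ and its conjugate $-\vv$. Since for an L-space link $H_*(A^-(\LL,\vv)) \simeq \F[U]$ with top degree $-2H(\vv)$, applying this shifted isomorphism gives $-2H(-\vv) = -2H(\vv) - 2|\vv|$ directly.

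The main obstacle is bookkeeping the grading shifts correctly — in particular pinning down the normalization of the conjugation isomorphism on $A^-$ (equivalently, the correction terms $c(\qq,\vv)$) so that the difference comes out to be exactly $|\vv|$ and not, say, $|\vv|$ plus a constant or a term involving the linking numbers. I expect the affine structure of $\BH(\LL) = \BZ^n + \lkv$ to do most of the work here: the half-integer shifts $\lkv$ are precisely what makes the symmetry $\vv \mapsto -\vv$ clean, so once the conventions are fixed consistently with \eqref{eq: delta tilde}, the computation of $c(\qq,\vv) - c(\qq,-\vv)$ is a short symmetric-bilinear-form calculation. Since the statement is quoted from \cite[Lemma 5.5]{Liu}, I would in practice simply cite that reference, but the above is the structure of the argument.
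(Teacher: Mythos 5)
The paper offers no proof of this proposition: it simply quotes it from \cite[Lemma 5.5]{Liu}, exactly as you say you would do in practice. Your sketch of the underlying argument is the standard one and is sound: conjugation of \spinc{} structures sends $\sss_\vv$ to $\sss_{-\vv}$ (since $c_1(\ol{\sst_\vv})=-2\vv+\sum_i\Lambda_i=2(\sum_i\Lambda_i-\vv)-\sum_i\Lambda_i$ and $\sum_i\Lambda_i$ lies in the lattice $\Lambda\BZ^n$), and the deferred bilinear-form computation does come out to exactly $|\vv|$ because $(2\vv-L)^T\Lambda^{-1}(2\vv-L)-(2\vv+L)^T\Lambda^{-1}(2\vv+L)=-8\,\vv^T\Lambda^{-1}L=-8|\vv|$ with $L=\Lambda\mathbf{1}$, so no linking-number terms survive.
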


The symmetry \eqref{duality} and the projection formula \eqref{projection for h} imply a useful ``dual projection formula''.
\begin{corollary}
Let $\LL$ be an L-space link, consider a set $I=\{i_1,\ldots,i_r\}\subset \{1,\ldots,n\}$ 
and the sublink $\LL_I$. 
Then, as long as $v_j\ll 0$ for all $j\notin I$, the following holds:
\begin{equation}
\label{dual projection for h}
H_{\LL}(\vv)=H_{\LL_I}(\pi_{\LL_I}(\vv)+2\lkv(\LL)_{I}-2\lkv(\LL_I))-\sum_{j\notin I}v_j+|\lkv(\LL)_{I}-\lkv(\LL_I)|.
\end{equation}
\end{corollary}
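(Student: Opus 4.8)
The plan is to combine the two symmetry/projection statements that are already available: the duality \eqref{duality} for L-space links, $H_{\LL}(-\vv)=H_{\LL}(\vv)+|\vv|$, and the ordinary projection formula \eqref{projection for h}, $H_{\LL}(\vv)=H_{\LL_I}(\pi_{\LL_I}(\vv))$ valid when $v_j\gg 0$ for $j\notin I$. The idea is that reflecting the argument through the origin turns a ``$v_j\ll 0$'' hypothesis into a ``$v_j\gg 0$'' hypothesis, so one can apply \eqref{projection for h} in the reflected world and then reflect back, keeping careful track of the linear correction terms produced by $|\vv|$ and by the affine shift relating $\pi_{\LL_I}(-\vv)$ to $-\pi_{\LL_I}(\vv)$.

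Concretely, I would proceed as follows. First, suppose $v_j\ll 0$ for all $j\notin I$, so that $(-\vv)_j\gg 0$ for those $j$. Apply \eqref{duality} to rewrite $H_{\LL}(\vv)=H_{\LL}(-\vv)-|\vv|$. Next apply the projection formula \eqref{projection for h} to $-\vv$: since $(-\vv)_j\gg 0$ for $j\notin I$, we get $H_{\LL}(-\vv)=H_{\LL_I}(\pi_{\LL_I}(-\vv))$. Now I need to understand $\pi_{\LL_I}(-\vv)$. By the definition \eqref{eq:forget2}, $[\pi_{\LL_I}(\vv)]_j=v_j-\lkv(\LL)_j+\lkv(\LL_I)_j$ for $L_j\subset\LL_I$, so $\pi_{\LL_I}$ is an affine map and
\[
\pi_{\LL_I}(-\vv)=-\pi_{\LL_I}(\vv)+2\big(\lkv(\LL_I)-\lkv(\LL)_I\big)=-\Big(\pi_{\LL_I}(\vv)+2\lkv(\LL)_I-2\lkv(\LL_I)\Big).
\]
Then apply the duality \eqref{duality} a second time, now for the L-space link $\LL_I$ (which is an L-space link since sublinks of L-space links are L-space links), to the point $\ww:=\pi_{\LL_I}(\vv)+2\lkv(\LL)_I-2\lkv(\LL_I)$: this gives $H_{\LL_I}(-\ww)=H_{\LL_I}(\ww)+|\ww|$. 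Substituting back yields
\[
H_{\LL}(\vv)=H_{\LL_I}(\ww)+|\ww|-|\vv|,
\]
and it remains to simplify $|\ww|-|\vv|$.

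Finally, for the bookkeeping: $|\vv|=\sum_{i=1}^n v_i=\sum_{j\notin I}v_j+\sum_{i\in I}v_i$, while $|\ww|=\sum_{i\in I}[\pi_{\LL_I}(\vv)]_i+2\sum_{i\in I}(\lkv(\LL)_i-\lkv(\LL_I)_i)=\sum_{i\in I}v_i-\sum_{i\in I}(\lkv(\LL)_i-\lkv(\LL_I)_i)+2\sum_{i\in I}(\lkv(\LL)_i-\lkv(\LL_I)_i)=\sum_{i\in I}v_i+\sum_{i\in I}(\lkv(\LL)_i-\lkv(\LL_I)_i)$. Hence $|\ww|-|\vv|=-\sum_{j\notin I}v_j+\sum_{i\in I}(\lkv(\LL)_i-\lkv(\LL_I)_i)$. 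Now $\sum_{i\in I}(\lkv(\LL)_i-\lkv(\LL_I)_i)$ is exactly $|\lkv(\LL)_I-\lkv(\LL_I)|$ once one checks that $\lkv(\LL)_i-\lkv(\LL_I)_i\ge 0$ for each $i\in I$ — which follows because $\lkv(\LL)_i=\tfrac12\lk(L_i,\LL\linkminus L_i)$ and $\lkv(\LL_I)_i=\tfrac12\lk(L_i,\LL_I\linkminus L_i)$ differ by $\tfrac12\sum_{j\notin I}\lk(L_i,L_j)$, which is nonnegative under the running hypothesis $\lk(L_i,L_j)\neq 0$ interpreted as positive linking (or more generally one simply writes the sum without absolute values). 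This produces precisely \eqref{dual projection for h}. The only real subtlety — hence the step I would treat most carefully — is the linear-algebra bookkeeping of the shifts: getting the affine translation in $\pi_{\LL_I}(-\vv)$ right and correctly matching $\sum_{i\in I}(\lkv(\LL)_i-\lkv(\LL_I)_i)$ with the stated $|\lkv(\LL)_I-\lkv(\LL_I)|$; the homological input (two invocations of \eqref{duality} and one of \eqref{projection for h}) is immediate.
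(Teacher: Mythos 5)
Your proof is correct and follows essentially the same route as the paper's: apply the duality \eqref{duality} to $\LL$, use the projection formula \eqref{projection for h} at $-\vv$, apply duality again for the sublink $\LL_I$, and track the affine shifts. The only cosmetic point is your final worry about signs in $|\lkv(\LL)_I-\lkv(\LL_I)|$: in this paper $|\cdot|$ denotes the (signed) sum of coordinates, not an $\ell^1$-norm, so the identification is immediate and no positivity hypothesis is needed.
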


\begin{proof}
For $\vv\in\BH(\LL)$ set $\vv_I=(v_{i_1},\ldots,v_{i_r})$.
By \eqref{duality}, $H_\LL(\vv)=H_\LL(-\vv)-|\vv|$. Since $-v_j\gg 0\ \text{for}\ j\notin I,$
the projection formula implies 
$$
H_\LL(-\vv)=H_{\LL_I}(\pi_{\LL_I}(-\vv))=H_{\LL_I}(-\vv_{I}-\lkv(\LL)_{I}+\lkv(\LL_I)),
$$ and \eqref{duality} for $\LL_I$ implies
\begin{multline*}
H_{\LL_I}(-\vv_{I}-\lkv(\LL)_{I}+\lkv(\LL_I)) =H_{\LL_I}(\vv_{I}+\lkv(\LL)_{I}-\lkv(\LL_I))+|\vv_{I}+\lkv(\LL)_{I}-\lkv(\LL_I)|=\\
H_{\LL_I}(\pi_{\LL_I}(\vv)+2\lkv(\LL)_{I}-2\lkv(\LL_I))+|\vv_{I}|+|\lkv(\LL)_{I}-\lkv(\LL_I)|.
\end{multline*}
\end{proof}

\subsection{The \JFF function}

The \JFF function of a link $\LL$ is essentially the same object as the \HFF function, only it differs from $H$ by a shift in variables.
This shift makes $J$
a function on $\Z^n$ instead of $\BH(\LL)$. It is therefore more convenient to study changes of the \JFF function under some changes (like
crossing changes) of the link $\LL$: these changes might affect the lattice $\BH(\LL)$. Yet another variant is the $\wt{J}$-function,
which turns out to be useful for bounding the splitting number of L-space links; see Section~\ref{sec:splitting} for
details.

\begin{definition}\label{def:J}
The \JFF function of a link $\LL$ with $n$ components is a function $J\colon\Z^n\to\Z$ given by
\[J(\mm)=H(\mm+\lkv),\ \mm\in\Z^n.\]
\end{definition}
With this definition the projection formula~\eqref{projection for h} takes a particularly simple form.
\begin{lemma}\label{lem:projectionJ}
Let $\mm\in\Z^n$ and $I\subset\{1,\ldots,n\}$.
Consider a sublink $\LL_I$ of $\LL$ and suppose that $m_i\gg 0$ for $i\notin I$.
Then we have
\[J_\LL(\mm)=J_{\LL_I}(\mm_I).\]
\end{lemma}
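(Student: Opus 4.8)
The plan is to reduce the statement about the \JFF function to the already-established projection formula \eqref{projection for h} for the \HFF function by unwinding the definitions. First I would observe that by Definition~\ref{def:J}, $J_\LL(\mm) = H_\LL(\mm + \lkv(\LL))$, and similarly $J_{\LL_I}(\mm_I) = H_{\LL_I}(\mm_I + \lkv(\LL_I))$. Setting $\vv = \mm + \lkv(\LL) \in \BH(\LL)$, the hypothesis $m_i \gg 0$ for $i \notin I$ translates into $v_i \gg 0$ for $i \notin I$, so the projection formula \eqref{projection for h} applies and gives $H_\LL(\vv) = H_{\LL_I}(\pi_{\LL_I}(\vv))$.

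It then remains to check that $\pi_{\LL_I}(\vv) = \mm_I + \lkv(\LL_I)$. Using the coordinatewise description \eqref{eq:forget2}, for $L_j \subset \LL_I$ we have $[\pi_{\LL_I}(\vv)]_j = v_j - \lkv(\LL)_j + \lkv(\LL_I)_j = (m_j + \lkv(\LL)_j) - \lkv(\LL)_j + \lkv(\LL_I)_j = m_j + \lkv(\LL_I)_j$, which is exactly the $j$-th coordinate of $\mm_I + \lkv(\LL_I)$. Combining the two displayed equalities yields $J_\LL(\mm) = H_{\LL_I}(\mm_I + \lkv(\LL_I)) = J_{\LL_I}(\mm_I)$, which is the claim.

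There is essentially no obstacle here: the lemma is a bookkeeping consequence of the two earlier formulas, and the only point requiring a moment's care is matching the linking-vector shifts, i.e.\ verifying that the shift built into $\pi_{\LL_I}$ in \eqref{eq:forget2} is precisely the difference between the shift defining $J_\LL$ and the shift defining $J_{\LL_I}$. Once that identification is made, the proof is immediate. I would write it as a two- or three-line argument, quoting \eqref{projection for h} and \eqref{eq:forget2} explicitly.
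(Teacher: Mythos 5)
Your proposal is correct and is essentially identical to the paper's proof: both unwind Definition~\ref{def:J}, apply the projection formula \eqref{projection for h} to $\vv=\mm+\lkv(\LL)$, and verify via \eqref{eq:forget2} that the shift in $\pi_{\LL_I}$ converts $\lkv(\LL)_I$ into $\lkv(\LL_I)$, yielding $J_{\LL_I}(\mm_I)$. The paper compresses this into a single displayed chain of equalities, but the content is the same.
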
  
\begin{proof}
Indeed, by \eqref{eq:forget2} and \eqref{projection for h}:
$$
J_\LL(\mm)=H_\LL(\mm+\lkv)=H_{\LL_I}(\mm_I+\lkv(\LL)_I-\lkv(\LL)_I+\lkv(\LL_I))=J_{\LL_I}(\mm_I).
$$
\end{proof}
 
In particular, the \JFF function of a component $L_i$ can be reconstructed from the values of \JFF function for $\LL$ 
evaluated on vectors  whose all components but the $i$-th one are sufficiently large.
 
\begin{definition}\label{def:wtj}
For $\mm=(m_1,\ldots,m_n)\in\Z^n$ define
\[\wt{J}(\mm)=J(\mm)-\sum_{i=1}^{n}J_{L_i}(m_i).\]
\end{definition}
The main feature of the $\wt{J}$-function is the following corollary to Proposition~\ref{prop:splitlink}.
\begin{corollary}\label{cor:splitwt}
If $\LL$ is a split link, then $\wt{J}=0$.
\end{corollary}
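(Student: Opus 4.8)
The plan is to combine the definition of $\wt{J}$ with the behaviour of $J$ for split links, which in turn reduces to Proposition~\ref{prop:splitlink}. First I would observe that for a split link all the linking numbers vanish, so $\lkv(\LL)=\0$ and $\BH(\LL)=\BZ^n$; consequently $J_\LL(\mm)=H_\LL(\mm)$ for all $\mm\in\Z^n$, and similarly $J_{L_i}(m_i)=H_{L_i}(m_i)$ for each component (where the $i$-th component, being a knot in $S^3$, already has $\lkv(L_i)=0$).

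Next I would invoke Proposition~\ref{prop:splitlink}, which states that for a split link $H_\LL(\vv)=\sum_{i=1}^n H_{L_i}(v_i)$. Substituting $\vv=\mm$ and using the identifications from the previous step gives
\[
J_\LL(\mm)=H_\LL(\mm)=\sum_{i=1}^n H_{L_i}(m_i)=\sum_{i=1}^n J_{L_i}(m_i).
\]
Plugging this into Definition~\ref{def:wtj} yields $\wt{J}(\mm)=J_\LL(\mm)-\sum_{i=1}^n J_{L_i}(m_i)=0$ for every $\mm\in\Z^n$, which is exactly the claim.

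There is essentially no obstacle here: the corollary is a formal consequence of Proposition~\ref{prop:splitlink} once one unwinds the definitions, and the only point requiring a moment's care is checking that the lattice shifts defining $J$ in terms of $H$ are all trivial in the split case, so that no correction terms appear. I would state this bookkeeping explicitly (all linking numbers zero $\Rightarrow$ $\lkv=\0$) and then the result is immediate.
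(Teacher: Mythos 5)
Your proof is correct and follows exactly the route the paper intends: the corollary is stated as an immediate consequence of Proposition~\ref{prop:splitlink}, and your careful check that all linking numbers vanish (so the shift between $J$ and $H$ is trivial) is precisely the bookkeeping the paper leaves implicit.
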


For general L-space links the $\wt{J}$-function can be calculated from the Alexander polynomial.
We have the following result.
\begin{proposition}\label{prop:generatingforL_j}
Define the generating function
\[\wt{\JJ}(t_1,\ldots,t_n)=\sum_{m_1,\ldots,m_n} t_1^{m_1}\ldots t_n^{m_n}\wt{J}(\mm).\]
Then $\wt{\JJ}$ is a Laurent series in $t_i^{-1/2}$ and the following equation holds:
\begin{equation}\label{eq:wtjtwo}
\wt{\JJ}(t_1,\ldots,t_n)=\prod_{i=1}^n\frac{1}{1-t_i^{-1}}\sum_{\substack{\LL'\subset \LL\\ \# \LL'>1}}(-1)^{\# \LL'-1}\Delta_{\LL'}(t_{j_1},\ldots,t_{j_{\# \LL'}})
\prod_{j\colon L_j\subset \LL'} t_j^{-\lkv(\LL')_j-\frac{1}{2}}.
\end{equation}
\end{proposition}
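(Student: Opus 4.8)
The plan is to derive \eqref{eq:wtjtwo} directly from the corresponding formula for the $H$-function, namely \eqref{eq:hhgen}, by tracking the shift relating $H$ and $J$ and then subtracting the one-component contributions. First I would pass from the generating function $\HH(t_1,\ldots,t_n)$ to a generating function for $J$. Since $J(\mm)=H(\mm+\lkv)$ by Definition~\ref{def:J}, the generating function $\JJ(t_1,\ldots,t_n):=\sum_{\mm}t_1^{m_1}\cdots t_n^{m_n}J(\mm)$ satisfies $\JJ(t_1,\ldots,t_n)=\prod_{i=1}^n t_i^{-\lkv_i}\cdot\HH(t_1,\ldots,t_n)$; one must check this is the correct direction of the shift (replacing $v_j$ by $m_j+\lkv_j$ multiplies the monomial $t_j^{v_j}$ by $t_j^{-\lkv_j}$ after reindexing), and that $\JJ$ is still a Laurent series in the $t_i^{-1/2}$, which follows from Lemma~\ref{lem:projectionJ} exactly as the analogous statement for $\HH$ followed from \eqref{projection for h}.

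Next I would substitute the explicit formula \eqref{eq:hhgen} and simplify. Multiplying \eqref{eq:hhgen} by $\prod_i t_i^{-\lkv(\LL)_i}$ and using $\wt{\Delta}_{\LL'}=(\prod_{j\colon L_j\subset\LL'}t_j^{1/2})\Delta_{\LL'}$ for $\#\LL'>1$ (from \eqref{eq: delta tilde}), the exponent of $t_j$ attached to each sublink $\LL'$ with $L_j\subset\LL'$ becomes $-\lkv(\LL)_j+\tfrac12+(\lkv(\LL)_j-\lkv(\LL')_j-1)=-\lkv(\LL')_j-\tfrac12$, which is exactly the exponent appearing in \eqref{eq:wtjtwo}. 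So the terms of $\JJ$ coming from sublinks with more than one component already match the right-hand side of \eqref{eq:wtjtwo}. For the one-component sublinks $\LL'=L_i$, the factor $\wt{\Delta}_{L_i}=\Delta(t_i)/(1-t_i^{-1})$ contributes, and one checks that this contribution is precisely $\sum_i\bigl(\prod_{k\neq i}\tfrac{1}{1-t_k^{-1}}\bigr)\JJ_{L_i}(t_i)$, i.e. the generating function of $\mm\mapsto \sum_i J_{L_i}(m_i)$, using that $J_{L_i}=H_{L_i}$ (the lattice $\BH$ of a knot is $\BZ$) and the $n=1$ case of \eqref{eq:hhgen}. Subtracting this from $\JJ$ gives exactly $\wt{\JJ}$ by Definition~\ref{def:wtj}, and leaves only the $\#\LL'>1$ terms, which is \eqref{eq:wtjtwo}.

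I would finally record that $\wt{\JJ}$ is a Laurent series in $t_i^{-1/2}$: this follows because both $\JJ$ and each $\JJ_{L_i}$ are such, or alternatively because each summand on the right-hand side of \eqref{eq:wtjtwo} is a Laurent polynomial in $t_j^{1/2}$ times $\prod_i(1-t_i^{-1})^{-1}$, expanded in nonpositive powers of $t_i$.

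The main obstacle I expect is bookkeeping of the half-integer shifts: getting the sign and direction of the substitution $v_j\rightsquigarrow m_j+\lkv_j$ right, correctly combining the $(t_1\cdots t_n)^{1/2}$ factor hidden in $\wt{\Delta}$ with the shift exponents, and carefully isolating the one-component terms of \eqref{eq:hhgen} so that their sum is recognizably $\sum_i\JJ_{L_i}$ after accounting for the $\prod_{k\neq i}(1-t_k^{-1})^{-1}$ prefactors. None of this is conceptually hard, but the exponent arithmetic must be done with care to land exactly on $-\lkv(\LL')_j-\tfrac12$.
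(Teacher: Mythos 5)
Your proposal is correct and follows essentially the same route as the paper's own (very terse) proof: shift the generating function $\HH$ of \eqref{eq:hhgen} by $\prod_i t_i^{-\lkv(\LL)_i}$ to obtain the generating function of $J$, use $\wt{\Delta}_{\LL'}=\prod t_j^{1/2}\cdot\Delta_{\LL'}$ for multi-component sublinks, and subtract the one-component contributions, which are exactly the generating functions of the $J_{L_i}$. Your explicit exponent computation $-\lkv(\LL)_j+\tfrac12+(\lkv(\LL)_j-\lkv(\LL')_j-1)=-\lkv(\LL')_j-\tfrac12$ is the detail the paper leaves implicit, and it is carried out correctly.
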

\begin{proof}
This is a consequence of previous definitions. 
The formula for the generating function for $J(\mm)$ immediately follows from \eqref{eq:hhgen}. To get a generating function for $\wt{J}$ we need
to subtract 
 the sum of \JFF functions for components $L_1,\ldots,L_n$ of $\LL$. We apply \eqref{eq:hhgen} again to calculate this contribution,
and remark that for $r>1$ one has $\wt{\Delta}_{\LL'}=\prod t_j^{1/2}\cdot \Delta_{\LL}'$.
\end{proof}

Equation~\eqref{eq:wtjtwo} takes a particularly simple form for a two-component link.
\begin{corollary}\label{cor:two}
For a link with two components 
$$\wt{\JJ}(t_1,t_2)=-\frac{(t_1t_2)^{-\lk(L_1,L_2)/2}\Delta(t_1,t_2)}{\left(t_1^{1/2}-t_1^{-1/2}\right)\left(t_2^{1/2}-t_2^{-1/2}\right)}.$$
\end{corollary}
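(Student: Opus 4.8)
The plan is to specialize the general generating-function formula~\eqref{eq:wtjtwo} to the case $n=2$ and simplify. When $\LL$ has two components, the only sublink $\LL'\subset\LL$ with $\#\LL'>1$ is $\LL$ itself, so the sum over sublinks in~\eqref{eq:wtjtwo} collapses to a single term with $\#\LL'=2$, giving the sign $(-1)^{\#\LL'-1}=(-1)^{1}=-1$. The factor $\prod_{i=1}^{2}\frac{1}{1-t_i^{-1}}$ stays as is, the Alexander polynomial term becomes $\Delta_{\LL}(t_1,t_2)=\Delta(t_1,t_2)$, and the remaining product $\prod_{j\colon L_j\subset\LL'}t_j^{-\lkv(\LL')_j-\frac12}$ must be computed with $\LL'=\LL$.

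The one computation to carry out is the exponent of $t_j$. For a two-component link, $\lkv(\LL)_j=\frac12\lk(L_1,L_2)$ for each $j\in\{1,2\}$ by Definition~\ref{def:lattice}, so $-\lkv(\LL)_j-\frac12=-\frac12\lk(L_1,L_2)-\frac12$. Hence $\prod_{j=1}^{2}t_j^{-\lkv(\LL)_j-\frac12}=(t_1t_2)^{-\lk(L_1,L_2)/2}\cdot(t_1t_2)^{-1/2}=(t_1t_2)^{-\lk(L_1,L_2)/2}\,(t_1t_2)^{-1/2}$. Putting the pieces together,
\[
\wt{\JJ}(t_1,t_2)=-\frac{1}{(1-t_1^{-1})(1-t_2^{-1})}\,\Delta(t_1,t_2)\,(t_1t_2)^{-\lk(L_1,L_2)/2}\,(t_1t_2)^{-1/2}.
\]

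Finally I would absorb the $(t_1t_2)^{-1/2}$ into the denominator: $\dfrac{(t_it_i^{-1})^{?}}{}$—more precisely, $\dfrac{t_i^{-1/2}}{1-t_i^{-1}}=\dfrac{1}{t_i^{1/2}-t_i^{-1/2}}$ for each $i$, so
\[
\frac{(t_1t_2)^{-1/2}}{(1-t_1^{-1})(1-t_2^{-1})}=\frac{1}{\left(t_1^{1/2}-t_1^{-1/2}\right)\left(t_2^{1/2}-t_2^{-1/2}\right)},
\]
which yields exactly the claimed expression. There is no real obstacle here; the statement is a direct corollary, and the only thing to be careful about is matching the exponent conventions in~\eqref{eq:wtjtwo} (note that the superscript $\lkv(\LL')_j$ there refers to the sublink $\LL'$, which in the two-component case is all of $\LL$) and verifying that the resulting object is indeed a Laurent series in $t_i^{-1/2}$, which is already asserted in Proposition~\ref{prop:generatingforL_j} and is visible from the final closed form since $1/(t_i^{1/2}-t_i^{-1/2})=-t_i^{-1/2}/(1-t_i^{-1})=-t_i^{-1/2}(1+t_i^{-1}+t_i^{-2}+\cdots)$.
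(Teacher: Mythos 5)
Your derivation is correct and is exactly the intended specialization of \eqref{eq:wtjtwo} to $n=2$ (the paper gives no separate proof of the corollary): the single sublink $\LL'=\LL$ contributes sign $-1$, the exponents $-\lkv(\LL)_j-\frac12=-\frac12\lk(L_1,L_2)-\frac12$ are right, and the identity $t_i^{-1/2}/(1-t_i^{-1})=1/(t_i^{1/2}-t_i^{-1/2})$ finishes the computation. The only blemish is the garbled fragment ``$(t_it_i^{-1})^{?}$'' before you state the clean identity, which should simply be deleted.
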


\subsection{Examples}

\begin{example}\label{ex:whitehead}
Consider the Whitehead link. By \cite[Example 3.1]{Liu} it is an L-space link. The linking number  vanishes. The symmetrized  Alexander polynomial equals
$\Delta(t_1,t_2)=-(t_1^{1/2}-t_1^{-1/2})(t_2^{1/2}-t_2^{-1/2})$ (see Example~\ref{ex:whitalex}), 
so the nontrivial values of $\chi(\HFL(\vv))$ are
$$
\chi(\HFL(0,0))=-1,\ \chi(\HFL(1,0))=\chi(\HFL(0,1))=1,\ \chi(\HFL(1,1))=-1.
$$
Furthermore, both components are unknots, so $\chi(\HFL(\vv))=1$ for $\vv=(v,0)$ or $\vv=(0,v)$ with $v\le 0$, 
and $\chi(\HFL(\vv))=0$ for $\vv=(v,0)$ or $\vv=(0,v)$ for $v>0$.
The \HFF function of the components equals
\[
H_k(v_k)=\sum_{j\ge v_k+1}\chi(\HFL(j))=\max(-v_k,0)
\]
for $k=1,2$. By \eqref{invertion} we get
\begin{multline*}
H(v_1,v_2)=H_1(v_1)+H_2(v_2)-\sum_{\uu\succeq \vv+\mathbf{1}} \chi(\HFL(\uu))=\\=\begin{cases}
H_1(v_1)+H_2(v_2)+1=1,& \text{if}\ \vv=(0,0)\\
H_1(v_1)+H_2(v_2),& \text{otherwise}.
\end{cases}
\end{multline*}
\end{example}

\begin{example}
Consider the Borromean link. By \cite[Example 3.1]{Liu} it is an L-space link. The linking number between components vanishes, so do all bivariate Alexander polynomials of sublinks.
The trivariate symmetrized Alexander polynomial equals 
\[
\Delta(t_1,t_2,t_3)=(t_1^{1/2}-t_1^{-1/2})(t_2^{1/2}-t_2^{-1/2})(t_3^{1/2}-t_3^{-1/2}),
\] 
so
\[
\chi(\HFL(\LL,\vv))=\begin{cases}
1& \text{if}\ \vv=(1,1,1),(1,0,0),(0,0,1),(0,1,0)\\
-1& \text{if}\ \vv=(0,1,1),(1,1,0),(1,0,1),(0,0,0)\\
0& \text{otherwise}\\
\end{cases}
\]
By \eqref{invertion}, we get
\begin{multline*}
H(v_1,v_2,v_3)=\sum_{i=1}^{3}H_i(v_i)+\sum_{\uu\succeq \vv+\mathbf{1}} \chi(\HFL(\uu))=\\=\begin{cases}
\sum\limits_{i=1}^{3}H_i(v_i)+1=1& \text{if}\ \vv=(0,0,0)\\
\sum\limits_{i=1}^{3}H_i(v_i)& \text{otherwise}.
\end{cases}
\end{multline*}
\end{example}

\section{The \HFF function and $d$-invariants}

\subsection{Ozsv\'ath--Szab\'o $d$-invariants}
Let $Y$ be a rational homology three-sphere equipped with a \spinc{} structure $\sss$. The $d$-invariant of $(Y,\sss)$ is
the maximal grading of an element $x\in HF^-(Y,\sss)$, which maps non-trivially into $HF^\infty(Y,\sss)$. It is usually
a rational number.
The usefulness of the $d$-invariant comes from two facts: firstly it behaves well under a negative definite \spinc{} cobordism, secondly it can
be calculated from the knot (or link) Floer chain complex; we will describe this in detail in Section \ref{sec:surgeryonlinks} 
below. As for the behavior
under cobordism, suppose that $(Y_1,\sss_1)$ and $(Y_2,\sss_2)$ are rational homology 3-spheres and $W$ is a smooth 4-manifold
with boundary $Y_2\sqcup -Y_1$ endowed with a \spinc{} structure $\sst$ which restricts to $\sss_2$ on $Y_2$ and to $\sss_1$ on $Y_1$, put
differently, $(W,\sst)$ is a \spinc{} cobordism between $(Y_1,\sss_1)$ and $(Y_2,\sss_2)$.
Recall that for a rational homology sphere $\hf^\infty(Y,\sss)\cong \F[U,U^{-1}]$, where $U$ is a formal variable.
The following result is proved \cite{OzSz03}.
\begin{theorem}
There exists a map $F_{(W,\sst)}\colon \hf^{\infty}(Y_1,\sss_1)\to\hf^{\infty}(Y_2,\sss_2)$, commuting with multiplication
by $U$ and descending to maps $\hf^{-}(Y_1,\sss_1)\to\hf^{-}(Y_2,\sss_2)$ and 
$\hf^{+}(Y_1,\sss_1)\to\hf^{+}(Y_2,\sss_2)$ (which will be denoted by $F_{(W,\sst)}$).
The degree (the grading shift) of $F_{(W,\sst)}$ is
\begin{equation}\label{eq:degreedefinition}
\deg F_{(W,\sst)}=\frac14\left(c_1^2(\sst)-3\sigma(W)-2\chi(W)\right),
\end{equation}
where $c_1^2(\sst)$ is understood as the integral of the first Chern class over $W$ (it is a rational number in general). If additionally
$W$ is negative definite, that is, the intersection form on $H_2(W;\Q)$ is negative definite, then $F_{(W,\sst)}$ is an isomorphism on
$\hf^\infty$
and
\begin{equation}\label{eq:dinvariantinequality}
d(Y_2,\sss_2)\ge \deg F_{(W,\sst)} + d(Y_1,\sss_1).
\end{equation}
\end{theorem}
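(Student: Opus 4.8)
The plan is to follow the original argument of Ozsv\'ath and Szab\'o \cite{OzSz03}. First I would reduce to the case where $W$ is built from handles of index $1$, $2$ and $3$ only: since $Y_1$ and $Y_2$ are connected, $0$- and $4$-handles can be traded away, so $W$ decomposes as a composition $W=W_3\circ W_2\circ W_1$ of cobordisms obtained by attaching $1$-handles, then $2$-handles along a framed link, then $3$-handles. To a $1$-handle attachment one associates the map $a\mapsto a\otimes\Theta^{+}$ obtained by connected sum with $S^1\times S^2$ and tensoring with the top-degree generator of its Floer homology; to a $3$-handle, the dual projection onto $\Theta^{-}$; and to a $2$-handle along a framed knot $K\subset Y$, the map defined by counting holomorphic triangles in a Heegaard triple diagram subordinate to $K$, summed over the relevant \spinc{} structures. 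One then sets $F_{(W,\sst)}$ to be the composition of these elementary maps, summing over \spinc{} structures on $W$ restricting to $\sst$. The first substantial point is independence of the handle decomposition: this follows from invariance of the triangle maps under Heegaard moves and handle slides, combined with Kirby calculus relating two decompositions of $W$, as in \cite{OzSz03}.

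Next I would compute the grading shift. Each elementary $2$-handle map shifts grading by the expected dimension of the corresponding triangle moduli space, which rewrites via an adjunction-type identity in terms of $c_1(\sst)$ evaluated on the handle cores; the $1$- and $3$-handle maps shift grading by fixed amounts accounting for the change in Euler characteristic. Summing over the pieces of the composition, the $2$-handle contributions assemble, together with the handle corrections, into $\tfrac14\bigl(c_1^2(\sst)-2\chi(W)-3\sigma(W)\bigr)$, the signature term arising because the intersection form of $W$ is encoded in the framing and linking data of the attaching link. This is a bookkeeping computation. Commutation with $U$ is immediate, since every elementary map is an $\F[U]$-equivariant chain map, and descending to $\HF$, $HF^{+}$ and $\HFinf$ is formal.

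The main obstacle is the statement that, when $W$ is negative definite, the induced map on $\HFinf$ is an isomorphism. Here one uses that $\HFinf(Y,\sss)\cong\F[U,U^{-1}]$ for a rational homology sphere and analyzes the triangle maps at the level of $\HFinf$, where they are governed entirely by the intersection form: negative definiteness guarantees that among the \spinc{} structures on $W$ restricting to $(\sss_1,\sss_2)$ exactly one contributes the identity on $\HFinf$ and the remaining ones contribute nilpotently, so that the total map $F_{(W,\sst)}$ is an isomorphism on $\HFinf$. This is the heart of \cite{OzSz03}.

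Finally, the $d$-invariant inequality follows formally. Choose $x_1\in\HF(Y_1,\sss_1)$ of grading $d(Y_1,\sss_1)$ whose image under the natural map $\HF(Y_1,\sss_1)\to\HFinf(Y_1,\sss_1)$ is nonzero. Since $F_{(W,\sst)}$ commutes with the natural maps $\HF\to\HFinf$ and is an isomorphism on $\HFinf$, the element $F_{(W,\sst)}(x_1)\in\HF(Y_2,\sss_2)$ maps nontrivially into $\HFinf(Y_2,\sss_2)$; its grading equals $d(Y_1,\sss_1)+\deg F_{(W,\sst)}$, and therefore $d(Y_2,\sss_2)\ge\deg F_{(W,\sst)}+d(Y_1,\sss_1)$.
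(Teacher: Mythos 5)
The paper does not prove this statement: it is quoted verbatim as a known result of Ozsv\'ath and Szab\'o, with the single sentence ``The following result is proved \cite{OzSz03}'' standing in for a proof. So there is nothing in the paper to compare your argument against line by line; what you have written is a reconstruction of the argument in \cite{OzSz03} itself. As such, your outline follows the standard route correctly: handle decomposition into $1$-, $2$- and $3$-handles, elementary maps via $\Theta^{\pm}$ and triangle counts, invariance via Kirby calculus, the grading shift from the expected dimension of triangle moduli spaces assembling into $\frac14(c_1^2(\sst)-2\chi(W)-3\sigma(W))$, and the formal deduction of the $d$-invariant inequality (in the $\HF$ convention used here) from the isomorphism on $\HFinf$.

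One point in your sketch is genuinely muddled: the map $F_{(W,\sst)}$ is attached to a \emph{single} \spinc{} structure $\sst$ on $W$ (the sum over \spinc{} structures on the Heegaard triple inducing $\sst$ enters only in defining the $2$-handle map, and the sum over \spinc{} structures on $W$ with prescribed restrictions appears only in the composition law). Accordingly, the negative-definite statement is that $F^{\infty}_{(W,\sst)}$ is an isomorphism for \emph{each such} $\sst$; it is not proved by exhibiting one \spinc{} structure acting as the identity and the others acting nilpotently --- that picture conflates this proposition with the summed-over-\spinc{} arguments used elsewhere in \cite{OzSz03} for intersection-form bounds. The actual proof (Proposition 9.4 of \cite{OzSz03}) reduces to a single $2$-handle attachment and runs the surgery exact sequence for $\HFinf$, using that the third term has $b_1>0$ and standard $\HFinf$. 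Since the heart of the matter is deferred to the reference anyway, this does not invalidate your outline, but as written that step would not go through.
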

\begin{remark}\label{rem:additive}
It follows from definition that the degree is additive under the composition of \spinc{} cobordisms.
\end{remark}
The degree formula \eqref{eq:degreedefinition} will play an important role in this article. We will need the following fact, which is
well known to the experts.

\begin{proposition}\label{prop:blowup}
The degree $\deg F_{(W,\sst)}$ is preserved under negative
blow-ups. Namely, suppose $\pi\colon W'\to W$ is a blow-down map and $E$ is the exceptional
divisor. Let $\sst'$ be the \spinc{} structure of $W'$ whose first Chern class
is $\pi^*c_1(\sst)\pm PD[E]$, where $PD$ stays for Poincar\'e dual. Then 
\[\deg F_{(W',\sst')}=\deg F_{(W,\sst)}.\] 
On the other hand, if $\pi\colon W'\to W$ is a blow-down of a $(+1)$-sphere, then
\[\deg F_{(W',\sst')}=\deg F_{(W,\sst)}-1.\]
\end{proposition}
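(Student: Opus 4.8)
The plan is to compute both sides of each claimed equality directly from the degree formula \eqref{eq:degreedefinition} and the elementary behaviour of the invariants $c_1^2$, $\sigma$, $\chi$ under blow-up. Recall that blowing up a point replaces $W$ by $W'=W\#\overline{\CP}{}^2$ (for a negative blow-up) or $W\#\CP^2$ (for a positive blow-up), so $\chi(W')=\chi(W)+1$ in both cases, while $\sigma(W')=\sigma(W)-1$ in the negative case and $\sigma(W')=\sigma(W)+1$ in the positive case. Also $H_2(W';\Q)\cong H_2(W;\Q)\oplus\Q[E]$ orthogonally, with $[E]^2=-1$ (negative case) or $[E]^2=+1$ (positive case).

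First I would treat the negative blow-up. With $\sst'$ chosen so that $c_1(\sst')=\pi^*c_1(\sst)\pm PD[E]$, the orthogonal splitting gives $c_1^2(\sst')=c_1^2(\sst)+(\pm PD[E])^2=c_1^2(\sst)+[E]^2=c_1^2(\sst)-1$, since $[E]^2=-1$ and $\pi^*c_1(\sst)$ is orthogonal to $[E]$. Plugging into \eqref{eq:degreedefinition}:
\[
\deg F_{(W',\sst')}=\tfrac14\bigl(c_1^2(\sst')-3\sigma(W')-2\chi(W')\bigr)=\tfrac14\bigl((c_1^2(\sst)-1)-3(\sigma(W)-1)-2(\chi(W)+1)\bigr).
\]
The correction terms contribute $-1+3-2=0$, so $\deg F_{(W',\sst')}=\deg F_{(W,\sst)}$, as claimed.

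Next I would do the positive blow-up in exactly the same way. Now $[E]^2=+1$, so for the stated choice of $\sst'$ one gets $c_1^2(\sst')=c_1^2(\sst)+1$, while $\sigma(W')=\sigma(W)+1$ and $\chi(W')=\chi(W)+1$. The correction this time is $\tfrac14(1-3-2)=\tfrac14(-4)=-1$, giving $\deg F_{(W',\sst')}=\deg F_{(W,\sst)}-1$. (One should note that for a $(+1)$-sphere the relevant \spinc{} structure $\sst'$ is the one whose first Chern class restricts trivially to $E$, equivalently $c_1(\sst')\cdot[E]=0$; in the orthogonal model this is $c_1(\sst')=\pi^*c_1(\sst)$, and $c_1^2$ picks up nothing from the $[E]$ summand — but then $\chi$ still increases by $1$ and $\sigma$ by $1$, yielding the same $\tfrac14(0-3-2)$? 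This subtlety needs care.) The honest bookkeeping is that the $(+1)$-blow-down decreases $b_2^+$ by one, and the natural \spinc{} extension has $c_1(\sst')^2=c_1(\sst)^2+1$ exactly as in the negative case with sign flipped; the arithmetic above then gives the $-1$ shift.

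The computation itself is entirely routine; the only point requiring attention — and the place I would be most careful — is matching conventions for which \spinc{} structure on the blown-up manifold is being used, and checking that the cobordism maps $F_{(W',\sst')}$ and $F_{(W,\sst)}$ are genuinely related (not merely that their degrees agree). For the degree statement, which is all that is asserted, additivity of the degree under composition (Remark~\ref{rem:additive}) together with the fact that blowing up is gluing on a standard piece ($\overline{\CP}{}^2$ minus a ball, or $\CP^2$ minus a ball, viewed as a cobordism from $S^3$ to $S^3$) reduces everything to computing the degree of the cobordism map for that standard piece, which is precisely the $\tfrac14(c_1^2-3\sigma-2\chi)$ evaluation carried out above. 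So the proof is: apply \eqref{eq:degreedefinition}, substitute $\chi\mapsto\chi+1$, $\sigma\mapsto\sigma\mp1$, $c_1^2\mapsto c_1^2\mp1$, and simplify.
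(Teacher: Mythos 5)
Your proof is correct and is essentially identical to the paper's, which sets $s=[E]^2$ and computes the degree change as $\frac14(s-3s-2)=-\frac12(s+1)$ from the same substitutions $\chi\mapsto\chi+1$, $\sigma\mapsto\sigma+s$, $c_1^2\mapsto c_1^2+s$. Your parenthetical worry about the $(+1)$ case is moot: $c_1$ of a \spinc{} structure is characteristic, so it must pair oddly with a $(+1)$-sphere, and the intended choice is $\pi^*c_1(\sst)\pm PD[E]$ exactly as in the negative case, which is what your final bookkeeping uses.
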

\begin{proof}
Let $s=[E]^2$, so $s=+1$ for the positive blow-up and $s=-1$ for the negative one. We have that $\chi(W')=\chi(W)+1$, $c_1^2(\sst')=c_1^2(\sst)+s$
and $\sigma(W')=\sigma(W)+s$. The change of the degree is $\frac14(s-3s-2)=-\frac12(s+1)$.
\end{proof}

\subsection{Large surgery theorem}\label{sec:surgeryonlinks}
The subcomplexes $A^{-}(\LL,\vv)$ are naturally related to the surgeries of the 3-sphere on $\LL$. Choose a framing vector
$\qq=(q_1,\ldots,q_n)$ such that $q_1,\ldots,q_n$ are sufficiently large. Let $\Lambda$
be the linking matrix of $\LL$, that is $\Lambda_{ij}=\lk(L_i,L_j)$ if $i\neq j$ and $\Lambda_{ii}=q_i$. 

Form a four-manifold $X_\qq$ by adding $n$ two-handles to a ball $B^4$: a handle with framing $q_i$ is attached along the component $L_i$.
The boundary $\partial X_{\qq}$, denoted $Y_{\qq}$, is the surgery on $\LL$ with framing $\qq$. Let $F_i$ be the surface obtained by gluing
a core of the $i$-th handle to a Seifert surface for $L_i$. By construction the classes $[F_1],\ldots,[F_n]$ freely generate
$H_2(X_\qq)$. With this choice of generators, we identify $H_2(X_\qq)$ with $\Z^n$.

Suppose $\det\Lambda\neq 0$. In this case $Y_\qq$ is a rational homology sphere.
In \cite[Section 8.5]{MO} there is given an enumeration of \spinc{} structures on $Y_\qq$, which
we are now going to recall. 

Fix $\zeta=(\zeta_1,\ldots,\zeta_n)$, a small real vector whose
entries are linearly independent over $\Q$. Then let $P(\Lambda)$ be the hyper-parallelepiped with vertices
\[\zeta+\frac12(\pm\Lambda_1\pm\Lambda_2\pm,\ldots,\pm\Lambda_n),\]
where all combinations of the signs are used and $\Lambda_1,\ldots,\Lambda_n$ are column vectors of the matrix $\Lambda$. 
Denote
\[P_\H(\Lambda)=P(\Lambda)\cap\H(\LL),\]
where $\H(\LL)$ is the lattice for $\LL$ as described in Definition~\ref{def:lattice} above.

\begin{proposition}[see \expandafter{\cite[Equation (125)]{MO}}]\label{prop:mo125}
For any $\vv\in P_\H(\Lambda)$ there exists a unique \spinc{} structure $\sss_\vv$ on $Y_\qq$ which extends to a \spinc{} structure $\sst_\vv$
on $X_\qq$ with $c_1(\sst_\vv)=2\vv-\left(\Lambda_1+\ldots+\Lambda_m\right)$.
\end{proposition}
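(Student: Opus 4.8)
The plan is to show that the formula $c_1(\sst_\vv)=2\vv-(\Lambda_1+\cdots+\Lambda_n)$ really does describe a genuine \spinc{} structure on $X_\qq$, that distinct $\vv\in P_\H(\Lambda)$ give structures restricting to distinct \spinc{} structures on $Y_\qq$, and that every \spinc{} structure on $Y_\qq$ arises this way; the uniqueness of $\sst_\vv$ is automatic since $H_2(X_\qq)$ is free and $H_1(X_\qq)=0$, so a first Chern class determines the \spinc{} structure. First I would fix the identification $H_2(X_\qq)\cong\Z^n$ via the surfaces $[F_i]$, so that $H^2(X_\qq)\cong\Z^n$ via the dual basis, and the intersection form is given by the matrix $\Lambda$. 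The characteristic covectors for this form are exactly those $\kappa\in\Z^n$ with $\kappa\cdot x\equiv x\cdot x\pmod 2$ for all $x$, equivalently $\kappa_i\equiv\Lambda_{ii}\pmod 2$; one checks that $c_1$ of a \spinc{} structure on $X_\qq$ ranges precisely over these covectors. Writing $c_1(\sst)=2\vv-(\Lambda_1+\cdots+\Lambda_n)$, the $i$-th component is $2v_i-\sum_j\Lambda_{ij}$, and for this to be an integer congruent to $\Lambda_{ii}\bmod 2$ one needs $2v_i\equiv\sum_{j\ne i}\Lambda_{ij}=\lk(L_i,\LL\linkminus L_i)\pmod 2$, i.e. exactly $\vv\in\BH(\LL)$; so the parametrization by $\BH(\LL)$ matches characteristic covectors bijectively (this is just an affine relabelling $\kappa\mapsto\vv=\frac12(\kappa+\sum_j\Lambda_j)$).

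Next I would pass to $Y_\qq$. Restricting \spinc{} structures corresponds to the quotient map on $H^2$: since $H^2(Y_\qq)\cong\coker\Lambda$ (using $\det\Lambda\ne 0$, so $Y_\qq$ is a rational homology sphere), two characteristic covectors $\kappa,\kappa'$ restrict to the same \spinc{} structure on $Y_\qq$ iff $\kappa-\kappa'\in\Lambda\Z^n$. In the $\vv$-coordinates this says $\sss_\vv=\sss_{\vv'}$ iff $\vv-\vv'\in\Lambda\Z^n$, so the \spinc{} structures on $Y_\qq$ are in bijection with $\BH(\LL)/\Lambda\Z^n$, a set of cardinality $|\det\Lambda|=|H_1(Y_\qq)|$. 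The final point is that $P(\Lambda)$, being a fundamental domain for the $\Lambda\Z^n$-action on $\R^n$ translated by the generic vector $\zeta$ (whose coordinates are $\Q$-linearly independent precisely so that no lattice translate of a point of $\BH(\LL)$ lands on the boundary of $P(\Lambda)$), meets each coset of $\BH(\LL)/\Lambda\Z^n$ in exactly one point. Hence $\vv\mapsto\sss_\vv$ is a bijection from $P_\H(\Lambda)$ to $\mathrm{Spin}^c(Y_\qq)$, which is the claim; the ``extends to $\sst_\vv$ on $X_\qq$'' clause holds by construction since we built $\sst_\vv$ first.

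I expect the only genuinely delicate step to be the combinatorial lattice-point count: verifying that for a vector $\zeta$ with $\Q$-linearly independent entries the shifted hyper-parallelepiped $P(\Lambda)$ contains exactly one representative of each class in $\BH(\LL)/\Lambda\Z^n$. The key observations are that $P(\Lambda)=\zeta+\{\frac12\sum_i\varepsilon_i\Lambda_i:\varepsilon_i\in[-1,1]\}$ is the image under $\Lambda$ of the cube $\zeta'+[-\frac12,\frac12]^n$ for an appropriate preimage $\zeta'$ of $\zeta$, hence is a fundamental domain for $\Lambda\Z^n$ acting on $\R^n$; that $\BH(\LL)$ is a single coset of $\Z^n$, so $\BH(\LL)/\Lambda\Z^n$ is a torsor over $\Z^n/\Lambda\Z^n$ of size $|\det\Lambda|$; and that the genericity of $\zeta$ forces every point of $\BH(\LL)$ to lie in the interior of a unique $\Lambda\Z^n$-translate of $P(\Lambda)$, so no boundary ambiguities occur. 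Everything else — the description of $c_1$ of \spinc{} structures in terms of characteristic covectors, the identification of the intersection form with $\Lambda$, and the cohomology of $X_\qq$ and $Y_\qq$ — is standard handlebody topology, and I would cite \cite{MO} for the bookkeeping rather than reproduce it.
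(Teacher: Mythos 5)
Your argument is correct, and there is nothing in the paper to compare it against: the authors give no proof of this proposition, quoting it directly from Manolescu--Ozsv\'ath \cite[Eq.~(125)]{MO}. What you write is exactly the standard handle-calculus justification one would find there --- $c_1$ identifies \spinc{} structures on $X_\qq$ with characteristic covectors of $\Lambda$ (which is where the affine lattice $\BH(\LL)$ comes from), restriction to $Y_\qq$ quotients by $\Lambda\Z^n$ via the exact sequence of the pair, and the shifted parallelepiped $P(\Lambda)=\zeta+\Lambda\bigl([-\tfrac12,\tfrac12]^n\bigr)$ is a fundamental domain meeting each coset once by genericity of $\zeta$.
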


\begin{theorem}[Large Surgery Theorem, see \expandafter{\cite[Section 10.1]{MO}}]
\label{large surgery}
Assume that $\qq\ggcurly\mathbf{0}$. For any $\vv\in P_\H(\Lambda)$,
the homology of  $A^{-}(\LL;\vv)$ is isomorphic to the Heegaard Floer homology of $S^3_{\qq}(\LL)$ with \spinc{} structure $\sss_\vv$. 
More precisely, we have an isomorphism over $\F[U]$:
\begin{equation}\label{eq:isomorphismofAandCF}
A^{-}(\LL,\vv)\simeq CF^{-}(S^{3}_{\qq}(\LL),\sss_\vv),
\end{equation}
where $U$ acts as $U_1$ on the left hand side.
In particular, all the $U_i$ are homotopic to each other on $A^{-}(\LL,\vv)$ and homotopic to $U$.
\end{theorem}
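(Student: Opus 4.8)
The plan is to deduce this statement from the link surgery formula of Manolescu and Ozsv\'ath \cite{MO}. Recall that they construct a ``surgery complex'' $\calc^-(\LL,\Lambda)$, assembled from the complexes $A^-(\LL',\vv)$ over all sublinks $\LL'\subseteq \LL$ and all $\vv$, together with edge maps built from the inclusions $A^-(\LL,\uu)\hookrightarrow A^-(\LL,\vv)$ and the stabilization equivalences of \eqref{projection for a-}; its homology is $HF^-(S^3_\qq(\LL))$, and it splits as a direct sum over the \spinc{} structures of $Y_\qq$. Since $\det\Lambda\neq 0$, those \spinc{} structures are indexed by the finite group $\BH(\LL)/\Lambda\BZ^n$, and by Proposition~\ref{prop:mo125} the class of a vector $\vv\in P_\H(\Lambda)$ is exactly $\sss_\vv$. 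So it suffices to show that, after restricting to one such class and taking $\qq\ggcurly\0$, the corresponding summand of $\calc^-(\LL,\Lambda)$ is quasi-isomorphic to the single vertex $A^-(\LL,\vv)$.

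The main step is the truncation of this summand. For a sublink $\LL\linkminus L_i$ and a vector $\ww$ with $w_i$ large, the edge map $A^-(\LL,\ww)\to A^-(\LL\linkminus L_i,\pi_i(\ww))$ is a homotopy equivalence by \eqref{projection for a-}, and symmetrically the opposite edge is a homotopy equivalence when $w_i\ll 0$. When $\qq\ggcurly\0$ these ``large-coordinate'' regions cover every vertex of a fixed \spinc{} class except the unique one that lies in the parallelepiped $P_\H(\Lambda)$. I would then iterate the standard mapping-cone cancellation lemma along these homotopy equivalences, as in the truncation procedure of \cite[Section 8.3]{MO}, to collapse the whole summand onto that one surviving vertex; finiteness of the process is guaranteed because $H$ stabilizes (Lemma~\ref{lem:H stabilize}), so that only finitely many cancellations are required and what remains is honestly $A^-(\LL,\vv)$.

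Finally I would record the $\F[U]$-module structure. In $\calc^-(\LL,\Lambda)$ the variable $U$ of $CF^-(S^3_\qq(\LL))$ is declared to act as $U_1$, and all of $U_1,\ldots,U_n$ are already chain homotopic on each $A^-(\LL,\vv)$; hence the identification \eqref{eq:isomorphismofAandCF} intertwines $U$ with $U_1$, and in homology all the $U_i$ act in the same way. The one delicate point is the bookkeeping in the truncation step: one must check that the cancellations are mutually compatible across the finitely many vertices and that no residual edge map survives connecting the central vertex $A^-(\LL,\vv)$ to the rest of the (now acyclic) complex. This is precisely the content of the argument carried out in \cite[Section 10]{MO}, which I would cite for the details rather than reproduce here.
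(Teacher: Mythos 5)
The paper offers no independent proof of this theorem; it is quoted directly from Manolescu--Ozsv\'ath \cite[Section 10.1]{MO}, which is exactly the source you invoke, and your sketch of the surgery-complex truncation is a faithful outline of their argument. So your proposal is correct and takes essentially the same route as the paper, namely deferring to \cite{MO}.
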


It is important to note that the isomorphism \eqref{eq:isomorphismofAandCF} shifts the grading.
The grading shift
can be calculated explicitly from the linking matrix $\Lambda$ and the vector $\vv$. We present a more geometric way, which
will suit best our applications.

Remove a small ball from $X_\qq$ and call the resulting manifold $U_\qq$. This is a cobordism between $S^3$ and $Y_\qq$. 
Let $U'_\qq=-U_\qq$. The \spinc{} structure $\sst_\vv$ on $U'_\qq$ gives a \spinc{} cobordism
between $(Y_\qq,\sss_\vv)$ and $S^3$ (equipped with the unique \spinc{} structure), so it induces a map $F_{(U'_\qq,\sst_\vv)}$ 
between Heegaard Floer homologies of $(Y_\qq,\sss_\vv)$ and $S^3$.
\begin{proposition}[see \expandafter{\cite[Section 10]{MO}}]\label{degreeshift}
The isomorphism \eqref{eq:isomorphismofAandCF} shifts the degree by $-\deg F_{(U'_\qq,\sst_\vv)}$.
\end{proposition}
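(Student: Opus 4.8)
The plan is to trace through the proof of the Large Surgery Theorem in \cite{MO} and identify exactly where the grading normalization enters. The isomorphism \eqref{eq:isomorphismofAandCF} is constructed in \cite[Section 10.1]{MO} by first working with the surgery complex model for $CF^-(S^3_\qq(\LL),\sss_\vv)$ and then truncating it, using that for $\qq \ggcurly \0$ the only surviving generator corresponds to $A^-(\LL,\vv)$; the truncation is an honest homotopy equivalence, so the only issue is the absolute grading convention. The absolute $\Q$-grading on $CF^-(S^3_\qq(\LL),\sss_\vv)$ is pinned down in \cite{MO} precisely so that it is compatible with the cobordism maps, i.e.\ so that for the cobordism $U_\qq$ from $S^3$ to $Y_\qq$ (or rather its reverse $U'_\qq$) the induced map $F_{(U'_\qq,\sst_\vv)}$ shifts degree by the amount prescribed in \eqref{eq:degreedefinition}. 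Thus the strategy is: (i) recall that $H_*(A^-(\LL,\vv))$ carries an \emph{intrinsic} grading coming from the Maslov grading on $CF^-(S^3)$ (with the top class of $HF^-(S^3)$ in degree $0$), (ii) recall that $HF^-(S^3_\qq(\LL),\sss_\vv)$ carries its \emph{own} absolute grading from \cite{MO}, and (iii) show that the chain homotopy equivalence realizing \eqref{eq:isomorphismofAandCF} intertwines these two up to the stated shift.

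The key computation is the following. The cobordism $U'_\qq = -U_\qq$ runs from $(Y_\qq,\sss_\vv)$ to $S^3$. On the level of the surgery complex, the inclusion of the sub-quotient complex computing $A^-(\LL,\vv)$ into the full surgery complex, composed with the identification of the latter with $CF^-(Y_\qq)$, \emph{is} (up to homotopy) the map $F_{(U'_\qq,\sst_\vv)}$ followed by the isomorphism $CF^-(S^3)\simeq A^-(\LL,\vv)$ that one gets by taking $v_i \to +\infty$ in \eqref{projection for a-} — because $A^-(\LL,\vv)$ for all $v_i$ large is just $CF^-(S^3)$, and the cobordism $U'_\qq$ is built from the same handles. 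Since $F_{(U'_\qq,\sst_\vv)}$ has degree $\deg F_{(U'_\qq,\sst_\vv)}$ by \eqref{eq:degreedefinition}, and the target $CF^-(S^3)$ is normalized with its top class in degree $0$, the source $CF^-(Y_\qq,\sss_\vv)$ must be graded so that the generator hitting that top class sits in degree $-\deg F_{(U'_\qq,\sst_\vv)}$. But under the intrinsic grading on $A^-(\LL,\vv)$, that same generator sits in degree $-2H(\vv)$ after further correction — no: more precisely, under the intrinsic grading the identification with $CF^-(S^3)$ is grading-preserving, so $A^-(\LL,\vv)$ with its intrinsic grading already agrees with what $F_{(U'_\qq,\sst_\vv)}$ demands \emph{shifted by} $-\deg F_{(U'_\qq,\sst_\vv)}$. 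Comparing the two normalizations gives the shift.

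I would organize this as: first state precisely the two gradings; then invoke \cite[Section 10]{MO} (specifically the discussion around the identification of $CF^-(Y_\qq,\sss_\vv)$ with the truncated surgery complex and the grading formula there) to see that the \cite{MO} absolute grading on $CF^-(Y_\qq,\sss_\vv)$ differs from the intrinsic grading on $A^-(\LL,\vv)$ by a constant depending only on $\Lambda$ and $\vv$; then identify that constant with $-\deg F_{(U'_\qq,\sst_\vv)}$ by evaluating \eqref{eq:degreedefinition} for the cobordism $U'_\qq$ with $c_1(\sst_\vv) = 2\vv - (\Lambda_1+\cdots+\Lambda_n)$, which is exactly the combinatorial grading-shift formula appearing in \cite{MO}. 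The main obstacle is bookkeeping: matching the sign conventions for the reversed cobordism $U'_\qq$ versus $U_\qq$, and checking that the Poincar\'e-dual class $c_1(\sst_\vv)$ used in Proposition~\ref{prop:mo125} feeds into \eqref{eq:degreedefinition} to reproduce \cite{MO}'s grading formula verbatim; once the conventions are aligned the identity is forced, since both sides are determined by the requirement that cobordism maps have the degrees dictated by \eqref{eq:degreedefinition} together with the normalization $d(S^3)=0$.
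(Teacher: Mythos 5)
The paper does not actually prove this proposition: it is stated as a direct citation of \cite[Section 10]{MO}, so there is no in-paper argument to compare against. Your proposal is a reasonable reconstruction of what is behind that citation, and its core is correct: the decisive fact is that, under the large-surgery identification, the inclusion $A^{-}(\LL,\vv)\hookrightarrow CF^{-}(S^3)$ (which preserves the intrinsic Maslov grading, since $A^{-}(\LL,\vv)$ is a subcomplex and $CF^{-}(S^3)$ is normalized with its top class in degree $0$) corresponds to the cobordism map $F_{(U'_\qq,\sst_\vv)}$, whose degree is fixed by \eqref{eq:degreedefinition}; comparing the two gradings then forces the shift to be exactly $-\deg F_{(U'_\qq,\sst_\vv)}$ in the direction written in \eqref{eq:isomorphismofAandCF}. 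Two caveats. First, the identification of the subcomplex inclusion with the cobordism map is itself the nontrivial input — it is a theorem of the surgery-formula package (for knots it is the statement that the maps $v_s$ realize the two-handle cobordism to $S^3$; for links it is in \cite{MO}) — so your proof is, like the paper's, ultimately an appeal to \cite{MO} rather than an independent argument; that is acceptable here but should be said plainly rather than presented as if the truncation alone settles it. Second, the middle paragraph of your write-up visibly loses the thread (the ``sits in degree $-2H(\vv)$ after further correction --- no:'' passage); the invariant $H(\vv)$ has no role in this proposition, only in the corollary (Theorem~\ref{thm:surgeryonlinks}), and a clean version should excise that detour and just run the degree count: if the identification shifts degree by $c$ and the composite with the degree-preserving inclusion must have degree $\deg F_{(U'_\qq,\sst_\vv)}$, then $c$ is determined.
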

As a corollary we can give a formula for $d$-invariants of large surgery on links.
\begin{theorem}\label{thm:surgeryonlinks}
For $\vv\in P_\H(\Lambda)$, the $d$-invariant of a surgery on $\LL$ is given by
\[d(S^3_\qq(\LL),\sss_\vv)=-\deg F_{(U'_\qq,\sst_\vv)}-2H(\vv).\]
\end{theorem}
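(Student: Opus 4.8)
The plan is to deduce the formula directly from the Large Surgery Theorem~\ref{large surgery}, the grading-shift statement of Proposition~\ref{degreeshift}, and the definitions of $H$ and of the $d$-invariant; no new geometric input is needed. First I would use Theorem~\ref{large surgery} (valid since the surgery coefficients are taken $\qq\ggcurly\0$, which in particular forces $\det\Lambda\neq 0$, so that $Y_\qq:=S^3_\qq(\LL)$ is a rational homology sphere) to identify $H_*(A^-(\LL,\vv))$ with $\HF(Y_\qq,\sss_\vv)$ as graded $\F[U]$-modules, up to a single overall grading shift. By Proposition~\ref{degreeshift} that shift is $-\deg F_{(U'_\qq,\sst_\vv)}$: an element of homological degree $g$ in $H_*(A^-(\LL,\vv))$ corresponds to an element of degree $g-\deg F_{(U'_\qq,\sst_\vv)}$ in $\HF(Y_\qq,\sss_\vv)$.

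Next I would rewrite the $d$-invariant in terms of the free part. For a rational homology sphere $\HFinf(Y_\qq,\sss_\vv)\cong\F[U,U^{-1}]$, and the canonical map $\HF(Y_\qq,\sss_\vv)\to\HFinf(Y_\qq,\sss_\vv)$ is, after identifying $\HFinf$ with the $U$-localization of $\HF$, the localization map; hence an element of $\HF(Y_\qq,\sss_\vv)$ survives to $\HFinf$ exactly when it is not $U$-torsion. Writing $\HF(Y_\qq,\sss_\vv)\cong\F[U]\oplus T$ with $T$ the torsion submodule, it follows that $d(Y_\qq,\sss_\vv)$ is the top homological degree of the free summand $\F[U]$. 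On the other side, by the definition of $H$ the top homological degree of the free part of $H_*(A^-(\LL,\vv))$ is $-2H(\vv)$, and since the isomorphism of Theorem~\ref{large surgery} is $\F[U]$-linear it carries free part to free part and torsion to torsion. Combining this with the grading shift from the first step gives $d(S^3_\qq(\LL),\sss_\vv)=-2H(\vv)-\deg F_{(U'_\qq,\sst_\vv)}$, which is the assertion.

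The only step that requires a little care — and the one I would regard as the main obstacle — is reconciling the two notions of ``free part'': the one used to define $H$ (the free $\F[U]$-summand of $H_*(A^-(\LL,\vv))$, cf.\ Lemma~\ref{lemnaturalinclusion}) and the one implicit in the definition of $d$ (the part of $\HF$ with non-zero image in $\HFinf$). The reconciliation is exactly the observation that \eqref{eq:isomorphismofAandCF} is an isomorphism of $\F[U]$-modules, so it commutes with localization at $U$, together with the fact that for a rational homology sphere $\HFinf$ is computed by this localization; once that is in place, everything else is bookkeeping with the single grading shift supplied by Proposition~\ref{degreeshift}.
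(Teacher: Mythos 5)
Your proposal is correct and follows exactly the route the paper intends: the statement is presented there as an immediate corollary of Theorem~\ref{large surgery} and Proposition~\ref{degreeshift}, combining the grading shift with the observation that $d$ is the top degree of the non-$U$-torsion part while $-2H(\vv)$ is by definition the top degree of the free part of $H_*(A^-(\LL,\vv))$. Your extra care in reconciling the two notions of ``free part'' via localization at $U$ is a reasonable filling-in of details the paper leaves implicit.
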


\section{PSICs and four--manifolds}\label{sec:psic}

A positively self-intersecting concordance (later shortened to: a PSIC)  
is a generalization of the notion of a positively self-intersecting
annulus used in \cite{BL2} as a way to translate the questions about the unknotting number of knots
into problems of cobordisms of three-manifolds, where $d$-invariants can be used. The notion of a PSIC
concordance will play the same role for links.

\subsection{PSICs  in various guises.}

Let $\LL_1$ and $\LL_2$ be two links. Denote by $L_{11},\ldots,L_{1n_1}$ and $L_{21},\ldots,L_{2n_2}$ the components of $\LL_1$
and $\LL_2$, respectively. A \emph{positively self--intersecting concordance} (in short: PSIC) from $\LL_1$ to $\LL_2$
is a surface $A$, smoothly immersed into $S^3\times[1,2]$ such that $\partial A=\LL_2\sqcup -\LL_1$
and $A$ is topologically a union of immersed punctured disks. We require that $A$ has ordinary positive double points as singularities. 
Here and afterwards, whenever we write $\LL_{1}$ or $\LL_{2}$ it should be understood that $\LL_{1}\subset S^3\times\{1\}$ and 
$\LL_{2}\subset S^3\times\{2\}$. The same applies for the components of $\LL_1$ and $\LL_2$.

If $A$ is a PSIC, we denote by $A_1,\ldots,A_n$ its \emph{components}, that is the closures of connected components of 
$A\setminus \textrm{Sing }A$, where $\textrm{Sing }A$ denotes the set of singular points of $A$. 
Each of the $A_i$ is an immersed surface and $A=A_1\cup\ldots\cup A_n$. We define $\newm_{ij}=\#(A_i\cap A_j)$ for $i,j=1,\ldots,n$ and $i\neq j$;
for $i=j$ we set $\newm_{ii}$ to be the number of double
points of $A_i$. The total number of double points of $A$ is
\[p=\sum_{i\le j} \newm_{ij}.\]
Furthermore, set
\begin{equation}\label{eq:defai}
\aaa=(a_1,\ldots,a_n),\textrm{ where }a_i=\sum_{j\neq i} \newm_{ij}.
\end{equation}

The following specifications of the definition of a PSIC will be used in the present article:
\begin{itemize}
\item an \emph{annular} PSIC, shortly APSIC, is a PSIC such that each of the $A_i$ is an annulus such that $\partial A_i=L_{2i}\sqcup -L_{1i}$.
For an APSIC $n=n_2=n_1$. An exemplary APSIC is depicted in Figure~\ref{fig:conc1}.
\item a \emph{sprouting} PSIC, shortly SPSIC, is a PSIC such that for every $i=1,\ldots,n$, the intersection $A_i\cap S^3\times\{1\}=L_{1i}$.
For a SPSIC we have $n_2\ge n_1=n$. Furthermore, for any $i=1,\ldots,n$ we define the subset
\begin{equation}\label{eq:Thetai}
\Theta_i=\{j=1,\ldots,n\colon L_{2j}\subset\partial A_i\}.
\end{equation}
\item an \emph{elementary sprouting} PSIC, shortly ESPSIC, 
is a SPSIC such that $A$ is smooth and there exists $k\in\{1,\dots,n\}$ such that for $i\neq k$ we have $\Theta_i=\{i\}$ and $\Theta_k=\{k,n_2\}$.
For an EPSIC we have $n_2-1=n_1=n$.
\end{itemize}

\begin{figure}
\resizebox{10cm}{!}{
\begin{tikzpicture}[y=0.80pt,x=0.80pt,yscale=-1, inner sep=0pt, outer sep=0pt]
  \path[draw=black,line join=round,miter limit=4.00,even odd rule,line
    width=1.600pt,rounded corners=0.0000cm] (68.5714,383.7908) rectangle
    (535.0000,584.5050);
  \path[draw=c805080,line join=miter,line cap=butt,even odd rule,line
    width=0.800pt] (68.5714,457.3622) .. controls (104.1492,460.2512) and
    (140.7934,419.6639) .. (172.8571,411.6479) .. controls (258.1247,407.7759) and
    (510.4434,410.8623) .. (534.2857,412.3622) node [midway,above]{$A_1$};
  \path[draw=c508050,line join=miter,line cap=butt,even odd rule,line
    width=0.800pt] (68.5714,509.5051) .. controls (108.7183,509.7293) and
    (154.7504,509.5051) .. (196.4286,509.5051) .. controls (305.8967,509.5051) and
    (400.1078,525.1654) .. (501.4286,474.5051) .. controls (515.1817,467.6285) and
    (533.5714,473.0765) .. (533.5714,473.0765) node [midway,above] {$A_1$};
  \path[draw=c508050,line join=miter,line cap=butt,even odd rule,line
    width=0.800pt] (68.5714,552.3622) .. controls (142.2074,560.5773) and
    (219.4613,545.7248) .. (297.8571,552.3622) .. controls (381.4329,549.0175) and
    (459.1812,548.8674) .. (532.8571,557.3622) node [midway,above] {$A_2$};
  \path[draw=c805080,line join=miter,line cap=butt,even odd rule,line
    width=0.800pt] (68.5714,412.3622) .. controls (127.7067,410.6511) and
    (163.0729,465.6256) .. (214.1999,464.1773) .. controls (294.1701,468.5926) and
    (347.5995,470.3466) .. (405.7143,473.7908) .. node [near start,above] {$A_2$} controls (444.2669,472.0004) and
    (464.9813,514.1882) .. (533.5714,513.7908) ;
   \draw(300,600) node {$S^3\times[0,1]$};
   \draw(55,530) node {$L_{12}$};
   \draw(55,430) node {$L_{11}$};
\end{tikzpicture}}
\caption{An APSIC between two two-component links. Two crossings are
marked: one is a double point on an annulus, another one is
a crossing between two annuli.}\label{fig:conc1}
\end{figure}
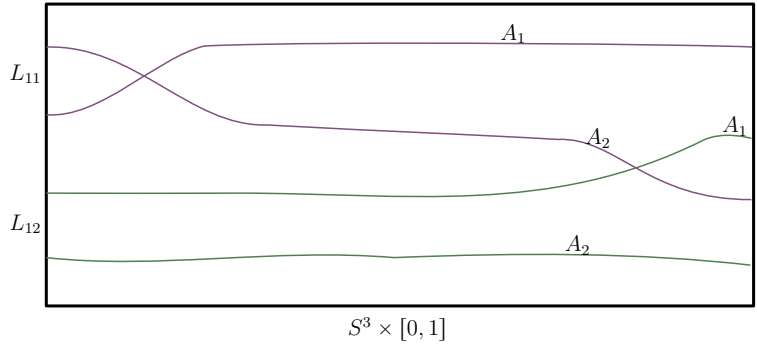
Let us introduce some useful terminology.
\begin{definition}
A double point $z$ of $A$ is called \emph{multicolored} respectively \emph{monochromatic}, if $z$ belongs to two components of $A$ (respectively, to
one component of $A$).
\end{definition}

For future reference we will need two simple facts. For simplicity, suppose $A$ is an APSIC (analogous statement can be proved
for general SPSIC, but we do not need it).
Define a shorthand 
\[l^{k}_{ij}=\lk(L_{ki},L_{kj}) \textrm{ for }i\neq j,\ k=1,2.\] 
As all the self-intersections of $A$
are positive and a positive self-intersection between different link components increases the linking number by $1$, 
we have that for $i\neq j$:
\begin{equation}\label{eq:linkingchange11}
l^2_{ij}=l^1_{ij}+\newm_{ij}.
\end{equation}
Equation~\eqref{eq:linkingchange11} summed up over $j\neq i$ yields the following result.
\begin{lemma}\label{lem:lkv}
Suppose that $A$ is an APSIC and let $\lkv_{1}$ and $\lkv_{2}$ be the linking vectors of $\LL_1$ and $\LL_2$ respectively. Then
\[\lkv_{2}-\lkv_{1}=\frac12\aaa.\]
\end{lemma}
\subsection{Topological constructions involving a PSIC.}\label{sec:topoconstruct}
In the following we generalize the construction of \cite{BL2} that based on a version of a PSIC for knots and as an output produced
a cobordism between surgeries of the two knots involved. We begin with a rather general construction,
later on we will specify its three
variants.  
The construction is done in four steps.

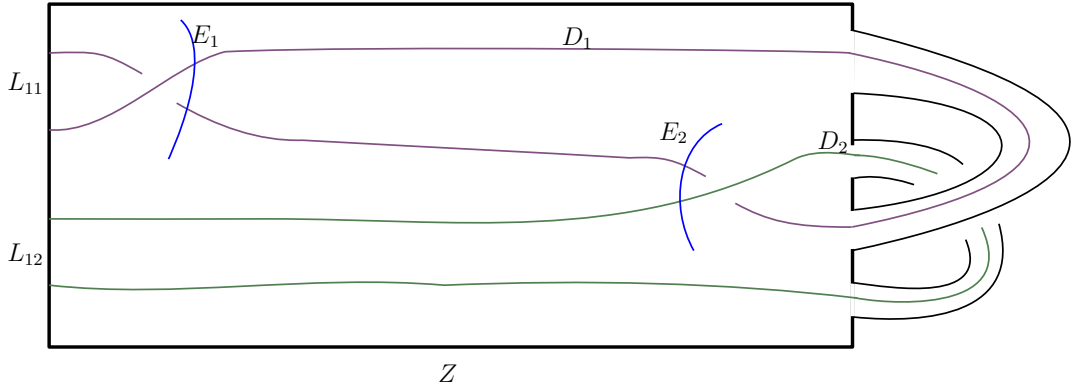
\begin{figure}
\resizebox{\hsize}{!}{
\begin{tikzpicture}[y=0.80pt,x=0.80pt,yscale=-1, inner sep=0pt, outer sep=0pt]
  \path[draw=black,line join=round,miter limit=4.00,even odd rule,line
    width=1.600pt,rounded corners=0.0000cm] (68.5714,383.7908) rectangle
    (535.0000,584.5050); 
  \path[draw=cffffff,line join=miter,line cap=butt,miter limit=4.00,even odd
    rule,line width=2.640pt] (534.4986,399.5410) .. controls (534.4986,395.2552)
    and (534.7908,435.7442) .. (534.7908,435.7442);
  \path[draw=cffffff,line join=miter,line cap=butt,miter limit=4.00,even odd
    rule,line width=4.000pt] (534.7937,466.4299) .. controls (534.7937,462.1442)
    and (534.9596,485.2080) .. (534.9596,485.2080);
  \path[draw=cffffff,line join=miter,line cap=butt,miter limit=4.00,even odd
    rule,line width=2.880pt] (534.7937,504.8788) .. controls (534.7937,504.8788)
    and (535.4335,512.1380) .. (535.2122,527.1293);
  \path[draw=cffffff,line join=miter,line cap=butt,miter limit=4.00,even odd
    rule,line width=1.920pt] (534.9831,547.3684) -- (535.0859,566.4622);

  \path[draw=c805080,line join=miter,line cap=butt,even odd rule,line
    width=0.800pt] (68.5714,457.3622) .. controls (104.1492,460.2512) and
    (138.7731,419.6639) .. (170.8368,411.6479) .. controls (256.1044,407.7759) and
    (509.6858,410.8623) .. (533.5281,412.3622) node [midway,above]{$D_1$};
  \path[draw=c508050,line join=miter,line cap=butt,even odd rule,line
    width=0.800pt] (68.5714,509.5051) .. controls (108.7183,509.7293) and
    (154.7504,509.5051) .. (196.4286,509.5051) .. controls (305.8967,509.5051) and
    (400.1078,525.1654) .. (501.4286,474.5051) .. controls (515.1817,467.6285) and
    (534.3290,472.3189) .. (539.3290,472.3189) node [midway,above]{$D_2$};
  \path[draw=c508050,line join=miter,line cap=butt,even odd rule,line
    width=0.800pt] (68.5714,548.3216) .. controls (142.2074,556.5367) and
    (219.4613,541.6842) .. (297.8571,548.3216) .. controls (381.4329,544.9769) and
    (459.9388,546.6577) .. (537.6147,555.8525);
  \path[draw=c805080,line join=miter,line cap=butt,even odd rule,line
    width=0.800pt] (68.5714,412.3622) .. controls (96.7105,411.5066) and
    (99.6216,411.2511) .. (122.6031,424.5997)(142.6031,441.7426) .. controls
    (165.5846,455.0913) and (190.0650,464.1871) .. (215.6285,463.4630) .. controls
    (295.5986,467.8783) and (347.5995,470.3466) ..
    (405.7143,473.7908)(405.7143,473.7908) .. controls (424.9906,472.8956) and
    (431.2359,474.4235) .. (449.8073,484.6970)(466.9502,500.4113) .. controls
    (485.5216,510.6848) and (500.2865,514.4946) .. (534.5816,514.2959);
  \path[draw=c0000ff,line join=miter,line cap=butt,even odd rule,line
    width=0.800pt] (145.0000,393.0765) .. controls (166.7351,413.6663) and
    (137.8571,474.5051) .. (137.8571,474.5051)node [very near start,right]{$E_1$};
  \path[draw=c0000ff,line join=miter,line cap=butt,even odd rule,line
    width=0.800pt] (459.2857,453.7908) .. controls (433.2760,464.8083) and
    (428.3551,502.0201) .. (442.8571,528.0765) node [very near start,left]{$E_2\phantom{A}$};
  \path[draw=c805080,line join=miter,line cap=butt,even odd rule,line
    width=0.800pt] (534.8267,514.2525) .. controls (682.4226,484.4669) and
    (661.7682,439.5452) .. (532.9921,412.3820);
  \path[draw=black,line join=miter,line cap=butt,even odd rule,line width=0.800pt]
    (536.0461,504.2166) .. controls (642.7809,492.6836) and (657.5606,441.6623) ..
    (535.5843,435.9336);
  \path[draw=black,line join=miter,line cap=butt,even odd rule,line width=0.800pt]
    (534.0498,398.8636) .. controls (740.4139,448.7613) and (661.8577,500.9062) ..
    (535.7304,527.8475);
  \path[draw=black,line join=miter,line cap=butt,even odd rule,line width=0.800pt]
    (534.8539,463.3884) .. controls (564.8669,462.5111) and (588.4935,467.4954) ..
    (599.1396,477.6742);
  \path[draw=black,line join=miter,line cap=butt,even odd rule,line width=0.800pt]
    (536.0027,485.7244) .. controls (551.7509,482.5186) and (570.7143,489.5051) ..
    (570.7143,489.5051);
  \path[draw=black,line join=miter,line cap=butt,even odd rule,line width=0.800pt]
    (600.7143,521.6479) .. controls (616.1830,559.3234) and (558.9229,550.0555) ..
    (535.2451,546.8336);
  \path[draw=c508050,line join=miter,line cap=butt,even odd rule,line
    width=0.800pt] (610.0000,514.5051) .. controls (634.1061,567.7248) and
    (548.2067,558.5900) .. (536.3184,555.6180);
  \path[draw=black,line join=miter,line cap=butt,even odd rule,line width=0.800pt]
    (620.0000,512.3622) .. controls (634.2541,565.1857) and (579.1471,571.3341) ..
    (535.7935,566.9834);
  \path[draw=c508050,line join=miter,line cap=butt,even odd rule,line
    width=0.800pt] (536.0460,472.3622) .. controls (555.1726,471.6872) and
    (584.2857,483.0765) .. (584.2857,483.0765);
   \draw(300,600) node {$Z$};
   \draw(55,530) node {$L_{12}$};
   \draw(55,430) node {$L_{11}$};
\end{tikzpicture}}
\caption{Step 2 of the construction in Section~\ref{sec:topoconstruct}. The boundary of $Z$ is $S^3$ on the left and $S^3_{\qq_2}(\LL_2)$
on the right.}\label{fig:conc2}
\end{figure}
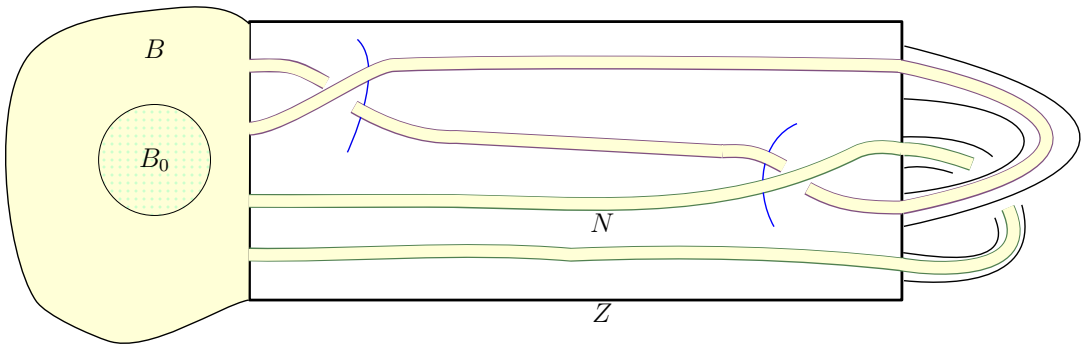
\begin{figure}
\resizebox{\hsize}{!}{
\begin{tikzpicture}[y=0.80pt,x=0.80pt,yscale=-1, inner sep=0pt, outer sep=0pt, every node/.style={scale=1.3}]
  \begin{scope}[shift={(-0.70711,-3.53553)}]
    \path[draw=black,line join=round,miter limit=4.00,even odd rule,line
      width=1.600pt,rounded corners=0.0000cm] (68.5714,383.7908) rectangle
      (535.0000,584.5050);
  \end{scope}
  \path[draw=c805080,line join=miter,line cap=butt,miter limit=4.00,even odd
    rule,line width=8.000pt] (67.9364,457.3622) .. controls (92.3074,457.4822) and
    (152.9530,412.1981) .. (170.8368,411.6479) .. controls (256.1044,407.7759) and
    (509.6858,410.8623) .. (533.5281,412.3622);
  \path[draw=c508050,line join=miter,line cap=butt,miter limit=4.00,even odd
    rule,line width=8.000pt] (67.5893,509.5051) .. controls (105.5933,509.7293)
    and (151.6254,509.5051) .. (193.3036,509.5051) .. controls (302.7717,509.5051)
    and (400.1078,525.1654) .. (501.4286,474.5051) .. controls (515.1817,467.6285)
    and (534.3290,472.3189) .. (534.3290,472.3189);
  \path[draw=c508050,line join=miter,line cap=butt,miter limit=4.00,even odd
    rule,line width=8.000pt] (67.6250,548.0716) .. controls (137.9574,549.6617)
    and (219.4613,541.6842) .. (297.8571,548.3216) .. controls (381.4329,544.9769)
    and (461.0638,546.9077) .. (534.7397,555.4025);
  \path[draw=c805080,line join=miter,line cap=butt,miter limit=4.00,even odd
    rule,line width=8.000pt] (67.7346,412.1097) .. controls (98.7308,411.2541) and
    (99.6216,411.2511) .. (122.6031,424.5997)(142.6031,441.7426) .. controls
    (165.5846,455.0913) and (190.0650,464.1871) .. (215.6285,463.4630) .. controls
    (295.5986,467.8783) and (347.5995,470.3466) ..
    (405.7143,473.7908)(405.7143,473.7908) .. controls (424.9906,472.8956) and
    (431.2359,474.4235) .. (449.8073,484.6970)(466.9502,500.4113) .. controls
    (485.5216,510.6848) and (500.2865,514.4946) .. (534.5816,514.2959);
  \path[draw=c0000ed,line join=miter,line cap=butt,even odd rule,line
    width=0.800pt] (145.0000,393.0765) .. controls (166.7351,413.6663) and
    (137.8571,474.5051) .. (137.8571,474.5051);
  \path[draw=c0000ff,line join=miter,line cap=butt,even odd rule,line
    width=0.800pt] (459.2857,453.7908) .. controls (433.2760,464.8083) and
    (428.3551,502.0201) .. (442.8571,528.0765);
  \path[draw=c805080,line join=miter,line cap=butt,miter limit=4.00,even odd
    rule,line width=8.000pt] (534.8267,514.2525) .. controls (682.4226,484.4669)
    and (661.7682,439.5452) .. (532.9921,412.3820);
  \path[draw=black,line join=miter,line cap=butt,even odd rule,line width=0.800pt]
    (536.0461,504.2166) .. controls (642.7809,492.6836) and (657.5606,441.6623) ..
    (535.5843,435.9336);
  \path[draw=black,line join=miter,line cap=butt,even odd rule,line width=0.800pt]
    (536.0498,397.8636) .. controls (740.4139,448.7613) and (661.8577,500.9062) ..
    (535.7304,527.8475);
  \path[draw=black,line join=miter,line cap=butt,even odd rule,line width=0.800pt]
    (534.8539,463.3884) .. controls (564.8669,462.5111) and (588.4935,467.4954) ..
    (599.1396,477.6742);
  \path[draw=black,line join=miter,line cap=butt,even odd rule,line width=0.800pt]
    (536.0027,485.7244) .. controls (551.7509,482.5186) and (570.7143,489.5051) ..
    (570.7143,489.5051);
  \path[draw=black,line join=miter,line cap=butt,even odd rule,line width=0.800pt]
    (600.7143,521.6479) .. controls (616.1830,559.3234) and (558.9229,550.0555) ..
    (535.2451,546.8336);
  \path[draw=c508050,line join=miter,line cap=butt,miter limit=4.00,even odd
    rule,line width=8.000pt] (610.0000,514.5051) .. controls (634.1061,567.7248)
    and (545.8317,558.0900) .. (533.9434,555.1180);
  \path[draw=black,line join=miter,line cap=butt,even odd rule,line width=0.800pt]
    (620.0000,512.3622) .. controls (634.2541,565.1857) and (579.1471,571.3341) ..
    (535.7935,566.9834);
  \path[draw=c508050,line join=miter,line cap=butt,miter limit=4.00,even odd
    rule,line width=8.000pt] (536.0460,472.3622) .. controls (555.1726,471.6872)
    and (584.2857,483.0765) .. (584.2857,483.0765);
    \path[draw=cffffd7,line join=miter,line cap=butt,miter limit=4.00,even odd
      rule,line width=6.720pt] (67.3326,457.4052) .. controls (95.9847,455.8022) and
      (151.8045,412.5768) .. (170.7382,411.4384) .. controls (256.0057,407.5663) and
      (509.5872,410.6527) .. (533.4294,412.1526);
  \path[draw=cffffd7,line join=miter,line cap=butt,miter limit=4.00,even odd
    rule,line width=6.720pt] (67.3812,412.2014) .. controls (97.8723,410.4166) and
    (100.8954,413.0323) .. (122.8668,424.4823)(142.8668,441.6251) .. controls
    (165.8483,454.9738) and (190.3287,464.0697) .. (215.8922,463.3455) .. controls
    (295.8623,467.7608) and (347.8632,470.2291) ..
    (405.9780,473.6733)(405.9780,473.6733) .. controls (425.2543,472.7781) and
    (431.4996,474.3060) .. (450.0711,484.5795)(467.2139,500.2938) .. controls
    (485.7853,510.5673) and (500.5502,514.3771) .. (534.8453,514.1784);
  \path[draw=cffffd7,line join=miter,line cap=butt,miter limit=4.00,even odd
    rule,line width=6.720pt] (533.6332,514.1216) .. controls (683.7545,484.0834)
    and (661.0797,439.4142) .. (532.3036,412.2511);
  \path[draw=cffffd7,line join=miter,line cap=butt,miter limit=4.00,even odd
    rule,line width=6.720pt] (67.5463,509.5519) .. controls (105.5503,509.7761)
    and (154.5824,509.3019) .. (196.2606,509.3019) .. controls (305.7287,509.3019)
    and (399.9398,524.9622) .. (501.2606,474.3019) .. controls (515.0137,467.4253)
    and (534.1610,472.1157) .. (534.1610,472.1157);
  \path[draw=cffffd7,line join=miter,line cap=butt,miter limit=4.00,even odd
    rule,line width=6.720pt] (533.2718,472.1921) .. controls (551.8626,470.8028)
    and (584.1901,482.9064) .. (584.1901,482.9064);
  \path[draw=cffffd7,line join=miter,line cap=butt,miter limit=4.00,even odd
    rule,line width=6.720pt] (67.3139,548.3545) .. controls (137.5213,549.4446)
    and (218.9002,541.5921) .. (297.2960,548.2295) .. controls (380.8718,544.8848)
    and (461.1277,546.6906) .. (534.8036,555.1854);
  \path[draw=cffffd7,line join=miter,line cap=butt,miter limit=4.00,even odd
    rule,line width=6.720pt] (610.1240,514.3347) .. controls (634.2300,567.5545)
    and (545.4556,558.0447) .. (533.5674,555.0726);
  \path[draw=black,fill=cffffd7,line join=miter,line cap=butt,even odd rule,line
    width=0.800pt] (67.8823,382.0250) .. controls (49.9030,364.6868) and
    (20.3354,369.7089) .. (-7.7782,372.8326) .. controls (-35.5091,375.9138) and
    (-66.9549,381.0977) .. (-86.9741,401.1169) .. controls (-118.5186,432.6613)
    and (-107.3014,547.7562) .. (-84.8528,581.4291) .. controls
    (-77.5110,592.4418) and (-45.0658,606.4766) .. (-33.2340,610.4205) .. controls
    (-3.9645,620.1770) and (46.8891,584.9646) .. (67.1751,580.9646);
   \draw[color=black,fill=white,pattern=dots, pattern color=green!20 ] (0,480) circle (40);
   \draw (0,480) node {$B_0$};
   \draw (0,400) node {$B$};
   \draw (320,525) node {$N$};
   \draw (320,590) node {$Z$};
\end{tikzpicture}}
\caption{The final step of constructing the cobordisms $W_{01}$, $W_{12}$ and $W_{02}$. The shaded part is $W_{01}$. The
unfilled part is $W_{12}$.}\label{fig:conc3}
\end{figure}
\begin{itemize}
\item[\textbf{Step 1.}] Begin with $A\subset S^3\times[1,2]$ and blow up all the double points of $A$ (we do not specify yet, whether we
perform positive or negative blow-ups). The exceptional divisors are denoted by $E_1,\ldots,E_p$. For a component $A_j$ of $A$, let $\wt{A}_j$
be its strict transform. Set
\begin{equation}\label{eq:dij} d_{ij}:=A_i\cdot E_j\textrm{ and } \wt{d}_{ij}:=\frac{A_i\cdot E_j}{E_j\cdot E_j}.\end{equation}

\item[\textbf{Step 2.}] Fix a framing vector $\qq_2=(q_{21},\ldots,q_{2n_2})\in\Z^{n_2}$, 
where $n_2$ is the number of components of the link $\LL_2$. This turns $\LL_2$
into a framed link and let $\Lambda_2$ be its framing matrix. Attach to the $p$-fold blow-up of $S^3\times[1,2]$ constructed
in Step~1 $n_2$ two-handles along $\LL_2$ with framings given by $\qq_2$. The resulting manifold is called $Z$.
Its boundary is $Y_2\sqcup -S^3\times\{1\}$, where $Y_2=S^3_{\qq_2}(\LL_2)$ is a surgery on $\LL_2$; see Figure~\ref{fig:conc2}. 
Let $C_1,\ldots,C_{n_2}$ be the cores of
the handles attached. For each component $L_{2i}$ of $\LL_2$ choose a Seifert surface $\Sigma_{2i}\subset S^3\times\{2\}$. Let 
$F_{2i}=\Sigma_{2i}\cup C_i$. Then $F_{21},\ldots,F_{2n_2}$ are closed connected surfaces. 
\item[\textbf{Step 3.}] 
Form the union $D=C_1\cup\ldots\cup C_{n_2}\cup\wt{A}_1\cup\ldots\cup\wt{A}_n$. Denote by $D_1,\ldots,D_n$ the connected components of $D$
as in Figure~\ref{fig:conc2}.
Let $N$ be a tubular neighbourhood of $D$ in $Z$; see Figure~\ref{fig:conc3}. 
Set $W_{12}=Z\setminus N$ and let $Y_1=\partial W\setminus Y_2$. Then
$W_{12}$ is a cobordism between $Y_1$ and $Y_2$.
\item[\textbf{Step 4.}] Take again $Z$ and glue to it a four-ball $B$ along $S^3\times\{1\}\subset\partial Z$. Let $X_{\qq_2}$
be the resulting manifold. Finally pick a small ball $B_0\subset B$ and drill it out from $X_{\qq_2}$. Set $W_{02}=X_{\qq_2}\setminus B_0$
and $W_{01}=(B\setminus B_0)\cup N$, so that $W_{01}\cup W_{12}=W_{02}$. 
Let $Y_0=S^3$. Then $W_{01}$ is a cobordism between $Y_0$ and $Y_1$ and $W_{02}$ is a cobordism between $Y_0$ and $Y_2$. See Figure~\ref{fig:conc3}.
\end{itemize}

We have the following immediate observation.
\begin{lemma}\
\begin{itemize}
\item[(a)] The cobordism $W_{02}$ is a $p$-fold blow-up of the cobordism $U_{\qq_2}(\LL_2)$ defined before Proposition~\ref{degreeshift}.  
\item[(b)] Suppose $A$ is a SPSIC. Then $D_1,\ldots,D_n$ are disks and $n=n_1$ is the number of components of $\LL_1$. Furthermore
$Y_1$ is a surgery on $\LL_1$ with a framing vector $\qq_1=(q_{11},\ldots,q_{1n_1})$ 
depending on $\qq_2$ and the signs of blow-ups (we give a precise formula for $\qq_1$ below). The cobordism $W_{01}$
is identified with $U_{\qq_1}(\LL_1)$.
\end{itemize}
\end{lemma}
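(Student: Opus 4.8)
The plan is to trace through the four-step construction of Section~\ref{sec:topoconstruct}, using repeatedly that a blow-up performed at an interior point of a four-manifold commutes with attaching two-handles along the boundary and with removing a ball, and that a tubular neighbourhood of a properly embedded disk is a trivial $D^2\times D^2$. \emph{For part (a)}, I would first check what Steps~2 and~4 give if in Step~1 no blow-ups are performed: in that case the collar $S^3\times[1,2]$ glued to the four-ball $B$ along $S^3\times\{1\}$ is itself a four-ball, attaching to it the $n_2$ two-handles along $\LL_2\subset S^3\times\{2\}$ with framings $\qq_2$ and then deleting $B_0$ yields exactly the manifold $U_{\qq_2}(\LL_2)$ defined before Proposition~\ref{degreeshift}. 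The $p$ blow-ups of Step~1 are made at the double points of $A$, which lie in the interior of $S^3\times[1,2]$, hence away from $S^3\times\{1\}$, from $S^3\times\{2\}$ and from $B_0$; they therefore commute with every operation of Steps~2 and~4, so $W_{02}$ is obtained from $U_{\qq_2}(\LL_2)$ by $p$ blow-ups, which is statement (a).

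\emph{For part (b)}, assume $A$ is a SPSIC. By definition each component $A_i$ is a genus-$0$ surface (an immersed punctured disk) whose boundary consists of the circle $L_{1i}\subset S^3\times\{1\}$ together with the circles $L_{2j}$ with $j\in\Theta_i$ lying in $S^3\times\{2\}$, so $A_i$ has $1+|\Theta_i|$ boundary circles. Blowing up all double points resolves the immersion of each $A_i$ to an embedding, so the strict transform $\wt A_i$ is an embedded surface diffeomorphic to $A_i$ and the $\wt A_i$ are pairwise disjoint. The cores $C_j$ with $j\in\Theta_i$ cap off precisely the boundary circles $L_{2j}$ of $\wt A_i$, hence
\[
D_i=\wt A_i\cup\bigcup_{j\in\Theta_i}C_j
\]
is a genus-$0$ surface with a single boundary circle, that is, a disk, properly embedded in $Z$ with $\partial D_i=L_{1i}\subset S^3\times\{1\}$. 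Since the $\Theta_i$ partition $\{1,\dots,n_2\}$, every core $C_j$ is used exactly once, so $D=D_1\sqcup\dots\sqcup D_n$ has exactly $n=n_1$ connected components.

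It remains to identify the cobordisms. A tubular neighbourhood $N_i$ of the properly embedded disk $D_i$ in $Z$ is a copy of $D^2\times D^2$, because the normal bundle of a disk is trivial; it meets $\partial Z$ in the tubular neighbourhood of $L_{1i}$ inside $S^3\times\{1\}$, a solid torus framed by the normal framing of $D_i$ along $\partial D_i$. Writing $q_{1i}$ for this framing and $\qq_1=(q_{11},\dots,q_{1n_1})$, I would observe that deleting $N=\bigsqcup_i N_i$ from $Z$ removes these solid tori from $S^3\times\{1\}$ and glues back the complementary solid tori of the $\partial N_i$; this is precisely the Dehn surgery on $\LL_1$ with framing $\qq_1$, so $Y_1=S^3_{\qq_1}(\LL_1)$. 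The coefficient $q_{1i}$ is read off from the intersection numbers of $D_i$ with the surfaces $F_{2j}$ and with the exceptional divisors $E_1,\dots,E_p$, and thus depends only on $\qq_2$ and the signs of the blow-ups; the explicit formula is recorded below. Finally, in $W_{01}=(B\setminus B_0)\cup N$ the piece $B\setminus B_0$ is a collar $S^3\times[0,1]$ from $Y_0=S^3$ to $S^3\times\{1\}$, and attaching each $N_i\cong D^2\times D^2$ along the solid-torus neighbourhood of $L_{1i}$ is precisely the attachment of a two-handle along $L_{1i}$ with framing $q_{1i}$; hence $W_{01}$ is diffeomorphic to the four-ball with $n$ two-handles attached along $\LL_1$ with framings $\qq_1$ minus a small ball, i.e.\ to $U_{\qq_1}(\LL_1)$.

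The step I expect to need the most care is this last identification: verifying that deleting a tubular neighbourhood of a properly embedded disk produces, on the level of boundaries, a Dehn surgery on its boundary circle with surgery coefficient equal to the normal framing of the disk, and that the complementary handle picture of $W_{01}$ is the asserted one — a handle-duality argument that must keep track of all framings. The other point that wants attention is the claim that each $\wt A_i$ is embedded and diffeomorphic to $A_i$, which uses that all double points of $A$ are positive transverse double points, so that each of them is resolved by a single blow-up; granting this, the count of boundary components makes each $D_i$ a disk.
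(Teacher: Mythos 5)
Your argument is correct and is precisely the unwinding of the construction that the authors intend: the paper states this lemma as an ``immediate observation'' and supplies no proof, so your write-up just fills in the details of Steps 1--4. The two points you flag for care are indeed the only nontrivial ones, and both are standard: blowing up a transverse double point (of either sign) separates the two sheets, so each $\wt{A}_i$ is an embedded copy of the normalization of $A_i$; and removing the trivial $D^2\times D^2$ neighbourhood of a properly embedded disk is dual to a two-handle attachment with framing given by the disk's normal framing, which yields both $Y_1=S^3_{\qq_1}(\LL_1)$ and $W_{01}\cong U_{\qq_1}(\LL_1)$.
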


From now on we will assume that $A$ is a SPSIC.

Choose Seifert surfaces for $L_{11},\ldots,L_{1n_1}$ and call them $\Sigma_{11},\ldots,\Sigma_{1n_1}$. Let $F_{11},\ldots,F_{1n_1}$ be
closed surfaces obtained by
capping the disks $D_1,\ldots,D_{n_1}$ with $\Sigma_{11},\ldots,\Sigma_{1n_1}$.
The classes $[F_{11}],\ldots,[F_{1n_1}]$ generate $H_2(W_{01};\Z)$. The map $W_{01}\hookrightarrow W_{02}$ induces a monomorphism
on second homologies. We will not distinguish between the class $[F_{1i}]\in H_2(W_{01};\Z)$ and its image in $H_2(W_{02};\Z)$.

\begin{lemma}\label{lem:relationofclasses}
In $H_2(W_{02},\Z)$ we have the following relation.
\[ [F_{1i}]=\sum_{j\in \Theta_i} [F_{2j}]+\sum_{k=1}^p \wt{d}_{ik}[E_k].\]
\end{lemma}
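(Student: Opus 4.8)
The plan is to realise the difference $[F_{1i}]-\sum_{j\in\Theta_i}[F_{2j}]$ by an explicit $2$--cycle $Z_i$ supported entirely in the $p$--fold blow--up $V$ of $S^3\times[1,2]$ produced in Step~1, and then to read off the class of $Z_i$ from the (diagonal, unimodular) intersection form of $V$.

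First I would fix orientations consistently. Orienting each disk $D_i$ compatibly with $\widetilde{A}_i$ induces, for every $j\in\Theta_i$, an orientation of the core $C_j$ for which $\partial C_j=-L_{2j}$; since $\partial\Sigma_{2j}=L_{2j}$, the copy of $C_j$ occurring in $F_{2j}=\Sigma_{2j}\cup C_j$ carries exactly the same orientation as the copy occurring inside $D_i\subset F_{1i}$. Hence, on the level of singular chains in $W_{02}$,
\[
F_{1i}-\sum_{j\in\Theta_i}F_{2j}
=\Bigl(\Sigma_{1i}+\widetilde{A}_i+\sum_{j\in\Theta_i}C_j\Bigr)-\sum_{j\in\Theta_i}\bigl(\Sigma_{2j}+C_j\bigr)
=\Sigma_{1i}+\widetilde{A}_i-\sum_{j\in\Theta_i}\Sigma_{2j}=:Z_i ,
\]
all the cores cancelling. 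Using $\partial\widetilde{A}_i=\sum_{j\in\Theta_i}L_{2j}-L_{1i}$ one checks $\partial Z_i=0$, so $Z_i$ is a $2$--cycle; moreover it lies in $V$, since $\Sigma_{1i}$ and the $\Sigma_{2j}$ sit in the boundary spheres $S^3\times\{1\}$, $S^3\times\{2\}$ and $\widetilde{A}_i$ in the interior.

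Next I would compute $[Z_i]\in H_2(V)$. Because $S^3\times[1,2]$ has $H_1=H_2=0$, blowing up $p$ points yields $H_2(V;\Z)\cong\Z^p$, freely generated by $[E_1],\dots,[E_p]$, with $[E_k]\cdot[E_l]=0$ for $k\ne l$ and $[E_l]\cdot[E_l]=E_l\cdot E_l=\pm1$. Writing $[Z_i]=\sum_k c_k[E_k]$ and pairing with $[E_l]$ gives $c_l\,(E_l\cdot E_l)=[Z_i]\cdot[E_l]$. The spheres $E_l$ lie in the interior of $V$, away from $S^3\times\{1,2\}$, so only $\widetilde{A}_i$ meets them, whence $[Z_i]\cdot[E_l]=\widetilde{A}_i\cdot E_l=d_{il}$ with $d_{il}$ as in \eqref{eq:dij}. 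Therefore $c_l=d_{il}/(E_l\cdot E_l)=\widetilde{d}_{il}$, i.e. $[Z_i]=\sum_k\widetilde{d}_{ik}[E_k]$ in $H_2(V)$.

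Finally, the inclusion $V\hookrightarrow W_{02}$ carries each $[E_k]$ to $[E_k]$, so the identity $[Z_i]=\sum_k\widetilde{d}_{ik}[E_k]$ persists in $H_2(W_{02})$; combined with $Z_i=F_{1i}-\sum_{j\in\Theta_i}F_{2j}$ from the first step, this is the asserted relation. The step I expect to need most care is the orientation bookkeeping that makes the cores $C_j$ cancel; once $Z_i$ is identified as a cycle in $V$, the remainder is the standard argument that a unimodular intersection form detects coefficients. (The cycle $Z_i$ has corners along $L_{1i}$ and the $L_{2j}$, but this is harmless: one may smooth them, or simply work with singular cycles, as only $[Z_i]$ and its pairings with the smooth spheres $E_l$ enter the argument.)
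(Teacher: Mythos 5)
Your proposal is correct and follows essentially the same route as the paper: both decompose $[F_{1i}]-\sum_{j\in\Theta_i}[F_{2j}]$ as the class of the closed surface $\Sigma_{1i}\cup\wt{A}_i\cup\sum_{j\in\Theta_i}\Sigma_{2j}$ (the cores cancelling), observe that this class must be a combination of exceptional divisors since $H_2(S^3\times[1,2];\Z)=0$, and extract the coefficients $\wt{d}_{ik}=d_{ik}/(E_k\cdot E_k)$ by pairing with the $E_k$. Your version is slightly more explicit about the orientation bookkeeping for the cores and about working in the blow-up of $S^3\times[1,2]$ before including into $W_{02}$, but this is only a more detailed write-up of the paper's argument, not a different one.
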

\begin{proof}
The class of $[F_{1i}]$ is the sum of a class $\sum_{j\in\Theta_i}[F_{2j}]$ and the class of an immersed sphere 
\[S_{i}=\Sigma_{1i}\cup\wt{A}_i\cup\sum_{j\in\Theta_i}\Sigma_{2j}.\]
The spheres $S_i$ will usually be only immersed, because the Seifert surfaces $\Sigma_{21},\ldots,\Sigma_{2n_2}$ can intersect (we may assume that
their intersection is transverse, but this is relevant for the present proof).

Notice that $S_{i}$ can be regarded as a strict transform of a closed surface in $S^3\times[0,1]$, formed by capping the component $A_i$
with the Seifert surfaces of corresponding links. This surface in $S^3\times[0,1]$ is homologically trivial, as $H_2(S^3\times[0,1];\Z)=0$. 
Therefore, the class of $S_i$
in $H_2(W_{02};\Z)$ is a linear combination of classes generated by the exceptional divisors. The coefficients in this linear
combinations can be calculated by intersecting $S_i$ with divisors $E_1,\ldots,E_p$. More concretely $[S_i]=\sum\frac{S_i\cdot E_k}{E_k\cdot E_k}[E_k]$.
But geometrically $S_i\cdot E_k=A_i\cdot E_k=d_{ik}$. The lemma follows.
\end{proof}

\begin{lemma}\label{lem:calculateqq1}
If $A$ is a SPSIC and all the blow-ups are of fixed sign (either all positive or all negative), then $\qq_1$ and $\qq_2$
are related by the following formula.
\begin{align}
\intertext{If all the blow-ups are negative}
q_{1i}&=\sum_{j\in \Theta_i} q_{2j}-4\newm_{ii}-a_i.\label{eq:pos1}\\
\intertext{If all the blow-ups are positive}
q_{1i}&=\sum_{j\in \Theta_i} q_{2j}+a_i\label{eq:pos2}.
\end{align}
\end{lemma}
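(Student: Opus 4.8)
The plan is to read off $q_{1i}$ as a self-intersection number in homology. By the last lemma of this section, $W_{01}$ is identified with $U_{\qq_1}(\LL_1)$, and under this identification the tubular neighbourhoods of the disks $D_i$ become the surgery two-handles; hence $F_{1i}$ is a handle core capped with a Seifert surface of $L_{1i}$, and its self-intersection is exactly the framing coefficient, so $q_{1i}=[F_{1i}]\cdot[F_{1i}]$ computed in $H_2(W_{01};\Z)$. Since the inclusion $W_{01}\hookrightarrow W_{02}$ is a monomorphism on $H_2$ which preserves the intersection pairing, I would instead evaluate $[F_{1i}]^2$ inside $W_{02}$, where an explicit basis is available.

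Next I would record the intersection form of $W_{02}$. Because $W_{02}$ is the $p$-fold blow-up of $U_{\qq_2}(\LL_2)$, the group $H_2(W_{02};\Z)$ is freely generated by $[F_{21}],\dots,[F_{2n_2}]$ together with the exceptional classes $[E_1],\dots,[E_p]$, with $[F_{2j}]\cdot[F_{2j'}]$ equal to the $(j,j')$-entry of the framing matrix $\Lambda_2$ of $\LL_2$, with $[E_k]\cdot[E_\ell]=s\,\delta_{k\ell}$ where $s=\pm1$ is the common sign of the blow-ups, and with $[F_{2j}]\cdot[E_k]=0$; the last identity holds because the blow-ups take place near the double points of $A$, which lie in the interior of $S^3\times[1,2]$ and can be kept away from the Seifert surfaces and handle cores making up $F_{2j}$. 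Feeding the expansion $[F_{1i}]=\sum_{j\in\Theta_i}[F_{2j}]+\sum_{k=1}^p\wt{d}_{ik}[E_k]$ of Lemma~\ref{lem:relationofclasses} into the pairing, the mixed $F$--$E$ terms disappear and one is left with
\[
q_{1i}=\sum_{j\in\Theta_i}q_{2j}\;+\;\sum_{\substack{j,j'\in\Theta_i\\ j\neq j'}}\lk(L_{2j},L_{2j'})\;+\;s\sum_{k=1}^p\wt{d}_{ik}^{\,2}.
\]

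It then remains to express the last two sums combinatorially. For the exceptional part, $\wt{d}_{ik}=d_{ik}/s$ with $d_{ik}=A_i\cdot E_k$ by \eqref{eq:dij}; this vanishes unless the $k$-th double point lies on $A_i$, it equals $\pm1$ when that point is multicoloured on $A_i$ (one local sheet belongs to $A_i$), and it equals $\pm2$ when it is monochromatic on $A_i$ (two local sheets), since the double point is positive and the two strict-transform intersections with $E_k$ carry the same sign. Hence $\wt{d}_{ik}^{\,2}=d_{ik}^{\,2}$ takes the values $0,1,4$ in these three cases, so $\sum_k\wt{d}_{ik}^{\,2}=a_i+4\newm_{ii}$ with $a_i$ and $\newm_{ii}$ as in \eqref{eq:defai} and the surrounding definitions. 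For the linking part, the components $L_{2j}$ with $j\in\Theta_i$ co-bound the strict transform $\wt{A}_i$ with $L_{1i}$, so slicing $A_i$ and tracking the crossing changes between consecutive slices — exactly as in the derivation of \eqref{eq:linkingchange11} — re-expresses $\sum_{j\neq j'\in\Theta_i}\lk(L_{2j},L_{2j'})$ through the double-point data of $A_i$. Collecting everything then yields \eqref{eq:pos1} in the case $s=-1$ and \eqref{eq:pos2} in the case $s=+1$; the difference between the two closed forms reflects the different effect of the two blow-up signs on the positive double points of $A$.

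The main obstacle is the sign bookkeeping. One has to pin down the sign of $d_{ik}$ — equivalently of $\wt{d}_{ik}$ — as a function of $s$ and of whether the $k$-th double point is mono- or multi-coloured on $A_i$, using the local complex model of a positive transverse double point, and one must similarly track with which signs the crossings of $A_i$ contribute to $\lk(L_{2j},L_{2j'})$. The cleanest safeguard, which I would carry out in parallel, is to compute $[F_{1i}]\cdot[F_{1i'}]$ for $i\neq i'$ by the same expansion and check that it equals $\lk(L_{1i},L_{1i'})$, as it must since $Y_1$ is an honest surgery on $\LL_1$; that consistency requirement is precisely what fixes the signs and forces the two distinct formulas for $s=-1$ and $s=+1$.
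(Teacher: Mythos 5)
Your skeleton is the same as the paper's: read $q_{1i}$ as the self-intersection of $[F_{1i}]$, expand $[F_{1i}]$ via Lemma~\ref{lem:relationofclasses}, use the orthogonality of the $[F_{2j}]$ and the $[E_k]$, and reduce everything to the combinatorial evaluation of $\sum_k \wt{d}_{ik}^{\,2}[E_k]^2$. The genuine gap is in that last evaluation. You assert that at a monochromatic double point of $A_i$ the two local sheets of the strict transform meet $E_k$ with the \emph{same} sign, hence $d_{ik}=\pm 2$ and $\wt{d}_{ik}^{\,2}=4$, and you apply this for both blow-up signs. That is correct only for negative blow-ups. For a \emph{positive} blow-up of a positive double point the two branches of $\wt{A}_i$ intersect $E_k$ with \emph{opposite} orientations, so $d_{ik}=0$ there; this is exactly the mechanism that removes the $4\newm_{ii}$ term from \eqref{eq:pos2}. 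With your uniform count $\sum_k\wt{d}_{ik}^{\,2}=a_i+4\newm_{ii}$ the positive case comes out as $q_{1i}=\sum_{j\in\Theta_i}q_{2j}+a_i+4\newm_{ii}$, contradicting \eqref{eq:pos2}; so your two formulas cannot both follow from the computation you describe. Note also that the consistency check you propose (computing $[F_{1i}]\cdot[F_{1i'}]$ for $i\neq i'$) cannot detect or repair this, since a monochromatic point of $A_i$ has $\wt{d}_{i'k}=0$ and contributes nothing to the mixed products; the sign issue lives entirely in the diagonal terms.

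A secondary point: you correctly observe that for $|\Theta_i|>1$ the expansion produces the cross term $\sum_{j\neq j'\in\Theta_i}\lk(L_{2j},L_{2j'})$, but you never actually evaluate it --- you only gesture at ``slicing $A_i$'' --- and the target formulas contain no such term. This part is therefore unfinished as written. (The paper's own proof silently drops this term too; it is vacuous in the APSIC case used for Theorem~\ref{thm:mainestimate}, where $\Theta_i=\{i\}$, but it is a real issue for a general SPSIC. You should either show it vanishes in the situations where the lemma is applied or absorb it into the formula.) The main defect to fix, however, is the local intersection count at monochromatic double points under a positive blow-up.
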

\begin{proof}
The coefficients $q_{11},\ldots,q_{1n_1}$ are self-intersections of disks $D_1,\ldots,D_{n_1}$. Here, by the word 'self-intersection' we mean the
following: push slightly $D_i$ to obtain another disk, 
called $D_i'$, intersecting $D_i$ transversally and such that $\partial D_i'\subset S^3\times\{1\}$ is disjoint
from $\partial D_i$ and the linking number 
$\lk(\partial D_i,\partial D_i')$ calculated on $S^3\times\{1\}$ is equal to zero. Then the self-intersection of $D_i$ is defined
as the number of intersection points (counted with signs) of $D_i$ and $D_i'$. In other words, the self-intersection of $D_i$ is
equal to the self-intersection of $[F_{1i}]$. On the other hand, the framing $q_{2j}$ is interpreted in the same way as the self-intersection
of $[F_{2j}]$. 

As the classes $[F_{2j}]$ and $[E_k]$ are orthogonal, by Lemma~\ref{lem:relationofclasses} the difference of self-intersections
\[\left(\sum_{j\in\Theta_i}[F_{2j}]\right)^2-[F_{1i}]^2\]
is equal to
\[\left(\sum_{k=1}^p\wt{d}_{ik}[E_k]\right)^2=\sum_{k=1}^p\wt{d}_{ik}^2[E_k]^2.\]

Now we have two cases. First suppose that all the blow-ups are negative. Then $[E_k]^2=-1$ for all $k$. Moreover, $\wt{d}_{ik}^2=0,1$ or $4$ is the square of
the multiplicity of the double point $z_k$ if $z_k\in A_i$ and $\wt{d}_{ik}=0$ if $z_k\notin A_i$.
As $a_i$ is equal to the number of multicolored double points on $A_i$, while $\newm_{ii}$ is the number of monochromatic double points on $A_i$.
This proves \eqref{eq:pos1}.

The situation with positive blow-ups is analogous. There is one difference, though. If an exceptional divisor $E_k$ is a blow-up of a monochromatic point
on $A_i$, then $d_{ik}=0$ (and not $\pm 2$). 
This corresponds to the fact that in the blow-up the annulus
$\wt{A_i}$ will intersect the exceptional divisor $E_k$ in two points with opposite orientations.
\end{proof}

\subsection{Homological properties of $W_{12}$.}

We will need to study some homological properties of $W_{02}$ and $W_{12}$. They are synthesized in the following lemma,
which is a direct generalization of results in \cite[Section 2.2]{BL2}.
\begin{lemma} \label{lem:definite} Suppose $A$ is sprouting and $\qq_2$ has all coordinates sufficiently large.
\begin{itemize}
\item We have $H_1(W_{12};\Q)=0$ and $H_2(W_{12};\Q)\cong\Q^{p+n_2-n_1}$. Moreover $H_2(W_{12};\Z)$ is the coimage of the map 
$H_2(Z;\Z)\to\Z^{n_1}$ which takes $x$ to the vector $(x\cdot D_1,\ldots,x\cdot D_{n_1})$.
\item If all the blow-ups are positive, the manifold $W_{12}$ has positive definite intersection form on $H_2$. If all the blow-ups
are negative definite and $A$ is an APSIC, then $W_{12}$ has negative definite intersection form.
\end{itemize}
\end{lemma}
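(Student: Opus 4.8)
The plan is to compute $H_*(W_{12})$ directly from the Mayer--Vietoris (or rather, the long exact sequence of the pair) applied to the decomposition $Z = W_{12}\cup N$, using that $N$ deformation retracts onto the one-complex $D$ and that the attaching data of $N$ is recorded by the intersection numbers $x\cdot D_i$. First I would recall that $Z$ is obtained from $S^3\times[1,2]$ by $p$ blow-ups and by attaching $n_2$ two-handles along $\LL_2$, so $H_1(Z;\Q)=0$, $H_2(Z;\Q)\cong\Q^{p+n_2}$, generated by the exceptional classes $[E_1],\dots,[E_p]$ and the surface classes $[F_{21}],\dots,[F_{2n_2}]$, and the intersection form is diagonalized on the $[E_k]$'s (each contributing $\pm1$ according to the sign of the blow-up) while the $[F_{2j}]$'s contribute the framing matrix $\Lambda_2$, which is positive definite once $\qq_2$ is large. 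Since $N$ is a regular neighbourhood of the union of disks $D_1,\dots,D_{n_1}$ (here I use that $A$ is sprouting, so the $D_i$ are disks by the earlier Lemma), $N$ is homotopy equivalent to $\bigvee^{n_1} S^0$-like data — more precisely $N\simeq \coprod_{i} D_i$ with each $D_i$ contractible — and $\partial N \to N$ together with $\partial N\to W_{12}$ gives the Mayer--Vietoris sequence
\[
H_2(\partial N;\Z)\to H_2(W_{12};\Z)\oplus H_2(N;\Z)\to H_2(Z;\Z)\xrightarrow{\ \partial\ } H_1(\partial N;\Z)\to\cdots
\]
The key computation is that the connecting-type map $H_2(Z;\Z)\to \Z^{n_1}$ sending a class $x$ to $(x\cdot D_1,\dots,x\cdot D_{n_1})$ is exactly the obstruction controlling whether a class in $Z$ survives to $W_{12}$: a $2$-cycle in $Z$ can be pushed off $N$ precisely when it has zero intersection with every $D_i$. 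This identifies $H_2(W_{12};\Z)$ with the coimage of that map, and since $x\mapsto (x\cdot D_i)$ is surjective onto $\Z^{n_1}$ (the classes $[F_{2j}]$ already pair nontrivially with the $D_i$, using $j\in\Theta_i$ and the large framings), the rank drops by exactly $n_1$, giving $H_2(W_{12};\Q)\cong\Q^{p+n_2-n_1}$. The same exact sequence, together with $H_1(Z;\Q)=0$ and $H_1(\partial N;\Q)=0$ (boundaries of disks are null-homologous in their tubular neighbourhoods for the relevant piece), yields $H_1(W_{12};\Q)=0$.

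For the definiteness statement, I would argue as follows. The intersection form on $H_2(W_{12};\Q)$ is the restriction of the form on $H_2(Z;\Q)$ to the subspace $V=\{x : x\cdot D_i = 0\text{ for all }i\}$, via the inclusion $W_{12}\hookrightarrow Z$ (one must check this inclusion is injective on rational $H_2$ and compatible with intersection forms, which follows from the Mayer--Vietoris identification above and from the fact that classes supported in $N$ are spanned by the $[D_i]$, which lie in the radical of the relevant pairing after the quotient). In the positive-blow-up case, $H_2(Z;\Q)$ carries a positive definite form — each $[E_k]^2=+1$ and the $[F_{2j}]$-block is $\Lambda_2$, positive definite by largeness of $\qq_2$, and the two blocks are orthogonal — so any subspace, in particular $V$, inherits a positive definite form; this is immediate. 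In the negative-blow-up case with $A$ an APSIC, the form on $H_2(Z;\Q)$ has signature $(n_2, p)$, so it is not definite, and one must genuinely use the constraint $x\cdot D_i = 0$. Here I would use Lemma~\ref{lem:relationofclasses}: in $H_2(W_{02};\Z)$ one has $[F_{1i}]=\sum_{j\in\Theta_i}[F_{2j}]+\sum_k \wt d_{ik}[E_k]$, and for an APSIC the classes $[F_{1i}]$ span a subspace on which the form is given by the framing matrix $\Lambda_1$ of $\LL_1$ (by Lemma~\ref{lem:calculateqq1}, with $\qq_1$ having all coordinates large once $\qq_2$ does, since $q_{1i}=q_{2i}-4\newm_{ii}-a_i$ — wait, this needs $q_{2i}$ large enough to dominate, which is exactly the hypothesis). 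The point is that $V$ is the orthogonal complement, inside $H_2(Z;\Q)$, of the span of the $D_i$, and the $D_i$ together with the $[F_{1i}]$ and $[E_k]$ let one exhibit $H_2(Z;\Q)\otimes$ a correction as an orthogonal sum in which the $V$-part is negative definite: concretely, $V$ is spanned by combinations of the $[E_k]$ (all with square $-1$) after subtracting their $[F_{2j}]$-components, i.e. $V$ can be shown to be a sum of a negative definite lattice built from the $[E_k]$ and the negative definite complement coming from $-\Lambda_1$ versus $\Lambda_2$ via the congruence relation. I would make this precise by choosing the basis $\{[E_1],\dots,[E_p]\}\cup\{[F_{1i}] : i\}$-adapted splitting and diagonalizing.

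The main obstacle I expect is the negative-definite case: one has to show that imposing the $n_1$ linear conditions $x\cdot D_i=0$ on the indefinite form of signature $(n_2,p)$ on $H_2(Z)$ kills exactly the positive part. This is where the geometry of the APSIC enters essentially — it is precisely the content that $W_{12}$ sits inside the negative-definite blow-up $W_{02}$ of $U_{\qq_2}(\LL_2)$ (part (a) of the earlier Lemma) in such a way that the "extra" directions $[D_i]$ carry the positive-definite block $\Lambda_2$ and get quotiented out. I would model the argument on \cite[Section 2.2]{BL2}, checking that the relation in Lemma~\ref{lem:relationofclasses} and the formula \eqref{eq:pos1} combine to show that the form on $H_2(Z)$, restricted to a complement of $\mathrm{span}\{[F_{2j}]\}$ that still meets $V$, is negative definite; the largeness of $\qq_2$ (hence of $\qq_1$) guarantees $\Lambda_1$ is also positive definite, which is what makes the subtraction work out with the right sign. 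The positive-blow-up half and the homology-rank computation are routine once the Mayer--Vietoris setup is in place.
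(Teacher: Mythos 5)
Your proposal is correct and follows essentially the same route as the paper: the long exact sequence of the pair $(Z,W_{12})$ (equivalently your Mayer--Vietoris setup) identifies $H_2(W_{12};\Z)$ with the classes in $H_2(Z;\Z)$ orthogonal to the span of $[F_{11}],\ldots,[F_{1n_1}]$, and definiteness follows by restriction of the positive definite form on $Z$ in the positive case, and by a signature count in the negative APSIC case using that the form on that span is the positive definite matrix $\Lambda_1$. The one step you leave ``to be made precise'' --- that removing an $n$-dimensional positive definite subspace from the nondegenerate form of signature $(n,p)$ on $H_2(Z;\Q)$ leaves a negative definite orthogonal complement of dimension $p$ --- is exactly how the paper concludes, so nothing essential is missing.
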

\begin{proof}
The manifold $W_{12}$ is built from $Z$ by removing tubular neighborhoods of disks. As $Z$ arises by gluing $n_2$
two-handles to the $p$-fold blow-up of $S^3\times[0,1]$ and the framing matrix is nondegenerate, we have $H_1(Z;\Q)=0$, $H_2(Z;\Q)\cong\Q^{p+n_2}$
and $H_3(Z;\Q)=0$, furthermore $H_3(Z;\Z)=0$.

Consider the long exact sequence of homology (with $\Z$ coefficients) of the pair $(Z,W_{12})$. By excision we have 
$H_*(Z,W_{12})\cong H_*(N,\partial_+N)$. Here $N$ is viewed as a $D^2$ bundle over $D$ and $N_+$ is the associated
$S^1$ bundle. Using Thom isomorphism we obtain that $H_3(N,\partial_+N)=0$ and that 
$H_2(N,\partial_+N)\cong\Z^{n_1}$ is generated by classes $\alpha_j:=[({t_j}\times D^2,{t_j}\times S^1)]\in
H_2(N,\partial_+ N;\Z)$, $j=1,\ldots,n_1$, where
$t_1,\ldots,t_{n_1}$ are some points in $D_1,\ldots,D_{n_1}$ respectively.  

The latter implies that $H_2(W_{12};\Z)$ injects into $H_2(Z;\Z)$. The map 
\[\kappa\colon H_2(Z;\Z)\to H_2(Z,W_{12};\Z)\]
can be explicitly described. Namely, for $x\in H_2(Z;\Z)$ we choose its representative as a union of cycles
each intersecting $D$ transversally. Then 
\[\kappa(x)=(x\cdot D_1)\alpha_1+(x\cdot D_2)\alpha_2+\ldots+(x\cdot D_{n_1})\alpha_{n_1}.\]
This proves the first part of the lemma.

The first part can also be rephrased in another way. As each class $x\in H_2(Z;\Z)$ can be represented by a surface disjoint
from $S^3\times\{1\}$, the geometric intersection number $(x\cdot D_j)$ is equal to $x\cdot F_{1j}$.  
With  this description it follows $H_2(W_{12};\Z)$
is an orthogonal (with respect to the intersection form) complement to a submodule of $H_2(Z;\Z)$ generated by 
$[F_{11}],\ldots,[F_{1n_1}]$. The same applies for homologies with $\Q$ coefficients. Therefore, the signature of the intersection
form on $W_{12}$ can be calculated as the difference of the signature of the intersection form on $Z$ and  the signature of
the intersection form on $[F_{11}],\ldots,[F_{1n_1}]$. The proof of the second part follows now by a case by case analysis.

If all the blow-ups are positive, then $Z$ has a positive definite intersection form, hence it restricts to a positive
definite intersection form on $W_{12}$. If $A$ is an APSIC and  all the blow-ups are negative, then one readily computes that 
$b_2^+(Z)=n$ and $b_2^-(Z)=p$. Moreover, the intersection form on an $n$-dimensional subspace spanned by
$[F_{11}],\ldots,[F_{1n}]$ is positive definite. So its orthogonal complement is negative definite.

\end{proof}

\section{Inequalities for the \HFF function under the crossing change}
We will now assume that links $\LL_1$ and $\LL_2$ are connected by a PSIC. The inequality for $d$-invariants \eqref{eq:dinvariantinequality}
will translate into the inequality between \HFF functions, or, equivalently, \JFF functions. 

We are going to prove the following two results.
\begin{theorem}\label{thm:mainestimate}
Let $A$ be an APSIC from $\LL_1$ to $\LL_2$. Let $J_1$ and $J_2$ be the \JFF functions as in Definition~\ref{def:J}.
Set $\rr=(\newm_{11},\ldots,\newm_{nn})$. Choose a presentation of $\newm_{il}$ for $i<l$
\[\newm_{il}=\newm_{il}^1+\newm_{il}^2,\]
where $\newm_{il}^1, \newm_{il}^2$ are non-negative integers. Set 
\[k_i=\sum_{j<i} \newm_{ji}^2+\sum_{j>i}\newm_{ij}^1.\]
Let $\kk=(k_1,\ldots,k_n)$. Then for any $\mm\in\Z$ we have
\begin{equation}\label{eq:mainestimate}
J_1(\mm+\kk)\le J_2(\mm)\le J_1(\mm-\rr)
\end{equation}
\end{theorem}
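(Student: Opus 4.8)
The plan is to run the four-step construction of Section~\ref{sec:topoconstruct} starting from the given APSIC $A$, once with all blow-ups negative and once with all blow-ups positive, and in each case apply the $d$-invariant inequality \eqref{eq:dinvariantinequality} to the resulting definite cobordism. Concretely, fix a surgery vector $\qq_2 \ggcurly \0$ on $\LL_2$ so that the large surgery theorem applies, and let $\qq_1$ be the induced framing on $\LL_1$ given by Lemma~\ref{lem:calculateqq1}. For an APSIC one has $\Theta_i = \{i\}$, so \eqref{eq:pos1} reads $q_{1i} = q_{2i} - 4\newm_{ii} - a_i$ in the negative case and \eqref{eq:pos2} reads $q_{1i} = q_{2i} + a_i$ in the positive case; these are still large provided $\qq_2$ was chosen large enough. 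The cobordism $W_{12}$ between $Y_1 = S^3_{\qq_1}(\LL_1)$ and $Y_2 = S^3_{\qq_2}(\LL_2)$ is negative definite in the negative case (Lemma~\ref{lem:definite}, using that $A$ is an APSIC) and positive definite in the positive case; applying the latter statement to the reversed cobordism $-W_{12}$ gives a negative definite cobordism from $-Y_2$ to $-Y_1$.

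The next step is bookkeeping with \spinc{} structures and the degree shift. Using Theorem~\ref{thm:surgeryonlinks}, $d(S^3_{\qq_k}(\LL_k), \sss_{\vv_k}) = -\deg F_{(U'_{\qq_k}, \sst_{\vv_k})} - 2H_k(\vv_k)$ for $\vv_k \in P_\H(\Lambda_k)$. One composes the cobordism $U'_{\qq_2}$ (from $Y_2$ to $S^3$) with $W_{12}$ to recover $U'_{\qq_1} = W_{01}$ up to the $p$ blow-ups (Lemma following Step~4, part (a)); by additivity of degree (Remark~\ref{rem:additive}) and Proposition~\ref{prop:blowup}, the degree of the glued cobordism $W_{12}$, with a \spinc{} structure $\mathfrak{t}$ chosen to restrict correctly to $\sst_{\vv_1}$ and $\sst_{\vv_2}$ on the two ends, is $\deg F_{(U'_{\qq_1}, \sst_{\vv_1})} - \deg F_{(U'_{\qq_2}, \sst_{\vv_2})}$ plus a correction of $0$ in the negative case (since negative blow-ups preserve the degree) and $-p$ in the positive case. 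Feeding this into \eqref{eq:dinvariantinequality} and cancelling the $\deg F$ terms, the negative-definite-$W_{12}$ inequality collapses to an inequality of the form $H_1(\vv_1) \le H_2(\vv_2)$, and the positive case (applied to $-W_{12}$) gives $H_2(\vv_2) \le H_1(\vv_1')$, where $\vv_1, \vv_1'$ are the characteristic vectors determined by how $\mathfrak{t}$ restricts. The content of Lemma~\ref{lem:relationofclasses} then dictates exactly which lattice points appear: $[F_{1i}] = [F_{2i}] + \sum_k \wt d_{ik}[E_k]$, so $c_1(\mathfrak{t})$ paired against $[F_{1i}]$ versus $[F_{2i}]$ differs by the contribution of the exceptional divisors, and translating via $c_1(\sst_\vv) = 2\vv - (\Lambda_1 + \dots)$ converts this into a shift of the argument of $H$ by the multiplicities of the double points lying on $A_i$. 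The freedom in the choice of \spinc{} structure on each exceptional $E_k$ (the $\pm PD[E_k]$ in Proposition~\ref{prop:blowup}) is precisely what produces the decomposition $\newm_{il} = \newm_{il}^1 + \newm_{il}^2$ and hence the vectors $\kk$ and $\rr$: a multicolored double point between $A_i$ and $A_l$ can be charged to the $i$-side or the $l$-side, while a monochromatic point on $A_i$ (contributing $\wt d_{ii} = \pm 2$, whence $\wt d_{ii}{}^2 = 4$) always contributes to the $i$-th coordinate, giving $\rr = (\newm_{11}, \dots, \newm_{nn})$. Finally one passes from $H_1, H_2$ on the shifted lattices $\BH(\LL_1), \BH(\LL_2)$ to $J_1, J_2$ on $\Z^n$ using Definition~\ref{def:J} together with Lemma~\ref{lem:lkv}, which says $\lkv_2 - \lkv_1 = \tfrac12 \aaa$; this linking-number shift is exactly absorbed by the $a_i$ terms appearing in $\qq_1$ and in the characteristic vectors, so that the two inequalities come out clean as $J_1(\mm + \kk) \le J_2(\mm) \le J_1(\mm - \rr)$.

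The main obstacle will be the precise tracking of the \spinc{} correction terms in the third paragraph: one must check that the grading-shift contributions $-\deg F_{(U'_{\qq_k}, \sst_{\vv_k})}$ cancel exactly between the two surgeries after composing cobordisms, that the blow-up corrections ($0$ or $-p$) are consumed correctly, and that the vectors $\vv_1, \vv_1'$ read off from $c_1(\mathfrak{t}) \cdot [F_{1i}]$ match the claimed arguments $\mm + \kk$ and $\mm - \rr$ after the $H \leftrightarrow J$ translation. A subtle point is ensuring $\mathfrak{t}$ can be chosen to restrict to the prescribed $\sst_{\vv_k}$ on both ends simultaneously — this requires that the relevant characteristic vectors be compatible under the inclusion $H_2(W_{01};\Z) \hookrightarrow H_2(W_{02};\Z)$, which is where Lemma~\ref{lem:relationofclasses} and the monomorphism statement just before it are used. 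Once this linear algebra is set up carefully, the inequality \eqref{eq:mainestimate} follows by specializing to $\mm \in \Z^n$ with all coordinates large (so that the large surgery theorem applies to the chosen \spinc{} structures) and then extending to all $\mm$ by the stabilization property of $H$ under the projection formulas.
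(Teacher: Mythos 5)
Your proposal follows essentially the same route as the paper's proof: two runs of the Section~\ref{sec:psic} construction (all blow-ups negative, resp.\ all positive), negative definiteness of $W_{12}$ resp.\ $-W_{12}$ from Lemma~\ref{lem:definite}, the $d$-invariant inequality \eqref{eq:dinvariantinequality}, cancellation of the $\deg F$ terms via additivity of degree and Proposition~\ref{prop:blowup}, and the identification of the argument shifts $\rr$ and $\kk$ from Lemma~\ref{lem:relationofclasses} together with the $\pm PD[E_k]$ freedom on the multicolored exceptional divisors and the shift $\lkv_2-\lkv_1=\frac12\aaa$ of Lemma~\ref{lem:lkv}. Your accounting of the degree corrections ($0$ in the negative case, $-p$ in the positive case) is consistent with the paper's computation $\deg F_{(-W_{02},\sst)}+\deg F_{(W_{12},\sst)}-\deg F_{(-W_{01},\sst)}=-\chi(W_{12})=-p$.

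The one step that would fail as written is the last one: you propose to prove \eqref{eq:mainestimate} only for $\mm$ with all coordinates large and then ``extend to all $\mm$ by the stabilization property of $H$ under the projection formulas.'' The projection formula \eqref{projection for h} only relates $H_{\LL}$ at points with some coordinates large to the \HFF function of a sublink; it gives no way to propagate an inequality from $\mm\ggcurly\0$ down to arbitrary (e.g.\ very negative) $\mm$, where $H$ carries the essential information. The correct mechanism, which the paper states explicitly, is an order of quantifiers: fix $\mm$ first and only then choose $\qq_2$ (hence $\qq_1$) large enough that $\vv_2\in P_\H(\Lambda_2)$ and $\vv_1\in P_\H(\Lambda_1)$, so the Large Surgery Theorem applies to the relevant \spinc{} structures for that particular $\mm$. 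Since the truncation region $P_\H(\Lambda)$ grows with the framing, every $\mm$ is eventually covered, and no extension argument is needed. (A cosmetic point: in the positive blow-up case a monochromatic double point has $d_{ik}=0$ rather than $\pm2$, which is exactly why $\kk$ contains no $\newm_{ii}$ terms; your conclusion about $\rr$ and $\kk$ is nonetheless correct.)
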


A counterpart of this result for an EPSIC is the following.
\begin{theorem}\label{thm:epsic}
Suppose $A$ is an EPSIC. Choose $\mm_2\in\Z^{n_2}$ and let $\mm_1\in\Z^{n_1}$ be given by 
 $\mm_{1i}=\mm_{2i}$ if $i\neq k$ and $\mm_{1k}=\mm_{2k}+\mm_{2,n_2}$. Then
\begin{equation}\label{eq:sprj}
J_1(\mm_1)\le J_2(\mm_2).
\end{equation}
\end{theorem}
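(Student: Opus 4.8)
The plan is to repeat the mechanism of the proof of Theorem~\ref{thm:mainestimate}, the simplification being that an EPSIC is smooth, so the construction of Section~\ref{sec:topoconstruct} involves no blow-ups ($p=0$). Fix a framing vector $\qq_2$ for $\LL_2$ with all coordinates sufficiently large and carry out Steps~1--4, obtaining $Z$, $W_{01}$, $W_{12}$ and $W_{02}$. Since $p=0$, the Lemma stated right after the four-step construction gives $W_{02}=U_{\qq_2}(\LL_2)$ and, because $A$ is sprouting, $W_{01}=U_{\qq_1}(\LL_1)$, with $W_{02}=W_{01}\cup_{Y_1}W_{12}$ and $Y_1=S^3_{\qq_1}(\LL_1)$; by Lemma~\ref{lem:calculateqq1} with all $\newm_{ii}=0$ and $a_i=0$ one has $q_{1i}=\sum_{j\in\Theta_i}q_{2j}$, i.e.\ $q_{1i}=q_{2i}$ for $i\neq k$ and $q_{1k}=q_{2k}+q_{2,n_2}$. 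Reversing orientations yields a decomposition $U'_{\qq_2}=(-W_{12})\cup_{Y_1}U'_{\qq_1}$ of cobordisms, the piece $-W_{12}$ going from $Y_2$ to $Y_1$.

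The next step consists of two homological observations. First, the argument in the proof of Lemma~\ref{lem:definite} applies directly: since $p=0$ and $\qq_2$ is large, $Z$ is positive definite, and $H_2(W_{12};\Q)$ is the orthogonal complement of the positive definite subspace $\langle[F_{11}],\dots,[F_{1n_1}]\rangle$, of corank $n_2-n_1=1$; hence $W_{12}$ is positive definite with $b_2=1$, so $-W_{12}$ is negative definite. Second, I would record how an EPSIC changes linking numbers: because $A$ is smooth with $\Theta_i=\{i\}$ for $i\neq k$ and $\Theta_k=\{k,n_2\}$, one gets $\lk(L_{1i},L_{1j})=\lk(L_{2i},L_{2j})$ for $i,j\neq k$ and $\lk(L_{1i},L_{1k})=\lk(L_{2i},L_{2k})+\lk(L_{2i},L_{2,n_2})$ for $i\neq k$. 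A short computation then yields the linking-vector relations $\lkv(\LL_1)_i=\lkv(\LL_2)_i$ for $i\neq k$ and $\lkv(\LL_1)_k=\lkv(\LL_2)_k+\lkv(\LL_2)_{n_2}-\lk(L_{2k},L_{2,n_2})$, so that, setting $\vv_j=\mm_j+\lkv(\LL_j)$ and using the definition of $\mm_1$ in the statement, one has $\vv_{1,i}=\vv_{2,i}$ for $i\neq k$ and $\vv_{1,k}=\vv_{2,k}+\vv_{2,n_2}-\lk(L_{2k},L_{2,n_2})$.

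Choosing $\qq_2$ large enough that $\vv_1\in P_\H(\Lambda_1)$ and $\vv_2\in P_\H(\Lambda_2)$, let $\sst_{\vv_2}$ be the spin$^c$ structure on $W_{02}=X_{\qq_2}$ furnished by Proposition~\ref{prop:mo125}. Evaluating $c_1(\sst_{\vv_2})$ on the classes $[F_{1i}]$ spanning $H_2(W_{01})$, using Lemma~\ref{lem:relationofclasses} (which here reads $[F_{1i}]=\sum_{j\in\Theta_i}[F_{2j}]$) together with the linking identities above, one checks that these values agree with those of $c_1(\sst_{\vv_1})$; since $H^2(W_{01})$ is torsion-free this forces $\sst_{\vv_2}|_{W_{01}}=\sst_{\vv_1}$, and in particular $\sst_{\vv_2}$ restricts to $\sss_{\vv_1}$ on $Y_1$. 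Now apply \eqref{eq:dinvariantinequality} to the negative definite cobordism $-W_{12}$ with the restriction of $\sst_{\vv_2}$, giving $d(Y_1,\sss_{\vv_1})\ge \deg F_{(-W_{12},\,\sst_{\vv_2}|_{-W_{12}})}+d(Y_2,\sss_{\vv_2})$. Feeding in Theorem~\ref{thm:surgeryonlinks} for both surgeries, and using additivity of $\deg F$ along $U'_{\qq_2}=(-W_{12})\cup U'_{\qq_1}$ (Remark~\ref{rem:additive}) to rewrite $\deg F_{(-W_{12},\,\sst_{\vv_2}|_{-W_{12}})}=\deg F_{(U'_{\qq_2},\sst_{\vv_2})}-\deg F_{(U'_{\qq_1},\sst_{\vv_1})}$, all $\deg F$ terms cancel and what remains is $-2H_1(\vv_1)\ge -2H_2(\vv_2)$, i.e.\ $H_1(\vv_1)\le H_2(\vv_2)$, which is \eqref{eq:sprj} by Definition~\ref{def:J}. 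As neither side depends on $\qq_2$, the largeness assumption is harmless.

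The main obstacle is the bookkeeping in the third paragraph: verifying that the shift $\mm_{1k}=\mm_{2k}+\mm_{2,n_2}$ is exactly the one for which $\sst_{\vv_2}$ restricts to $\sst_{\vv_1}$ on $W_{01}$ --- equivalently, the one for which the two surgery grading shifts $\deg F_{(U'_{\qq_j},\sst_{\vv_j})}$ cancel in the final step. This is where the pair-of-pants component $A_k$, and with it the non-product behaviour of the linking form and of the lattice $\BH$, enters the argument, and it has to be tracked through both Lemma~\ref{lem:relationofclasses} and the first Chern class formula of Proposition~\ref{prop:mo125}. A secondary point, in contrast with the APSIC case of Lemma~\ref{lem:definite}, is that here definiteness of $W_{12}$ comes for free from $p=0$, so one must reverse orientation to reach the negative definite situation required by \eqref{eq:dinvariantinequality}.
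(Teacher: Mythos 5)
Your proof is correct and follows essentially the same route as the paper's: with no blow-ups one gets $W_{02}=U_{\qq_2}(\LL_2)$ and $W_{01}=U_{\qq_1}(\LL_1)$, the restriction of the \spinc{} structure is computed by evaluating $c_1$ on the $[F_{1i}]$ via Lemma~\ref{lem:relationofclasses}, positive definiteness of $W_{12}$ comes from Lemma~\ref{lem:definite}, and the $d$-invariant inequality is applied to $-W_{12}$ with the degree terms cancelling by additivity. Your bookkeeping of the linking vectors (in particular $v_{1,k}=v_{2,k}+v_{2,n_2}-\lk(L_{2k},L_{2,n_2})$) is actually more careful than the paper's terse statement of the restriction and correctly lands on $\vv_1-\lkv_1=\mm_1$.
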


Theorem~\ref{thm:mainestimate} is proved in Sections~\ref{sec:positiveblowup} and \ref{sec:negativeblowup}. Theorem~\ref{thm:epsic}
is proved in Section~\ref{sec:epsic}. In Section~\ref{sec:single} we prove Theorem~\ref{thm:single}, which is a straightforward, but
important, corollary of Theorem~\ref{thm:mainestimate}.

\subsection{Proof of Theorem~\ref{thm:mainestimate}. Part 1.}\label{sec:negativeblowup}
In this section we prove the part $J_2(\mm)\le J_1(\mm-\rr)$.

\smallskip
Construct $W_{02}$  by making negative blow-ups of the APSIC concordance; see Section~\ref{sec:topoconstruct}. Choose $\mm\in\Z^n$. Pick
$\qq_2$ sufficiently large (we specify
below the precise meaning of sufficiently large), but now we point out that $\qq_2$ is chosen \emph{after} $\mm$.
According to Lemma~\ref{lem:calculateqq1} choose $\qq_1=\qq_2-4\rr-\aaa$.

Set $\vv_2=\mm-\lkv_2$, where $\lkv_2$ is the linking vector for $\LL_2$. 
Let $\sss_{\vv_2}$ be the \spinc{} structure on $Y_2$.
It extends to a \spinc{} structure $\sst_{\vv_2}$ on $U_{\qq_2}(\LL_2)$ (see Section~\ref{sec:surgeryonlinks} for definition of $U_{\qq}(\LL)$). 
Recall that $W_{02}$ is identified with a $p$-fold
blow-up of $U_{\qq_2}(\LL_2)$. Let $\pi\colon W_{02}\to U_{\qq_2}(\LL_2)$ be the blow-down map. Define
the \spinc{} structure $\sst'_{\vv_2}$ on $W_{02}$ as the \spinc{} structure, whose first Chern class is equal to
\[\pi^*c_1(\sst_{\vv_2})+PD[E_1]+\ldots+PD[E_n].\]

\begin{lemma}\label{lem:spincrestricts}
The \spinc{} structure $\sst'_{\vv_2}$ on $W_{02}$ restricts to the \spinc{} structure $\sss_{\vv_1}$
on $Y_1$, where 
\begin{equation}\label{eq:spincrestricts}
\vv_1=\vv_2-\rr-\frac12\aaa.
\end{equation}
\end{lemma}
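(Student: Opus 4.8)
The plan is to restrict $\sst'_{\vv_2}$ first to the sub-cobordism $W_{01}\subset W_{02}$ and then to its boundary component $Y_1$. By the construction of Section~\ref{sec:topoconstruct}, $W_{01}$ is identified with $U_{\qq_1}(\LL_1)$, and $H_2(W_{01};\Z)=\Z^{n_1}$ is freely generated by $[F_{11}],\dots,[F_{1n_1}]$, with $H^2(W_{01};\Z)$ torsion-free (since $W_{01}$ is simply connected), so a \spinc{} structure on $W_{01}$ is determined by the integers $\langle c_1(\cdot),[F_{1i}]\rangle$, $i=1,\dots,n_1$. I would therefore show that these pairings for $\sst'_{\vv_2}|_{W_{01}}$ agree with the ones for the \spinc{} structure $\sst_{\vv_1}$ attached by Proposition~\ref{prop:mo125} to $\LL_1$ with framing $\qq_1$ and the vector $\vv_1=\vv_2-\rr-\tfrac12\aaa$; those equal $2(v_1)_i-q_{1i}-\sum_{j\neq i}l^1_{ij}$. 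Once the two \spinc{} structures on $W_{01}$ coincide, Proposition~\ref{prop:mo125} gives that $\sst_{\vv_1}$, hence $\sst'_{\vv_2}$, restricts to $\sss_{\vv_1}$ on $Y_1$, which is the assertion.

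To compute $\langle c_1(\sst'_{\vv_2}),[F_{1i}]\rangle$ I would use the defining equality $c_1(\sst'_{\vv_2})=\pi^*c_1(\sst_{\vv_2})+\sum_{k=1}^{p}PD[E_k]$ together with Lemma~\ref{lem:relationofclasses}, which for an APSIC (so $\Theta_i=\{i\}$) reads $[F_{1i}]=[F_{2i}]+\sum_{k}\wt{d}_{ik}[E_k]$. The $\pi^*$ summand contributes $\langle c_1(\sst_{\vv_2}),\pi_*[F_{1i}]\rangle=\langle c_1(\sst_{\vv_2}),[F_{2i}]\rangle$, since $\pi_*[E_k]=0$ and $F_{2i}$ may be taken disjoint from the exceptional divisors; by Proposition~\ref{prop:mo125} applied to $\LL_2$ this is $2(v_2)_i-q_{2i}-\sum_{j\neq i}l^2_{ij}$. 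The exceptional summand contributes $\sum_k[E_k]\cdot[F_{1i}]=\sum_k d_{ik}$, where the sign $E_k\cdot E_k=-1$ of a negative blow-up cancels the sign in $\wt{d}_{ik}=d_{ik}/(E_k\cdot E_k)$; geometrically $d_{ik}=A_i\cdot E_k$ is $2$ at each of the $\newm_{ii}$ monochromatic double points of $A_i$ and $1$ at each of the $a_i$ multicolored ones, so $\sum_k d_{ik}=2\newm_{ii}+a_i$. Hence $\langle c_1(\sst'_{\vv_2}),[F_{1i}]\rangle=2(v_2)_i-q_{2i}-\sum_{j\neq i}l^2_{ij}+2\newm_{ii}+a_i$.

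Finally I would substitute $q_{1i}=q_{2i}-4\newm_{ii}-a_i$ from Lemma~\ref{lem:calculateqq1} (negative blow-ups) and $\sum_{j\neq i}l^2_{ij}=\sum_{j\neq i}l^1_{ij}+a_i$, which is \eqref{eq:linkingchange11} summed over $j\neq i$, and solve $2(v_1)_i-q_{1i}-\sum_{j\neq i}l^1_{ij}=\langle c_1(\sst'_{\vv_2}),[F_{1i}]\rangle$ for $(v_1)_i$: the $a_i$ and $l^1_{ij}$ terms cancel, leaving $(v_1)_i=(v_2)_i-\newm_{ii}-\tfrac12 a_i$, i.e.\ $\vv_1=\vv_2-\rr-\tfrac12\aaa$, which is \eqref{eq:spincrestricts}. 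I expect the only genuinely delicate point to be keeping the orientation and sign conventions coherent — in particular the convention of Proposition~\ref{prop:mo125} for how $c_1(\sst_\vv)$ is paired with the generators $[F_i]$, and the compatibility of the inclusion $W_{01}\hookrightarrow W_{02}$ with intersection forms so that the blow-up signs enter $\wt{d}_{ik}$ correctly; after that the argument is exactly the bookkeeping above.
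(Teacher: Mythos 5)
Your proposal is correct and follows essentially the same route as the paper: both arguments evaluate $c_1(\sst'_{\vv_2})$ on the classes $[F_{1i}]$ via Lemma~\ref{lem:relationofclasses}, use $\sum_k d_{ik}=2\newm_{ii}+a_i$ together with \eqref{eq:linkingchange11} and \eqref{eq:pos1} to pass from the $\Lambda_2$-data to the $\Lambda_1$-data, and match the result against \eqref{eq:c1f11}. The only (harmless) difference is that you make explicit the step the paper leaves implicit, namely that on $W_{01}\cong U_{\qq_1}(\LL_1)$ a \spinc{} structure is determined by the pairings of its first Chern class with $[F_{11}],\ldots,[F_{1n_1}]$.
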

\begin{proof}[Proof of Lemma~\ref{lem:spincrestricts}]
By construction of $W_{01}$ and by Lemma~\ref{lem:calculateqq1} we have $W_{01}=U_{\qq_1}(\LL_1)$. 
The \spinc{} structure $\sss_{\vv_1}$ on $Y_1$ extends to the \spinc{} structure $\sst_{\vv_1}$ on $W_{01}$. Our aim is
to show that with the choice of $\vv_1$ as in the statement of the lemma, $c_1(\sst_{\vv_1})$ and $c_1(\sst_{\vv'_2})$ evaluate
in the same way on the classes $[F_{11}],\ldots,[F_{1n}]$.

By definition of $\sst'_{\vv_2}$ we have
\begin{equation}\label{eq:newchernevaluates1}
\begin{split}
\langle c_1(\sst'_{\vv_2}),[F_{2i}]\rangle&=2v_{2,i}-\left(\Lambda_{21}+\ldots+\Lambda_{2n_2}\right)_i\\
\langle c_1(\sst'_{\vv_2}),[E_j]\rangle&=E_j\cdot E_j=-1,
\end{split}
\end{equation}
where $\Lambda_{21},\ldots,\Lambda_{2n_2}$ are column vectors of the 
framing matrix $\Lambda_2$ for $\LL_2$.
The subscript $i$ in the first formula means that we take the $i$-th coordinate of the vector in the parentheses.

Combining \eqref{eq:newchernevaluates1} with Lemma~\ref{lem:relationofclasses} we obtain
\begin{equation}\label{eq:secondchernevaluates1}
\langle c_1(\sst'_{\vv_2}),[F_{1i}]\rangle=2v_{2,i}-\left(\Lambda_{21}+\ldots+\Lambda_{2n}\right)_i+\sum_{j=1}^p d_{ij}.
\end{equation}
The framing matrices $\Lambda_2$ and $\Lambda_1$  can be compared using \eqref{eq:linkingchange11} and \eqref{eq:pos1}.
\[
\left(\Lambda_{21}+\ldots+\Lambda_{2n}\right)_i-\left(\Lambda_{11}+\ldots+\Lambda_{1n}\right)_i\stackrel{\eqref{eq:linkingchange11}}{=}
\sum_{j\neq i} \newm_{ij}+q_{2i}-q_{1i}\stackrel{\eqref{eq:pos1}}{=}4\newm_{ii}+2a_i.
\]
Notice that $d_{ij}=1$ for all multicolored double points that lie on $A_i$ and $d_{ij}=2$
for all monochromatic double points on $A_i$. Therefore \eqref{eq:secondchernevaluates1}
implies that

\[\langle c_1(\sst'_{\vv_2}),[F_{1i}]\rangle=2v_{2,i}-\left(\Lambda_{21}+\ldots+\Lambda_{2n}\right)_i+a_i+2\newm_{ii}.\]
The two above equations yield
\[\langle c_1(\sst'_{\vv_2}),[F_{1i}]\rangle=2v_{2,i}-\left(\Lambda_{11}+\ldots+\Lambda_{1n}\right)_i-a_i-2\newm_{ii}.\]
On the other hand, by Proposition~\ref{prop:mo125}
\begin{equation}\label{eq:c1f11}
\langle c_1(\sst_{\vv_1}),[F_{1i}]\rangle=2v_{1,i}-\left(\Lambda_{11}+\ldots+\Lambda_{1n}\right)_i.
\end{equation}
Combining the two above formulae we conclude that $c_1(\sst_{\vv_1})$ and $c_1(\sst'_{\vv_2})$ evaluate to the
same number on each of the $[F_{1i}]$. 
\end{proof}

We resume the proof of inequality $J_2(\mm)\le J_1(\mm-\rr)$.
If $\qq_2$ is large, then the statement of Theorem~\ref{thm:surgeryonlinks} holds for $\qq_2$ surgery on $\LL_2$
and for $\qq_1$ surgery on $\LL_1$.
Furthermore, we require that $\qq_2$ and $\qq_1$ are large enough so that $\vv_2\in P_\H(\Lambda_2)$
and $\vv_1\in P_\H(\Lambda_1)$.

Denote by $\sst$ the restriction of $\sst'_{\vv_2}$ to $W_{12}$. By Lemma~\ref{lem:spincrestricts}, $(W_{12},\sst)$
is a \spinc{} cobordism between $(Y_1,\sss_{\vv_1})$ and $(Y_2,\sss_{\vv_2})$.

By Lemma~\ref{lem:definite} $W_{12}$ is negative definite. By \eqref{eq:degreedefinition} we have:
\begin{equation}\label{eq:dw12}
d(Y_2,\sss_{\vv_2})\ge \deg F_{(W_{12},\sst)}+d(Y_1,\sss_{\vv_2}).
\end{equation}
By Proposition~\ref{prop:blowup} 
combined with Theorem~\ref{thm:surgeryonlinks}: 
\begin{equation}\label{eq:disnextstep}
\begin{split}
d(Y_2,\sss_{\vv_2})&=-\deg F(-W_{02},\sst)-2J_2(\vv_2-\lkv_2)-p\\
d(Y_1)&=-\deg F(-W_{01},\sst)-2J_1(\vv_1-\lkv_1).
\end{split}
\end{equation}
Notice that the first equation contains the term $-p$. 
This follows from the fact that $W_{02}$ is not $U_{\qq_2}(\LL_2)$, but it is a negative blow-up 
of $U_{\qq_2}(\LL_2)$ with $p$ blow-ups. If we reverse the orientation, the negative blow-up becomes a positive blow-up, so $\deg F_{(-W_{02},\sst)}=
\deg F_{(U'_{\qq_2}(\LL_2),\sst)}-p$ by Proposition~\ref{prop:blowup}.

Substituting \eqref{eq:disnextstep} into \eqref{eq:dw12} we obtain:
\begin{equation}\label{eq:intermediatestep}
-\deg F_{(-W_{02},\sst)}+\deg F_{(-W_{01},\sst)}-\deg F_{(W_{12},\sst)}-p+J_1(\vv_1-\lkv_1)\ge J_2(\vv_2-\lkv_2).
\end{equation}
Let us look at the expression 
\[\Delta:=\deg F_{(-W_{02},\sst)}+\deg F_{(W_{12},\sst)}-\deg F_{(-W_{01},\sst)}.\] 
Denote by $c_{02},c_{12}$ and $c_{01}$ the integrals of $c_1^2(\sst)$ over $W_{02}$, $W_{12}$ and $W_{01}$ respectively. Likewise
denote by $\sigma_{02},\sigma_{12}$ and $\sigma_{01}$ the corresponding signatures and $\chi_{02}$, $\chi_{12}$, $\chi_{01}$ the Euler
characteristic. We have by \eqref{eq:degreedefinition}:
\begin{align*}
4\deg F_{(-W_{02},\sst)}&=-c_{02}+3\sigma_{02}-2\chi_{02}\\
4\deg F_{(W_{12},\sst)}&=c_{12}-3\sigma_{12}-2\chi_{12}\\
-4\deg F_{(-W_{01},\sst)}&=c_{01}-3\sigma_{01}+2\chi_{01}.
\end{align*}
Notice that in the above expression we switched signs of $\sigma$ and $c$ according to the orientation. Notice also that 
$\sigma_{02}=\sigma_{01}+\sigma_{12}$ and $\chi_{02}=\chi_{01}+\chi_{12}$ (additivity of the signature and of the Euler characteristic)
and $c_{02}=c_{01}+c_{12}$ (functoriality of the Chern class). 
Summing up the three equations we obtain
\[4\Delta=4\chi_{12}=-4\chi(W_{12}).\]
The Euler characteristic of $W_{12}$ can be quickly calculated. Recall that in Section~\ref{sec:topoconstruct} the manifold $W_{12}$
was constructed by taking $S^3\times[0,1]$, blowing up $p$ times, gluing $n$ two-handles and drilling out $n$ disks. The original $S^3\times[0,1]$
has Euler characteristic $0$. Each blow-up increases it by $1$. A two-handle attachment increases it by $1$ and drilling out a disk decreases
it by $1$. Finally $\chi(W_{12})=p$ so $\Delta=-p$. Plugging the value of $\Delta$ into \eqref{eq:intermediatestep} we obtain.
\[J_1(\vv_1-\lkv_1)\ge J_2(\vv_2-\lkv_2).\] 

\smallskip
By definition, $\vv_2-\lkv_2=\mm$. The last step is to calculate $\vv_1-\lkv_1$. We use Lemma~\ref{lem:spincrestricts}. By Lemma~\ref{lem:lkv},
equation \eqref{eq:spincrestricts} can be rewritten as
\[\vv_1=\vv_2-\kk-(\lkv_2-\lkv_1).\] 
This amounts to saying that $\vv_1-\lkv_1=\mm-\rr$, so $J_1(\mm-\rr)\ge J_2(\mm)$.

\subsection{Proof of Theorem~\ref{thm:mainestimate}. Part 2.}\label{sec:positiveblowup}
We are going to prove the part $J_1(\mm+\kk)\le J_2(\mm)$.

The proof is analogous, although there are some differences. We construct $W_{02}$ by making all blow-ups positive. Choose $\mm\in\Z^n$
and let $\qq_2$ be sufficiently large.

We begin with some combinatorics. 
The exceptional divisors of the blow-up are $E_1,\ldots,E_p$. We choose orientation of the divisors by requiring that if $E_j$ is
the exceptional divisor of the blow-up of the point of intersection $A_i\cap A_{i'}$ with $i<i'$, then $E_j\cap A_i=1$ and $E_j\cap A_{i'}=-1$.
The orientation of the exceptional divisors of blow-ups of monochromatic double points is relevant.

Choose now $\delta_1,\ldots,\delta_p\in\{-1,+1\}$ in the following way.
If $E_j$ is the exceptional divisor of the blow-up of a monochromatic double point, then $\delta_j=1$. 
Let now $i$ and $i'$ be the indices such that
$i<i'$. Let $I_{ii'}$ be the set of indices $\{1,\ldots,p\}$ such that if $j\in I_{ii'}$, then $E_j$
is the exceptional divisor of the blow-up of a point in $A_i\cap A_{i'}$. We know that $\# I_{ii'}=\newm_{ii'}$. Partition
the set $I_{ii'}$ into two subsets $I^1_{ii'}$ and $I^2_{ii'}$ of cardinality $m^1_{ii'}$ and $m^2_{ii'}$ respectively.
Set $\delta_j=-1$ for $j\in I^1_{ii'}$ and $\delta_j=1$ for $j\in I^2_{ii'}$. Finally denote
\[\ddl=(\theta_1,\ldots,\theta_n)=\left(\sum_{l=1}^p\delta_l d_{1l},\ldots,\sum_{l=1}^p\delta_l d_{nl}\right).\]
We have the following result
\begin{lemma}\label{lem:newdiff}
With the choice of $\delta_1,\ldots,\delta_p$ as above and with $\kk$ as in the statement of Theorem~\ref{thm:mainestimate} we have
\[\lkv_2-\lkv_1-\kk=\frac12\ddl.\]
\end{lemma}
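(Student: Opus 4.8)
The plan is to compute $\ddl=(\theta_1,\ldots,\theta_n)$ directly from the definitions and compare with $\lkv_2-\lkv_1-\kk$, which by Lemma~\ref{lem:lkv} equals $\frac12\aaa-\kk$. First I would isolate the contribution of each type of double point to $\theta_i=\sum_l\delta_l d_{il}$. A monochromatic double point of $A_i$ has $\delta_l=1$ and, for the \emph{positive} blow-up convention used in this section, $d_{il}=0$ (the strict transform $\wt A_i$ meets $E_l$ in two points of opposite orientation, as noted in the proof of Lemma~\ref{lem:calculateqq1}); so monochromatic points contribute $0$ to every $\theta_i$. Hence only multicolored points matter.

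Next I would fix a pair $i<i'$ and a multicolored point $z_l\in A_i\cap A_{i'}$. By the orientation convention chosen in Section~\ref{sec:positiveblowup}, $E_l\cdot A_i=+1$ and $E_l\cdot A_{i'}=-1$, i.e. $d_{il}=1$ and $d_{i'l}=-1$. Therefore $z_l$ contributes $\delta_l$ to $\theta_i$ and $-\delta_l$ to $\theta_{i'}$. Summing over $l\in I_{ii'}$: the subset $I^1_{ii'}$ (size $m^1_{ii'}$, with $\delta_l=-1$) contributes $-m^1_{ii'}$ to $\theta_i$ and $+m^1_{ii'}$ to $\theta_{i'}$; the subset $I^2_{ii'}$ (size $m^2_{ii'}$, with $\delta_l=+1$) contributes $+m^2_{ii'}$ to $\theta_i$ and $-m^2_{ii'}$ to $\theta_{i'}$. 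So the net contribution of the pair $(i,i')$ to $\theta_i$ is $m^2_{ii'}-m^1_{ii'}$ and to $\theta_{i'}$ is $m^1_{ii'}-m^2_{ii'}$. Adding over all partners $i'$ (splitting into $i'>i$ and $i'<i$, with $\newm_{ji}$ for $j<i$) gives
\begin{equation}
\theta_i=\sum_{j>i}\bigl(\newm_{ij}^2-\newm_{ij}^1\bigr)+\sum_{j<i}\bigl(\newm_{ji}^1-\newm_{ji}^2\bigr).
\end{equation}

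Finally I would match this against $\frac12\aaa-\kk$. Using $\newm_{ij}=\newm_{ij}^1+\newm_{ij}^2$ and recalling $a_i=\sum_{j\neq i}\newm_{ij}$, we have $\frac12 a_i=\frac12\sum_{j>i}(\newm_{ij}^1+\newm_{ij}^2)+\frac12\sum_{j<i}(\newm_{ji}^1+\newm_{ji}^2)$, while $k_i=\sum_{j<i}\newm_{ji}^2+\sum_{j>i}\newm_{ij}^1$. Subtracting,
\begin{equation}
\tfrac12 a_i-k_i=\sum_{j>i}\Bigl(\tfrac12\newm_{ij}^1+\tfrac12\newm_{ij}^2-\newm_{ij}^1\Bigr)+\sum_{j<i}\Bigl(\tfrac12\newm_{ji}^1+\tfrac12\newm_{ji}^2-\newm_{ji}^2\Bigr)=\tfrac12\theta_i,
\end{equation}
which is exactly the $i$-th coordinate of the asserted identity, since $\lkv_2-\lkv_1=\frac12\aaa$ by Lemma~\ref{lem:lkv}. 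The only real subtlety — the step I expect to need the most care — is getting the sign bookkeeping right: namely that for positive blow-ups the monochromatic exceptional divisors contribute $0$ (not $\pm2$) to $d_{il}$, and that the fixed orientation convention on the multicolored exceptional divisors is what makes $d_{il}=+1$, $d_{i'l}=-1$ consistent across all pairs. Once those conventions are pinned down, the computation is the elementary algebra above.
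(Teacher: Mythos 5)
Your proof is correct and follows essentially the same route as the paper: reduce via Lemma~\ref{lem:lkv} to the coordinatewise identity $\ddl=\aaa-2\kk$, note that monochromatic points contribute $0$ for positive blow-ups, and evaluate the multicolored contributions from the orientation convention on the exceptional divisors and the choice of the $\delta_l$. Your sign bookkeeping is in fact cleaner than the paper's own write-up, which states the target as $\ddl=2\kk-\aaa$ and then expands $\sum_l\delta_l d_{il}$ with the two sums over $i'<i$ and $i'>i$ interchanged --- two slips that cancel --- whereas you obtain $\theta_i=\sum_{j>i}(\newm_{ij}^2-\newm_{ij}^1)+\sum_{j<i}(\newm_{ji}^1-\newm_{ji}^2)=a_i-2k_i$ directly and correctly.
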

\begin{proof}
In view of Lemma~\ref{lem:lkv} we need to prove that 
\[\ddl=2\kk-\aaa.\]
By definition, $k_i=\sum_{j<i} \newm_{ji}^2+\sum_{j>i} \newm_{ij}^1$. Using the definition of $a_i$ in \eqref{eq:defai}
and the fact that for $i<j$ $\newm_{ji}=\newm_{ij}=\newm_{ij}^1+\newm_{ij}^2$
we transform the above equation into the following set of equations for $i=1,\ldots,n$:
\begin{equation}\label{eq:deltaeq}
\sum_{l=1}^p\delta_l d_{il}=\sum_{j<i} (\newm_{ji}^2-\newm_{ji}^1)+\sum_{i'>i} (\newm_{ji}^1-\newm_{ji}^2).
\end{equation}
The way the exceptional divisors are oriented implies that
$d_{il}=1$ if $l\in I_{ii'}$ for some $i'>i$, $d_{il}=-1$ if $l\in I_{ii'}$ for some $i'<i$, otherwise $d_{il}=0$.
The left hand side of \eqref{eq:deltaeq} can be expressed as
\[\sum_{l=1}^p\delta_l d_{il}=\sum_{i'<i}\sum_{l\in I_{ii'}} \delta_l-\sum_{i'>i}\sum_{l\in I_{ii'}}\delta_l.\]
But $\sum_{l\in I_{ii'}}\delta_l=\newm_{i'i}^2-\newm_{i'i}^1$ by definition, so
\[\sum_{l=1}^p\delta_l d_{il}=\sum_{j<i} (\newm_{ji}^2-\newm_{ji}^1)+\sum_{i'>i} (\newm_{ji}^1-\newm_{ji}^2).\]
This proves \eqref{eq:deltaeq} and concludes the proof of the lemma.
\end{proof}

We resume the proof of Theorem~\ref{thm:mainestimate}. 
The manifold $W_{02}$ is a $p$-fold positive blow-up of $U_{\qq_2}(\LL_2)$,
and let again $\pi$ be the blow-down map. Choose $\vv_2=\mm+\lkv_2$ and the \spinc{} structure $\sst'_{\vv_2}$ on $W_{02}$ 
given by
\[c_1(\sst'_{\vv_2})=\pi^*c_1(\sst_{\vv_2})+\delta_1 PD[E_1]+\ldots+\delta_n PD[E_n].\]
We have the following result, which is a counterpart of Lemma~\ref{lem:spincrestricts}.
\begin{lemma}\label{lem:newdiff2}
The \spinc{} structure $\sst'_{\vv_2}$ restricts to the \spinc{} structure $\vv_1$ on $Y_1$, where
\[\vv_1=\vv_2-\frac12\ddl.\]
\end{lemma}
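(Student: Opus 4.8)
The plan is to follow the proof of Lemma~\ref{lem:spincrestricts} essentially verbatim, the two differences being that the blow-ups are now positive (so $E_k\cdot E_k=+1$) and that the \spinc{} structure is twisted by the signs $\delta_k$ rather than uniformly by $+1$. First, recall from Lemma~\ref{lem:calculateqq1} in its positive form \eqref{eq:pos2} and from the construction of Section~\ref{sec:topoconstruct} that $W_{01}=U_{\qq_1}(\LL_1)$, and that for an APSIC $\Theta_i=\{i\}$, so that $q_{1i}=q_{2i}+a_i$. Hence the restriction of $\sst'_{\vv_2}$ to $W_{01}$ is one of the \spinc{} structures $\sst_{\vv_1}$ of Proposition~\ref{prop:mo125}, and the vector $\vv_1$ is determined by the evaluations $\langle c_1(\sst_{\vv_1}),[F_{1i}]\rangle=2v_{1,i}-(\Lambda_{11}+\ldots+\Lambda_{1n})_i$, since $[F_{11}],\ldots,[F_{1n}]$ freely generate $H_2(W_{01};\Z)$ and, with $\qq_1$ chosen large, $\vv_1\in P_\H(\Lambda_1)$. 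So the whole task reduces to computing $\langle c_1(\sst'_{\vv_2}),[F_{1i}]\rangle$.

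For this I would combine three ingredients: $\langle c_1(\sst'_{\vv_2}),[F_{2i}]\rangle=2v_{2,i}-(\Lambda_{21}+\ldots+\Lambda_{2n_2})_i$ (Proposition~\ref{prop:mo125} for the $\qq_2$-surgery on $\LL_2$, pulled back by $\pi$); $\langle c_1(\sst'_{\vv_2}),[E_k]\rangle=\delta_k(E_k\cdot E_k)=\delta_k$; and the relation $[F_{1i}]=[F_{2i}]+\sum_k\wt d_{ik}[E_k]$ of Lemma~\ref{lem:relationofclasses}. Since $\wt d_{ik}(E_k\cdot E_k)=d_{ik}$, the contribution of the exceptional divisors is $\sum_k d_{ik}\delta_k$, which by the definition of $\ddl$ and the chosen orientations of the $E_k$ equals $-\theta_i$, giving $\langle c_1(\sst'_{\vv_2}),[F_{1i}]\rangle=2v_{2,i}-(\Lambda_{21}+\ldots+\Lambda_{2n_2})_i-\theta_i$. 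Now I would compare the framing matrices: by \eqref{eq:linkingchange11} and \eqref{eq:pos2},
\[(\Lambda_{21}+\ldots+\Lambda_{2n})_i-(\Lambda_{11}+\ldots+\Lambda_{1n})_i=\sum_{j\ne i}\newm_{ij}+q_{2i}-q_{1i}=a_i-a_i=0,\]
so that, in contrast with the negative case of Lemma~\ref{lem:spincrestricts}, there is no framing correction term $4\newm_{ii}+2a_i$ and one obtains directly $\langle c_1(\sst'_{\vv_2}),[F_{1i}]\rangle=2v_{2,i}-(\Lambda_{11}+\ldots+\Lambda_{1n})_i-\theta_i$. Comparing with the formula for $\langle c_1(\sst_{\vv_1}),[F_{1i}]\rangle$ from the first paragraph yields $2v_{1,i}=2v_{2,i}-\theta_i$ for each $i$, i.e.\ $\vv_1=\vv_2-\tfrac12\ddl$; and since $c_1(\sst_{\vv_1})$ and $c_1(\sst'_{\vv_2})$ then agree on a basis of $H_2(W_{01};\Z)$, their restrictions to $Y_1$ coincide, so $\sst'_{\vv_2}$ restricts to $\sss_{\vv_1}$.

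The one genuinely new point, and the main thing I would check with care, is the sign in the exceptional-divisor term: one must correctly combine the orientations of the $E_k$ fixed above (which make $d_{ik}=A_i\cdot E_k\in\{0,\pm1\}$, with monochromatic double points contributing $0$), the twisting signs $\delta_k$, and the relation $\wt d_{ik}=d_{ik}/(E_k\cdot E_k)$ for positive blow-ups, in order to confirm that $\sum_k\wt d_{ik}\langle c_1(\sst'_{\vv_2}),[E_k]\rangle=-\theta_i$ and is therefore consistent with Lemma~\ref{lem:newdiff}. Everything else — the identification $W_{01}=U_{\qq_1}(\LL_1)$, recovering $\vv_1$ from the $c_1$-evaluations, and the vanishing of the framing correction — is a routine transcription of the proof of Lemma~\ref{lem:spincrestricts}; indeed the vanishing of that correction makes the positive-blow-up computation slightly shorter than the negative one.
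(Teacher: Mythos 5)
Your proposal follows the paper's argument for Lemma~\ref{lem:newdiff2} step for step: the same three ingredients (evaluation of $c_1(\sst'_{\vv_2})$ on the classes $[F_{2i}]$ via Proposition~\ref{prop:mo125}, on the $[E_k]$ via the definition of the twist, and the class relation of Lemma~\ref{lem:relationofclasses}), the same observation that the framing correction $(\Lambda_{21}+\ldots+\Lambda_{2n})_i-(\Lambda_{11}+\ldots+\Lambda_{1n})_i$ vanishes because $\qq_1=\qq_2+\aaa$, and the same final comparison with \eqref{eq:c1f11}. So the route is identical to the paper's.

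The one place you deviate is precisely the step you flag as needing care, and there your justification does not hold up as written. You claim $\sum_k\wt d_{ik}\langle c_1(\sst'_{\vv_2}),[E_k]\rangle=\sum_k d_{ik}\delta_k=-\theta_i$; but $\wt d_{ik}=d_{ik}$ for positive blow-ups and $\theta_i$ is \emph{defined} as $\sum_l\delta_l d_{il}$, computed with the very same orientations of the $E_k$, so this sum is $+\theta_i$ by definition -- no appeal to "the chosen orientations" can produce the minus sign. With the literal conventions of Section~\ref{sec:positiveblowup} one therefore arrives at $2v_{1,i}=2v_{2,i}+\theta_i$. To be fair, the paper's own proof contains the same tension: \eqref{eq:secondchernevaluates2} carries $+\sum_j\delta_j d_{ij}$, yet the conclusion is stated as $2\vv_2-\ddl=2\vv_1$. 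The ambiguity is an overall sign of $\ddl$ (equivalently, which of $I^1_{ii'}$, $I^2_{ii'}$ is assigned $\delta=-1$), and it is harmless for Theorem~\ref{thm:mainestimate}, since swapping $\newm^1_{ij}\leftrightarrow\newm^2_{ij}$ replaces $\kk$ by $\aaa-\kk$ and $\ddl$ by $-\ddl$ and the theorem quantifies over all such partitions. Still, in your write-up the equality ``$=-\theta_i$'' is asserted rather than derived; a complete proof should either adjust one convention so that the minus sign actually appears, or note explicitly the $I^1\leftrightarrow I^2$ symmetry that makes the sign immaterial downstream.
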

\begin{proof}[Proof of Lemma~\ref{lem:newdiff2}]
By definition of $\sst'_{\vv_2}$ we obtain
\begin{equation}\label{eq:newchernevaluates2}
\begin{split}
\langle c_1(\sst'_{\vv_2}),[F_{2i}]\rangle&=2v_{2,i}-\left(\Lambda_{21}+\ldots+\Lambda_{2n_2}\right)_i\\
\langle c_1(\sst'_{\vv_2}),[E_j]\rangle&=\delta_jE_j\cdot E_j=\delta_j.
\end{split}
\end{equation}

Combining \eqref{eq:newchernevaluates2} with Lemma~\ref{lem:relationofclasses} we obtain
\begin{equation}\label{eq:secondchernevaluates2}
\langle c_1(\sst'_{\vv_2}),[F_{1i}]\rangle=2v_{2,i}-\left(\Lambda_{21}+\ldots+\Lambda_{2n}\right)_i+\sum_{j=1}^p \delta_{j}d_{ij}.
\end{equation}

Notice that by Lemma~\ref{lem:calculateqq1} $Y_1$ is a $\qq_1$ surgery on $\LL_1$, where
$\qq_1=\qq_2+\aaa$.
Therefore a quick calculation using \eqref{eq:linkingchange11} yields
\begin{multline*}
\left(\Lambda_{21}+\ldots+\Lambda_{2n}\right)_i-\left(\Lambda_{11}+\ldots+\Lambda_{1n}\right)_i=\\
\left(\Lambda_{21}+\ldots+\Lambda_{2,i-1}+\Lambda_{2,i+1}+\ldots+\Lambda_{2n}\right)_i\\-
\left(\Lambda_{21}+\ldots+\Lambda_{2,i-1}+\Lambda_{2,i+1}+\ldots+\Lambda_{2n}\right)_i\\+q_{2,i}-q_{1,i}=
a_i-a_i=0.
\end{multline*}
Substituting this into \eqref{eq:secondchernevaluates2} 
we obtain.
\[
\langle c_1(\sst'_{\vv_2}),[F_{1i}]\rangle=2v_{2,i}-\left(\Lambda_{11}+\ldots+\Lambda_{1n}\right)_i+\sum_{j=1}^p \delta_{j}d_{ij}.
\]
The evaluation of $c_1(\sst_{\vv_1})$ on $[F_{1i}]$ is given by \eqref{eq:c1f11}. We obtain that
\[\langle c_1(\sst'_{\vv_2}),[F_{1i}]\rangle=\langle c_1(\sst_{\vv1}),[F_{1i}]\rangle, \textrm{ if }2\vv_2-\ddl=2\vv_1.\]
This is exactly the statement of the lemma.
\end{proof}

Resuming the proof of Theorem~\ref{thm:mainestimate} we obtain that with the choice of $\vv_1$ as in Lemma~\ref{lem:newdiff2},
the manifold $(W_{12},\sst'_{\vv_2})$ is a \spinc{} cobordism between $(Y_1,\sss_{\vv_1})$ and $(Y_2,\sss_{\vv_2})$.
By Lemma~\ref{lem:definite} $W_{12}$ is positive definite. Then $-W_{12}$ is negative definite and
\eqref{eq:dinvariantinequality} gives.
\[d(Y_1,\sss_{\vv_1})\ge \deg F_{(-W_{12},\sst)}+d(Y_2,\sss_{\vv_2}).\]
Plugging again the formula for $d$--invariants of large surgeries we obtain.
\begin{equation}\label{eq:degreescancelout}
-\deg F_{(-W_{01},\sst)}+\deg F_{(-W_{02},\sst)}-\deg F_{(-W_{12},\sst)}+J_2(\vv_2-\lkv_2)\ge J_1(\vv_1-\lkv_1).
\end{equation}
Now the expression $\deg F_{(-W_{01},\sst)}+\deg F_{(-W_{12},\sst)}-\deg F_{(-W_{02},\sst)}$ is much easier to handle
than an analogous expression in Section~\ref{sec:negativeblowup}  because 
$-W_{02}=-W_{01}\cup -W_{12}$. Therefore the map $F_{(-W_{02},\sst)}$ is the composition of $F_{(-W_{01},\sst)}$ and $F_{(-W_{12},\sst)}$ so
its degree is the sum of the degrees of the summands.
The three degrees in \eqref{eq:degreescancelout} cancel out and we are left with
\begin{equation}\label{eq:penultimate}
J_2(\vv_2-\lkv_2)\ge J_1(\vv_1-\lkv_1).
\end{equation}
By definition $\vv_2-\lkv_2=\mm$. On the other hand, by Lemma~\ref{lem:newdiff} combined with Lemma~\ref{lem:newdiff2}:
\[\lkv_2-\lkv_1-\kk=\frac12\ddl=\vv_2-\vv_1.\]
Plugging this into \eqref{eq:penultimate} yields $J_2(\mm)\ge J_1(\mm+\kk)$. This accomplishes the proof of Theorem~\ref{thm:mainestimate}.

\subsection{Proof of Theorem~\ref{thm:epsic}}\label{sec:epsic}

The construction is similar as in Section~\ref{sec:positiveblowup}. Take $\mm_2\in\Z^{n_2}$ and let $\qq_2$ be sufficiently large.
The construction of $W_{02}$ is as as in the proof of Theorem~\ref{thm:mainestimate}, but there are no blow-ups, hence $W_{02}=U_{\qq_2}(\LL_2)$. We know that
$W_{01}=U_{\qq_1}(\LL_1)$, where by Lemma~\ref{lem:calculateqq1} $q_{1i}=q_{2i}$ if $i\neq k$ and $q_{1k}=q_{2k}+q_{2n_2}$.
Set $\vv_2=\mm_2-\lkv_2$ and let $\sst_{\vv_2}$ be the \spinc{} structure on $W_{02}$ extending the \spinc{} structure
$\sss_{\vv_2}$ on $Y_2$. Evaluating $c_1(\sst_{\vv_2})$ on classes $[F_{11}],\ldots,[F_{1n_1}]$ we show
that $\sst_{\vv_2}$ restricts to $\sss_{\vv_1}$ on $Y_1$, where $v_{1i}=v_{2i}$ if $i\neq k$ and $v_{1k}=v_{2k}+v_{2n_2}$.
By Lemma~\ref{lem:definite}, $W_{12}$ is positive definite. Therefore $(-W_{12},\sst_{\vv_2})$ is a negative
definite \spinc{} cobordism between $(Y_2,\sss_{\vv_2})$ and $(Y_1,\sss_{\vv_1})$. Acting exactly in the same way as in 
Section~\ref{sec:positiveblowup} we arrive at the inequality $J_2(\vv_2-\lkv_2)\ge J_1(\vv_1-\lkv_1)$. We have $\vv_2-\lkv_2=\mm_2$.
Moreover it is easy to see that with the definition of $\vv_1$ and $\mm_1$, we have $\vv_1-\lkv_1=\mm_1$. This concludes the proof.

\subsection{A variant of Theorem~\ref{thm:mainestimate} for a single crossing change}\label{sec:single}
\begin{theorem}\label{thm:single}
Let $\LL_1$ and $\LL_2$ be two $n$-component links differing by a single positive
crossing change, that is, $\LL_2$ arises by changing a negative crossing of $\LL_1$
into a positive one. Let $J_1$ and $J_2$ be the corresponding \JFF functions and let $\mm\in\Z^n$, $\mm=(m_1,\ldots,m_n)$.
\begin{itemize}
\item[(a)] If the crossing change is between two strands of the same component $L_{1i}$, then
\[J_2(m_1,m_2,\ldots,m_i+1,\ldots,m_n)\le J_1(m_1,\ldots,m_n)\le J_2(m_1,\ldots,m_i,\ldots,m_n).\]
\item[(b)] If the crossing change is between the $i$-th and $j$-th strand of $\LL_1$, then
\[J_2(m_1,m_2,\ldots,m_n)\le J_1(m_1,\ldots,m_n)\le J_2(m_1,\ldots,m_i-1,\ldots,m_n)\]
and 
\[J_2(m_1,m_2,\ldots,m_n)\le J_1(m_1,\ldots,m_n)\le J_2(m_1,\ldots,m_j-1,\ldots,m_n)\]
\end{itemize}
\end{theorem}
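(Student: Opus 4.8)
The plan is to realize the single crossing change by an annular PSIC carrying exactly one positive double point, and then to read off the asserted inequalities directly from Theorem~\ref{thm:mainestimate}. First I would check that if $\LL_2$ is obtained from $\LL_1$ by changing one negative crossing into a positive one, then there is an APSIC $A$ from $\LL_1$ to $\LL_2$ with $p=1$ double point. Indeed, the homotopy that pushes one strand through the other sweeps out a cobordism in $S^3\times[1,2]$ in which the component (or two components) meeting the crossing trace out an immersed annulus with a single transverse self-intersection, while every other component traces out an embedded product annulus; so $A$ is a union of immersed annuli with matching boundary circles, i.e.\ an APSIC in the sense of Section~\ref{sec:psic}. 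Granting this, the remainder of the argument is a specialization of Theorem~\ref{thm:mainestimate} together with index bookkeeping.

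In case (a) the double point is monochromatic, lying on the single component $A_i$, so $\newm_{ii}=1$ and all other $\newm_{jk}=0$; hence $\rr=\ee_i$, $\aaa=\0$, and since there are no off-diagonal terms to split, $\kk=\0$ is forced. Theorem~\ref{thm:mainestimate} then says $J_1(\mm)\le J_2(\mm)\le J_1(\mm-\ee_i)$ for every $\mm\in\Z^n$. The left inequality $J_1(\mm)\le J_2(\mm)$ is the asserted upper bound for $J_1$; replacing $\mm$ by $\mm+\ee_i$ in the right inequality gives $J_2(\mm+\ee_i)\le J_1(\mm)$, the asserted lower bound. This settles (a).

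In case (b) the double point is multicolored, lying on $A_i$ and $A_j$; say $i<j$ (the two displayed lines of (b) are symmetric in $i,j$, so this loses nothing). Then $\newm_{ij}=1$, all $\newm_{kk}=0$, so $\rr=\0$ and $\aaa=\ee_i+\ee_j$. The partition $\newm_{ij}=\newm_{ij}^1+\newm_{ij}^2$ into nonnegative integers is either $(1,0)$ or $(0,1)$. For $(\newm_{ij}^1,\newm_{ij}^2)=(1,0)$ one computes $\kk=\ee_i$, so Theorem~\ref{thm:mainestimate} yields $J_1(\mm+\ee_i)\le J_2(\mm)\le J_1(\mm)$; the right inequality is $J_2(\mm)\le J_1(\mm)$, and replacing $\mm$ by $\mm-\ee_i$ in the left one gives $J_1(\mm)\le J_2(\mm-\ee_i)$, which together give the first line of (b). For $(\newm_{ij}^1,\newm_{ij}^2)=(0,1)$ one computes $\kk=\ee_j$, hence $J_1(\mm+\ee_j)\le J_2(\mm)\le J_1(\mm)$, and the same manipulation yields $J_2(\mm)\le J_1(\mm)\le J_2(\mm-\ee_j)$, the second line of (b).

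The only genuinely delicate point is the first step: one must verify that a negative-to-positive crossing change produces a \emph{positive} double point with the orientation conventions of Section~\ref{sec:psic}, so that $A$ is a PSIC (and not merely an immersed concordance) and Theorem~\ref{thm:mainestimate} applies with the sign assignments used above. Everything after that reduces to substituting vectors into the inequality of Theorem~\ref{thm:mainestimate} and exploiting the freedom in the partition of $\newm_{ij}$.
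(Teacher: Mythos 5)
Your proposal is correct and follows essentially the same route as the paper: realize the crossing change by an APSIC with a single positive double point (product annuli on the untouched components), then specialize Theorem~\ref{thm:mainestimate} with $\rr=\ee_i$, $\kk=\0$ in case (a) and $\rr=\0$, $\kk=\ee_i$ or $\ee_j$ from the two partitions of $\newm_{ij}=1$ in case (b). The index bookkeeping and substitutions you carry out match the paper's argument exactly.
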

\begin{proof}
We begin with part (a). If $\LL_1$ and $\LL_2$ differ by a single positive crossing change involving the component $L_{1i}$,
then there is an APSIC from $\LL_1$ to $\LL_2$. The construction is a generalization of \cite[Example 2.2]{BL2}. We take a product
cobordism between components $L_{1j}$ and $L_{2j}$ for $j\neq i$ and an annulus with a single positive double point connecting $L_{1i}$ to $L_{2i}$.
The concordance has $\newm_{kl}=0$ unless $k=l=i$, we have $\newm_{ii}=1$. In the notation of Theorem~\ref{thm:mainestimate}
we have $\rr=\ee_i$ and $\kk=(0,\ldots,0)$. Part (a) of Theorem~\ref{thm:single} follows immediately.

\smallskip
Part (b) is analogous. We construct an APSIC with $\newm_{kl}=0$ with the exception that $\newm_{ij}=\newm_{ji}=1$. 
We have $\rr=(0,\ldots,0)$ and the splitting $1=\newm_{ij}=\newm_{ij}^1+\newm_{ij}^2$ can be done in two ways: $(\newm_{ij}^1,\newm_{ij}^2)=(0,1)$ or $(1,0)$.
This gives two possibilities for choosing $\kk$, namely $\kk=\ee_i$ or $\kk=\ee_j$. Applying Theorem~\ref{thm:mainestimate} concludes the proof.
\end{proof}
\section{Splitting numbers of links}\label{sec:splitting}
Let us recall the following definition.
\begin{definition} Let $\LL$ be a link with $n$ components.
\begin{itemize}
\item The \emph{splitting number} $sp(\LL)$ is the minimal number of \emph{multicolored} crossing changes (that is, between different components)
needed to turn $\LL$ into a split link.
\item The \emph{clasp number} is the minimal number of double points of a singular concordance between $\LL$ and an unlink with the same
number of components.
\end{itemize}
\end{definition}
\begin{example}
The splitting number of the Whitehead link is $2$, even though the unlinking number is $1$. 
\end{example}

We will use the following terminology:
\begin{definition}
A \emph{positive crossing change} is a change of a negative crossing of a link into a positive crossing. Likewise, a \emph{negative crossing change}
is a change of a positive crossing into a negative crossing.
\end{definition}
\subsection{Splitting number bound from the $\wt{J}$-function}
In Definition~\ref{def:wtj}
we defined a $\wt{J}$-function of a link. The following result gives a ready-to-use bound for the splitting number.

\begin{theorem}\label{thm:basicsplit}
Suppose that $\LL$ can be turned into an unlink using $t_+$ positive and $t_-$ negative multicolored crossing changes.
Then $-t_-\le\wt{J}(\mm)\le t_+$ for all $\mm\in\Z^n$.
\end{theorem}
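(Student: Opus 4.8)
The plan is to argue by induction on the total number $N=t_++t_-$ of multicolored crossing changes in a fixed unlinking sequence for $\LL$. In the base case $N=0$ the link $\LL$ is already split, so $\wt{J}\equiv 0$ by Corollary~\ref{cor:splitwt}, which is exactly the required bound. For the inductive step, let $\LL'$ be the link obtained from $\LL$ by performing the first crossing change of the sequence; then $\LL'$ is unlinked by $t_+'$ positive and $t_-'$ negative multicolored crossing changes with $t_++t_->t_+'+t_-'$, so by the inductive hypothesis $-t_-'\le\wt{J}_{\LL'}(\mm)\le t_+'$ for all $\mm\in\BZ^n$.

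The observation to record first is that a multicolored crossing change between the $i$-th and $j$-th components ($i\neq j$) leaves every individual component $L_k$ unchanged as a knot in $S^3$, since the strand of the other component can be isotoped out of the way. Consequently the knot functions $J_{L_k}$ agree for $\LL$ and for $\LL'$, and in $\wt{J}(\mm)=J(\mm)-\sum_k J_{L_k}(m_k)$ only the term $J$ is affected by the move. Moreover, applying Proposition~\ref{prop:changeby1} to the knot $L_i$ (for which $\BH=\BZ$, so $J_{L_i}=H_{L_i}$), one has $J_{L_i}(m_i-1)-J_{L_i}(m_i)\in\{0,1\}$ and $J_{L_i}(m_i+1)-J_{L_i}(m_i)\in\{-1,0\}$.

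Now one runs the two cases of Theorem~\ref{thm:single}(b). If $\LL\to\LL'$ is a positive crossing change, then $\LL,\LL'$ play the roles of $\LL_1,\LL_2$ and $t_+=t_+'+1$, $t_-=t_-'$, and Theorem~\ref{thm:single}(b) gives $J_{\LL'}(\mm)\le J_{\LL}(\mm)\le J_{\LL'}(\mm-\ee_i)$. The left inequality yields $\wt{J}_{\LL}(\mm)\ge\wt{J}_{\LL'}(\mm)\ge -t_-'=-t_-$. Rewriting the right inequality in terms of $\wt{J}_{\LL'}$, using that $J_{L_k}$ is unchanged, gives $\wt{J}_{\LL}(\mm)\le\wt{J}_{\LL'}(\mm-\ee_i)+\bigl(J_{L_i}(m_i-1)-J_{L_i}(m_i)\bigr)\le t_+'+1=t_+$. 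If instead $\LL\to\LL'$ is a negative crossing change, then $\LL',\LL$ play the roles of $\LL_1,\LL_2$, $t_+=t_+'$, $t_-=t_-'+1$, and Theorem~\ref{thm:single}(b) gives $J_{\LL}(\mm)\le J_{\LL'}(\mm)\le J_{\LL}(\mm-\ee_i)$; the first inequality gives $\wt{J}_{\LL}(\mm)\le\wt{J}_{\LL'}(\mm)\le t_+$, and substituting $\mm+\ee_i$ for $\mm$ in the second and passing to $\wt{J}$ gives $\wt{J}_{\LL}(\mm)\ge\wt{J}_{\LL'}(\mm+\ee_i)+\bigl(J_{L_i}(m_i+1)-J_{L_i}(m_i)\bigr)\ge -t_-'-1=-t_-$. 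This closes the induction.

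I do not expect a genuine obstacle here: Theorem~\ref{thm:single} carries the analytic content. The only point requiring care is the bookkeeping — verifying that the $J_{L_k}$ correction terms in the definition of $\wt{J}$ are genuinely untouched by a multicolored crossing change, and checking that the residual discrepancy from evaluating the knot function $J_{L_i}$ at the shifted argument $m_i\mp1$ is exactly $0$ or $1$, so that it is absorbed precisely by incrementing $t_+$ (resp. $t_-$) by one along the unlinking sequence; Proposition~\ref{prop:changeby1} is exactly what makes this balance.
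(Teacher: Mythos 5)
Your proof is correct and follows essentially the same route as the paper: induct (the paper says ``use repeatedly'') along the unlinking sequence, use Theorem~\ref{thm:single}(b) for each multicolored crossing change, note that the components and hence the correction terms $J_{L_k}$ are unchanged, and invoke Corollary~\ref{cor:splitwt} at the split end. The only cosmetic difference is that the paper absorbs the argument shift $\mm\mapsto\mm-\ee_i$ by applying Proposition~\ref{prop:changeby1} to the link's $J$-function to get $J_2(\mm)\le J_1(\mm)\le J_2(\mm)+1$ before passing to $\wt{J}$, whereas you absorb it through the component knot's $J_{L_i}$; both yield the same $\pm1$ per move.
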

\begin{proof}
Use Theorem~\ref{thm:single} together with Proposition~\ref{prop:changeby1} (the latter holds for the \JFF function as well, because
$J$ differs from $H$ by an overall argument shift). We obtain that if two links $\LL_1$ and $\LL_2$ differ by a single positive multicolored crossing
change, then for all $\mm\in\Z^n$
\[J_2(\mm)\le J_1(\mm)\le J_2(\mm)+1.\]
Notice that a multicolored crossing change of a link $\LL$ does not affect the isotopy type of its components, in particular
the \JFF functions of its components remain the same. This shows that if $\LL_1$ and $\LL_2$ differ by a single positive multicolored crossing change,
then for all $\mm\in\Z^n$:
\[\wt{J}_2(\mm)\le \wt{J}_1(\mm)\le \wt{J}_2(\mm)+1.\]
Using this result repeatedly we show that if $\LL_1$ and $\LL_2$ differ by $t_+$ positive multicolored crossing changes and $t_-$ negative
multicolored crossing changes, then
\[\wt{J}_2(\mm)-t_-\le \wt{J}_1(\mm)\le \wt{J}_2(\mm)+t_+.\]
Suppose now that $\wt{J}_2$ is a split link. Then by Corollary~\ref{cor:splitwt} we know that $\wt{J}_2=0$. In particular
\[-t_-\le \wt{J}_1(\mm)\le t_+.\]
\end{proof}

Theorem~\ref{thm:basicsplit} is very useful for quick estimates of the splitting number of L-space links with two components, because then
the $\wt{J}$-function can be quickly determined from the Alexander polynomial. 
\begin{example}
We continue the example of Whitehead link, see Example~\ref{ex:whitehead}. 
As the Alexander polynomial is $\Delta=-(t_1-1)(t_2-1)t_1^{-1/2}t_2^{-1/2}$, by Corollary \ref{cor:two} we have $\wt{J}(t_1,t_2)=1$. 
This shows that the splitting number of the Whitehead link is at least $1$.
\end{example}


\subsection{Two-component links}

Theorem~\ref{thm:single} can be used directly to obtain some bounds for splitting numbers
for two-component links. 

\begin{theorem}
\label{th:2 comp}
Let $\LL$ be an arbitrary link with two components, and a link $\LL'$ can be obtained
from $\LL$ by changing $a$ negative \emph{multicolored} crossings to positive, and $b$ positive \emph{multicolored} crossings to negative.
Write $a=a_1+a_2$ and $b=b_1+b_2$ for arbitrary nonnegative $a_i,b_i$, then the following inequalities hold for all $m_1,m_2$:
$$
J'(m_1+b_1,m_2+b_2)\le J(m_1,m_2)\le J'(m_1-a_1,m_2-a_2).
$$
\end{theorem}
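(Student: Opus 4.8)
The plan is to reduce everything to the single--crossing--change bound of Theorem~\ref{thm:single}(b), specialised to $n=2$ (so that $\{i,j\}=\{1,2\}$). In that case the theorem reads: if $\LL_2$ is obtained from $\LL_1$ by one positive multicolored crossing change (i.e.\ by changing one negative crossing between the two components into a positive one), then for \emph{every} $(m_1,m_2)\in\Z^2$,
\[
J_2(m_1,m_2)\le J_1(m_1,m_2),\qquad J_1(m_1,m_2)\le J_2(m_1-1,m_2),\qquad J_1(m_1,m_2)\le J_2(m_1,m_2-1).
\]
Reading this in the opposite direction, if $\LL_2$ is obtained from $\LL_1$ by one negative multicolored crossing change (changing a positive crossing between the components into a negative one), then $J_1(m_1,m_2)\le J_2(m_1,m_2)$ for all $(m_1,m_2)$ (together with bounds on $J_2$ in terms of $J_1$ that we shall not need). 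The reason for phrasing the estimate via $J$ rather than $H$ is that a multicolored crossing change alters the linking number and hence the lattice $\BH$, whereas $J$ is a function on $\Z^2$ throughout; this is exactly what makes the telescoping below possible.

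First I would write the passage from $\LL$ to $\LL'$ as a sequence of $a+b$ elementary multicolored crossing changes ($a$ positive and $b$ negative), performed one at a time at the prescribed crossings of a fixed diagram of $\LL$; each intermediate stage is a genuine diagram and each step is a multicolored crossing change between the same two components, so Theorem~\ref{thm:single}(b) applies at each step. Telescoping the elementary bounds above along this sequence — using that each elementary inequality is valid at every point of $\Z^2$, so that the $(k{+}1)$-st inequality may be evaluated at the shifted point produced by the $k$-th — gives, for any choice at each positive step of which of the two available shifts to use, $J(m_1,m_2)\le J'(m_1-a_1,m_2-a_2)$, where $a_1$ is the number of positive steps at which the first--coordinate shift was used, $a_2=a-a_1$ the number at which the second was used, and the negative steps contribute no shift (they give $J_{\mathrm{before}}\le J_{\mathrm{after}}$ pointwise). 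Since each positive step offers both shifts independently and $a_1+a_2=a$, every admissible splitting with $a_i\ge 0$ is attained; this is the upper bound $J(m_1,m_2)\le J'(m_1-a_1,m_2-a_2)$.

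For the lower bound I would apply the upper bound to the reversed crossing changes: $\LL$ is obtained from $\LL'$ by $b$ positive and $a$ negative multicolored crossing changes. With the splitting $b=b_1+b_2$, the upper bound (with the roles of $\LL$ and $\LL'$ interchanged and $a,a_i$ replaced by $b,b_i$) reads $J'(m_1,m_2)\le J(m_1-b_1,m_2-b_2)$ for all $(m_1,m_2)$; substituting $(m_1+b_1,m_2+b_2)$ for $(m_1,m_2)$ yields $J'(m_1+b_1,m_2+b_2)\le J(m_1,m_2)$, completing the proof.

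The argument is essentially pure bookkeeping and there is no real obstacle; the only points that need care are: keeping straight which link plays the role of $\LL_1$ and which of $\LL_2$ in each of the two kinds of elementary step, so that the inequalities point the correct way (a negative$\to$positive change weakly lowers $J$ pointwise, a positive$\to$negative change weakly raises it); the remark that each elementary bound holds on all of $\Z^2$, which is what legitimises composing the successive shifted inequalities; and the freedom, at each positive step, to use either the $m_1$--shift or the $m_2$--shift, which is what realises the prescribed splitting $a=a_1+a_2$ (and, by symmetry, $b=b_1+b_2$).
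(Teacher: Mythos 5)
Your proof is correct and follows essentially the same route as the paper: both reduce to the single multicolored crossing change case of Theorem~\ref{thm:single}(b) with $n=2$ and then compose the resulting pointwise inequalities over the $a+b$ elementary steps, choosing at each positive step which coordinate absorbs the shift. The paper merely states the two single-change cases and leaves the telescoping implicit, whereas you spell it out and obtain the lower bound by reversing the sequence; this is a presentational difference only.
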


\begin{proof}
It is sufficient to consider a single crossing change. If $a_1=1,a_2=b_1=b_2=0$, then by Theorem \ref{thm:single} one has
$$
J'(m_1,m_2)\le J(m_1,m_2)\le J'(m_1-1,m_2).
$$ 
If $b_1=1,a_1=a_2=b_2=0$, then by Theorem \ref{thm:single} one has
$$
J(m_1,m_2)\le J'(m_1,m_2)\le J(m_1-1,m_2),
$$
so
$$
J'(m_1+1,m_2)\le J(m_1,m_2)\le J'(m_1,m_2).
$$
\end{proof}

The following corollary will be useful below:

\begin{theorem}
\label{th: vanishing}\
\begin{itemize}
\item[(a)] Suppose that a two component link  $\LL=L_1\cup L_2$ can be unlinked using $a$ positive and $b$ negative crossing changes.
Let $g_i$ denote the slice genus of $L_i$. Define vectors
$$
\ggg:=(g_1,g_2),\ \wt{\ggg}:=\left(g_1+\frac 12\lk(L_1,L_2),g_2+\frac 12\lk(L_1,L_2)\right).
$$
Define the region $R(a)$  by inequalities:
$$
R(a):=\{(m_1,m_2):m_1+m_2\ge a, m_1\ge 0, m_2\ge 0\};
$$
see Figure \ref{fig: J vanish}. Then $J(\mm)=\wt{J}(\mm)=0$ for $\mm\in R(a)+\ggg$.

\item[(b)] If, in addition, $\LL$ is an L-space link, then 
$$\HFL(\LL,\vv)=0\ \text{for}\  \vv \in R(a)+\wt{\ggg}+(1,1).$$
In particular, all coefficients of the Alexander polynomial vanish in $R(a)+\wt{\ggg}+(\frac 12,\frac 12)$.
\end{itemize}
\end{theorem}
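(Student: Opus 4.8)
The plan is to deduce both parts from the splitting‐number machinery (Theorems~\ref{thm:single} and \ref{th: vanishing}(a) precursor, i.e. Theorem~\ref{th: vanishing} itself in part (a)) together with the genus bounds on the individual components, and then for part (b) to invert the relationship between $\HFL$ and the \HFF function. For part (a), I would first handle the ``genus'' direction: a knot $L_i$ of slice genus $g_i$ bounds a PSIC (in fact a smooth concordance after surgery considerations) to the unknot realized by a genus‐$g_i$ cobordism, which by the clasp/genus interpretation can be turned into $2g_i$ crossing changes of fixed sign; alternatively one uses the known computation $J_{L_i}(m)=0$ for $m\ge g_i$ (this is the knot case, where $J$ is the Rasmussen $h$-function shifted, and $h(m)=0$ for $m\ge g$ by the slice-genus bound). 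Combining this with Theorem~\ref{th:2 comp} applied to the unlinking of $\LL$ by $a$ positive and $b$ negative crossing changes, and noting that the unlink has $J=\widetilde J=0$, I would chase the inequalities
\[
0=J_{\mathrm{unlink}}(\mm')\le J_\LL(\mm)\le J_{\mathrm{unlink}}(\mm'')=0
\]
for $\mm$ in the shifted region: the point is that $R(a)+\ggg$ is exactly the set of $\mm$ for which both the shift by $a$ (to absorb the crossing changes) and the shift by $\ggg$ (to absorb the component genera) still land in the region where $J$ of the split/unlinked object vanishes. Since $J\ge 0$ always (Proposition~\ref{prop:changeby1}, valid for $J$), the two-sided squeeze forces $J(\mm)=0$, and then $\widetilde J(\mm)=J(\mm)-\sum_i J_{L_i}(m_i)=0$ because each $J_{L_i}(m_i)=0$ for $m_i\ge g_i$, which holds on $R(a)+\ggg$.

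For part (b), assume $\LL$ is an L-space link. Then by \eqref{chi from h} the Euler characteristic $\chi(\HFL(\LL,\vv))$ is an alternating sum of values $H_\LL(\vv-\ee_B)$, and since $\LL$ is an L-space link, $\HFL(\LL,\vv)$ is itself concentrated in a single degree, so $\HFL(\LL,\vv)=0$ iff $\chi(\HFL(\LL,\vv))=0$. I would translate the vanishing of $J$ on $R(a)+\ggg$ into vanishing of $H$ on the corresponding shifted region via $H_\LL(\vv)=J_\LL(\vv-\lkv)$; since $\widetilde\ggg=\ggg+(\tfrac12\lk,\tfrac12\lk)=\ggg+\lkv$ for a two-component link, the region $R(a)+\widetilde\ggg$ in $\BH(\LL)$ corresponds precisely to $R(a)+\ggg$ in $\Z^2$. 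Thus $H_\LL(\vv)=0$ for $\vv\in R(a)+\widetilde\ggg$. Now for a two-component link \eqref{chi from h} reads $\chi(\HFL(\LL,\vv))=H(\vv-\ee_1)+H(\vv-\ee_2)-H(\vv-\ee_1-\ee_2)-H(\vv)$, and if $\vv\in R(a)+\widetilde\ggg+(1,1)$ then all four arguments $\vv,\vv-\ee_1,\vv-\ee_2,\vv-\ee_1-\ee_2$ lie in $R(a)+\widetilde\ggg$ (the region $R(a)$ is an up-set under $\preceq$ restricted to the cone, and the $+(1,1)$ shift guarantees room for the four corners of the unit square), so $\chi(\HFL(\LL,\vv))=0$, hence $\HFL(\LL,\vv)=0$. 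Finally, since $\widetilde\Delta(t_1,t_2)=(t_1t_2)^{1/2}\Delta(t_1,t_2)$ and $\widetilde\Delta$ is the generating function of $\chi(\HFL)$ by \eqref{eq: delta tilde}, vanishing of all $\chi(\HFL(\LL,\vv))$ in $R(a)+\widetilde\ggg+(1,1)$ gives vanishing of all coefficients of $\Delta$ in the half-integer-shifted region $R(a)+\widetilde\ggg+(\tfrac12,\tfrac12)$.

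The main obstacle I anticipate is bookkeeping the exact shape of the shifted regions and checking that the four lattice points $\vv-\ee_B$ ($B\subseteq\{1,2\}$) all remain inside $R(a)+\widetilde\ggg$ when $\vv\in R(a)+\widetilde\ggg+(1,1)$ — this requires that $R(a)$ be closed under adding $\ee_i$ and that the constraint $m_1+m_2\ge a$ together with $m_i\ge 0$ survives subtracting a coordinate vector after the $+(1,1)$ cushion, which is a small but genuinely necessary verification (and is why the statement inserts the $(1,1)$, resp. $(\tfrac12,\tfrac12)$, shift). A secondary point needing care is justifying $J_{L_i}(m)=0$ for $m\ge g_i$ with the correct normalization of $J$ in the knot case, and confirming that a multicolored crossing change leaves the components — hence their $J_{L_i}$ — unchanged, so that the passage from $J$ to $\widetilde J$ is clean; both are essentially recorded already in Theorem~\ref{thm:basicsplit} and its proof.
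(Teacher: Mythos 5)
Your part (a) follows the paper's proof essentially verbatim: the upper bound comes from Theorem~\ref{th:2 comp} applied to the unlinking sequence with a suitable decomposition $a=a_1+a_2$ (the paper takes $a_1=\min(m_1-g_1,a)$, which is exactly the feasibility check you flag as ``bookkeeping''), the lower bound is the nonnegativity of $J$, and $\wt J=0$ follows since $J_{L_i}(m_i)=0$ for $m_i\ge g_i$ by Rasmussen's slice-genus bound. That part is fine.

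Part (b) contains a genuine gap. You assert that for an L-space link $\HFL(\LL,\vv)$ is concentrated in a single homological degree, so that $\HFL(\LL,\vv)=0$ if and only if $\chi(\HFL(\LL,\vv))=0$. This is true for L-space \emph{knots} but false for L-space links with several components. For a two-component L-space link, $\HFL(\LL,\vv)$ is the homology of the three-term complex
\[
A^-(\vv-\ee_1-\ee_2)\longrightarrow A^-(\vv-\ee_1)\oplus A^-(\vv-\ee_2)\longrightarrow A^-(\vv),
\]
whose $E_1$ page is $\F[U]\to\F[U]^2\to\F[U]$ with maps given by powers of $U$ determined by differences of $H$; its homology sits in two adjacent degrees with ranks $\min(H(\vv-\ee_1),H(\vv-\ee_2))-H(\vv)$ and $H(\vv-\ee_1-\ee_2)-\max(H(\vv-\ee_1),H(\vv-\ee_2))$. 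Already for the positive Hopf link at $\vv=(-\tfrac12,-\tfrac12)$ one has $H$-values $1,2,2,3$ on the unit square, so $\chi(\HFL(\vv))=0$ while $\HFL(\vv)\cong\F^2$ supported in two degrees. So ``$\chi=0$ implies $\HFL=0$'' is not a valid step for L-space links.

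The good news is that you have already established the stronger input needed: for $\vv\in R(a)+\wt\ggg+(1,1)$ all four values $H(\vv)$, $H(\vv-\ee_1)$, $H(\vv-\ee_2)$, $H(\vv-\ee_1-\ee_2)$ vanish, not merely their alternating sum. This is what the paper uses: since $H(\vv-\ee_i)=H(\vv)=0$ and $H_*(A^-(\uu))\cong\F[U]$ for an L-space link, the inclusions $A^-(\vv-\ee_i)\hookrightarrow A^-(\vv)$ are multiplication by $U^0$, hence quasi-isomorphisms, and the quotient complex in \eqref{def of HFL} is acyclic, giving $\HFL(\LL,\vv)=0$ directly. (Equivalently, in the rank formula above both torsion orders are $0$.) With that substitution your argument closes; the Alexander polynomial statement then follows either from your Euler-characteristic bookkeeping or, as in the paper, immediately from Corollary~\ref{cor:two} and the vanishing of $\wt J$ on $R(a)+\ggg$.
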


\begin{proof}
As above, let $J_i$ denote the \JFF functions for the components  of $\LL$. 
For a split link $\LL'=L_1\sqcup L_2$ we get  
$
J'(v_1,v_2)=J_1(v_1)+J_2(v_2).
$
Furthermore, by \cite[Corollary 7.4]{Ras} we get $J_i(v_i)=0$ for $v_i\ge g_i$.

Assume that $\mm=(m_1,m_2)$ belongs to $R(a)+\ggg$. By definition, $J(m_1,m_2)\ge 0$. 
On the other hand, let us choose
$a_1=\min(m_1-g_1,a)$ and $a_2=a-a_1$, then $m_1-a_1\ge g_1$ and 
$$m_2-a_2=m_2-a+\min(m_1-g_1,a)\ge m_2-a+m_1-g_1\ge g_2.$$
Therefore by Theorem \ref{th:2 comp}:
$$J(m_1,m_2)\le J'(m_1-a_1,m_2-a_2)=J_1(m_1-a_1)+J_2(m_2-a_2)=0.$$
Since $J_1(m_1)=J_2(m_2)=0$, we get also get $\wt{J}(\mm)=0$. 

Suppose now that $\LL$ is an L-space link. By the above, \HFF vanishes in $\vv\in R(a)+\wt{\ggg}$.
Corollary~\ref{cor:two} implies the vanishing of the coefficients of the Alexander polynomial
in $R(a)+\wt{\ggg}+(\frac12,\frac12)$. To show that $\HFL(\vv)=0$ for $\vv\in R(a)+\wt{\ggg}+(1,1)$,
remark that for such $\vv$ one has $H(\vv-\ee_i)=H(\vv)=0$, so the natural inclusions $A^-(\vv-\ee_i)\hookrightarrow A^-(\vv)$
induce isomorphisms on homology. By \eqref{def of HFL}, $\HFL(\vv)=0$.
\end{proof}

\begin{figure}
\begin{tikzpicture}
\draw [->,dashed](0,0)--(5,0);
\draw [->,dashed](0,0)--(0,5);
\draw [dashed] (0,2)--(3,2)--(3,0);
\draw [dashed] (0,3)--(2,3)--(2,0);
\filldraw [color=gray] (2,5)--(2,3)--(3,2)--(5,2)--(5,5)--(2,5);
\draw  [thick] (2,5)--(2,3)--(3,2)--(5,2);
\draw (2,-0.2) node {$g_1$};
\draw (3.2,-0.2) node {$g_1+a$};
\draw (-0.2,2) node {$g_2$};
\draw (-0.7,3) node {$g_2+a$};
\end{tikzpicture}
\caption{Region $R(a)+\ggg$ where $J$ and $\wt{J}$ vanish.}
\label{fig: J vanish}
\end{figure}

\begin{remark}
Part (b) of Theorem~\ref{th: vanishing} does not hold for non L-space links. For example, the link $L9a31$ in \cite{linkinfo}
has two components, one being an unknot and one being a trefoil. The linking number of the components is $1$, so $\wt{\ggg}=(\frac12,\frac32)$. 
Now the Alexander polynomial is
\[t_1^{-1}t_2^{-2}-t_2^{-2}-2t_1^{-1}t_2^{-1}+4t_2^{-1}-t_1t_2^{-1}+2t_1^{-1}-5+2t_1-t_1^{-1}t_2+4t_2-2t_1t_2-t_2^2+t_1t_2^2.\]
The term $t_1t_2^2$ has exponents $(1,2)$ which belong to $R(0)+\wt{\ggg}+(\frac12,\frac12)$. Therefore, Theorem~\ref{th: vanishing} would
imply that we need at least one crossing change from negative to positive in order to split $L9a31$. 
Such a crossing change will make the two components have linking number $2$, so we will need at least two more crossing changes
to make the linking number equal to $0$. Altogether,
we would need at least three crossing changes to split $L9a31$.
However, we can split $L9a31$ is a single move.
\end{remark}
\subsection{Example: the two-bridge link $b(8,-5)$}\label{sec:b85}

We will discuss an example of the two-bridge link $b(8,-5)$ which was shown by Liu \cite[Example 3.8]{Liu} to be an L-space link. It
is presented in Figure~\ref{fig:b85}. The orientation of $b(8,-5)$ is as in \cite{Liu}. The two components have linking number $0$.
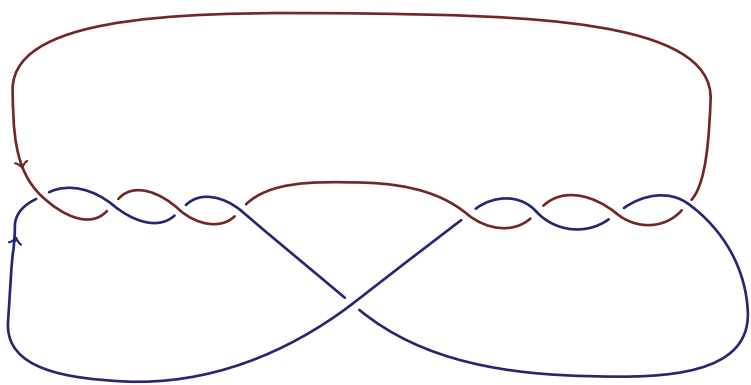
\begin{figure}
\definecolor{linkcolor0}{rgb}{0.45, 0.15, 0.15}
\definecolor{linkcolor1}{rgb}{0.15, 0.15, 0.45}
\begin{tikzpicture}[line width=1, line cap=round, line join=round]
  \begin{scope}[color=linkcolor0]
    \draw (9.11, 2.54) .. controls (9.31, 2.77) and (9.34, 3.40) .. 
          (9.36, 3.86) .. controls (9.41, 4.95) and (6.83, 4.98) .. 
          (4.79, 5.01) .. controls (2.76, 5.03) and (0.18, 5.06) .. 
          (0.18, 4.00) .. controls (0.19, 3.49) and (0.19, 2.94) .. (0.56, 2.59);
    \draw (0.56, 2.59) .. controls (0.83, 2.33) and (1.20, 2.15) .. (1.42, 2.39);
    \draw (1.57, 2.55) .. controls (1.77, 2.78) and (2.13, 2.64) .. (2.38, 2.40);
    \draw (2.38, 2.40) .. controls (2.60, 2.21) and (2.91, 2.14) .. (3.10, 2.32);
    \draw (3.25, 2.48) .. controls (3.56, 2.78) and (4.13, 2.78) .. 
          (4.62, 2.77) .. controls (5.17, 2.77) and (5.75, 2.70) .. (6.17, 2.34);
    \draw (6.17, 2.34) .. controls (6.42, 2.14) and (6.76, 2.09) .. (6.99, 2.29);
    \draw (7.16, 2.46) .. controls (7.42, 2.71) and (7.83, 2.60) .. (8.12, 2.36);
    \draw (8.12, 2.36) .. controls (8.38, 2.14) and (8.76, 2.15) .. (8.98, 2.40);
    \draw[->] (0.3, 3.00) -- +(0.01, -0.05);
  \end{scope}
  \begin{scope}[color=linkcolor1]
    \draw (4.65, 1.16) .. controls (3.78, 0.49) and (2.74, 0.08) .. 
          (1.65, 0.13) .. controls (0.91, 0.17) and (0.08, 0.28) .. 
          (0.12, 0.92) .. controls (0.14, 1.21) and (0.16, 1.62) .. 
          (0.18, 1.80) .. controls (0.20, 1.93) and (0.21, 2.06) .. 
          (0.21, 2.19) .. controls (0.21, 2.36) and (0.34, 2.48) .. (0.49, 2.55);
    \draw (0.66, 2.64) .. controls (0.93, 2.77) and (1.26, 2.67) .. (1.50, 2.47);
    \draw (1.50, 2.47) .. controls (1.75, 2.26) and (2.09, 2.13) .. (2.31, 2.33);
    \draw (2.46, 2.47) .. controls (2.65, 2.66) and (2.96, 2.58) .. (3.18, 2.40);
    \draw (3.18, 2.40) .. controls (3.63, 2.01) and (4.09, 1.63) .. (4.55, 1.24);
    \draw (4.74, 1.08) .. controls (5.58, 0.38) and (6.71, 0.22) .. 
          (7.80, 0.20) .. controls (8.77, 0.17) and (9.87, 0.22) .. 
          (9.85, 1.04) .. controls (9.83, 1.62) and (9.53, 2.15) .. (9.06, 2.49);
    \draw (9.06, 2.49) .. controls (8.81, 2.67) and (8.47, 2.61) .. (8.22, 2.43);
    \draw (8.02, 2.28) .. controls (7.73, 2.07) and (7.32, 2.11) .. (7.07, 2.38);
    \draw (7.07, 2.38) .. controls (6.87, 2.61) and (6.52, 2.61) .. (6.27, 2.42);
    \draw (6.08, 2.27) .. controls (5.60, 1.90) and (5.12, 1.53) .. (4.65, 1.16);
    \draw[->] (0.22, 2.00) -- +(0.01, 0.05);
  \end{scope}
\end{tikzpicture}
\caption{The link $b(8,-5)$. Its two components are unknots.}\label{fig:b85}
\end{figure}
In the notation of LinkInfo \cite{linkinfo} it is the link $L9a40$. It was shown in \cite[Section 7.1]{CFP} that
the splitting number of this link is $4$. The tool was studying the smooth four genus of the link obtained by taking a double branch
cover of one of the components of $b(8,-5)$. The splitting number of $b(8,-5)$
can be also detected by the signatures as in \cite{CCZ}. We will show that $sp(b(8,-5))=4$ using the \JFF function.

The Alexander polynomial of $b(8,-5)$ can be found on the LinkInfo web page \cite{linkinfo} or 
calculated using the SnapPy package \cite{snappy}. We have
\[\Delta(t_1,t_2)=-(t_1+t_2+1+t_1^{-1}+t_2^{-1})(t_1^{1/2}-t_1^{-1/2})(t_2^{1/2}-t_2^{-1/2}).\]
After normalizing, by Corollary~\ref{cor:two} the generating function for the $\wt{J}$-function equals
\begin{equation}\label{eq:wtjofb}
\wt{\JJ}(t_1,t_2)=t_1+t_2+1+t_1^{-1}+t_2^{-1}.
\end{equation}
Theorem~\ref{thm:basicsplit} implies that we need to make at least one positive crossing change to unlink $b(8,-5)$. As the original linking number
is zero and a positive crossing change increases the linking number, we have to compensate the positive crossing change with a negative crossing change,
so the splitting number is at least $2$. That is all we can deduce from Theorem~\ref{thm:basicsplit}.

On the other hand, $J(1,0)=1$, so by Theorem \ref{th: vanishing} one needs at least two positive crossing changes to split $b(8,-5)$. As each
such crossing change increases the linking number between the two components of $b(8,-5)$, we also need two negative crossing changes. Therefore
we have proved the following result.
\begin{proposition}
The splitting number of $b(8,-5)$ is at least $4$.
\end{proposition}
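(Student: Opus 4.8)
The plan is to pin down a single value of the \JFF function of $b(8,-5)$ and play it off against Theorem~\ref{th: vanishing}(a), then close the remaining gap with a linking-number count. The reason the argument must proceed this way is that the crude estimate from Theorem~\ref{thm:basicsplit} only uses $\wt{J}\le 1$ and therefore gives $sp(b(8,-5))\ge 2$; the two extra crossing changes have to come from the sign-sensitive vanishing region of Theorem~\ref{th: vanishing}(a) together with the constraint imposed by the linking number.

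First I would compute $J(1,0)$. Since $b(8,-5)$ is an L-space link by \cite[Example 3.8]{Liu} with vanishing linking number, Corollary~\ref{cor:two} expresses $\wt{\JJ}$ through the Alexander polynomial; substituting the displayed $\Delta(t_1,t_2)$, the quotient simplifies to the Laurent polynomial $\wt{\JJ}(t_1,t_2)=t_1+t_2+1+t_1^{-1}+t_2^{-1}$ recorded in~\eqref{eq:wtjofb}. Reading off the coefficient of $t_1t_2^{0}$ gives $\wt{J}(1,0)=1$. Both components of $b(8,-5)$ are unknots, whose \JFF functions vanish at nonnegative arguments (by \cite[Corollary 7.4]{Ras}), so $J_{L_1}(1)=J_{L_2}(0)=0$, and Definition~\ref{def:wtj} then yields $J(1,0)=\wt{J}(1,0)+J_{L_1}(1)+J_{L_2}(0)=1$.

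Next I would apply Theorem~\ref{th: vanishing}(a) contrapositively. Each component is unknotted, so $\ggg=(0,0)$ and the point $(1,0)$ lies in $R(a)+\ggg=R(a)$ precisely when $a\le 1$. Hence, if $b(8,-5)$ could be split using at most one positive multicolored crossing change, Theorem~\ref{th: vanishing}(a) would force $J(1,0)=0$, contradicting the previous step; so any splitting sequence uses $a\ge 2$ positive multicolored crossing changes. Finally, a multicolored crossing change does not alter the isotopy type of either component and shifts $\lk(L_1,L_2)$ by $+1$ or $-1$ according to its sign; since $b(8,-5)$ and any split link on the same two components have linking number $0$, the number $a$ of positive and $b$ of negative multicolored crossing changes in a splitting sequence satisfy $a=b$. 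Combining $a=b$ with $a\ge 2$ gives $sp(b(8,-5))=a+b\ge 4$.

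I do not anticipate a real obstacle here: everything substantial is already contained in Theorems~\ref{thm:single} and~\ref{th: vanishing}, and what remains is bookkeeping. The only two points requiring a little care are the passage from $\wt{J}$ back to $J$ using the fact that the components are unknots, and the observation that the jump from ``$\ge 2$ positive changes'' to ``$\ge 4$ changes in total'' is exactly what the linking-number equality $a=b$ supplies; one should also note explicitly that these are the multicolored crossing changes counted by $sp$.
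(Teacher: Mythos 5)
Your argument is correct and is essentially the paper's own proof: compute $\wt{J}=J$ at $(1,0)$ via Corollary~\ref{cor:two}, invoke Theorem~\ref{th: vanishing}(a) with $\ggg=(0,0)$ to force at least two positive multicolored crossing changes, and use the vanishing linking number to match them with two negative ones. The bookkeeping (unknotted components, $a=b$ from the linking number) is handled exactly as in the paper.
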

It is quite easy to split the $b(8,-5)$ in four moves.
\begin{remark}
SnapPy and and the LinkInfo webpage
\cite{linkinfo} give the Alexander polynomial of $b(8,-5)$ with opposite sign. To choose the sign we notice that the other choice
of sign of the Alexander polynomial yields $\wt{\JJ}$ with negative coefficients only, hence, for example $J(0,0)=-1$. This
contradicts property of non-negativity of the \JFF function. Liu's algorithm in \cite[Section 3.3]{Liu0}
gives the proper sign of the Alexander polynomial.
\end{remark}


\begin{figure}
\definecolor{linkcolor0}{rgb}{0.45, 0.15, 0.15}
\definecolor{linkcolor1}{rgb}{0.15, 0.15, 0.45}
\begin{tikzpicture}[line width=1, line cap=round, line join=round]
  \begin{scope}[color=linkcolor0]
    \draw (9.11, 2.54) .. controls (9.31, 2.77) and (9.34, 3.40) .. 
          (9.36, 3.86) .. controls (9.41, 4.95) and (6.83, 4.98) .. 
          (4.79, 5.01) .. controls (2.76, 5.03) and (0.18, 5.06) .. 
          (0.18, 4.00) .. controls (0.19, 3.49) and (0.19, 2.94) .. (0.56, 2.59);
    \draw (0.56, 2.59) .. controls (0.83, 2.33) and (1.20, 2.15) .. (1.42, 2.39);
    \draw (1.57, 2.55) .. controls (1.77, 2.78) and (2.13, 2.64) .. (2.38, 2.40);
    \draw (2.38, 2.40) .. controls (2.60, 2.21) and (2.91, 2.14) .. (3.10, 2.32);
    \draw (3.25, 2.48) .. controls (3.56, 2.78) and (4.13, 2.78) .. 
          (4.62, 2.77) .. controls (5.17, 2.77) and (5.75, 2.70) .. (6.17, 2.34);
    \draw (6.17, 2.34) .. controls (6.42, 2.14) and (6.76, 2.09) .. (6.99, 2.29);
    \draw (7.16, 2.46) .. controls (7.42, 2.71) and (7.83, 2.60) .. (8.12, 2.36);
    \draw (8.12, 2.36) .. controls (8.38, 2.14) and (8.76, 2.15) .. (8.98, 2.40);
    \draw[->] (0.3, 3.00) -- +(0.01, -0.05);
  \end{scope}
  \begin{scope}[color=linkcolor1]
    \draw (4.65, 1.16) .. controls (3.78, 0.49) and (2.74, 0.08) .. 
          (1.65, 0.13) .. controls (0.91, 0.17) and (0.08, 0.28) .. 
          (0.12, 0.92) .. controls (0.14, 1.21) and (0.16, 1.62) .. 
          (0.18, 1.80) .. controls (0.20, 1.93) and (0.21, 2.06) .. 
          (0.21, 2.19) .. controls (0.21, 2.36) and (0.34, 2.48) .. (0.49, 2.55);
    \draw (0.66, 2.64) .. controls (0.93, 2.77) and (1.26, 2.67) .. (1.50, 2.47);
    \draw (1.50, 2.47) .. controls (1.75, 2.26) and (2.09, 2.13) .. (2.31, 2.33);
    \draw (2.46, 2.47) .. controls (2.65, 2.66) and (2.96, 2.58) .. (3.18, 2.40);
    \draw (3.18, 2.40) .. controls (3.63, 2.01) and (4.09, 1.63) .. (4.55, 1.24);
    \draw (4.74, 1.08) .. controls (5.58, 0.38) and (6.71, 0.22) .. 
          (7.80, 0.20) .. controls (8.77, 0.17) and (9.87, 0.22) .. 
          (9.85, 1.04) .. controls (9.83, 1.62) and (9.53, 2.15) .. (9.06, 2.49);
    \draw (9.06, 2.49) .. controls (8.81, 2.67) and (8.47, 2.61) .. (8.22, 2.43);
    \draw (8.02, 2.28) .. controls (7.73, 2.07) and (7.32, 2.11) .. (7.07, 2.38);
    \draw (7.07, 2.38) .. controls (6.87, 2.61) and (6.52, 2.61) .. (6.27, 2.42);
    \draw (6.08, 2.27) .. controls (5.60, 1.90) and (5.12, 1.53) .. (4.65, 1.16);
    \draw[->] (0.22, 2.00) -- +(0.01, 0.05);
  \end{scope}
    \draw[fill=white] (1,2.2) -- (1,2.8) -- (2,2.8) node[scale=0.8,below=0.1] {$n$ full twists} -- (3,2.8) -- (3,2.2) -- cycle;
    \draw[fill=white] (6.7,2.1) -- (6.7,2.8) -- (7.7,2.8) node[scale=0.8,below=0.1] {$k$ full twists} -- (8.7,2.8) -- (8.7,2.1) -- cycle;
\end{tikzpicture}
\caption{The general $b(rq-1,-q)$ two-bridge link, where $r=2n+1$, $q=2k+1$. The linking number is $n-k$.}\label{fig:bnk}
\end{figure}
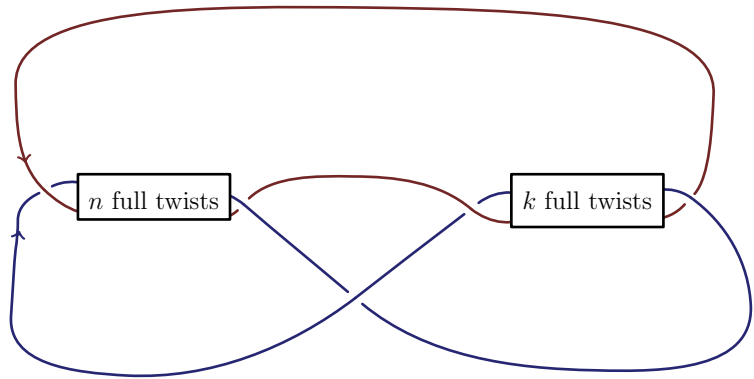

\begin{figure}[t]
\begin{tikzpicture}[scale=.5]
  \draw[->, color=black!20] (0,0) -- (14.5,0) node[right] {$t$};
  \draw[->, color=black!20] (0,0) -- (0,15.5) node[above] {$q$};
  \draw[step=1, color=black!20] (0,0) grid (14,15);
  \draw (0.5,-.2) node[below] {$-6$};
  \draw (1.5,-.2) node[below] {$-5$};
  \draw (2.5,-.2) node[below] {$-4$};
  \draw (3.5,-.2) node[below] {$-3$};
  \draw (4.5,-.2) node[below] {$-2$};
  \draw (5.5,-.2) node[below] {$-1$};
  \draw (6.5,-.2) node[below] {$0$};
  \draw (7.5,-.2) node[below] {$1$};
  \draw (8.5,-.2) node[below] {$2$};
  \draw (9.5,-.2) node[below] {$3$};
  \draw (10.5,-.2) node[below] {$4$};
  \draw (11.5,-.2) node[below] {$5$};
  \draw (12.5,-.2) node[below] {$6$};
  \draw (13.5,-.2) node[below] {$7$};
  \draw (-.2,0.5) node[left] {$-8$};
  \draw (-.2,1.5) node[left] {$-6$};
  \draw (-.2,2.5) node[left] {$-4$};
  \draw (-.2,3.5) node[left] {$-2$};
  \draw (-.2,4.5) node[left] {$0$};
  \draw (-.2,5.5) node[left] {$2$};
  \draw (-.2,6.5) node[left] {$4$};
  \draw (-.2,7.5) node[left] {$6$};
  \draw (-.2,8.5) node[left] {$8$};
  \draw (-.2,9.5) node[left] {$10$};
  \draw (-.2,10.5) node[left] {$12$};
  \draw (-.2,11.5) node[left] {$14$};
  \draw (-.2,12.5) node[left] {$16$};
  \draw (-.2,13.5) node[left] {$18$};
  \draw (-.2,14.5) node[left] {$20$};
  \fill (0.5, 0.5) circle (.15);
  \fill (0.5, 1.5) circle (.15);
  \fill (1.5, 1.5) circle (.15);
  \fill (1.5, 2.5) circle (.15);
  \draw (2.5, 2.5) node {$2$};
  \draw (2.5, 3.5) node {$2$};
  \draw (3.5, 3.5) node {$3$};
  \draw (3.5, 4.5) node {$3$};
  \draw (4.5, 4.5) node {$4$};
  \draw (4.5, 5.5) node {$4$};
  \draw (5.5, 5.5) node {$5$};
  \draw (5.5, 6.5) node {$5$};
  \draw (6.5, 6.5) node {$6$};
  \draw (6.5, 7.5) node {$6$};
  \draw (7.5, 7.5) node {$5$};
  \draw (7.5, 8.5) node {$5$};
  \draw (8.5, 8.5) node {$6$};
  \draw (8.5, 9.5) node {$6$};
  \draw (9.5, 9.5) node {$5$};
  \draw (9.5, 10.5) node {$5$};
  \draw (10.5, 10.5) node {$4$};
  \draw (10.5, 11.5) node {$4$};
  \draw (11.5, 11.5) node {$3$};
  \draw (11.5, 12.5) node {$3$};
  \draw (12.5, 12.5) node {$2$};
  \draw (12.5, 13.5) node {$2$};
  \fill (13.5, 13.5) circle (.15);
  \fill (13.5, 14.5) circle (.15);
  \draw[->] (2.288, 2.288) -- (1.712, 1.712);
  \draw[->] (2.288, 3.288) -- (1.712, 2.712);
  \draw[->] (4.288, 4.288) -- node[color=red!75!black] {$3$} (3.712, 3.712);
  \draw[->] (4.288, 5.288) -- node[color=red!75!black] {$3$} (3.712, 4.712);
  \draw[->] (5.288, 5.288) -- (4.712, 4.712);
  \draw[->] (5.288, 6.288) -- (4.712, 5.712);
  \draw[->] (6.288, 6.288) -- node[color=red!75!black] {$4$} (5.712, 5.712);
  \draw[->] (6.288, 7.288) -- node[color=red!75!black] {$4$} (5.712, 6.712);
  \draw[->] (7.288, 7.288) -- node[color=red!75!black] {$2$} (6.712, 6.712);
  \draw[->] (7.288, 8.288) -- node[color=red!75!black] {$2$} (6.712, 7.712);
  \draw[->] (8.288, 8.288) -- node[color=red!75!black] {$3$} (7.712, 7.712);
  \draw[->] (8.288, 9.288) -- node[color=red!75!black] {$3$} (7.712, 8.712);
  \draw[->] (9.288, 9.288) -- node[color=red!75!black] {$3$} (8.712, 8.712);
  \draw[->] (9.288, 10.288) -- node[color=red!75!black] {$3$} (8.712, 9.712);
  \draw[->] (10.288, 10.288) -- node[color=red!75!black] {$2$} (9.712, 9.712);
  \draw[->] (10.288, 11.288) -- node[color=red!75!black] {$2$} (9.712, 10.712);
  \draw[->] (11.288, 11.288) -- node[color=red!75!black] {$2$} (10.712, 10.712);
  \draw[->] (11.288, 12.288) -- node[color=red!75!black] {$2$} (10.712, 11.712);
  \draw[->] (12.288, 12.288) -- (11.712, 11.712);
  \draw[->] (12.288, 13.288) -- (11.712, 12.712);
  \draw[->] (13.288, 13.288) -- (12.712, 12.712);
  \draw[->] (13.288, 14.288) -- (12.712, 13.712);
\end{tikzpicture}
\caption{The $E_1$ page of the Batson--Seed spectral sequence for the link $b(48,-7)$ with splitting number $3$. It can be shown
that $E_2=E_\infty$ is the Khovanov homology of the unknot. The calculations were made
using the KnotKit program \cite{knotkit}.}\label{fig:batson-seed}.
\end{figure}

\subsection{More general two-bridge links}

The arguments used in Section~\ref{sec:b85} can be easily generalized for the case of two-bridge links $\LL_n=b(4n^2+4n,-2n-1)$.
The components of $\LL_n$ are unknots with linking number 0. For example, $\LL_1$ is the Whitehead link.
It is proved in \cite[Section 3]{Liu} that all the $\LL_n$ are L-space links, and their Alexander polynomials were computed in \cite[Section 6]{Liu},
see also \cite[Section 3.3]{Liu0}:
$$
\Delta_{\LL_n}(t_1,t_2)=(-1)^{n}\sum_{|i+1/2|+|j+1/2|\le n}(-1)^{i+j}t_1^{i+1/2}t_2^{j+1/2}.
$$ 
Clearly,
$$
\Delta(t_1,t_2)=-t_1^{n-\frac12}\left(t_2^{1/2}-t_2^{-1/2}\right)+\text{terms of lower degree in}\ t_1,
$$
Hence by  Corollary \ref{cor:two}:
$$
\wt{\JJ}_{\LL_n}(t_1,t_2)=-\frac{\Delta_{\LL_n}(t_1,t_2)}{\left(t_1^{1/2}-t_1^{-1/2}\right)\left(t_2^{1/2}-t_2^{-1/2}\right)}=t_1^{n-1}+\text{terms of lower degree in}\ t_1,
$$
and $J(n-1,0)=\wt{J}(n-1,0)=1$. 
By Theorem \ref{th: vanishing} one needs at least $n$ positive crossing changes to split $\LL_n$,
and the linking number argument from the previous section implies that one needs same number of negative crossing changes. 
 We obtained the following result.
\begin{theorem}
\label{th:two bridge}
The splitting number of $\LL_n$ is at least $2n$.
\end{theorem}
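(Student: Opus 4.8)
The plan is to carry out, for the whole family, the same argument used for $b(8,-5)$ in Section~\ref{sec:b85}, now fed by Liu's computation of $\Delta_{\LL_n}$. First I would record the structural input: each $\LL_n$ is an L-space link by \cite[Section~3]{Liu}, both components of $\LL_n$ are unknots (so each has slice genus $0$), and $\lk(L_1,L_2)=0$. Since $\LL_n$ is a two-component L-space link, Corollary~\ref{cor:two} computes the generating function $\wt{\JJ}_{\LL_n}$ directly from $\Delta_{\LL_n}$.

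Next I would extract the top-degree-in-$t_1$ part of Liu's polynomial $\Delta_{\LL_n}(t_1,t_2)=(-1)^n\sum_{|i+1/2|+|j+1/2|\le n}(-1)^{i+j}t_1^{i+1/2}t_2^{j+1/2}$. As $j$ is an integer, $|j+1/2|\ge\tfrac12$, so the largest $t_1$-exponent that occurs is $n-\tfrac12$, and it is attained only for $i=n-1$, $j\in\{0,-1\}$; summing those two monomials gives $\Delta_{\LL_n}=-t_1^{n-1/2}\bigl(t_2^{1/2}-t_2^{-1/2}\bigr)+(\text{lower degree in }t_1)$. Dividing by $\bigl(t_1^{1/2}-t_1^{-1/2}\bigr)\bigl(t_2^{1/2}-t_2^{-1/2}\bigr)$ and changing sign as in Corollary~\ref{cor:two}, the factor $t_2^{1/2}-t_2^{-1/2}$ cancels in the leading piece, and since dividing a Laurent series by $t_1^{1/2}-t_1^{-1/2}$ only lowers $t_1$-exponents, no lower-order term of $\Delta_{\LL_n}$ can contribute to the monomial $t_1^{n-1}t_2^{0}$. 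Hence $\wt{\JJ}_{\LL_n}(t_1,t_2)=t_1^{n-1}+(\text{lower degree in }t_1)$, so $\wt{J}(n-1,0)=1$; and because the components are unknots, $J_{L_i}(v)=0$ for $v\ge 0$ by \cite[Corollary~7.4]{Ras}, whence $J(n-1,0)=\wt{J}(n-1,0)+J_{L_1}(n-1)+J_{L_2}(0)=1$.

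Finally I would apply the obstruction. Let a sequence of multicolored crossing changes split $\LL_n$, say with $a$ positive and $b$ negative changes. By Theorem~\ref{th: vanishing}(a), with $\ggg=(g_1,g_2)=(0,0)$, one has $J(\mm)=0$ for all $\mm\in R(a)+\ggg=R(a)$; since $J(n-1,0)=1\ne 0$ and $(n-1,0)$ has nonnegative entries, the inequality $m_1+m_2\ge a$ must fail there, forcing $a\ge n$. On the other hand each positive multicolored crossing change raises $\lk(L_1,L_2)$ by $1$ and each negative one lowers it by $1$; since $\lk=0$ both before (for $\LL_n$) and after (for the split link), we get $a=b$, hence $b\ge n$. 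Therefore $sp(\LL_n)=a+b\ge 2n$.

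The only step I expect to need genuine care is the monomial bookkeeping in the middle paragraph — checking that the coefficient of $t_1^{n-1}t_2^{0}$ in $\wt{\JJ}_{\LL_n}$ is exactly $1$ and receives no contribution from the lower-$t_1$-degree part of $\Delta_{\LL_n}$ after dividing by $(t_1^{1/2}-t_1^{-1/2})(t_2^{1/2}-t_2^{-1/2})$. Everything else is an immediate appeal to Corollary~\ref{cor:two}, Theorem~\ref{th: vanishing}, and the linking-number parity argument already used in Section~\ref{sec:b85}.
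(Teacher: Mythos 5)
Your proposal is correct and follows essentially the same route as the paper: extract the leading $t_1$-term of Liu's formula for $\Delta_{\LL_n}$, use Corollary~\ref{cor:two} to get $\wt{J}(n-1,0)=J(n-1,0)=1$, apply Theorem~\ref{th: vanishing} with $\ggg=(0,0)$ to force at least $n$ positive multicolored crossing changes, and use the linking-number parity to force $n$ negative ones. The monomial bookkeeping you flag does check out: the only monomial of $\Delta_{\LL_n}$ with exponents $\succeq(n-\frac12,\frac12)$ is $-t_1^{n-1/2}t_2^{1/2}$, so the coefficient of $t_1^{n-1}t_2^0$ in $\wt{\JJ}$ is exactly $1$.
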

It is quite easy to split the $\LL_n$ in $2n$ moves using Figure \ref{fig:bnk} (where $k=n$).
On the other hand, as all these links are alternating, the Batson--Seed spectral sequence degenerates at most at the $E_3$ page by 
Proposition~\ref{prop:degenerates}, so Theorem~\ref{thm:batsonseed} will not detect the splitting number of $\LL_n$ for $n>1$.

\subsection{Comparison with the Batson--Seed criterion}

In \cite{BS} Batson and Seed proved an obstruction for splitting number. For the sake of simplicity we formulate the result
for a link with two components and for homologies over $\Z_2$.
\begin{theorem}\cite{BS}
\label{thm:batsonseed}
Suppose $\LL=L_1\cup L_2$ is a two component link and let $\LL'$ be a split link with components $L_1,L_2$. Then
there exists a spectral sequence, whose $E_1$ page is the Khovanov homology $Kh(L)$ and $E_\infty$ page is the Khovanov
homology $Kh(\LL')$. If the splitting number of $\LL$ is $k$, then the $E_k$ page is equal to the $E_\infty$ page of this sequence.
\end{theorem}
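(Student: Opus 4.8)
Since this statement is quoted from Batson--Seed \cite{BS}, I will only outline the strategy one would follow to prove it. The plan is to build, from the Khovanov cube of resolutions of $\LL=L_1\cup L_2$ over $\F$, a deformed complex $(\mathcal{C}(\LL),d)$ whose differential $d=d_0+d_1+d_2+\cdots$ has $d_0$ equal to the ordinary Khovanov differential, and whose higher terms $d_i$, $i\ge 1$, are assembled from local maps inserted at the crossings \emph{between} $L_1$ and $L_2$. These higher terms should strictly raise an auxiliary weight filtration $\mathcal{F}_\bullet$ on $\mathcal{C}(\LL)$, with $d_0$ preserving $\mathcal{F}_\bullet$. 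The first thing to prove about this complex is a local computation identifying its total homology, $H_*(\mathcal{C}(\LL),d)\cong Kh(L_1\sqcup L_2)=Kh(\LL')$, which shows that in the deformed complex the two components can be separated. Granting this, the spectral sequence of the filtered complex $(\mathcal{C}(\LL),\mathcal{F}_\bullet,d)$ has $E_1\cong H_*(\mathcal{C}(\LL),d_0)\cong Kh(\LL)$ and abuts to $E_\infty\cong Kh(\LL')$, which is the first assertion. One also records the trivial observation that if $\LL$ is already split there are no multicolored crossings, so $d_i=0$ for $i\ge 1$, hence $d=d_0$, and the spectral sequence of $\mathcal{C}(\LL)$ collapses at $E_1$.

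The second step is to understand how a single \emph{multicolored} crossing change affects the deformed complex. If $\LL$ and $\LL''$ differ by such a change, then the two resolutions of the changed crossing carry the same systems of circles in the two cubes, so $\mathcal{C}(\LL)$ and $\mathcal{C}(\LL'')$ have the same underlying $\F$-module and their deformed differentials differ only through the local contribution at that crossing, which is modified by maps of weight $1$. From this I would extract, by a Gaussian-elimination argument at the changed crossing, a chain homotopy equivalence $\Phi\colon\mathcal{C}(\LL)\to\mathcal{C}(\LL'')$ together with a homotopy inverse and chain homotopies, all filtered of degree at most $1$ with respect to $\mathcal{F}_\bullet$.

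Finally, assuming $sp(\LL)=k$, one composes the maps of the previous step over the $k$ crossing changes taking $\LL$ to the split link $\LL'$, obtaining a chain homotopy equivalence $\mathcal{C}(\LL)\simeq\mathcal{C}(\LL')$ which, with its inverse and the homotopies, is filtered of degree at most $k$. A standard comparison lemma for spectral sequences of filtered complexes then shows that such a map identifies the two spectral sequences on the $E_k$ page and on all later pages --- this is where the precise definition of the filtration in \cite{BS} is used --- and since $\mathcal{C}(\LL')$ collapses at $E_1$, all higher differentials on $E_k(\LL)$ must vanish, so $E_k(\LL)\cong E_\infty(\LL)\cong Kh(\LL')$.

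The main obstacle is the first step: producing the deformation with the prescribed total homology and carrying out the local reduction to the split complex. A secondary difficulty is the bookkeeping in the crossing-change step, namely checking that the homotopy equivalence \emph{and} its inverse \emph{and} all chain homotopies can be taken filtered of degree $1$; the passage from these to the bound on the page is then purely formal. For the full argument one should consult \cite{BS}.
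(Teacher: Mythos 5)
The paper does not prove Theorem~\ref{thm:batsonseed} at all: it is imported verbatim from Batson--Seed \cite{BS} and used as a black box (only Proposition~\ref{prop:degenerates}, which concerns the thinness of alternating links, is proved in the text). There is therefore no internal argument to compare yours against. Your outline is a faithful summary of the actual strategy of \cite{BS} --- the deformation of the Khovanov differential supported at multicolored crossings, the weight filtration whose associated graded recovers the undeformed complex, the identification of the total homology with $Kh(\LL')$, the filtered homotopy equivalence of filtration degree one attached to a single multicolored crossing change, and the formal comparison of spectral sequences after composing $k$ such equivalences. Since you explicitly defer the two substantive steps (the local computation of the total homology and the Gaussian-elimination bookkeeping at a changed crossing) to \cite{BS}, what you have written is a correct road map rather than a proof; given that the paper itself only cites the result, this is the appropriate level of detail, and nothing in your sketch contradicts the construction in \cite{BS}.
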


In Figure~\ref{fig:batson-seed} there is shown the $E_1$ page of the Batson--Seed spectral sequence for $b(48,-7)$, whose
splitting number was shown to be $6$. The arrows in the figure correspond to the differential. 
We have $E_2=E_\infty$, so Theorem~\ref{thm:batsonseed} implies that $sp(b(8,-5))\ge 2$. This means that the Batson--Seed
criterion does not detect the splitting number of $b(48,-7)$.

For general two-bridge links $b(4n^2+4n,-2n-1)$ 
we have the following observation, which limits the usage of the Batson--Seed criterion. It is well known to the experts.

\begin{proposition}\label{prop:degenerates}
Suppose $\LL$ is an alternating non-split link. Then the Batson--Seed spectral sequence collapses at most at the $E_3$ page.
\end{proposition}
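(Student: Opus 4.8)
The plan is to combine the thinness of Khovanov homology of alternating links with a grading count on the differentials of the Batson--Seed spectral sequence. Throughout write $\delta=j-2i$ for the diagonal grading, where $i$ is the homological and $j$ the quantum grading. The first step is to recall that a non-split alternating link has \emph{thin} Khovanov homology over $\F=\BZ/2\BZ$: there is an integer $\delta_0$ such that $Kh^{i,j}(\LL;\F)=0$ whenever $\delta\notin\{\delta_0,\delta_0+2\}$. This is the classical thinness theorem for alternating links (due to E.~S.~Lee), and the hypothesis that $\LL$ be non-split is precisely what is needed for it: the two-component unlink, for instance, already has Khovanov homology occupying the three diagonals $\delta\in\{-2,0,2\}$. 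By Theorem~\ref{thm:batsonseed} the $E_1$ page of the Batson--Seed spectral sequence is $Kh(\LL;\F)$, so $E_1$ is thin; and since $E_{r+1}$ is a subquotient of $E_r$ for every $r$ (in a way compatible with the $(i,j)$-grading), all pages $E_r$ with $r\ge 1$ are supported on the same two diagonals $\{\delta_0,\delta_0+2\}$.

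The second step is to pin down the bidegree of the page-$r$ differential $d_r$. In the Batson--Seed construction $d_r$ has bidegree $(-1,-2r)$ with respect to $(i,j)$; for $r=1$ this is exactly what one sees in Figure~\ref{fig:batson-seed}, where every arrow moves one unit to the left in the homological grading and two units down in the quantum grading. Hence $d_r$ changes the diagonal grading by
\[
\Delta\delta(d_r)=(-2r)-2(-1)=-2(r-1).
\]
For $r=1$ this shift is $0$ and for $r=2$ it is $-2$, so $d_1$ and $d_2$ are not obstructed by thinness (consistently with the example of $b(48,-7)$, which already collapses at $E_2$). But for every $r\ge 3$ we get $\Delta\delta(d_r)=-2(r-1)\le -4$: such a map would send the $\delta=\delta_0+2$ part of $E_r$ into a group of diagonal grading $\le\delta_0-2$ and the $\delta=\delta_0$ part into an even lower diagonal, and all of these vanish because $E_r$ lives only in diagonals $\delta_0$ and $\delta_0+2$. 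Therefore $d_r=0$ for all $r\ge 3$, i.e.\ $E_3=E_\infty$, which is what was claimed.

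Since thinness is quoted from the literature and the bidegree of $d_r$ is read off from Batson--Seed's definitions, no real computation is involved. The one point that needs care is the bookkeeping around the differentials: one must be sure that the diagonal-grading shift of $d_r$ has absolute value $2(r-1)$ (equivalently, that it grows linearly in $r$ with the vanishing threshold occurring at $r=3$ rather than earlier or later), and that Batson--Seed's $E_1$ page is $Kh(\LL;\F)$ on the nose, so that thinness is inherited by every subsequent page. This is the only place where one might wish to double-check the precise conventions.
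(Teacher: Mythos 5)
Your argument is essentially the paper's own: Lee's thinness theorem for non-split alternating links confines every page to two adjacent diagonals, and a count of how much $d_r$ shifts the diagonal grading $\delta=q-2h$ kills all differentials past the second. One small correction: the page-$r$ differential is the zig-zag $d_1(d_0^{-1}d_1)^{r-1}$, of bidegree $(1-2r,-2r)$ in $(h,q)$ rather than $(-1,-2r)$ as you claim; since this still changes $\delta$ by $2(r-1)$ in absolute value, your conclusion that $d_r=0$ for $r\ge 3$ is unaffected.
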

\begin{proof}
By \cite{Lee} $\LL$ is Khovanov thin, that is, the Khovanov homology is supported on two diagonals. More precisely, if $x$ is
a non-trivial element of $Kh(\LL)$, then $q(x)=2h(x)-\sigma(\LL)\pm 1$, where $q(x)$ is the $q$-grading, $h(x)$ is the homological grading
and $\sigma(\LL)$ is the signature of $\LL$.

The differential in the Batson--Seed spectral sequence is $d=d_0+d_1$, where $d_0$ is the standard differential in the Khovanov complex and $d_1$
decreases the homological grading by $1$ and drops the $q$-grading by $2$. A higher differential $d_k$ changes the $(h,q)$ bigrading by $(1-2k,-2k)$, and hence changes the difference $q-2h$ by $2k-2$. As $\LL$ is thin, the only potentially non-trivial differentials are $d_0,d_1$ and $d_2$.
\end{proof}

\subsection{Cables on the Whitehead link}
As a more complicated example, we calculate the splitting number of cables on the Whitehead link.
Let $Wh_{p,q}$ denote the link consisting of the $(p,q)$ cable on one of the component of the Whitehead link
and the unchanged second component of it. It is clear that the linking number of $Wh_{p,q}$ vanishes.
A $(1,1)$-surgery on the Whitehead
link is an L-space \cite[Example 3.1]{Liu}, 
hence by Proposition~\ref{prop:cablesonlinks} $Wh_{p,q}$ is an L-space link as long as $1<p<q$ and $p,q$ are coprime.

The Whitehead link has Alexander polynomial $\Delta=-(t_1^{1/2}-t_1^{-1/2})(t_2^{1/2}-t_2^{-1/2})$. 
The Alexander polynomial of a cable link was calculated by Turaev in \cite[Theorem 1.3.1]{Tu86}.  
\begin{theorem}\label{thm:turaevcable}
Let $\LL=L_1\cup\ldots\cup L_n$ be a link and $\Delta_\LL(t_1,\ldots,t_n)$ its multivariable Alexander polynomial. Let $\LL_{p,q}$ be as in the statement of 
Proposition~\ref{prop:cablesonlinks} above. Set $T=t_1^qt_2^{l_2}\ldots t_n^{l_n}$, where $l_j=\lk(L_1,L_j)$. Then
\[\Delta_{\LL_{p,q}}(t_1,\ldots,t_n)=\Delta_\LL(t_1^p,t_2,t_3,\ldots,t_n)\frac{T^{p/2}-T^{-p/2}}{T^{1/2}-T^{-1/2}}.\]
\end{theorem}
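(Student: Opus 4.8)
The plan is to compute $\Delta_{\LL_{p,q}}$ as a Milnor--Turaev torsion and to use its multiplicativity under gluing along a torus. Write $X=S^3\setminus\operatorname{int}N(\LL_{p,q})$ for the exterior of the cable link and $E(\LL)=S^3\setminus\operatorname{int}N(\LL)$ for that of $\LL$; put $L_{cab}\subset\operatorname{int}N(L_1)$, let $T_1=\partial N(L_1)$, and let $W=N(L_1)\setminus\operatorname{int}N(L_{cab})$ be the complement of the $(p,q)$--curve inside the solid torus $N(L_1)$ (the standard \emph{cable space}). Since the neighbourhoods $N(L_2),\dots,N(L_n)$ are removed on both sides, one obtains the decomposition $X=E(\LL)\cup_{T_1}W$. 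Recalling that for a link of $\ge 2$ components the torsion of the exterior equals the symmetrized multivariable Alexander polynomial (for a knot it equals $\Delta_K(t)/(t-1)$, in which normalization the same computation goes through; I carry out the case $n\ge 2$, which is what the paper uses), the Mayer--Vietoris formula for torsion gives, for the coefficient system pulled back from $H_1(X)=\Z^n=\langle t_1,\dots,t_n\rangle$ (all pieces being acyclic over $\Q(t_1,\dots,t_n)$ since this system is nontrivial on each of them),
\[
\tau(X)\doteq\tau(E(\LL))\cdot\tau(W)\cdot\tau(T_1)^{-1}.
\]
Equivalently this is an instance of splice--additivity of Turaev torsion, $X$ being the splice of $E(\LL)$ with $W$ along $T_1$.

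The second step is the coefficient bookkeeping. A meridian disk of $N(L_1)$ meets $L_{cab}$ in exactly $p$ points, so the meridian $\mu_1$ of $L_1$ maps to $t_1^p$ in $H_1(X)$, while the meridians $\mu_j$ of $L_j$ map to $t_j$; hence the coefficient system restricted to $E(\LL)$ is the tautological one followed by the substitution $s_1\mapsto t_1^p,\ s_j\mapsto t_j$, and by naturality of torsion $\tau(E(\LL))\doteq\Delta_\LL(t_1^p,t_2,\dots,t_n)$. A linking number computation shows that the $0$--framed longitude $\lambda_1$ of $L_1$ satisfies $\lk(\lambda_1,L_{cab})=q$ and $\lk(\lambda_1,L_j)=l_j$, so $\lambda_1$ maps to $T=t_1^qt_2^{l_2}\cdots t_n^{l_n}$; and $H_1(W)\cong\Z^2$ is generated by $\mu_{cab}$ (mapping to $t_1$) and $\lambda_1$, the core of $N(L_1)$ (mapping to $T$). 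Finally $\tau(T_1)=\tau(T^2)\doteq 1$.

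It remains to compute $\tau(W)$, which is the crux. Regarding $L_{cab}$ as a knot in $S^3$ it is the torus knot $T(p,q)$, and $S^3\setminus\operatorname{int}N(L_{cab})=W\cup_{T'}V''$ with $V''$ the complementary solid torus and $T'$ the Heegaard torus; this already pins down $\tau(W)$ after the specialization $T=t_1^q$ via $\tau\!\bigl(E(T(p,q))\bigr)\doteq\tau(W)\,\tau(V'')^{-1}$ and $\tau\!\bigl(E(T(p,q))\bigr)\doteq \Delta_{T(p,q)}(t_1)/(t_1-1)$. Carrying out the genuine two--variable computation — from the Seifert fibered structure of the cable space, or equivalently Fox calculus on a presentation of $\pi_1(W)$ — yields
\[
\tau(W)\doteq\frac{T^{p/2}-T^{-p/2}}{T^{1/2}-T^{-1/2}}.
\]
Substituting into the Mayer--Vietoris formula gives $\tau(X)\doteq\Delta_\LL(t_1^p,t_2,\dots,t_n)\cdot\dfrac{T^{p/2}-T^{-p/2}}{T^{1/2}-T^{-1/2}}$, and since $\tau(X)\doteq\Delta_{\LL_{p,q}}$ the formula follows up to a unit $\pm t_1^{a_1}\cdots t_n^{a_n}$, which is fixed by the symmetry of both sides (or by the sign refinement of torsion). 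The expected main obstacle is exactly part (iii): obtaining the two--variable torsion of $W$ with the inherited coefficient system, since the torus--knot decomposition only recovers its value at $T=t_1^q$; secondary nuisances are the knot versus link normalization of $\Delta$ and tracking the unit ambiguity and orientations. An alternative, torsion--free route is a direct Fox--calculus computation on the Wirtinger presentation of $\pi_1(X)$, obtained from one for $\pi_1(E(\LL))$ by replacing each arc of $L_1$ with $p$ parallel strands and inserting the $q$ meridional twists; this is elementary but combinatorially heavier.
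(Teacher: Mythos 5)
The paper does not prove this statement at all: it is quoted verbatim from Turaev (\cite[Theorem 1.3.1]{Tu86}), so there is no internal proof to compare against. Your torsion-theoretic reconstruction is, in fact, essentially Turaev's own argument: decompose the cable-link exterior as $X=E(\LL)\cup_{T_1}W$ with $W$ the $(p,q)$-cable space, apply multiplicativity of torsion over $\Q(t_1,\dots,t_n)$ with $\tau(T_1)\doteq 1$, and track the coefficient systems. Your bookkeeping is correct throughout: $[\mu_1]=t_1^p$, $[\lambda_1]=[\mathrm{core}]=T$, $\tau(E(\LL))\doteq\Delta_{\LL}(t_1^p,t_2,\dots,t_n)$ for $n\ge 2$, and the asserted value $\tau(W)\doteq(T^{p/2}-T^{-p/2})/(T^{1/2}-T^{-1/2})$ is the right one. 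Two minor caveats you should record: (i) the Mayer--Vietoris identity requires all pieces acyclic, which fails exactly when $\Delta_{\LL}=0$; there both sides of the formula vanish by the usual convention $\tau=0$, so the statement survives, but it is a separate (trivial) case; (ii) symmetrization fixes the monomial ambiguity but only determines the sign up to $\pm1$, and the sign does matter later in the paper (cf.\ the remark on $\Delta_{b(8,-5)}$), so the sign-refined torsion you mention is genuinely needed, not optional.

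The one real gap is the one you flag yourself: the two-variable computation of $\tau(W)$, which your torus-knot specialization only pins down on the locus $T=t_1^q$. This is fillable and is the only substantive content. The cleanest closure: $W$ is itself the exterior in $S^3$ of the two-component Seifert link $T(p,q)\cup A$, where $A$ is the core of the solid torus complementary to $N(L_1)$ (so $\lk(A,T(p,q))=p$); hence $\tau(W)$ is the two-variable Alexander polynomial of that link, which the Eisenbud--Neumann formula (cited in the paper as \cite{EN}) evaluates to $(T^p-1)/(T-1)$ up to units, with $T$ the class of the Seifert fiber direction --- in particular it depends on $(t_1,T)$ only through $T$, which is the point your specialization argument cannot see. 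Equivalently one can run Fox calculus on the presentation $\pi_1(W)\cong\langle x,y\mid x^p y^{-q}\rangle\ast_{\text{(fiber)}}\cdots$ coming from the Seifert structure, but invoking Eisenbud--Neumann is shorter and stays within the paper's toolbox. With that step supplied, your argument is a complete and correct proof.
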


It follows from the theorem that the Alexander polynomial of $Wh_{p,q}$ is equal to
\[\Delta_{Wh_{p,q}}=-\left(t_1^{p/2}-t_1^{-p/2}\right)\left(t_2^{1/2}-t_2^{-1/2}\right)\frac{t_1^{pq/2}-t_1^{-pq/2}}{t_1^{q/2}-t_1^{-q/2}}.\]
From this we obtain by Corollary~\ref{cor:two}.
\[\wt{\JJ}(t_1,t_2)=\frac{\left(t_1^{p/2}-t_1^{-p/2}\right)\left(t_1^{pq/2}-t_1^{-pq/2}\right)}{\left(t_1^{1/2}-t_1^{-1/2}\right)\left(t_1^{q/2}-t_1^{-q/2}\right)}=
t_1^{\delta+(p-1)}+\text{terms of lower degree in}\ t_1,\]
where $\delta=\frac12(p-1)(q-1)$. In particular, $\wt{J}(\delta+(p-1),0)=1$.

Now the genera of the components of $Wh_{p,q}$ are $g_1=\delta, g_2=0$. By Theorem \ref{th: vanishing}
we infer that we need to perform at least $p$ positive-to-negative multicolored crossing changes to transform $Wh_{p,q}$ into the disjoint sum
of $T(p,q)$ and the unknot. The linking number argument shows that we also need $p$ negative crossing changes, hence we obtain the
following result.
\begin{proposition}
\label{th: whitehead cable}
The splitting number of the $(p,q)$-cable on the Whitehead link is at least $2p$.
\end{proposition}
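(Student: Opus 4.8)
The plan is to run the same obstruction already used for $b(8,-5)$ and for the links $\LL_n$: extract the $\wt{J}$-function of $Wh_{p,q}$ from its Alexander polynomial, locate a single lattice point where $\wt{J}=1$, and then feed this into Theorem~\ref{th: vanishing}(a) together with a linking-number parity argument. The first step is to record that, under the running hypotheses $1<p<q$ with $p,q$ coprime, the link $Wh_{p,q}$ is an L-space link: a $(1,1)$-surgery on the Whitehead link is an L-space, so Proposition~\ref{prop:cablesonlinks} applies. Consequently Corollary~\ref{cor:two} expresses $\wt{\JJ}$ in terms of $\Delta_{Wh_{p,q}}$, so the entire $\wt{J}$-function is accessible from a closed formula.

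Next I would compute $\wt{\JJ}$. Applying Turaev's cabling formula (Theorem~\ref{thm:turaevcable}) to $\Delta_{Wh}=-(t_1^{1/2}-t_1^{-1/2})(t_2^{1/2}-t_2^{-1/2})$ and simplifying via Corollary~\ref{cor:two} (the linking number is $0$), the factors $t_2^{1/2}-t_2^{-1/2}$ cancel and one is left with
\[
\wt{\JJ}(t_1,t_2)=\frac{\left(t_1^{p/2}-t_1^{-p/2}\right)\left(t_1^{pq/2}-t_1^{-pq/2}\right)}{\left(t_1^{1/2}-t_1^{-1/2}\right)\left(t_1^{q/2}-t_1^{-q/2}\right)},
\]
a Laurent polynomial in $t_1$ alone. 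Expanding each quotient as a balanced sum of $p$ monomials, its highest-degree term is $t_1^{\delta+(p-1)}$ with coefficient $1$, where $\delta=\frac12(p-1)(q-1)$. Hence $\wt{J}(\delta+(p-1),0)=1$, and this single nonzero value is all that I will use.

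Now I would set up the obstruction. Suppose $Wh_{p,q}$ is turned into a split link by $a$ positive and $b$ negative multicolored crossing changes. Multicolored crossing changes leave the isotopy types of the components unchanged, so the resulting split link is $T(p,q)\sqcup U$, where $T(p,q)$ is the $(p,q)$-cable of the unknotted Whitehead component and $U$ is an unknot; their slice genera are $g_1=\delta$ and $g_2=0$. By Theorem~\ref{th: vanishing}(a), $\wt{J}\equiv 0$ on $R(a)+(\delta,0)$. Since $(\delta+(p-1),0)=(\delta,0)+(p-1,0)$ and $(p-1,0)\in R(a)$ exactly when $a\le p-1$, the equality $\wt{J}(\delta+(p-1),0)=1$ rules out $a\le p-1$; therefore $a\ge p$. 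Finally, a single multicolored crossing change changes $\lk(L_1,L_2)$ by $\pm1$, while this linking number equals $0$ both for $Wh_{p,q}$ and for $T(p,q)\sqcup U$, so $a=b$; hence $b\ge p$ as well, and $sp(Wh_{p,q})=a+b\ge 2p$.

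There is no genuine obstacle here, since every ingredient is already available. The only points requiring a line of care are: checking the hypotheses of Proposition~\ref{prop:cablesonlinks} that make $Wh_{p,q}$ an L-space link (so that $\wt{J}$ is truly determined by $\Delta$); recording that the cabled component is the torus knot $T(p,q)$ of slice genus $\delta$; and the arithmetic turning $a\ge p$ into the symmetric bound $a=b\ge p$ via the linking number. I would also remark that Theorem~\ref{thm:basicsplit} alone yields only $sp(Wh_{p,q})\ge 2$; the improvement to $2p$ comes precisely from the genus-shifted vanishing region of Theorem~\ref{th: vanishing}(a).
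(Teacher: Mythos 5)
Your proposal is correct and follows essentially the same route as the paper: compute $\wt{\JJ}(Wh_{p,q})$ via Turaev's cabling formula and Corollary~\ref{cor:two}, observe $\wt{J}(\delta+(p-1),0)=1$ with $\delta=\frac12(p-1)(q-1)$, and combine Theorem~\ref{th: vanishing}(a) (with $g_1=\delta$, $g_2=0$) with the linking-number balance $a=b$ to get $sp\ge 2p$. Your write-up is in fact slightly more careful than the paper's in making the step from $\wt{J}(\delta+p-1,0)=1$ to $a\ge p$ explicit.
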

It is not hard to find a splitting sequence of length $2p$.

\section{Algebraic links}\label{sec:algebraiclinks}

\subsection{The \HFF function for links of singularities}
Let $C$ be a germ of a complex plane curve singularity with branches $C_1,\ldots,C_n$. 
Its intersection with a small sphere is called an algebraic link.
It is shown in 
\cite{GN2} that all algebraic links are L-space links. For algebraic links the \HFF 
function admits the following description. Let $\gamma_i:(\BC,0)\to (C_i,0)$ denote the uniformization of $C_i$. Define the set
$$
\J(\vv):=\{f\in \BC[[x,y]]: \Ord_{0}f(\gamma_i(t))\ge v_i\}
$$
It is clear that $\J(\vv)$ is in fact a vector subspace of $\BC[[x,y]]$.
Define
the {\em Hilbert function} of $C$ as 
\begin{equation}\label{eq:Rfunction}
R(\vv)=\codim \J(\vv)=\dim_{\C} \C[x,y]/\J(\vv).
\end{equation}
Moreover, set 
\[
R_i(v_i)=R(0,\ldots,0,v_i,0,\ldots,0).
\]
Notice that for a singularity with one branch, $R(\vv)=R_1(v_1)$ 
is the number of the elements of the semigroup of the singular point in the interval $[0,v_1)$,
so \eqref{eq:Rfunction} can be regarded as a generalization the definition of $R$--function in \cite{BL}.

We can relate $R$ to the \HFF function discussed above. Define
\begin{equation}
\label{eq:ggg}
\ggg=(g_1,\ldots,g_n);\quad \wt{\ggg}=(\wt{g}_1,\ldots,\wt{g}_n),\ \wt{g}_i=g_i+\frac12\sum_{j\neq i}\lk(L_i,L_j).
\end{equation}
where $g_i$ is the Seifert genus of $L_i$ (or, equivalently, the delta-invariant of the singularity $C_i$). It is known
that for algebraic links $2\wt{\ggg}$ is the conductor of the multi-dimensional semigroup of~$C$; see
for instance \cite[Chapter 17]{Kunz}. Campillo, Delgado and Gusein-Zade related \cite{CDG} the Alexander polynomial
of an algebraic link to the semigroup of the corresponding curve. Based on their result and \eqref{invertion}, the following 
formula for the Hilbert function was obtained in \cite{GN}:

\begin{theorem}[see \cite{GN}]
For an algebraic link, one has
\begin{equation}\label{eq:RandH}
H(\vv)=R(\wt{\ggg}-\vv),\ J(\vv)=R(\ggg-\vv).
\end{equation}
\end{theorem}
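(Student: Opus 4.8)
The second identity follows formally from the first: since $\wt g_i=g_i+\tfrac12\sum_{j\ne i}\lk(L_i,L_j)$ we have $\wt\ggg=\ggg+\lkv$, so by Definition~\ref{def:J}
\[
J(\vv)=H(\vv+\lkv)=R(\wt\ggg-\vv-\lkv)=R(\ggg-\vv).
\]
Hence the task reduces to proving $H(\vv)=R(\wt\ggg-\vv)$, and I would do this via the inversion formula. An algebraic link and each of its sublinks (the sublink $\LL_I$ is the link of the subcurve $C_I=\bigcup_{i\in I}C_i$) is an L-space link by \cite{GN2}, so by Theorem~\ref{th:invertion} the function $H_\LL$ is the \emph{unique} solution of the recursion \eqref{chi from h} subject to the stabilization conditions \eqref{projection for h}. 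It therefore suffices to verify that $\widehat H(\vv):=R(\wt\ggg-\vv)$ satisfies the same system. The base case $n=1$ is classical: for an algebraic knot $L_i$ one has $H_{L_i}(v_i)=\#\bigl(\Gamma_i\cap[0,g_i-v_i)\bigr)=R_i(g_i-v_i)=\widehat H_{L_i}(v_i)$ (here $\wt g_i=g_i$), which is the unibranched statement treated in \cite{BL2} and goes back to \cite{Ras}.

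For the recursion \eqref{chi from h} the key input is the theorem of Campillo--Delgado--Gusein-Zade \cite{CDG}: for each subcurve $C'=C_I$ the symmetrized Alexander polynomial $\wt\Delta_{\LL_I}$ equals, in the normalization of \eqref{eq: delta tilde} (including the separate $n=1$ case), the Poincar\'e series of the multi-index filtration $\{\J_{C'}(\vv)\}$ on $\mathcal O_{C'}$ cut out by the branch valuations. On the other hand, the Hilbert function $R_{C'}(\vv)=\codim\J_{C'}(\vv)$ of that very filtration is tied to its Poincar\'e series by the standard multi-dimensional finite-difference identity; feeding this, together with \eqref{eq: delta tilde}, into \eqref{chi from h} shows that $\widehat H$ obeys the recursion. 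Equivalently and more transparently, I would substitute the CDG identity for every sublink into the right-hand side of \eqref{eq:hhgen} and recognize the result --- using $\prod_i(1-t_i^{-1})^{-1}$ as the generating function of the ``sum from $+\infty$'' operator --- as $\sum_\vv R(\wt\ggg-\vv)\,t^\vv$. The reflection $\vv\mapsto\wt\ggg-\vv$ enters because the Hilbert function of the Gorenstein local ring $\mathcal O_{C'}$ is symmetric about its conductor $2\wt\ggg(C')$ (Delgado's symmetry), the algebro-geometric incarnation of the duality $H(-\vv)=H(\vv)+|\vv|$.

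For the stabilization \eqref{projection for h}: deleting a branch $C_n$ corresponds to the subcurve $C'=C_1\cup\cdots\cup C_{n-1}$, and once $v_n$ exceeds the $n$-th coordinate of the conductor the condition $\Ord_0 f(\gamma_n)\ge v_n$ becomes vacuous, so the filtration is pulled back from $\mathcal O_{C'}$ and $R_C(\vv)=R_{C'}(\pi_n(\vv))$; that the required coordinate shift is exactly $\wt\ggg-\wt\ggg'=\tfrac12\bigl(\lk(L_1,L_n),\dots,\lk(L_{n-1},L_n)\bigr)=\lkv(\LL)-\lkv(\LL')$ is the behaviour of the conductor under dropping a branch, i.e.\ the algebro-geometric form of the Torres formula (Theorem~\ref{thm:torres}). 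Nonnegativity of $\widehat H$ and its vanishing for $\vv$ large are immediate from $R\ge 0$ and $R(\vv)=0$ for $\vv\preceq\mathbf 0$. With the base case, the recursion and the boundary conditions verified, uniqueness in Theorem~\ref{th:invertion} forces $H=\widehat H$.

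The main obstacle is the bookkeeping of the second paragraph: passing precisely from the Poincar\'e series handed over by \cite{CDG} to the Hilbert function $R$, and checking that the inclusion--exclusion over subsets $I\subseteq\{1,\dots,n\}$ in \eqref{invertion}/\eqref{eq:hhgen} is compatible with the inclusion--exclusion relating a filtration to its Hilbert function, all while keeping track of the half-integer shifts and the anomalous $n=1$ normalization of $\wt\Delta$. This is exactly the computation performed in \cite{GN}.
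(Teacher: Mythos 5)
The paper gives no proof of this statement: it is quoted from \cite{GN}, with the one-line indication that it follows from the Campillo--Delgado--Gusein-Zade theorem combined with the inversion formula \eqref{invertion}. Your proposal fleshes out exactly that route (reduce $J$ to $H$ via $\wt\ggg=\ggg+\lkv$, then characterize $H$ by the recursion \eqref{chi from h} plus the boundary conditions \eqref{projection for h} and check that $R(\wt\ggg-\cdot)$ satisfies both, with \cite{CDG} supplying the identification of $\wt\Delta$ with the Poincar\'e series of the valuation filtration), and it correctly flags the remaining bookkeeping as the computation carried out in \cite{GN}; this is the same approach.
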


\begin{remark}
It was proven in \cite{CDK} that for all plane curve singularities the Hilbert function satisfies the following symmetry property:
\begin{equation}
\label{eq: symmetry for R}
R(2\wt{\ggg}-\vv)=R(\vv)+|\wt{\ggg}|-|\vv|.
\end{equation}
Indeed, this agrees with the symmetry property \eqref{duality} of $H$.
\end{remark}

\begin{theorem}
\label{th:hilb}
We have the following inequality for the Hilbert function of a plane curve singularity.
$$
0\ge R(\vv)-\sum_{i=1}^{n}R_i(v_i)\ge -\sum_{i<j}\lk(L_i,L_j).
$$
Both inequalities are sharp.
\end{theorem}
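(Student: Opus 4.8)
The plan is to prove Theorem~\ref{th:hilb} by an entirely algebraic argument, presenting $R(\vv)$ and the $R_i(v_i)$ through the partial normalization of $C$ that separates its branches. Write $f=f_1\cdots f_n$ for an equation of $C$, with $f_i$ an equation of the branch $C_i$, and let
\[
\nu\colon\mathcal O_C=\C[[x,y]]/(f)\longrightarrow N:=\bigoplus_{i=1}^n\mathcal O_{C_i},\qquad \mathcal O_{C_i}=\C[[x,y]]/(f_i),
\]
be the natural homomorphism. Since the $f_i$ are pairwise coprime, $\nu$ is injective, so I identify $\mathcal O_C$ with $P:=\nu(\mathcal O_C)\subseteq N$, and I write $D:=N/P=\coker\nu$. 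For each $i$ put $\overline{\J}_i(v_i)=\{g\in\mathcal O_{C_i}:\Ord_0(g\circ\gamma_i)\ge v_i\}$, an ideal of $\mathcal O_{C_i}$, and $M:=\bigoplus_i\overline{\J}_i(v_i)\subseteq N$. Because $(f_i)$ is contained in the $\C[[x,y]]$-ideal $\J(0,\dots,v_i,\dots,0)$, and $(f)$ in $\J(\vv)$, one has $R_i(v_i)=\dim_\C\mathcal O_{C_i}/\overline{\J}_i(v_i)$ and $R(\vv)=\dim_\C\mathcal O_C/\overline{\J}(\vv)$, where $\overline{\J}(\vv)=\nu^{-1}(M)=P\cap M$; the first step is to check these identifications, which is routine (a coordinate $v_j\le 0$ simply makes $\overline{\J}_j(v_j)=\mathcal O_{C_j}$ and $R_j(v_j)=0$, consistently on both sides).

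The heart of the argument is then a one-line diagram chase: the inclusion $P\hookrightarrow N$ induces an injection $P/(P\cap M)\hookrightarrow N/M$, that is $\mathcal O_C/\overline{\J}(\vv)\hookrightarrow\bigoplus_i\mathcal O_{C_i}/\overline{\J}_i(v_i)$, with cokernel $N/(P+M)$. Hence
\[
\sum_{i=1}^n R_i(v_i)-R(\vv)=\dim_\C N/M-\dim_\C P/(P\cap M)=\dim_\C N/(P+M).
\]
The right-hand side is nonnegative, which is the upper inequality, and it is a quotient of $N/P=D$, so it is at most $\dim_\C D$, which will be the lower inequality once I identify $\dim_\C D$. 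For that I would compare $\mathcal O_C\subseteq\bigoplus_i\mathcal O_{C_i}\subseteq\bigoplus_i\wt{\mathcal O}_{C_i}$, where $\wt{\mathcal O}_{C_i}\cong\C[[t]]$ are the normalizations: additivity of the $\delta$-invariant gives $\delta(C)=\sum_i\delta(C_i)+\sum_{i<j}(C_i\cdot C_j)_0$, whence $\dim_\C D=\delta(C)-\sum_i\delta(C_i)=\sum_{i<j}(C_i\cdot C_j)_0=\sum_{i<j}\lk(L_i,L_j)$, the last equality because the local intersection multiplicity of two branches equals the linking number of the corresponding link components.

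For sharpness I would argue as follows. The upper bound is attained whenever $v_j\le 0$ for every index $j$ except one, say $i_0$: then $M$ contains $\bigoplus_{j\neq i_0}\mathcal O_{C_j}$ and $\mathcal O_C$ surjects onto $N/M\cong\mathcal O_{C_{i_0}}/\overline{\J}_{i_0}(v_{i_0})$, so $N/(P+M)=0$. The lower bound is attained whenever all $v_i$ are sufficiently large, concretely $\vv\succeq 2\wt{\ggg}$ (the conductor of the multivariable semigroup of $C$, cf.\ \cite{Kunz}): then each $\overline{\J}_i(v_i)$ lies past the conductor of the branch $C_i$, hence $M$ lies inside the conductor of $\mathcal O_C$ in $\bigoplus_i\wt{\mathcal O}_{C_i}$, so $M\subseteq P$ and $N/(P+M)=D$. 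The simplest explicit instance is the Hopf link $C=\{xy=0\}$ with $\vv=(1,1)$, where $R(1,1)=1=R_1(1)=R_2(1)$, so $R(\vv)-\sum_i R_i(v_i)=-1=-\lk(L_1,L_2)$; taking $n$ generic lines through the origin and $\vv$ large gives the $n$-component analogue. I would close with the remark that, through the dictionary $J(\vv)=R(\ggg-\vv)$ of \eqref{eq:RandH}, Theorem~\ref{th:hilb} says exactly $-\sum_{i<j}\lk(L_i,L_j)\le\wt{J}(\mm)\le 0$ for all $\mm\in\Z^n$, so it is the algebro-geometric shadow of Theorems~\ref{thm:basicsplit} and~\ref{th: vanishing}.

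The only point requiring genuine care is the identification $\dim_\C D=\sum_{i<j}\lk(L_i,L_j)$: it is classical, but it should be tied to a precise statement of $\delta$-additivity for plane curves — the same conductor computation already used elsewhere in the paper — and one must be careful that the formal identities $\overline{\J}(\vv)=P\cap M$ and ``the cokernel is the quotient $N/(P+M)$ of $D$'' are set up with exactly the right modules. Everything else is bookkeeping.
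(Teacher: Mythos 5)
Your proof is correct, but it follows a genuinely different route from the paper's. The paper establishes the submodularity inequality $R(\uu)+R(\vv)\ge R(\min(\uu,\vv))+R(\max(\uu,\vv))$ (from $\J(\uu)\cap\J(\vv)=\J(\max(\uu,\vv))$), telescopes it into $R(\vv)-R(\uu)\le\sum_i(R_i(v_i)-R_i(u_i))$ for $\0\preceq\uu\preceq\vv$, and then gets the upper bound by comparing $\vv$ with $\0$ and the lower bound by comparing with $\uu\ggcurly\0$, where the value $R(\uu)=|\uu|-|\wt{\ggg}|$ is supplied by the Gorenstein symmetry $R(2\wt{\ggg}-\vv)=R(\vv)+|\wt{\ggg}|-|\vv|$ of Campillo--Delgado--Kiyek. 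You instead identify the defect $\sum_i R_i(v_i)-R(\vv)$ exactly, as $\dim_\C N/(P+M)$ via the short exact sequence $0\to P/(P\cap M)\to N/M\to N/(P+M)\to 0$, and then squeeze it between $0$ and $\dim_\C N/P=\delta(C)-\sum_i\delta(C_i)=\sum_{i<j}\lk(L_i,L_j)$. This buys several things: both inequalities drop out of a single exact sequence; the external input is $\delta$-additivity rather than Gorenstein duality (the conductor statement $\vv\succeq 2\wt{\ggg}\Rightarrow M\subseteq P$ is needed only for sharpness of the lower bound, and is exactly the fact the paper already quotes from Kunz); and sharpness, which the paper asserts but does not argue, becomes transparent ($P+M=N$ when all but one $v_j\le 0$; $M\subseteq P$ past the conductor). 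Your identifications $\overline{\J}(\vv)=\nu^{-1}(M)$ and $R(\vv)=\dim_\C P/(P\cap M)$ are justified precisely because $\Ord_0 f(\gamma_i(t))$ depends only on the class of $f$ modulo $(f_i)$ and because $\bigcap_i(f_i)=(f)$ for pairwise coprime branches, so the routine checks you flag do go through. The paper's route has the mild advantage of never leaving the lattice-function formalism (its two lemmas are purely about $R$ as a function on $\Z^n$), but your argument is shorter, more structural, and proves strictly more, since it exhibits the defect as the dimension of a concrete quotient of the gluing module $N/P$.
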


\begin{corollary}\label{cor:algebraicJ}
For an algebraic link, for all $\vv$:
$$
0\ge \wt{J}(\vv) \ge -\sum_{i<j}\lk(L_i,L_j)
$$
\end{corollary}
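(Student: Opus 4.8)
The plan is to prove the two inequalities separately by pure commutative algebra in the local ring $\C[[x,y]]$, and to check sharpness by evaluating $R$ at extreme values of $\vv$; Corollary~\ref{cor:algebraicJ} will then follow by the substitution $\vv\mapsto\ggg-\vv$ in \eqref{eq:RandH}. The left inequality is immediate: since the order conditions along the branches are imposed independently, $\J(\vv)=\bigcap_{i=1}^n\J(v_i\ee_i)$, where $\J(v_i\ee_i)$ is precisely the space computing $R_i(v_i)$. Hence the tautological map $\C[[x,y]]/\J(\vv)\hookrightarrow\bigoplus_{i=1}^n\C[[x,y]]/\J(v_i\ee_i)$ is injective, so $R(\vv)\le\sum_iR_i(v_i)$.

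For the right inequality I would add the branches one at a time. Put $\J^{(k)}=\bigcap_{i=1}^k\J(v_i\ee_i)$ and $R^{(k)}=\codim\J^{(k)}$, so that $R^{(0)}=0$, $R^{(1)}=R_1(v_1)$ and $R^{(n)}=R(\vv)$. The restriction map $f\mapsto f\circ\gamma_k\bmod t^{v_k}$ sends $\J^{(k-1)}$ into $\C[[t]]/(t^{v_k})$ with kernel exactly $\J^{(k)}$, so $R^{(k)}-R^{(k-1)}$ is the dimension of its image. That image sits inside $V_k:=\mathrm{im}\bigl(\C[[x,y]]\to\C[[t]]/(t^{v_k})\bigr)$, which has dimension $R_k(v_k)$, giving $R^{(k)}-R^{(k-1)}\le R_k(v_k)$. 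For the reverse bound let $g_i$ be a local equation of $C_i$; then $g_1\cdots g_{k-1}\cdot\C[[x,y]]\subseteq\J^{(k-1)}$, so the image of $\J^{(k-1)}$ also contains the reduction of $\varphi_kA$ modulo $t^{v_k}$, where $A=\mathrm{im}(\C[[x,y]]\to\C[[t]])$ is the local ring of $C_k$ and $\varphi_k:=(g_1\cdots g_{k-1})\circ\gamma_k$ has $t$-adic order $\sum_{i<k}(C_i\cdot C_k)_0=\sum_{i<k}\lk(L_i,L_k)=:c_k$. The key algebraic input is the colength identity $\dim_\C A/\varphi_kA=c_k$: multiplication by $\varphi_k$ gives an isomorphism $\C[[t]]/A\cong\varphi_k\C[[t]]/\varphi_kA$, so computing $\dim_\C\C[[t]]/\varphi_kA$ along the two filtrations $\varphi_kA\subseteq A\subseteq\C[[t]]$ and $\varphi_kA\subseteq\varphi_k\C[[t]]\subseteq\C[[t]]$ yields $\dim_\C A/\varphi_kA=\dim_\C\C[[t]]/\varphi_k\C[[t]]=\Ord_t\varphi_k=c_k$. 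Reducing mod $t^{v_k}$ can only decrease this, so the codimension of $\mathrm{im}\,\J^{(k-1)}$ inside $V_k$ is at most $c_k$, i.e.\ $R^{(k)}-R^{(k-1)}\ge R_k(v_k)-c_k$. Summing the two-sided bounds $R_k(v_k)-c_k\le R^{(k)}-R^{(k-1)}\le R_k(v_k)$ over $k=1,\dots,n$ telescopes the middle to $R(\vv)$ and the error term to $\sum_kc_k=\sum_{i<j}\lk(L_i,L_j)$, which is the asserted inequality.

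The genuine content, and the step I expect to be the main obstacle, is this colength lemma $\dim_\C A/\varphi A=\Ord_t\varphi$ together with the classical identification $\Ord_t(g_i\circ\gamma_k)=(C_i\cdot C_k)_0=\lk(L_i,L_k)$: these are where the one-dimensional local algebra of a branch and the dictionary between intersection multiplicity and linking number enter, and one must in particular use that $A$ and its normalization $\C[[t]]$ agree in all large degrees so that their colengths modulo a nonzerodivisor coincide. Everything else is bookkeeping with finite-dimensional quotients.

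Finally, for sharpness, note that $R(\ww)=0$ whenever $\ww\preceq\0$. The left inequality is an equality whenever $v_i\le 0$ for all but one index, since then $\J(v_i\ee_i)=\C[[x,y]]$ for those $i$ and both sides collapse to $R_i(v_i)$ for the surviving index. The right inequality is an equality for $\vv\succeq 2\wt{\ggg}$: there $R(2\wt{\ggg}-\vv)=0$, so the symmetry \eqref{eq: symmetry for R} gives $R(\vv)=|\vv|-|\wt{\ggg}|$, while $R_i(v_i)=v_i-g_i$ for $v_i$ large and $|\wt{\ggg}|=|\ggg|+\sum_{i<j}\lk(L_i,L_j)$ by \eqref{eq:ggg}, whence $R(\vv)=\sum_iR_i(v_i)-\sum_{i<j}\lk(L_i,L_j)$. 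Corollary~\ref{cor:algebraicJ} then follows by replacing $\vv$ with $\ggg-\vv$ and using $J(\vv)=R(\ggg-\vv)$ and $J_{L_i}(v_i)=R_i(g_i-v_i)$ from \eqref{eq:RandH}.
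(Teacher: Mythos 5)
Your proposal is correct, and the final step (substituting $\vv\mapsto\ggg-\vv$ in \eqref{eq:RandH}) is exactly how the paper deduces the corollary; the difference lies in how you prove the underlying inequality $0\ge R(\vv)-\sum_iR_i(v_i)\ge-\sum_{i<j}\lk(L_i,L_j)$, which is Theorem~\ref{th:hilb} in the paper. The paper's route is combinatorial: a submodularity lemma $R(\uu)+R(\vv)\ge R(\min(\uu,\vv))+R(\max(\uu,\vv))$ (Lemma~\ref{l:hilb}, from $\J(\uu)\cap\J(\vv)=\J(\max(\uu,\vv))$), a telescoping consequence (Lemma~\ref{l:cube}) which gives the upper bound by taking $\uu=\0$, and then the lower bound by applying Lemma~\ref{l:cube} between $\vv$ and a vector $\uu\ggcurly\0$ where the symmetry \eqref{eq: symmetry for R} forces $R(\uu)=|\uu|-|\wt{\ggg}|$. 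You instead add branches one at a time and bound each increment $R^{(k)}-R^{(k-1)}$ from both sides, with the lower bound coming from the colength identity $\dim_\C A/\varphi_kA=\Ord_t\varphi_k$ and the dictionary $\Ord_t(g_i\circ\gamma_k)=(C_i\cdot C_k)_0=\lk(L_i,L_k)$; your inclusion $g_1\cdots g_{k-1}\C[[x,y]]\subseteq\J^{(k-1)}$, the kernel computation, and the colength argument via the two filtrations of $\C[[t]]/\varphi_k A$ (using $\dim_\C\C[[t]]/A=\delta<\infty$) are all sound. What each approach buys: the paper's lower bound is shorter but leans on the symmetry \eqref{eq: symmetry for R} as a black box (a nontrivial Gorenstein-type input from \cite{CDK}), whereas yours is self-contained local algebra and in addition localizes the defect branch-by-branch, making the sharpness of both inequalities transparent — something the paper asserts but does not prove. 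The only degenerate point (in both arguments) is the convention for $v_k\le 0$, where one should read $\C[[t]]/(t^{v_k})=0$ and $R_k(v_k)=0$ so that those indices contribute trivially.
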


\begin{proof}
By \eqref{eq:RandH},
$J(\vv) =R(\ggg-\vv).$
Similarly, $J_i(\vv_i)=R_i(g_i-v_i)$, so it remains to apply the theorem to the vector $\ggg-\vv$.
\end{proof}

\begin{remark}
Corollary~\ref{cor:algebraicJ} can be compared with Theorem \ref{thm:basicsplit}. Indeed, all crossings in an algebraic link are positive, so
$t_+=0$, and by the above corollary we get $t_-\ge \sum_{i<j}\lk(L_i,L_j)$. In other words, to split an algebraic link
one needs to change exactly $\sum_{i<j}\lk(L_i,L_j)$ crossings from positive to negative. It is well known
that the splitting number of an algebraic link is equal to $\sum_{i<j}\lk(L_i,L_j)$.
\end{remark}

The following two lemmas will be used in the proof of Theorem~\ref{th:hilb}. 

\begin{lemma}
\label{l:hilb}
For $\uu,\vv\in \BZ^n$, one has
$$
R(\uu)+R(\vv)\ge R(\min(\uu,\vv))+R(\max(\uu,\vv)).
$$
\end{lemma}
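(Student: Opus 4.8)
The plan is to work directly with the vector subspaces $\J(\vv)\subset\C[[x,y]]$ and reduce the statement to the standard ``submodularity from a short exact sequence'' identity in linear algebra. First I would record two elementary consequences of the definition of $\J$. Since $f(\gamma_i(t))$ vanishes to order at least $\max(u_i,v_i)$ if and only if it vanishes to order at least $u_i$ and to order at least $v_i$, for every $i$, we get
\[
\J(\max(\uu,\vv))=\J(\uu)\cap\J(\vv).
\]
On the other hand, any element of $\J(\uu)$ or of $\J(\vv)$ already satisfies $\Ord_0 f(\gamma_i(t))\ge\min(u_i,v_i)$ for all $i$, so, taking sums,
\[
\J(\uu)+\J(\vv)\subseteq\J(\min(\uu,\vv)).
\]
I would emphasize that this inclusion is in general \emph{not} an equality; this is exactly why the lemma produces an inequality and not an identity.

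Next I would invoke the standard fact that for subspaces $A,B$ of a vector space $V$ of finite codimension there is a short exact sequence
\[
0\longrightarrow V/(A\cap B)\longrightarrow V/A\oplus V/B\longrightarrow V/(A+B)\longrightarrow 0,
\]
where the first map is $v\mapsto(v,v)$ and the second is $(v,w)\mapsto v-w$, and hence
\[
\codim_V A+\codim_V B=\codim_V(A+B)+\codim_V(A\cap B).
\]
Before applying this I should check finiteness. Each $\J(\ww)$ has finite codimension in $\C[[x,y]]$: a monomial of degree $d$ vanishes to order at least $d$ along every branch, because each $\gamma_i$ is a non-constant map germ with $\gamma_i(0)=0$, so $(x,y)^N\subseteq\J(\ww)$ whenever $N\ge\max_i w_i$, whence $R(\ww)=\codim\J(\ww)\le\dim\C[[x,y]]/(x,y)^N=\binom{N+1}{2}<\infty$. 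Moreover $\min(\uu,\vv)\preceq\uu,\vv\preceq\max(\uu,\vv)$ already gives the monotonicity $R(\min(\uu,\vv))\le R(\uu),R(\vv)\le R(\max(\uu,\vv))$, so all four quantities in the claimed inequality are finite.

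Finally I would apply the displayed identity with $V=\C[[x,y]]$, $A=\J(\uu)$, $B=\J(\vv)$. Using $A\cap B=\J(\max(\uu,\vv))$ together with the containment $A+B\subseteq\J(\min(\uu,\vv))$, which gives $\codim(A+B)\ge\codim\J(\min(\uu,\vv))=R(\min(\uu,\vv))$, one obtains
\[
R(\uu)+R(\vv)=\codim(A+B)+R(\max(\uu,\vv))\ge R(\min(\uu,\vv))+R(\max(\uu,\vv)),
\]
which is the assertion. The only point requiring any care is the finiteness of the codimensions involved, so that the exact sequence translates into a numerical equality; everything else is formal. This submodularity of $R$ is the Hilbert-function counterpart of the corresponding lattice property of the function $H$, and it is precisely the input needed for the inductive step in the proof of Theorem~\ref{th:hilb}.
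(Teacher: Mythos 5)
Your proof is correct and follows essentially the same route as the paper's: both rest on $\J(\uu)\cap\J(\vv)=\J(\max(\uu,\vv))$ together with $\J(\uu),\J(\vv)\subseteq\J(\min(\uu,\vv))$, and then apply the standard submodularity of codimension (the paper phrases it as $\dim W/A+\dim W/B\ge\dim W/(A\cap B)$ inside $W=\J(\min(\uu,\vv))$, while you use the equivalent short exact sequence with $A+B$). Your explicit finiteness check is a harmless extra.
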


\begin{proof}
Indeed, $\J(\uu),\J(\vv)\subset \J(\min(\uu,\vv))$ and $\J(\uu)\cap \J(\vv)=\J(\max(\uu,\vv))$. One has
$$
\dim \J(\min(\uu,\vv))/\J(\uu)+\dim \J(\min(\uu,\vv))/\J(\vv)\ge \dim \J(\min(\uu,\vv))/\J(\max(\uu,\vv)),
$$
therefore
$$
-R(\min(\uu,\vv))+R(\uu)-R(\min(\uu,\vv))+R(\vv)\ge -R(\min(\uu,\vv))+R(\max(\uu,\vv)).
$$
\end{proof}

\begin{lemma}
\label{l:cube}
Suppose that $\uu,\vv\in \BZ^n, 0\preceq \uu\preceq \vv$. 
Then 
\[R(\vv)-R(\uu)\le \sum_{i=1}^{n}(R_i(v_i)-R_i(u_i)).\]
\end{lemma}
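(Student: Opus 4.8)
The plan is to prove Lemma~\ref{l:cube} by reducing to the one-variable case coordinate by coordinate, telescoping along a monotone lattice path from $\uu$ to $\vv$, and at each single step bounding the jump of $R$ by the jump of the corresponding $R_i$. First I would reduce to the case $\vv=\uu+\ee_i$ for some $i$: writing a chain $\uu=\ww^{(0)}\preceq\ww^{(1)}\preceq\dots\preceq\ww^{(N)}=\vv$ where each $\ww^{(k+1)}=\ww^{(k)}+\ee_{i_k}$, it suffices to show $R(\ww+\ee_i)-R(\ww)\le R_i(w_i+1)-R_i(w_i)$ for every $\ww$ with $\ww\succeq\0$, and then sum. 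So the heart of the matter is the inequality
\[
R(\ww+\ee_i)-R(\ww)\le R_i(w_i+1)-R_i(w_i),\qquad \ww\succeq \0.
\]
Here the left side is $\dim_\C \J(\ww)/\J(\ww+\ee_i)$ (since $\J(\ww+\ee_i)\subseteq\J(\ww)$ and both are finite-codimensional in $\C[[x,y]]$), and this dimension is $0$ or $1$ because raising one $v_i$ by one imposes at most one linear condition (vanishing to one higher order along the branch $C_i$). Similarly $R_i(w_i+1)-R_i(w_i)$ equals $\dim_\C \J_i(w_i)/\J_i(w_i+1)\in\{0,1\}$, where $\J_i(k)=\{f:\Ord_0 f(\gamma_i(t))\ge k\}$ is the one-variable version.

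The key step is then: if $R(\ww+\ee_i)-R(\ww)=1$, then $R_i(w_i+1)-R_i(w_i)=1$. Contrapositively, suppose $w_i+1$ does \emph{not} occur as an order of a function along $C_i$, i.e. $\J_i(w_i)=\J_i(w_i+1)$; I must show $\J(\ww)=\J(\ww+\ee_i)$. The point is that the semigroup of orders along a single branch $C_i$ realized by \emph{all} of $\C[[x,y]]$ equals the semigroup realized by functions that already vanish to prescribed orders along the other branches. Concretely, given $f\in\J(\ww)$ with $\Ord_0 f(\gamma_i(t))=w_i$ exactly, I want to modify $f$ to $f'\in\J(\ww)$ with $\Ord_0 f'(\gamma_i(t))\ge w_i+1$ and $f'\equiv f$ enough that $R$ doesn't change — but that is not quite the right formulation; instead I argue directly with dimensions. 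Since $\J_i(w_i)=\J_i(w_i+1)$, there is \emph{no} $g\in\C[[x,y]]$ with $\Ord_0 g(\gamma_i(t))=w_i$; but $\J(\ww)\subseteq \J_i(w_i)\cap(\text{conditions on other branches from }\ww)$... wait, $\J(\ww)$ requires $\Ord_0 f(\gamma_j(t))\ge w_j$ for all $j$, so in particular $\J(\ww)\subseteq\J_i(w_i)=\J_i(w_i+1)$, which forces every $f\in\J(\ww)$ to satisfy $\Ord_0 f(\gamma_i(t))\ge w_i+1$, i.e. $\J(\ww)\subseteq\J(\ww+\ee_i)$, hence equality. That closes the step, and summing over the chain gives the Lemma.

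The main obstacle I anticipate is the very last implication, namely justifying $\J(\ww)\subseteq\J_i(w_i)$: this requires $w_i\ge 0$ (so that the constraint $\Ord_0 f(\gamma_i(t))\ge w_i$ is among those defining $\J(\ww)$ and is not automatically satisfied by something that fails the one-variable constraint) — which is exactly the hypothesis $\uu\succeq\0$, propagated to $\ww\succeq\0$ along the chain; one should pick the chain so that all intermediate points stay $\succeq \0$, which is automatic since $\0\preceq\uu\preceq\ww^{(k)}$. I would also want to double-check that $\J_i(w_i)\subseteq\C[[x,y]]$ as defined is genuinely the one-variable object whose codimension is $R_i(w_i)$, i.e.\ that $\codim_{\C[[x,y]]}\J_i(w_i)=R_i(w_i)$; this is immediate from the definitions in~\eqref{eq:Rfunction} since $\J_i(w_i)=\J(w_i\ee_i)$ when $w_i\ge 0$ (and for $w_i\le 0$, $R_i(w_i)=0=R_i(0)-(-w_i)$... actually here all arguments are $\ge 0$ so this subtlety does not arise). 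Apart from these bookkeeping points the argument is short: it is essentially the statement that adding a constraint on one branch either cuts codimension by one or not, and when it does, the one-branch Hilbert function must jump there too, because $\J(\ww)$ already lives inside $\J_i(w_i)$.
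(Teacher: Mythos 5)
Your argument is correct, and it shares the paper's overall telescoping structure (reduce to a monotone chain of lattice points from $\uu$ to $\vv$ and bound each jump of $R$ by the jump of the corresponding one-branch function), but the way you handle the single step is genuinely different. The paper first establishes a submodularity inequality (Lemma~\ref{l:hilb}), $R(\uu)+R(\vv)\ge R(\min(\uu,\vv))+R(\max(\uu,\vv))$, coming from $\J(\uu)\cap\J(\vv)=\J(\max(\uu,\vv))$ together with $\J(\uu),\J(\vv)\subseteq\J(\min(\uu,\vv))$, and then applies it to the pairs $(\vv^{(i)},v_i\ee_i)$, changing the $i$-th coordinate from $v_i$ to $u_i$ in a single stroke. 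Your unit-step inequality $R(\ww+\ee_i)-R(\ww)\le R_i(w_i+1)-R_i(w_i)$ is precisely the instance of that submodularity for the pair $(\ww,(w_i+1)\ee_i)$ (with $\ww\succeq\0$ used to identify the max and min), but you prove it directly: both jumps lie in $\{0,1\}$ because $\J(\ww)/\J(\ww+\ee_i)$ embeds into $\C$ via $f\mapsto [t^{w_i}]f(\gamma_i(t))$, and the inclusion $\J(\ww)\subseteq\J_i(w_i)$ forces the full jump to vanish whenever the one-branch jump does. Both routes are elementary linear algebra with the subspaces $\J(\vv)$; yours is more self-contained and makes the ``each new condition cuts codimension by at most one, and only where the one-branch semigroup jumps'' picture explicit, while the paper's lemma is stated for arbitrary pairs $\uu,\vv$ and does the bookkeeping in fewer steps. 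One small simplification: your worry about justifying $\J(\ww)\subseteq\J_i(w_i)$ is unnecessary, since by the definition of $\J(\ww)$ the condition $\Ord_0 f(\gamma_i(t))\ge w_i$ is imposed for every $i$ irrespective of sign, so the inclusion holds automatically; the hypothesis $\0\preceq\uu$ only enters in keeping the identifications $\J_i(w_i)=\J(w_i\ee_i)$ and the min/max computations clean, exactly as you observed.
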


\begin{proof}
Consider a sequence of lattice points $\vv^{(i)}=(u_1,\ldots,u_i,v_{i+1},\ldots, v_n)$, so that 
$\uu=\vv^{(n)}$ and $\vv=\vv^{(0)}$. Let $\ee_i$ denote the $i$-th coordinate vector.
Then 
\[
\max(\vv^{(i)}, v_i\ee_i)=\vv^{(i-1)},\ \min(\vv^{(i)},v_i\ee_i)=u_i\ee_i,
\]
hence by Lemma \ref{l:hilb}:
\begin{align*}
R_i(v_i)+R(\vv^{(i)})&\ge R(\vv^{(i-1)})+R_i(u_i),\\
\intertext{so}
R(\vv^{(i-1)})-R(\vv^{(i)})&\le R_i(v_i)-R_i(u_i),
\end{align*}
and
\[
R(\vv)-R(\uu)=R(\vv^{(0)})-R(\vv^{(r)})=\sum_{i=1}^{n}(R(\vv^{(i-1)})-R(\vv^{(i)}))\le \sum_{i=1}^{n} (R_i(v_i)-R_i(u_i)).
\]
\end{proof}

\begin{proof}[Proof of Theorem \ref{th:hilb}]
By Lemma \ref{l:cube}, one has
\[
R(\vv)=R(\vv)-R(0)\le \sum_{i=1}^{n}(R_i(v_i)-R_i(0))=\sum_{i=1}^{n}R_i(v_i).
\]
Furthermore, if $\uu\ggcurly\mathbf{0}$,
then by \eqref{eq: symmetry for R} $R(\uu)=|\uu|-|\wt{\ggg}|$.
By Lemma \ref{l:cube}, we get
\begin{align*}
R(\uu)-R(\vv)&\le \sum_{i=1}^{n}(R_i(u_i)-R_i(v_i)),\textrm{ so}\\
|\uu|-|\wt{\ggg}|-R(\vv)&\le \sum_{i=1}^{n}(u_i-g_i-R_i(v_i)),\textrm{ that is}\\
R(\vv)-\sum_{i=1}^{n}R_i(v_i)&\ge 
-|\wt{\ggg}|+\sum_{i=1}^{n}g_i=-\sum_{i<j}l_{ij}.
\end{align*}
\end{proof}

\subsection{Semicontinuity of the Hilbert function}
We can use Theorem~\ref{thm:mainestimate} to give a topological proof of semigroup semicontinuity property under $\delta$-constant
deformation, generalizing the result of \cite{BL2} for many components. We refer the reader to \cite{BCG,GN} for other approaches
to semicontinuity property of semigroups.
 
Suppose $F_t\colon(\C^2,0)\to (\C,0)$ is a family of polynomials depending on a parameter $t\in(-\varepsilon,\varepsilon)\subset\R$.
We assume that for every $t$ the curve $F_t^{-1}(0)$ has an isolated singularity with $n$ branches.
\begin{theorem}\label{thm:semigroupsemic}
Assume that the deformation
is $\delta$-constant. Then for any $\mm\in\Z^n$ and $t\neq 0$ sufficiently close to $0$ we have
\[R_t(\mm)\ge R_0(\mm).\]
\end{theorem}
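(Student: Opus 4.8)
The plan is to deduce Theorem~\ref{thm:semigroupsemic} from Theorem~\ref{thm:mainestimate} by realizing a $\delta$-constant deformation as an APSIC, and then to translate the resulting inequality between \JFF functions into one between Hilbert functions through the dictionary \eqref{eq:RandH}. This is the multibranch version of the argument of \cite{BL2}, which handles $n=1$. To set up the geometry, fix a small Milnor ball $B$ around the origin. For $t\neq 0$ small the curve $C_t=F_t^{-1}(0)$ has inside $B$ a distinguished singular point $p_t\to 0$ with exactly $n$ branches, together with finitely many further singular points of $C_t$ which, by the $\delta$-constant hypothesis, carry precisely $\delta(C_0;0)-\delta(C_t;p_t)$ of the $\delta$-invariant (if there are no such points the family is equisingular and there is nothing to prove). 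Let $\LL_0$ be the link of $(C_0,0)$ and $\mathcal L_t$ the link of $(C_t,p_t)$; both are algebraic links with $n$ components $L_{0,i}$, $L_{t,i}$ (the links of the $i$-th branches), and $R_0$, $R_t$ are their Hilbert functions in the sense of \eqref{eq:Rfunction}.

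The geometric heart of the proof is the claim that the total space of the family, restricted over a real arc joining $0$ to $t$ and suitably resolved, produces an APSIC $A$ from $\mathcal L_t$ to $\LL_0$. Since the number of branches is $n$ all along the arc, the $i$-th branch of $(C_t,p_t)$ deforms to the $i$-th branch of $(C_0,0)$, so each component $A_i$ is an annulus with $\partial A_i=L_{0,i}\sqcup -L_{t,i}$; since complex curves meet themselves and each other positively, $A$ has only positive double points; and the self-intersection count of $A_i$ is
\[\newm_{ii}=\delta(C_{0,i};0)-\delta(C_{t,i};p_t).\]
The last identity holds because each branch separately undergoes a $\delta$-constant deformation — this follows from upper semicontinuity of $\delta$ together with deformation-invariance of intersection numbers, or alternatively from the slice-genus bound applied to $A_i$ combined with that semicontinuity — so the $\delta$ lost by the $i$-th branch at $p_t$ reappears entirely as self-nodes strewn between the scale of $p_t$ and that of $B$, and these are exactly the double points of $A_i$. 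The multicolored counts $\newm_{ij}$ ($i\neq j$) need not be computed; only their nonnegativity, which is semicontinuity of intersection multiplicities, is needed for $A$ to exist.

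Granting this, the conclusion is a one-line computation. Let $\ggg_0$ and $\ggg_t$ be the genus vectors of $\LL_0$ and $\mathcal L_t$; their coordinates are the $\delta$-invariants $\delta(C_{0,i};0)$ and $\delta(C_{t,i};p_t)$ (the Seifert genus of an algebraic knot is its $\delta$-invariant), so $\rr:=(\newm_{11},\ldots,\newm_{nn})=\ggg_0-\ggg_t$. Applying Theorem~\ref{thm:mainestimate} to the APSIC $A$ with $\LL_1=\mathcal L_t$ and $\LL_2=\LL_0$ yields its right-hand inequality
\[J_{\LL_0}(\mm)\le J_{\mathcal L_t}(\mm-\rr)=J_{\mathcal L_t}(\mm-\ggg_0+\ggg_t)\qquad(\mm\in\Z^n).\]
Replacing $\mm$ by $\ggg_0-\mm$ and invoking \eqref{eq:RandH} for $(C_0,0)$ on the left and for $(C_t,p_t)$ on the right,
\[R_0(\mm)=J_{\LL_0}(\ggg_0-\mm)\le J_{\mathcal L_t}(\ggg_t-\mm)=R_t(\mm),\]
which is the asserted semicontinuity.

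The main obstacle is the geometric input of the second paragraph: constructing the APSIC $A$ out of the total space of the family so that its components are annuli matching up the branches, its double points are positive, and its monochromatic data equals the branchwise $\delta$-drops. For $n=1$ this is the construction of \cite{BL2}; the additional work here is the bookkeeping of the branches and of their pairwise intersection multiplicities, which is exactly where constancy of the number of branches enters. One convenient route is to treat first a generic $\delta$-constant deformation, in which the extra singular points of $C_t$ are ordinary nodes, and then to reduce the general case to it.
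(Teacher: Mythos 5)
Your proposal is correct and takes essentially the same route as the paper: both realize the $\delta$-constant deformation (after a generic perturbation of the parametrization, which exists by disk-parametrizability of $\delta$-constant families) as an APSIC from the link of the perturbed singularity to the link of the central one, identify the monochromatic double-point vector $\rr$ with the difference of the genus (delta) vectors of the branches, and then feed the right-hand inequality of Theorem~\ref{thm:mainestimate} into the dictionary \eqref{eq:RandH} exactly as in your final display. The paper likewise isolates the geometric input as a lemma and defers the double-point count to the unibranch argument of \cite{BL2}, so your treatment matches its level of detail.
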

\begin{proof}
We follow the proof of \cite[Theorem 2.15]{BL}. Take a ball $B\subset\C^2$ with center at $0$
such that $F_0^{-1}(0)\cap\partial B$ is the link
of the singularity of $F_0^{-1}(0)$ at $0$. Denote this link by $\LL_2$.

Choose $t$ sufficiently small. Then $F_t^{-1}(0)\cap\partial B$ is still isotopic to $\LL_2$. Choose a smaller ball $B'$ with center at $0$
such that $F_t^{-1}(0)\cap\partial B'$ is the link of the singularity of $F^{-1}_t(0)$ at $0$. Denote this link by $\LL_1$. For $i=1,2$,
let $L_{i1},\ldots,L_{in}$ be the components of the link $\LL_i$. Denote by $g_{i1},\ldots,g_{in}$ the Seifert genus of the corresponding component.
Let $\ggg$ be as in \eqref{eq:ggg}.
\begin{lemma}\label{lem:apsicfromdeformation}
Up to perturbing $F_t^{-1}(0)$ by a polynomial, the curve $F_t^{-1}(0)\cap (B\setminus B')$ is an APSIC from $\LL_1$ to $\LL_2$. The
number of monochromatic double points of the $i$-th component (denoted by $\newm_{ii}$ in Section~\ref{sec:psic}) is equal to $g_{i2}-g_{i1}$.
\end{lemma}
The proof of Lemma~\ref{lem:apsicfromdeformation} is given after the proof of Theorem~\ref{thm:semigroupsemic}.
Given the lemma we use Theorem~\ref{thm:mainestimate} to obtain
\begin{equation}\label{eq:J21}
J_2(\mm')\le J_1(\mm'-\kk),
\end{equation}
where $\kk=(\newm_{11},\ldots,\newm_{nn})=\ggg_2-\ggg_1$ and $\mm'\in\Z^n$ is arbitrary.
Substituting \eqref{eq:RandH} into \eqref{eq:J21}
we obtain
\[R_2(\mm)=J_2(\ggg_2-\mm)\le J_1(\ggg_2-\mm-\kk)=J_1(\ggg_1-\mm)=R_1(\mm).\]
\end{proof}
\begin{proof}[Proof of Lemma~\ref{lem:apsicfromdeformation}]
The proof is a direct generalization of \cite[Lemma 2.3]{BL2}. As the deformation is $\delta$-constant, we can find
a complex parametrization $\psi$ of $F_t^{-1}(0)\cap B''$ (where $B''$ is a ball slightly larger than $B$), 
whose domain is a disjoint union of $n$ disks $D_1,\ldots,D_n$. Set $D=D_1\sqcup\ldots\sqcup D_n$. Perturb $\psi$ to 
a complex analytic map $\wt{\psi}$
such that $\wt{\psi}$ has only generic singularities, that is, positive double points. For small perturbation the links
$\LL_2'=\wt{\psi}(D)\cap \partial B$ and $\LL_1'=\wt{\psi}(D)\cap\partial B'$ are isotopic to $\LL_2$
and $\LL_1$ respectively. The APSIC is the intersection $\wt{\psi}(D)\cap (B\setminus B')$. The number of double monochromatic double
points is calculated as in \cite[Lemma 2.3]{BL2}; the argument is standard, we omit it.
\end{proof}


\begin{thebibliography}{XXX}


\bibitem{BS} J.~Batson, C.~Seed, \emph{A Link Splitting Spectral Sequence in Khovanov Homology},
 Duke Math. J. \textbf{164}, no. 5 (2015), 801--841.

\bibitem{BCG} J.~Bodn\'ar, D.~Celoria, M.~Golla, \emph{A note on cobordisms of algebraic knots}, preprint 2015, arxiv:1509.08821.

\bibitem{BFP} M.~Borodzik, S.~Friedl, M.~Powell, \emph{Blanchfield forms and Gordian distance}, preprint 2014, arxiv:1409.8421, to appear
in J. Math. Soc. Japan.

\bibitem{BL} M.~Borodzik, C.~Livingston,  \textit{Heegaard Floer homologies and rational cuspidal curves}, Forum of Math. Sigma, \textbf{2} (2014),
e28, 23 pages.

\bibitem{BL2} M.~Borodzik, C.~Livingston, \textit{Semigroups, d-invariants and deformations of cuspidal singular points of plane curves},
preprint 2014, arxiv:1305.2868, to appear in J. Lond. Math. Soc.

\bibitem{CDG} A.Campillo, F.Delgado, S.M.Gusein-Zade, \emph{The Alexander polynomial of a plane curve singularity via the ring of functions on it}, Duke Math J. {\bf 117} (2003), no. 1, 125--156.


\bibitem{CDK} A. Campillo, F. Delgado, K. Kiyek, \emph{Gorenstein property and symmetry for one-dimensional local Cohen- Macaulay rings}, Manuscripta Math. 83 (1994), no. 3-4, 405--423.

\bibitem{CFP} J.~Cha, S.~Friedl, M.~Powell, \textit{Splitting numbers of links}, preprint 2013, arxiv:1308.5638, to appear in Proc. Edin. Math. Soc.

\bibitem{linkinfo} J.~Cha and C.~Livingston, \emph{LinkInfo: Table of Knot Invariants}, \url{http://www.indiana.edu/~linkinfo}, January 9, 2016.

\bibitem{CCZ} D.~Cimanoni, A.~Conway, and L.~Zacharova, \emph{Splitting numbers and signatures}, preprint 2016.

\bibitem{snappy} M.~Culler, N.~Dunfield, and J.~Weeks, \emph{SnapPy, a computer program for studying the geometry and topology of 3-manifolds},
\url{http://snappy.computop.org}.

\bibitem{EN} D. Eisenbud, W. Neumann, \textit{Three-dimensional
link theory and invariants of plane curve singularities}, Annals Math.
Studies \textbf{110}, Princeton University Press, Princeton, 1985.


\bibitem{GHom} E.~Gorsky, J.~Hom, \emph{Cable links and L-space surgeries}, to appear in Quant. Topol., arxiv: 1502.05425.
 
\bibitem{GN} E. Gorsky, A. N\'emethi. \emph{Lattice and Heegaard Floer homologies of algebraic links},
 Int. Math. Res. Not. 2015, no. 23, 12737--12780.

\bibitem{GN2} E. Gorsky, A. N\'emethi. \emph{Links of plane curve singularities are L-space links}, 
To appear in Algebraic and Geometric Topology, arXiv:1403.3143.



                                         

\bibitem{hedcab}
M.~Hedden, \emph{On knot Floer homology and cabling}, Algebr. Geom. Topol. \textbf{5} (2005), 1197--1222.
 

\bibitem{Hi12}
J. Hillman, {\em  Algebraic invariants of links}, second edition, Series on Knots and Everything, 52. World Scientific Publishing Co. Inc, River Edge, NJ, (2012).

\bibitem{HLZ} J. Hom, T. Lidman, N. Zufelt, \emph{Reducible surgeries and Heegaard Floer homology},
 Math. Res. Lett. {\bf 22} (2015), no. 3, 763--788. 

\bibitem{Kaw} A.~Kawauchi, 
\emph{The Alexander polynomials of immersed concordant links},
Bol. Soc. Mat. Mex. \textbf{20} (2014), no. 2, 559--578. 

\bibitem{Kunz} E.~Kunz, \emph{Introduction to plane algebraic curves}, Birkh\"auser Boston Inc., Boston 2005.

\bibitem{Lee} E.~Lee, \emph{An endomorphism of the Khovanov invariant},
Adv. Math. \textbf{197} (2005), no. 2, 554--586. 

\bibitem{Liu0} Y. Liu, \emph{Heegaard Floer homology and surgeries on two-bridge links}, preprint, arXiv:1402.5727.

\bibitem{Liu} Y. Liu, \emph{L-space surgeries on links}, to appear in Quant. Topol., arXiv:1409.0075.

\bibitem{MO} C. Manolescu, P. Ozsv\'ath, \emph{Heegaard Floer homology and integer surgeries on links}, preprint 2010, arXiv:1011.1317.



\bibitem{NO} M.~Nagel, B.~Owens, \emph{Unlinking information from 4-manifolds}, preprint 2015, arxiv:1503.03092, to
appear in Bull. Lond. Math. Soc.
 
\bibitem{NiWu} Y. Ni, Z. Wu, \emph{Heegaard Floer correction terms and rational genus bounds}, Adv. Math. {\bf 267} (2014), 360--380.
 
\bibitem{OS} B.~Owens, S.~Strle, \emph{Immersed disks, slicing numbers and concordance unknotting numbers}, preprint 2013, arxiv:1311.6702,
to appear in Comm. Anal. Geom.

\bibitem{OzSz03} P.~Ozsv\'ath, Z.~Szab\'o,  \textit{Absolutely graded Floer homologies and intersection 
forms for four-manifolds with boundary},
Adv. Math. \textbf{173} (2003), 179--261.

                                                            

\bibitem{os:lspace} P.~Ozsv\'ath,  Z.~Szab\'o, \textit{     On knot Floer homology and lens space surgeries}, Topology, {\bf 44} (2005), 1281--1300.                       
                              
\bibitem{OSlinks} P. Ozsv\'ath, Z. Szab\'o. \emph{Holomorphic discs, link invariants and the multi-variable Alexander polynomial.}
Algebr. Geom. Topol. {\bf 8} (2008), no. 2, 615--692.

\bibitem{Ras} J. Rasmussen,  \emph{Floer homology and knot complements}, Ph.D. thesis, Harvard University, 2003.

\bibitem{RR} J.~Rasmussen, S.~Rasmussen, \emph{Floer Simple Manifolds and L-Space Intervals}, preprint 2015, arxiv: 1508.05900.


\bibitem{knotkit} C.~Seed, \emph{Knotkit, a computer program to calculate Khovanov homology}, \url{https://github.com/cseed/knotkit/}.

\bibitem{torres} G. Torres, {\emph On the Alexander polynomial.} Ann. of Math. {\bf 57}(1953), 57--89.

\bibitem{Tu86}
V. Turaev, {\em Reidemeister torsion in knot theory},
Russian Math. Surveys 41 (1986), No. 1, 119--182.

\bibitem{Tu01}
V. Turaev, {\em An introduction to combinatorial torsion}, Notes taken by Felix Schlenk,
Lectures in Mathematics ETH Z\"urich, Birkh\"auser Verlag, Basel, 2001.


\end{thebibliography}
\end{document}